\algnewcommand\algorithmicinput{\textbf{INPUT:}}
\algnewcommand\INPUT{\item[\algorithmicinput]}
\algnewcommand\algorithmicoutput{\textbf{OUTPUT:}}
\algnewcommand\OUTPUT{\item[\algorithmicoutput]}
\newsavebox\newcaptionbox\newdimen\newcaptionboxwid
\long\def\@makecaption#1#2{
 \vskip 10pt
        \baselineskip 11pt
        \setbox\@tempboxa\hbox{#1. #2}
        \ifdim \wd\@tempboxa >\hsize
        \sbox{\newcaptionbox}{\small\sl #1.~}
        \newcaptionboxwid=\wd\newcaptionbox
        \usebox\newcaptionbox {\footnotesize #2}
%        \usebox\newcaptionbox {\small #2}
        \else
          \centerline{{\small\sl #1.} {\small #2}}
        \fi}
\def\fnum@figure{Figure \thefigure}
\def\fnum@table{Table \thetable}
\newcommand{\lc}{ \lceil} 
\newcommand{\rc}{ \rceil } 
\newcommand{\kse}{\kappa_{\max}^{s,e}}
\DeclareMathOperator*{\argmax}{arg\,max}
\newtheorem{assumption}{Assumption}
\newtheorem{lemma}{Lemma}
\newtheorem{proposition}{Proposition}
\newtheorem{definition}{Definition}
\newtheorem{remark}{Remark}
\newtheorem{theorem}{Theorem}
\title{Change point detection and inference in multivariable nonparametric models under  mixing conditions}
\author{Carlos Misael Madrid Padilla$^1$ \and Haotian Xu$^2$ \and  Daren Wang$^3$ \and Oscar Hernan Madrid Padilla$^4$ \and  Yi Yu$^5$}
\date{%
    $^1$Department of Mathematics, University of Notre Dame\\%
    $^2$Department of Statistics, Pennsylvania State University\\%
    $^3$Department of Statistics, University of Notre Dame\\%
    $^4$Department of Statistics, University of California\\%
    $^5$Department of Statistics, University of Warwick\\[2ex]%
    \today
}
\begin{document}

\maketitle

\begin{abstract}
This paper studies multivariate nonparametric change point localization and inference problems. The data consists of a multivariate time series with potentially short range dependence. The distribution of this data is assumed to be piecewise constant with densities in a H\"{o}lder class. The change points, or times at which the distribution changes, are unknown. We derive the limiting distributions of the change point estimators when the minimal jump size vanishes or remains constant, a first in the literature on change point settings. We are introducing two new features: a consistent estimator that can detect when a change is happening in data with short-term dependence, and a consistent block-type long-run variance estimator. Numerical evidence is provided to back up our theoretical results.
\end{abstract}

\section{Introduction}
In this paper, we study the problem of change point detection in nonparametric settings. Our model assumes that a vector of measurements is collected at every time point following a distribution that has a probability density function belonging to a  H\"{o}lder function class. The change point assumption implies that the probability density functions remain the same across time except for abrupt changes at the change points. Furthermore, our theory permits temporal dependence of the measurements, a feature not explored in prior work.

To be more specific, the observations 
$\{X_t\}_{t=1}^{T}\subset \mathbb{R}^p$ are assumed to be an $\alpha$-mixing sequence of random vectors with unknown distributions $\{P_t\}_{t=1}^{T}$. The  $\alpha$-mixing coefficients, $\{\alpha_k\}_{k\in\mathbb{Z}}$, have an exponential-decay,
 \begin{equation}
 \label{mix-cond}
     \alpha_k\le e^{-2ck}, \quad k \in \mathbb{Z},
 \end{equation}
for a certain $c>0.$ %The decay rate of $\alpha_k$ imposes a condition on temporal dependence between events separated by $k$ time points. We highlight that the requirement in (\ref{mix-cond}) is standard in the literature, see for instance \cite{abadi2004sharp}, and \cite{merlevede2009bernstein}. 
The decay rate of $\alpha_k$ imposes a temporal dependence between events that are separated by $k$ time points, as is stated in \eqref{mix-cond}. This is a standard requirement in the literature \citep[e.g][]{abadi2004sharp,merlevede2009bernstein}. To model the nonstationarity of sequentially observed multivariate data, we assume that there exists $K \in \mathbb{N}$ change points, namely $\{\eta_k\}_{k=1}^{K}\subset \{2,...,T\}$ with  $1=\eta_0<\eta_1< \ldots <\eta_k\le T<\eta_{K+1}=T+1$, such that 
\begin{equation} \label{model1}
    P_t\neq P_{t-1} \ \text{if and only if } t \in \{\eta_1, \ldots ,\eta_K\}.
\end{equation}  
%\textcolor{red}{should this be $P_t \neq P_{t-1}$? }
 Our primary interest is to accurately estimate $\{\eta_k\}_{k=1}^K$ and perform inference. We
 refer to \Cref{assume: model assumption} below for detailed technical conditions on the model described by \eqref{mix-cond} and \eqref{model1}.

Nonstationary multivariate data is frequently encountered in real-world applications, including
biology \citep[e.g.][]{molenaar2009analyzing,wolkovich2021phenological}, epidemiology \citep[e.g.][]{azhar2021study,nguyen2021forecasting}, social science \citep[e.g.][]{kunitomo2021robust,cai2022state}, climatology \citep[e.g.][]{corbella2012predicting,heo2022greedy}, finance \citep[e.g.][]{herzel2002non,schmitt2013non}, neuroscience \citep[e.g.][]{frolov2020revealing,gorrostieta2019time}, among others.

Due to the importance of modeling nonstationary data in various scientific fields, this problem has received extensive attention in the statistical change point literature, \citep[e.g.][]{aue2009break,fryzlewicz2014wild,cho2015multiple,cho2016change,wang2020univariate}. 
%Despite the popularity, we identify a few limitations in the existing works in multivariable nonparametric settings. Firstly, temporal dependence between the observed data has not been extensively studied. To our best knowledge, none of these papers has investigated the short range temporal dependence described by \eqref{mix-cond}. Secondly, presumably, there is no consistent result for data with the underlying densities being as general as H\"{o}lder smooth.
%Lastly, but most importantly, the statistical task of deriving limiting distributions of the
%change point estimators, reportedly, have not been treated in the multivariable nonparametric change point detection literature.
However, there are a few limitations in the existing works in multivariate nonparametric settings. Firstly, to the best of our knowledge, temporal dependence has not been considered. %To the best of our knowledge, none of these papers have investigated the short range temporal dependence described by \eqref{mix-cond}. 
Secondly, there is no consistent result for data with the underlying densities being as general as H\"{o}lder smooth. Lastly, but most importantly, the statistical task of deriving limiting distributions of the change point estimators has reportedly not been treated in the multivariate nonparametric change point %detection
literature.

Taking into account the aforestated limitations, this paper examines change point problems in a fully nonparametric framework, wherein the underlying distributions are only assumed to have piecewise and H\"{o}lder smooth continuous densities and the magnitudes of the distributional changes are measured by the $L_2$-norm of the differences between the corresponding densities. 

The rest of the paper is organized as follows. %In \Cref{sec-model}, we describe model assumptions on multivariate time series with change points in nonparametric settings. \Cref{sec-methods} introduces the proposed two-step change points estimation procedure and formulates estimators at each step. Theoretical results including the consistency of the preliminary estimator and the limiting distribution of the final estimator are provided in \Cref{Theory}. \Cref{sec-numeric} illustrates the practical performance of our procedure by considering various simulation settings and real data analysis.
In \Cref{sec-model}, we explain the model assumptions for multivariate time series with change points in a nonparametric setting. \Cref{sec-methods} details the two-step change point estimation procedure, as well as the estimators at each step. Theoretical results, including the consistency of the preliminary estimator and the limiting distribution of the final estimator, are presented in  \Cref{Theory}. \Cref{sec-numeric} evaluates the practical performance of the proposed procedure via various simulations and a real data analysis. Finally, \Cref{sec-conclusion} concludes with a discussion.
\subsection{Notation}  For any function $f:\ \mathbb{R}^p \to \mathbb R$ and for $1 \leq q < \infty$, define $\|f\|_{L_q} = (\int_{\mathbb{R}^p} |f(x)|^q dx)^{1/q}$ and for $q = \infty$, define $\|f\|_{L_\infty} = \sup_{x\in{\mathbb{R}^p}}\vert f(x)\vert$.  Define     $L_q = \{f: \, \mathbb{R}^p \to \mathbb{R}, \, \|f\|_q<\infty \}$. Moreover, for $q=2$,  define $\langle f,g\rangle_{L_2}=\int_{\mathbb{R}^p}f(x)g(x)dx$  where $f,g:\ \mathbb{R}^p \to \mathbb R$.  For any vector $s = (s_1, \ldots, s_p)^{\top} \in{\mathbb{N}^p}$, define $\vert s\vert = \sum_{i = 1}^p s_i$, $s!=s_1!\cdots s_p!$ and the associated partial differential operator $D^s= \frac{\partial^{\vert s\vert}}{\partial x_1^{s_1}\cdots  \partial x_{p}^{s_p}}$.
For $\alpha>0$, denote $\lfloor\alpha\rfloor $ to be the largest integer   smaller than $\alpha$. For any function $f:\, \mathbb{R}^p \to \mathbb R $ that is 
$ \lfloor\alpha\rfloor $-times continuously differentiable at point $x_0$, denote by $f_{x_0}^\alpha $ its Taylor polynomial of degree $ \lfloor\alpha\rfloor $ at $x_0$, which is defined as $$f_{x_0}^\alpha(x) = \sum_{|s| \le \lfloor\alpha\rfloor  } \frac{(x-x_0)^s  }{s!} D^s f(x_0).$$ For a constant $L>0$, let $\mathcal{H}^{\alpha}(L,\mathbb{R}^p)$ be the set  of functions $f:\, \mathbb{R}^p \to \mathbb R $ such that $f$ is $\lfloor\alpha\rfloor$-times differentiable for all  $  x  \in \mathbb{R}^p$  and satisfy $ |f(x) - f_{x_0}^\alpha(x)  | \le L| x-x_0|^ \alpha$, for all $x, x_0\in \mathbb{R}^p$. Here $|x-x_0| $ is  the Euclidean distance between $x, x_0 \in \mathbb R^p$. In nonparametric statistics literature, $\mathcal{H}^{\alpha}(L,\mathbb{R}^p)$ is often referred to as the  class of H\"{o}lder functions. We refer readers to \citet{rigollet2009optimal} for detailed discussions on H\"{o}lder functions.

A process $\{X_t\}_{t\in{\mathbb{Z}}}$ is said to be $\alpha$-mixing if 
$$\alpha_k=\sup_{t\in{\mathbb{Z}}}\alpha(\sigma(X_s,s\le t),\sigma(X_s,s\ge t+k))\longrightarrow_{k\rightarrow \infty} 0,$$ where
$
\alpha(\mathcal{A}, \mathcal{B})=\sup _{A \in \mathcal{A}, B \in \mathcal{B}}|\mathbb{P}(A \cap B)-\mathbb{P}(A) \mathbb{P}(B)| .
$ for any two $\sigma$-fields $\mathcal{A}$ and $\mathcal{B}$. For two positive sequences $\{a_n\}_{n\in \mathbb N^+ }$ and $\{b_n\}_{n\in \mathbb N ^+ }$, we write $a_n = O(b_n)$ or $a_n\lesssim b_n$, if $a_n\le Cb_n$ with some constant $C > 0$ that does not depend on $n$, and $a_n = \Theta(b_n)$ or $a_n\asymp b_n$, if $a_n = O(b_n)$ and $b_n = O(a_n)$.  For a deterministic or random $\mathbb{R}$-valued sequence $a_n$, write that a sequence of random variable $X_n=O_p(a_n)$, if $\lim _{M \rightarrow \infty} \lim \sup _{n \to \infty} \mathbb{P}(|X_n| \geq M a_n)=0$. Write $X_n=o_p(a_n)$ if $\limsup _{n \to \infty} \mathbb{P}(|X_n| \geq M a_n)=0$ for all $M>0$. The convergences in distribution and probability are respectively denoted by $\stackrel{\mathcal{D}}{\to}$ and $\stackrel{P.}{\to}$.
%We write $a_n \lesssim b_n$ or $b_n\gtrsim a_n$, if there exists a positive constant $C>0$ such that $a_n\le Cb_n$ for any $n\in{N}$.  We write an   if $a_n\lesssim b_n$ and $a_n \lesssim b_n$.
\section{Model setup}\label{sec-model}
%Given a process $\{X_t\}_{t=1}^{T}\subset \mathbb{R}^p$ with unknown distributions $\{P_t\}_{t=1}^{T}$ such that, for an unknown sequence of change points $\{\eta_k\}_{k=1}^{K}\subset \{2,...,T\}$ with $1=\eta_0<\eta_1<...<\eta_k\le T<\eta_{K+1}=T+1,$ we have 
%\begin{equation}
% \label{mod1}
%    P_t\neq P_{t+1} \ \text{if and only if} %\ \{\eta_1,...,\eta_K\}.
%\end{equation}

Detailed  assumptions imposed on the model \eqref{model1}  are collected in \Cref{assume: model assumption}.

  \begin{assumption}\label{assume: model assumption}
The data $\{X_t\}_{t=1}^{T}\subset \mathbb{R}^p$ is generated based on model \eqref{model1}, satisfying \eqref{mix-cond}, and
%{\bf a. (Option 1)} 
%Assume that, for each $t\in\{1,...,T\},$ the distribution $P_t$ has a bounded lebesgue density function $f_t:\mathbb{R}^p\rightarrow R$ such that
%\begin{equation}
%    \max_{t=1,...,T}\vert f_t(s_1)-f_t(s_2)\vert\le C_{\text{Lip}}\vert s_1-s_2\vert \ \text{for all} \ s_1,s_2\in{\mathcal{X}}.
%\end{equation}
%where $\mathcal{X}$ is the union of the supports of all the densities $f_t,$ $\vert\vert \cdot\vert\vert$ represents the $l_2$-norm.
\\
{\bf a.}
For $t = 1, \ldots, T$, the distribution $P_t$ has a  Lebesgue density function
$f_t
:\mathbb{R}^p \rightarrow \mathbb{R}$. With $r, L >0$, we assume that, $f_t \in \mathcal{H}^r(L,\mathcal{X})$, where $\mathcal{X}$ is the union of the supports of all the density functions $f_t$, with bounded Lebesgue measure. 
\\
 {\bf b.} Let $g_t$ be the joint density between $X_1$ and $X_{t+1}.$
It satisfies that 
\begin{equation}
\label{join-d-cond}
    \vert\vert g_{t}\vert\vert_{L_\infty}<\infty.
\end{equation} 
{\bf c.}  
The minimal spacing between two consecutive change points $\Delta = \min_{k = 1}^{ K+1}(\eta_{k} - \eta_{k-1})$ satisfies that $\Delta=\Theta(T)$. 
\\
{\bf d.} For $k\in\{1,...,K\}$, let
\begin{equation}
    \kappa_k=\vert\vert  f_{\eta_k}-f_{\eta_{k+1}}\vert\vert_{L_2}
\end{equation}
be the jump size at the $k$th change point and let
$$
    \kappa=\min_{k=1,...,K}\kappa_k>0.
$$
\end{assumption} 

The minimal spacing $\Delta$ and the minimal jump size $\kappa$ are two key parameters characterizing the change point phenomenon.  %\Cref{assume: model assumption}\textbf{c} imposes that $\Delta  = \Theta(T)$, which, in fact, is only needed for our inference result in Theorems~\ref{theorem:Final-est} and \ref{Long-Run-V-Theorem}. This condition may appear to be strong given the existing literature on localisation, \citep[e.g.][]{padilla2019optimal,wang2020univariate,padilla2021optimal}. In fact, it is not needed for our localisation. We can relax it  to $ \Delta \ll T$, as stated in \cite{padilla2022change}. To this end, consider increasing $\mathfrak{K}$ in \Cref{definition:seeded} to enlarge the coverage of the
%seeded intervals in MNSBS, and apply the narrowest over-threshold selection method
%\citep[Theorem 3 in][]{kovacs2020seeded}.
\Cref{assume: model assumption}\textbf{c.} requires that $\Delta  = \Theta(T)$, which is necessary only for our inference results in \Cref{theorem:Final-est} and \Cref{Long-Run-V-Theorem}. Indeed, this condition may appear strong compared to the existing literature on localization, such as \citep{padilla2019optimal,wang2020univariate,padilla2021optimal}. %However, it is not actually required for our localization; 
For our localization results, we can easily relax this condition to $ \Delta \ll T$, as stated in \cite{padilla2022change}. To achieve this, consider increasing $\mathfrak{K}$ in \Cref{definition:seeded} to broaden the coverage of the seeded intervals in MNSBS, and apply the narrowest over-threshold selection method, as described in Theorem 3 of \citep{kovacs2020seeded}.

\Cref{assume: model assumption}\textbf{d.} characterizes the changes in density functions through the function's $L_2$-norm. A reason to use the $L_2$-norm is that the $L_2$ space has an inner product structure.
%A reason to quantify the changes in distribution in terms of the $L_2$ distance of the probability density functions is that the $L_2$ space has an inner product structure.

Revolving the change point estimators, we are to conduct the estimation and inference tasks.  For a sequence of estimators $\widehat{\eta}_1 < \ldots < \widehat{\eta}_{\widehat{K}} \subset \{1, \ldots, T\}$, we are to show their consistency, i.e.~with probability tending to one as the sample size $T$ grows unbounded, it holds that 
\begin{equation}
\label{cons-1}
    \widehat{K} = K \ \text{and} \ \underset{k=1, \ldots,  \widehat{K}}{\max}\vert  \widehat{\eta}_k - \eta_{k}  \vert \leq \epsilon, \ \text{with} \ \underset{T \rightarrow \infty }{\lim} \frac{\epsilon}{\Delta} = 0.
\end{equation}
 We refer to $\epsilon$ as the localization error in the rest of this paper.  

 With a consistent estimation result, we further refine $\{\widehat{\eta}_k\}_{k = 1}^{\widehat{K}}$ and obtain $\{\widetilde{\eta}_k\}_{k = 1}^{\widehat{K}}$ satisfying that $\left|\widetilde{\eta}_k-\eta_k\right|=O_p(1)$.  We are to derive the limiting distribution of 
 \[
 	( \widetilde{\eta}_k- \eta_k)\kappa_k^{\frac{p}{r}+2}, \quad T \to \infty. 
 \]
\subsection{Summary of the results}
The contributions of this paper are as follows.
\begin{itemize}
	\item We develop a multivariate nonparametric seeded change point detection algorithm detailed \Cref{algorithm:WBS}, which is based on the seeded binary segmentation method (SBS), proposed in  \cite{kovacs2020seeded}, for the univariate Gaussian change in mean setup. As suggested in \cite{kovacs2020seeded}, SBS may be adaptable to a wide range of change point detection problems, such as that found in \cite{padilla2022change} for Functional data. We have innovatively adapted SBS to the multivariate nonparametric setup.
% \Yi{ref.  Say something about the existing results on seededBS are all for other model, not for functions.  emphasise the novelty.}.
    \item 
Under the model assumptions outlined in \Cref{assume: model assumption} and the signal-to-noise ratio condition in \Cref{assume-snr} that $\kappa^2 \Delta  \gtrsim\log(T)T^{\frac{p}{2r+p}}$, we demonstrate that the output of \Cref{algorithm:WBS} is consistent, with a localization error of $\kappa_k^{-2} T^{\frac{p}{2r+p}} \log (T)$, for $k \in \{1, \ldots, K\}$. We note that this localization error was obtained under the temporal dependence stated in \eqref{mix-cond} and with a more general smoothness assumption outlined in \Cref{assume: model assumption}\textbf{a.}, which is a novel contribution to the literature.
    %Under the model assumptions in \Cref{assume: model assumption} and the signal-to-noise ratio condition in \Cref{assume-snr} that
   % $
	%  \kappa^2 \Delta  \gtrsim\log(T)T^{\frac{p}{2r+p}},
%$
 %   we show that the output of \Cref{algorithm:WBS} is consistent, with localization error 
%	\[
%	  \epsilon= \kappa_k^{-2} T^{\frac{p}{2r+p}} \log (T), \quad k \in \{1, \ldots, K\}.
%	\]
 %We point out this localization error is obtained under temporal dependence in our model, see \eqref{mix-cond}, and for general smoothness assumption see \Cref{assume: model assumption}\textbf{a.}. This level of generality is an innovation for the literature.
    %\Yi{emphasise these results are obtained based on temporal dependence.  }	
	\item Based on the consistent estimators $\{\widehat{\eta}\}_{k=1}^{\widehat{K}}$, we construct refined estimators $\{\widetilde{\eta}_k\}_{k=1}^{\widehat{K}}$ and derive their limiting distributions in different regimes, as detailed in \Cref{theorem:Final-est}. This result is novel in the literature of nonparametric temporal dependence models, and such two-regime limiting distributions are rarely seen in the literature, with the exception of mean change under fixed-dimensional time series \citep[e.g.][]{yao1987approximating, yao1989least, bai1994least}, high-dimensional vector time series \citep[e.g.][]{kaul2021inference}, functional time series setting \citep[e.g.][]{aue2009estimation}, and high-dimensional linear regression \citep[e.g.][]{xu2022change}.
	\item Extensive numerical results are presented in \Cref{sec-numeric} to corroborate the theoretical findings. %Implementations of \Cref{algorithm:WBS} and \Cref{algorithm:2} are included in the development version of the R package \texttt{changepoints} \cite{changepoints}. 
    The code used for numerical experiments is available upon request prior to publication. If the paper is accepted, we will include the code and instructions on how to reproduce the numerical results in Section \ref{sec-numeric}.
	%\item We show in extensive numerical experiments the great advantage of using the second stage estimator  $\{ \widetilde{\eta}_k\}_{k=1}^{ \widehat{K} }$ over the first stage estimator $\{ \widehat{\eta}_k\}_{k=1}^{ \widehat{K} }$ . In particular, our experiments hightlight the practical importance of Theorem \ref{Long-Run-V-Theorem}.
\end{itemize}
\section{Multivariate nonparametric seeded change point estimators and their refinement} \label{sec-methods}
%\subsection{Method}
%Our change point estimation method is based on kernel-based CUSUM statistics, see Definition \eqref{Cus-Sta}, and a seeded binary segmentation framework, detailed in Algorithm %\eqref{algorithm:WBS}.
In this section, we present the initial and refined change point estimators, both of which share the same building block, namely CUSUM statistics, defined in \Cref{Cus-Sta}.
%We now present the main methods of this paper. Two estimators are presented for estimating the true change points. \textcolor{blue}{The first estimator is based on using kernel density estimators, similar in spirt to \cite{padilla2021optimal}, but replacing the $\ell_{\infty}$ norm  with the $\ell_2$  norm.} \Yi{move the comparison to lit review} For our second estimator we will use the preliminary estimated change points and construct discrepancy measures based on estimates of the jump sizes. The details are provided next. At the heart of constructions is the following CUSUM  statistics.

\begin{definition}[CUSUM statistics]\label{Cus-Sta}
 For any integer triplet $0 \le s < t < e \le T$, let the CUSUM statistic be 
\begin{align*}
 \widetilde F_{t, h }^{(s,e] } (x) &= \sqrt { \frac{e-t}{ (e-s )(t-s)}}\sum_{i=s+1}^{t}F_{i,h}(x)
  -\sqrt { \frac{t-s}{ (e-s )(e-t)}}\sum_{i=t+1}^{e}F_{i,h}(x), \ x\in{\mathbb{R}^p} 
\end{align*} 
where $F_{t, h}(\cdot)$ is a kernel estimator, $F_{t, h} (x) = \mathcal{K}_h(x-X_{t}), \ x\in{\mathbb{R}^p}$
with the kernel function %$\mathcal{K}_h (x ) = \frac{1}{h  ^p  } \mathcal{K} (\frac{x}{h} ),\  x\in{\mathbb{R}^p}$
\[
     \mathcal{K}_h (x ) = \frac{1}{h  ^p  } \mathcal{K} \bigg  (\frac{x}{h} \bigg ),\quad x\in{\mathbb{R}^p},
\]
accompanied with the bandwidth $h>0.$
\end{definition}
The CUSUM statistic is a key ingredient of our algorithm and is based on the kernel estimators $F_{t, h}(\cdot)$.   We would like to highlight that kernel-based change-point estimation techniques have been employed in the detection of change-points in nonparametric models in existing literature, as demonstrated in \citep[e.g.][]{ arlot2019kernel, li2019scan, padilla2021optimal}.

Our preliminary estimator is based on SBS. Such an estimator is obtained by combining the CUSUM statistic in \Cref{Cus-Sta} with a modified version of SBS, which is based on a collection of deterministic intervals defined in \Cref{definition:seeded}.

%Definition \ref{Cus-Sta} provides the statistic necessary for evaluating  a time point  $t$ in the interval $(s,e]$ as a possible change point. In our next definition, we now establish the set of intervals used to search for change points in Algorithm \ref{algorithm:WBS}.

\begin{definition} [Seeded intervals]\label{definition:seeded}
Let $\mathfrak{K} = \lc C_{\mathfrak{K}}\log(T) \rc$, with some sufficiently large absolute constant $C_{\mathfrak{K}} > 0$.  For $k \in \{1, \ldots, \mathfrak{K}\}$, let $\mathcal{J}_k$ be the collection of $2^k - 1$ intervals of length $l_k = T2^{-k+1}$ that are evenly shifted by $l_k/2 = T2^{-k}$, i.e. %$ \mathcal{J}_k = \{(\lfloor (i -1) T2^{-k} \rfloor, \, \lc (i-1) T2^{-k} + T2^{-k+1}\rc ],\ i = 1, \ldots,  2^k - 1 \}.$
\begin{align*}
    \mathcal{J}_k =& \{(\lfloor (i -1) T2^{-k} \rfloor, \, \lc (i-1) T2^{-k} + T2^{-k+1}\rc ],
    \\
    &\quad i = 1, \ldots,  2^k - 1 \}.
\end{align*}
The overall collection of seeded intervals is denoted as $\mathcal{J} = \cup_{k = 1}^{\mathfrak{K}} \mathcal{J}_k$.
\end{definition}

With the CUSUM statistics and the seeded intervals as building blocks, we are now ready to present our multivariate nonparametric seeded change point detection algorithm.
 \begin{algorithm}[ht]
	\begin{algorithmic}
		\INPUT Sample $\{X_t\}_{t=s}^{e} \subset \mathbb{R}^p$, collection of seeded intervals $\mathcal{J}$, tuning parameter $\tau > 0$ and bandwidth $h > 0$.
  \State \textbf{initialization}: If $(s,e]=(0,n]$, set $\textbf{S} \rightarrow \varnothing$ and set $\rho \rightarrow \log(T)h^{-p}$.  
    
		\For{$\mathcal I=(\alpha, \beta ] \in \mathcal{J}$}  
		\If{$ \mathcal I= (\alpha, \beta ] \subseteq  (s,e]  $ and $\beta-\alpha >2\rho $}
		\State $b_{\mathcal I} \leftarrow \argmax_{\alpha +\rho \leq t \leq \beta - \rho}   \vert\vert \widetilde F^{(\alpha, \beta]}_{t,h}\vert\vert_{L_2}$ 
		\State $a_{\mathcal I} \leftarrow \vert\vert \widetilde F^{(\alpha, \beta]}_{b_{\mathcal I},h}\vert\vert_{L_2}$
		\Else
		\State $a_{\mathcal I} \leftarrow -1$	
		\EndIf
		\EndFor
		\State $\mathcal{I}^* \leftarrow \argmax_{\mathcal I\in{\mathcal{J}}} a_{\mathcal I}$
		\If{$a_{\mathcal I^*} > \tau$}
		\State ${\bf S} \leftarrow {\bf S} \cup \{b_{r^*}\} $
		\State MNSBS$((s, b_{\mathcal{I}*}), \mathcal{J}, \tau,h)$
		\State MNSBS$((b_{\mathcal {I}*}+1,e), \mathcal{J}, \tau,h)$
		\EndIf  
		\OUTPUT The set of estimated change points $\textbf{S}$.
		\caption{Multivariate Nonparametric Seeded Binary Segmentation. MNSBS $((s, e), \mathcal{J}, \tau, h)$}
		\label{algorithm:WBS}
	\end{algorithmic}
\end{algorithm} 

%\Yi{discuss the uniqueness in the argmax used in the algorithm.  i don't think it's unique.  say, what one should do when there are ties.}
%\Yi{I like the following paragraph.  however, we need some discussions when introducing Def 2.  some phrases may be repetitive here. edit the following afterwards.  Maybe finish this paragraph with a ref to the theorem discussing the properties.}
%Algorithm \ref{algorithm:WBS} is our preliminary estimator of the change points based on seeded binary segmentation as in \cite{kovacs2020seeded}. The procedure recursively searches for change points starting from an initial interval (s, e]. For the current interval, the algorithm uses the CUSUM statistics in \Cref{Cus-Sta} and uses the seeded intervals in \Cref{definition:seeded} for finding potential change points. The inputs of Algorithm \ref{algorithm:WBS} include  the observed data $\{ X_t\}_{t=1}^T$, the collection of seeded intervals $\mathcal{J}$, the bandwidth $h$ for constructing the CUSUM statistics, and the threshold $\tau $ for  detecting change points.
\Cref{algorithm:WBS} is proposed as a preliminary estimator for multiple change points in sequentially observed multivariate time series data. It takes advantage of seeded intervals to provide a multi-scale search system and recursively uses CUSUM statistics to identify potential change points. Inputs required are observed data $\{X_t\}_{t=1}^T$, seeded intervals $\mathcal{J}$, bandwidth $h$ for constructing the CUSUM statistics, and threshold $\tau$ for detecting change points. Theoretical and numerical guidance for tuning parameters is presented in \Cref{Theory,sec-numeric}.

%\Yi{The flow of the following discussions is a bit unclear.  I suggest to rewrite in the following order.  (1) a brief, high-level explanation of why we want to refine them; (2) how we construct the refined estimators; (3) briefly explain why we choose $h$ in that way; (4) say more discussions are in the following.}
Denote by $\{\widehat{\eta}_k\}_{k = 1}^{\widehat{K}}$ our preliminary estimators provided by \Cref{algorithm:WBS}. It has been demonstrated in various studies, such as \citep{rinaldo2021localizing,xu2022change,yu2022localising}, that a refinement procedure can likely reduce the localization error of preliminary estimates of change points. Thus, a refinement step is proposed. First, let 
\begin{equation}
	\label{final-int}
	s_k=\frac{9}{10}\widehat{\eta}_{k-1}+\frac{1}{10}\widehat{\eta}_{k} \ \text{and} \ e_k=\frac{9}{10}\widehat{\eta}_{k+1}+\frac{1}{10}\widehat{\eta}_{k}.
\end{equation}
Then, $\{\widehat{\eta}_k\}_{k = 1}^{\widehat{K}}$ and $\widetilde{h}\asymp h$ produce an estimator of $\kappa_k$ as:
\begin{equation}
\label{kappa-hat}
    \widehat{\kappa}_k=\Big|\Big|\frac{ \sqrt{\frac{\widehat{\eta}_{k+1}-\widehat{\eta}_k}{(\widehat{\eta}_{k+1}-\widehat{\eta}_{k-1})(\widehat{\eta}_k-\widehat{\eta}_{k-1})}}\sum_{i=\widehat{\eta}_{k-1}+1}^{\widehat{\eta}_k}F_{i,\widetilde{h}}-\sqrt{\frac{(\widehat{\eta}_k-\widehat{\eta}_{k-1})}{(\widehat{\eta}_{k+1}-\widehat{\eta}_{k-1})(\widehat{\eta}_{k+1}-\widehat{\eta}_k)}}\sum_{i=\widehat{\eta}_k+1}^{\widehat{\eta}_{k+1}}F_{i,\widetilde{h}}}{\sqrt{\frac{(\widehat{\eta}_k-\widehat{\eta}_{k-1})(\widehat{\eta}_{k+1}-\widehat{\eta}_k)}{\widehat{\eta}_{k+1}-\widehat{\eta}_{k-1}}}}
    \Big|\Big|_{L_2}.
\end{equation}
 We then propose the final change points estimators,
\begin{equation}
\label{Final-est}
    \widetilde{\eta}_k=\underset{s_k<\eta<e_k}{\arg \min } \ \widehat{Q}_k(\eta)
    =\underset{s_k<\eta<e_k}{\arg \min }\Big\{\sum_{t=s_k+1}^{\eta}\vert\vert F_{t,h_1}-F_{(s_k,\eta_k],h_1}\vert\vert_{L_2}^2 
    +\sum_{t=\eta+1}^{e_k}\vert\vert F_{t,h_1}-F_{(\eta_k,e_k],h_1}\vert\vert_{L_2}^2\Big\},
\end{equation}
where, $h_1=c_{\widehat{\kappa}_k}\widehat{\kappa}_k^{\frac{1}{r}},$ $$F_{(s_k,\eta_k],h_1}=\frac{1}{\eta_k-s_k}\sum_{i=s_k+1}^{\eta_k}F_{i,h_1} \ \text{and} F_{(\eta_k,e_k],h_1}=\frac{1}{e_k-\eta_k}\sum_{i=\eta_k+1}^{e_k}F_{i,h_1}.$$
%\begin{align*}
 %   &F_{(s_k,\eta_k],h_1}=\frac{1}{\eta_k-s_k}\sum_{i=s_k+1}^{\eta_k}F_{i,h_1}, \ \text{and}, 
 %   \\
  %  &F_{(\eta_k,e_k],h_1}=\frac{1}{e_k-\eta_k}\sum_{i=\eta_k+1}^{e_k}F_{i,h_1}.
%\end{align*}

If the initial change point estimators are consistent, i.e.~\eqref{cons-1}, %such that $\max_{k =1,\ldots,K} \vert \widehat{\eta}_k -\eta_k\vert$ is small,
then with arbitrary choice of the constants $9/10$ and $1/10$ in \eqref{final-int}, $(s_k,e_k)$ should contain only one true change point, $\eta_k$. Motivated by \cite{padilla2021optimal}, where it was demonstrated that $\kappa_k$ is a near minimax optimal bandwidth, we use $\widehat{\kappa}_k$ as bandwidth for the kernel density estimator in \eqref{Final-est} and search for a change point in the smaller interval $(s_k,e_k),$ aiming for a better estimate of $\eta_k$. 

We observe in practice and theory that the local refinement step improves estimation. Our experiments in \Cref{sec-numeric} support this. In addition, \Cref{theorem:Final-est} shows an improved error rate for $\vert  \widetilde{\eta}_k  - \eta_k\vert$ over the original estimator, and also studies the limiting distribution of $\widetilde{\eta}_k$.
 
The intuition behind the construction of $\widehat{\kappa}_k$ is that  the numerator in \eqref{kappa-hat} is, up to a normalizing factor,  the difference of kernel density estimators of $ f_{\eta_k}$  and $f_{\eta_{k+1}}$. The denominator in \eqref{kappa-hat} is just a normalizing term.

With regards to the choices of tuning parameters in practice, see our discussion in \Cref{sec-comparison} and \Cref{sec-numeric}. 

The computational complexity of \Cref{algorithm:WBS}, i.e.~the preliminary estimators, is of order $O(T\log(T) \cdot \mathrm{Kernel})$, where $O(T\log(T))$ is due to the computational cost of the Seeded Binary Segmentation and ``kernel'' stands for the computational cost of numerical computation of the $L_2$-norm of the CUSUM statistics based on the kernel function evaluated at each time point. The dependence on the dimension $p$ is only through the evaluation of the kernel function. The computational complexity of the final estimators (including estimating $\widehat{\kappa}_k$'s) is of order $O(T \cdot \mathrm{Kernel})$.
Therefore, the overall cost for finding $\{\widetilde{\eta}_k\}_{k=1}^{ \widehat{K} }$ is of order $O(T\log T \cdot \mathrm{Kernel})$. % \Yi{this computational cost is about $\hat{\eta}$ right?  }
\section{Consistent estimation and limiting distributions}\label{Theory}
 %We begin by stating some assumptions on the kernel function $\mathcal{K}(\cdot)$ used in the CUSUM statistics, on \Cref{Cus-Sta}.
Recalling that the core of our estimators defined in \Cref{sec-methods} is a kernel estimator, 
%\textcolor{red}{sometimes we write kernel and sometimes kernel},
we first state conditions needed for the kernel function $\mathcal{K}(\cdot)$.
%We are to establish the main results of the paper. Towards that end, we begin by stating some assumptions on the kernel function $\mathcal{K}(\cdot)$ used in Definition \ref{Cus-Sta} for constructing the CUSUM statistics that we use in our methods.
 
 \begin{assumption}[The kernel function] \label{kernel-as}
Assume that the kernel function $\mathcal{K}(\cdot)$ has compact support and satisfies the following.

\
\noindent {\bf a.}  For the H\"{o}lder smooth parameter $r$ in \Cref{assume: model assumption}{\bf a}, assume that $\mathcal{K}(\cdot)$ is adaptive to $\mathcal{H}^{r}(L,\mathbb{R}^p)$, i.e.~for any $f \in \mathcal{H}^{r}(L,\mathbb{R}^p)$, it holds that  
\[
    \sup_{x \in{\mathbb{R}^p}} \Big|\int_{\mathbb{R}^p} h^{-p} \mathcal{K}\Big(\frac{x-z}{h}\Big) f(z) \,\mathrm{d}z - f(x)\Big| \leq \widetilde{C} h^r,
\]
for some absolute constant $\widetilde{C} > 0$.

\
\noindent{\bf b.} The class of functions 
$\mathcal{F}_\mathcal{K} = \{\mathcal{K}(x-\cdot)/h:\,   \mathbb{R}^p \to $ $ \mathbb R^+, h > 0\}$ is separable  in $L_{\infty}(\mathbb{R}^d)$ and is a  uniformly bounded VC-class; i.e.~there exist constants $A, \nu > 0$  such that for any probability measure $Q$ on $\mathbb{R}^p$ and any $u \in (0, \|\mathcal{K}\|_{{L_\infty}})$, it holds that $\mathcal{N}(\mathcal{F}_\mathcal{K},L_2(Q),u) \leq ({A\|\mathcal{K}\|_{L_\infty}}/{u})^{v}$, where $\mathcal{N}(\mathcal{F}_\mathcal{K},  L_2(Q) ,u)$ denotes the $u$-covering number of the metric space $(\mathcal{F}_\mathcal{K},  L_2(Q))$. 
 
\
\noindent{\bf c.} For fixed $m>0$, it holds that
  $$\int_{0}^{\infty}t^{m-1}\sup_{\vert\vert x\vert\vert\ge t}\vert \mathcal{K}(x)\vert^m \, \mathrm{d}t<\infty, \
 \int_{\mathbb{R}^d}\mathcal{K}(z)\vert\vert z\vert\vert \,\mathrm{d}z\le C_K,$$
  where $C_K > 0$ is an absolute constant.
\end{assumption} 

\Cref{kernel-as} is a standard result in the nonparametric literature \citep[e.g.][]{gine1999laws, gine2001consistency,  sriperumbudur2012consistency, kim2019uniform,padilla2021optimal}, holding for various kernels, such as uniform, Epanechnikov and Gaussian.
 
\subsection{Consistency of preliminary estimators}

The consistency of the preliminary estimators outputted by \Cref{algorithm:WBS} holds provided the signal strength is large enough.  This is detailed in the following assumption.

\begin{assumption}[Signal-to-noise ratio]\label{assume-snr}
Assume there exists an arbitrarily slow diverging sequence $\gamma_T>0$ such that
\[
    \kappa^2 \Delta >\gamma_T \log(T)T^{\frac{p}{2r+p}}.
\]
\end{assumption}
Signal-to-noise ratio condition of such a form is ubiquitous in the change point detection literature, but dealing with H\"{o}lder smooth function, nonparametric, and short range temporal dependence is novel. Combined with \Cref{assume: model assumption}\textbf{c.}, the SNR in \Cref{assume-snr} is reduced to $\kappa^2  \gtrsim \gamma_T \log(T)T^{-\frac{2r}{2r+p}}$.%\textcolor{red}{are you sure?}.

We are now ready to present the main theorem concerning \Cref{algorithm:WBS}, showing the consistency of the proposed MNSBS.

\begin{theorem}\label{theorem:FSBS}
Let the data be $\{X_t\}_{t = 1}^T \subset \mathbb{R}^p$ satisfying \Cref{assume: model assumption}.  Let $\{\widehat \eta_k\}_{k=1}^{\widehat K}$ be the estimated change points by MNSBS detailed in \Cref{algorithm:WBS}, with inputs $\{X_t\}$, tuning parameter $\tau=c_\tau T^{\frac{p}{4r+2p}} \log^{\frac{1}{2}} (T)$ and bandwidth $
h=c_h T^{\frac{-1}{2r+p}}$, with $c_h, c_{\tau} > 0$ being absolute constants.  Under \Cref{kernel-as} and \Cref{assume-snr}, it holds that,
\begin{align*}
 \mathbb{P}\Big\{\widehat{K}=K, \ \Big|\widehat{\eta}_k-\eta_k\Big| \leq C_\epsilon \kappa_k^{-2} T^{\frac{p}{2r+p}} \log (T),
\forall k=1, \ldots, K\Big\} 
\geq 1 -  3C_{p,Ker}T^{-1}, \nonumber
\end{align*}
where, $C_\epsilon > 0$, and $C_{p,Ker}>0$ depending on the kernel and the dimension $p$, are absolute constant. % \textcolor{red}{ no metnion of :   $C_{p,Ker}$?}
\end{theorem}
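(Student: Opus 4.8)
The plan is to follow the now-standard seeded/wild binary segmentation analysis scheme (as in \citet{kovacs2020seeded, padilla2022change}), adapted to the kernel-based CUSUM statistics $\widetilde F^{(s,e]}_{t,h}$ and to the $\alpha$-mixing setting. The argument has two ingredients: (i) a deterministic ``good event'' on which the $L_2$-norm of the kernel-CUSUM process is uniformly close to its population counterpart over all seeded intervals and all split points, and (ii) an induction over the recursion tree of MNSBS showing that on the good event the algorithm correctly detects and localizes each change point.

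\textbf{Step 1: Stochastic control of the kernel-CUSUM process.} First I would define, for each seeded interval $(\alpha,\beta]\in\mathcal{J}$ and each $\alpha+\rho\le t\le\beta-\rho$, the population version $\widetilde f^{(\alpha,\beta]}_{t,h}(x)$ obtained by replacing each $F_{i,h}$ with its expectation $\mathbb{E} F_{i,h}=\mathcal{K}_h * f_i$. The key concentration claim is that there is an event $\mathcal{A}$ with $\mathbb{P}(\mathcal{A})\ge 1-3C_{p,Ker}T^{-1}$ on which
\[
\sup_{(\alpha,\beta]\in\mathcal{J}}\ \sup_{\alpha+\rho\le t\le\beta-\rho}\ \big\|\widetilde F^{(\alpha,\beta]}_{t,h}-\widetilde f^{(\alpha,\beta]}_{t,h}\big\|_{L_2}\ \lesssim\ \sqrt{\log(T)h^{-p}}\ =\ \sqrt{\rho}.
\]
This is where the mixing condition \eqref{mix-cond}, the VC/separability property in \Cref{kernel-as}\textbf{b}, and the bounded joint-density condition \eqref{join-d-cond} enter: one writes the CUSUM increment as a weighted sum of the centered kernel fields $F_{i,h}-\mathbb{E}F_{i,h}$, controls its $L_2(\mathbb{R}^p)$-norm via a discretization / empirical-process bound over the VC-class $\mathcal{F}_\mathcal{K}$, and applies a Bernstein-type inequality for $\alpha$-mixing sequences (e.g.\ \citet{merlevede2009bernstein}) to obtain the Gaussian-type tail. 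The exponential mixing decay ensures the effective variance proxy is of the same order as in the independent case up to constants, so the deviation bound $\sqrt{\log(T)h^{-p}}$ holds with the stated polynomial probability after a union bound over the $O(T\log T)$ pairs in $\mathcal{J}\times\{t\}$.

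\textbf{Step 2: Bias of the kernel estimator and the population CUSUM.} Next I would quantify, using \Cref{kernel-as}\textbf{a} and \Cref{assume: model assumption}\textbf{a}, that $\|\mathbb{E}F_{i,h}-f_i\|_{L_\infty}\lesssim h^r$, hence $\|\mathbb{E}F_{i,h}-f_i\|_{L_2}\lesssim h^r$ on the bounded support $\mathcal{X}$. From this one gets that for a generic interval $(s,e]$ containing a single change point $\eta_k$ with both segments of length $\Theta(\Delta)=\Theta(T)$, the population CUSUM at $t=\eta_k$ satisfies $\|\widetilde f^{(s,e]}_{\eta_k,h}\|_{L_2}\gtrsim\kappa_k\sqrt{\Delta}$ up to a $h^r\sqrt{T}$ bias term, and more generally $\|\widetilde f^{(s,e]}_{t,h}\|_{L_2}$ behaves like the classical triangular CUSUM signal with a peak near $\eta_k$. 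Choosing $h=c_h T^{-1/(2r+p)}$ balances $\sqrt{\rho}=\sqrt{\log(T)h^{-p}}\asymp T^{p/(2(2r+p))}\log^{1/2}T$ against the bias $h^r\sqrt{T}$, and the SNR assumption \Cref{assume-snr} guarantees the signal $\kappa_k\sqrt{\Delta}$ dominates both; this is exactly why $\tau\asymp T^{p/(4r+2p)}\log^{1/2}T$ is the right threshold.

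\textbf{Step 3: Induction over the MNSBS recursion.} Finally, working on $\mathcal{A}$, I would argue by induction on the recursion depth that every recursive call MNSBS$((s,e),\mathcal J,\tau,h)$ with $(s,e]$ containing change points behaves correctly: (a) among the seeded intervals contained in $(s,e]$ there is one, say $(\alpha,\beta]$, that ``well-isolates'' some change point $\eta_k$ — i.e.\ contains it with both sides of length $\gtrsim\Delta$ and no other change point too close — which exists by the seeded-interval construction in \Cref{definition:seeded} together with $\Delta=\Theta(T)$; (b) on this interval $a_{(\alpha,\beta]}>\tau$, so the algorithm does not stop prematurely; (c) the maximizing interval $\mathcal I^*$ and its argmax $b_{\mathcal I^*}$ satisfy $|b_{\mathcal I^*}-\eta_j|\le C_\epsilon\kappa_j^{-2}T^{p/(2r+p)}\log T$ for some change point $\eta_j$ — this is the localization bound, obtained by comparing $\|\widetilde F^{(\alpha^*,\beta^*]}_{b_{\mathcal I^*},h}\|_{L_2}$ with $\|\widetilde F^{(\alpha^*,\beta^*]}_{\eta_j,h}\|_{L_2}$ via the deterministic approximation and the curvature of the population CUSUM around its peak (a quadratic-decay lower bound); and (d) the two child intervals $(s,b_{\mathcal I^*})$ and $(b_{\mathcal I^*}+1,e)$ each still satisfy the inductive hypothesis, and when $(s,e]$ contains no change point, $a_{\mathcal I^*}\le\tau$ so no spurious point is added. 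Combining (a)–(d) over all nodes yields $\widehat K=K$ and the uniform localization error, completing the proof.

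\textbf{Main obstacle.} I expect the crux to be Step 1 — establishing the uniform deviation bound $\sqrt{\log(T)h^{-p}}$ for the $L_2(\mathbb{R}^p)$-norm of the kernel-CUSUM field under $\alpha$-mixing. Combining an empirical-process / chaining argument over the VC-class $\mathcal{F}_\mathcal{K}$ (to pass from pointwise to $L_2(\mathbb{R}^p)$-norm control) with a mixing Bernstein inequality, while keeping the variance proxy sharp enough that the bandwidth-balancing in Step 2 goes through, is the delicate part; the dependence is what forces \eqref{mix-cond} and \eqref{join-d-cond}, and care is needed so the mixing only costs absolute constants rather than extra logarithmic or polynomial factors. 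The subsequent binary-segmentation induction (Step 3), while notationally heavy, is essentially the standard SBS scheme once the deterministic event is in hand.
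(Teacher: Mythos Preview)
Your proposal is correct and follows essentially the same route as the paper: a high-probability ``good event'' giving uniform control of $\|\widetilde F^{(\alpha,\beta]}_{t,h}-\widetilde f^{(\alpha,\beta]}_{t,h}\|_{L_2}$ at level $\asymp\sqrt{\log(T)/h^p}+\sqrt{T}h^r$ (Bernstein for $\alpha$-mixing plus a covering/discretization in $x$, then $L_\infty\to L_2$ via bounded support), followed by the standard seeded-BS induction with a curvature lemma to get the $\kappa_k^{-2}T^{p/(2r+p)}\log T$ localization. The only tactical difference is that the paper obtains the $\sup_x$ control by a direct grid-plus-Lipschitz argument rather than the VC-chaining you sketch, and bundles the bias term $\sqrt{T}h^r$ into the single deviation bound $\lambda$ rather than treating it separately; these are cosmetic and your identification of Step~1 as the crux is accurate.
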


\subsection{Limiting distributions based on refined estimators}

As for the refined estimators $\{\widetilde{\eta}_k\}_{k = 1}^{\widehat{K}}$ defined in \eqref{Final-est}, to derive limiting distributions thereof, we require the stronger signal-to-noise ratio condition below.

%. Towards that end, we start by stating a condition required to arrive at our main result.

%The rationale of the final estimators $\{\widetilde{\eta}_k\}_{k=1}^{\widehat{K}}$ is that, provided the preliminary estimators $\{\widehat{\eta}_k\}_{k=1}^{\widehat{K}}$ are good enough, each interval $(s_k, e_k)$ contains one and only one true change point $\eta_k$.
%For the true change points $\{\eta_k\}_{k=1}^K$, we proposed a two-step estimator $\{\widetilde{\eta}_k\}_{k=1, \ldots, \widehat{K}}$. Under the assumptions listed before, we present the limiting distributions of $\{\widetilde{\eta}_k\}_{k=1, \ldots, \widehat{K}}$ in two regimes, regarding the jump sizes $\{\kappa_k\}_{k=1}^K$: (i) the non-vanishing regime where $\kappa_k \rightarrow \varrho_k$, as $n \rightarrow \infty$, with $\varrho_k>0$ being an absolute constant; and (ii) the vanishing regime where $\kappa_k \rightarrow 0$ as $T \rightarrow \infty$.

\begin{assumption}[Stronger signal-to-noise ratio]\label{assume-snr2}
 Assume that there exists an arbitrarily slow diverging sequence $\gamma_T>0$ such
\[
    \kappa^{\frac{3p}{r}+3} \Delta > \gamma_T \log(T)T^{\frac{p}{2r+p}}.
\]
\end{assumption}

\Cref{assume-snr2} is strictly stronger than \Cref{assume-snr}, since deriving limiting distributions is usually a more challenging task than providing a high-probability estimation error upper bound, see for example \cite{xu2022change}.

Next, we state our main result of this subsection concerning the estimator  \eqref{Final-est}.

\begin{theorem}
\label{theorem:Final-est}
 Given data $\{X_t\}_{t=1}^T$, suppose that \Cref{assume: model assumption}, \Cref{kernel-as}, and \Cref{assume-snr} hold. Let $\{\widetilde{\eta}_k\}_{k=1}^{\widehat{K}}$ be the change point estimators defined in \eqref{Final-est}, with
 \begin{itemize}
     \item the intervals $\{(s_k, e_k)\}_{k=1}^{\widehat{K}}$ defined in \eqref{final-int};
     \item  the preliminary estimators $\{\widehat{\eta}_k\}_{k=1}^{\widehat{K}}$ from $\operatorname{MNSBS}\Big( \mathcal{J}, \tau, h\Big)$ detailed in \Cref{algorithm:WBS};
     \item the MNSBS tuning parameters $\tau=c_\tau T^{\frac{p}{4r+2p}} \log^{\frac{1}{2}} (T)$ and $h=c_h T^{\frac{-1}{2r+p}}$;
    \item and $\widehat{\kappa}_k$ as in \eqref{kappa-hat}.
 \end{itemize}
{\bf{a.}} (Non-vanishing regime) For $k \in\{1, \ldots, K\}$, if $\kappa_k \rightarrow \varrho_k$, as $T \rightarrow \infty$, with $\varrho_k>0$ being an absolute constant, then the following results hold.
\\
{\bf{a.1.}} The estimation error satisfies that $\Big|\widetilde{\eta}_k-\eta_k\Big|=O_p(1)$, as $T \rightarrow \infty$.
\\
{\bf{a.2.}} When $T \rightarrow \infty$,
\begin{equation}
    ( \widetilde{\eta}_k- \eta_k)\kappa_k^{\frac{p}{r}+2} \ \underrightarrow{\mathcal{D}} \ 
    \underset{\widetilde{r}\in{\mathbb{Z}}}{\arg \min } P_k(\widetilde{r}),
\end{equation}
where
\begin{equation}
    P(\widetilde{r})= \begin{cases}
    \sum_{t=\widetilde{r}+1}^0 2\Big\langle F_{t,h_2}-f_t*\mathcal{K}_{h_2}, 
    (f_{\eta_{k}}-f_{\eta_{k+1}})*\mathcal{K}_{h_2}\Big\rangle_{L_2}
    +\widetilde{r}\vert\vert  (f_{\eta_{k+1}}-f_{\eta_{k}})*\mathcal{K}_{h_2}\vert\vert_{L_2}^2 & \text { if } \widetilde{r}<0 \\ 0 & \text { if } \widetilde{r}=0 \\  \sum_{t=1}^{\widetilde{r}} 2\Big\langle F_{t,h_2}-f_t*\mathcal{K}_{h_2}, 
    (f_{\eta_{k+1}}-f_{\eta_{k}})*\mathcal{K}_{h_2}\Big\rangle_{L_2}
    +\widetilde{r}\vert\vert  (f_{\eta_{k+1}}-f_{\eta_{k}})*\mathcal{K}_{h_2}\vert\vert_{L_2}^2 & \text { if } \widetilde{r}>0\end{cases}
\end{equation}
with $*$ denoting convolution and $h_2=c_{\kappa_k} \kappa_k^{\frac{1}{r}}.$
\\
{\bf{b.}} (Vanishing regime) For $k \in\{1, \ldots, K\}$, if $\kappa_k \rightarrow 0$, as $n \rightarrow \infty$, then the following results hold.
\\
{\bf{b.1.}} The estimation error satisfies that $\Big|\widetilde{\eta}_k-\eta_k\Big|=O_p(\kappa_k^{-2-\frac{p}{r}})$, as $T \rightarrow \infty$.
\\
{\bf{b.2.}} When $T \rightarrow \infty$,
\begin{equation}
\label{limit-dis}
    ( \widetilde{\eta}_k- \eta_k)\kappa_k^{\frac{p}{r}+2} \ \underrightarrow{\mathcal{D}} \ 
    \underset{\widetilde{r}\in{\mathbb{Z}}}{\arg \min } \ \widetilde{\sigma}_{\infty}(k)B(\widetilde{r})+\vert \widetilde{r}\vert,
\end{equation}
with $*$ denoting convolution and $h_2=c_{\kappa_k}\kappa_k^{\frac{1}{r}}$. Here
\begin{equation}
    B(\widetilde{r})= \begin{cases}B_1(-\widetilde{r}) & \text { if } \widetilde{r}<0 \\ 0 & \text { if } \widetilde{r}=0 \\  B_2(\widetilde{r}) & \text { if } \widetilde{r}>0\end{cases}
\end{equation}
and
\begin{align}
\label{long-run-var}
    \widetilde{\sigma}_{\infty}^2(k)=\lim_{T\rightarrow \infty} \frac{\kappa_k^{\frac{p}{r}-2}}{T}Var&\Big(\sum
_{t=1}^{T} \Big\langle F_{{t},h_2}-f_t*\mathcal{K}_{h_2}, 
(f_{\eta_{k}}-f_{\eta_{k+1}})*\mathcal{K}_{h_2}\Big\rangle_{L_2}\Big)
\end{align}
with $B_1(r)$ and $B_2(r)$ being two independent standard Brownian motions.

\end{theorem}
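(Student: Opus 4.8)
The plan is to derive the limiting distribution of the refined estimator $\widetilde{\eta}_k$ by a standard argmax/argmin-continuity argument applied to a suitably rescaled and recentered version of the objective $\widehat{Q}_k(\eta)$, following the template of \cite{xu2022change, kaul2021inference}. First I would reduce the problem to a local one: invoking \Cref{theorem:FSBS}, with probability tending to one the preliminary estimators satisfy \eqref{cons-1}, so $(s_k,e_k)$ defined in \eqref{final-int} contains exactly one true change point $\eta_k$, is of length $\Theta(\Delta)=\Theta(T)$, and is bounded away from $\eta_k$ by a multiple of $\Delta$. Also on this event $\widehat{\kappa}_k/\kappa_k \stackrel{P.}{\to} 1$, which must be established first (this follows by a concentration argument for the kernel-smoothed CUSUM evaluated near the true change point, using the VC-bound in \Cref{kernel-as}\textbf{b} and the $\alpha$-mixing Bernstein inequality of \cite{merlevede2009bernstein}, together with the bias bound in \Cref{kernel-as}\textbf{a}); consequently the data-driven bandwidth $h_1=c_{\widehat\kappa_k}\widehat\kappa_k^{1/r}$ is, up to $1+o_p(1)$ factors, equal to the oracle bandwidth $h_2=c_{\kappa_k}\kappa_k^{1/r}$. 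A technical lemma will be needed to show that replacing $h_1$ by $h_2$ (and $\eta_k$ by a consistent preliminary estimate inside the centering terms $F_{(s_k,\eta_k],h_1}$ etc.) perturbs the objective negligibly relative to the scale at which the argmin fluctuates.

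Next I would define the recentered process. For $\widetilde r\in\mathbb Z$ set $G_k(\widetilde r)=\widehat{Q}_k(\eta_k+\widetilde r)-\widehat{Q}_k(\eta_k)$ and expand the squared $L_2$-norms. The population minimizer of the analogous deterministic functional is exactly $\eta_k$ because the densities are piecewise constant, so the ``drift'' part of $G_k(\widetilde r)$ is, to leading order, $|\widetilde r|\cdot\|(f_{\eta_{k+1}}-f_{\eta_k})*\mathcal K_{h_2}\|_{L_2}^2$ (this uses that the sample means $F_{(s_k,\eta_k],h_1}$ concentrate around $f_{\eta_k}*\mathcal K_{h_1}$ and $F_{(\eta_k,e_k],h_1}$ around $f_{\eta_{k+1}}*\mathcal K_{h_1}$, with the cross terms and the $O(\kappa_k^{-2-p/r})$-vs-$\Theta(T)$ interval-length discrepancy contributing only lower-order terms once multiplied by $\kappa_k^{p/r+2}$). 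The ``stochastic'' part is a sum over the $|\widetilde r|$ time points between $\eta_k$ and $\eta_k+\widetilde r$ of the inner products $2\langle F_{t,h_2}-f_t*\mathcal K_{h_2},\,(f_{\eta_k}-f_{\eta_{k+1}})*\mathcal K_{h_2}\rangle_{L_2}$ (with the sign flipping according to which side of $\eta_k$ we are on). I would verify that the residual cross-terms involving the empirical centering means vanish after rescaling, so that $\kappa_k^{p/r+2}$-time the stochastic part matches $P_k$ in the non-vanishing regime, and a Donsker-type functional CLT matches $\widetilde\sigma_\infty(k)B(\widetilde r)$ in the vanishing regime.

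In the non-vanishing regime ($\kappa_k\to\varrho_k>0$), $h_2$ converges to a fixed bandwidth and the time horizon needed is $O_p(1)$, so no CLT is invoked: $G_k(\cdot)$ converges finite-dimensionally (as a process on $\mathbb Z$) to $P_k(\cdot)$ by the $\alpha$-mixing joint-density structure in \Cref{assume: model assumption}\textbf{b} (which controls the law of the finitely many vectors $X_{\eta_k+1},\ldots,X_{\eta_k+\widetilde r}$ near the change), and an argmin-continuous-mapping argument, combined with a tightness/``no escape of mass'' bound showing $|\widetilde\eta_k-\eta_k|=O_p(1)$, yields \textbf{a.1} and \textbf{a.2}. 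In the vanishing regime ($\kappa_k\to0$), I would rescale time by $\kappa_k^{-2-p/r}$: the drift term becomes $|\widetilde r|$ after dividing by the correct power of $\kappa_k$ (here the bias bound $\|f*\mathcal K_{h_2}-f\|_\infty\lesssim h_2^r\asymp\kappa_k$ is used to show $\|(f_{\eta_{k+1}}-f_{\eta_k})*\mathcal K_{h_2}\|_{L_2}^2=\kappa_k^2(1+o(1))$), and the two-sided stochastic sum, after normalization by $\sqrt{\kappa_k^{-2-p/r}}$ and rescaling variance by the factor defining $\widetilde\sigma_\infty^2(k)$ in \eqref{long-run-var}, converges by the functional CLT for $\alpha$-mixing arrays (\cite{merlevede2009bernstein} plus a standard blocking argument; the two sides are asymptotically independent since they involve disjoint, widely-separated blocks of observations) to two independent Brownian motions $B_1,B_2$. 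The argmin again passes to the limit by continuity, giving \eqref{limit-dis}; the rate \textbf{b.1} comes out as a byproduct of the tightness bound.

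The main obstacle I anticipate is the uniform-in-$\widetilde r$ control needed to justify the argmin convergence — i.e.\ showing that the minimizer cannot escape to large $|\widetilde r|$. This requires a maximal inequality for the partial-sum process of the kernel-smoothed inner products that holds uniformly over the growing window $|\widetilde r|\le c\,\kappa_k^{-2-p/r}$ (in the vanishing case) or $|\widetilde r|\le cT$ (as a crude a priori bound), while simultaneously keeping track of the bias introduced by the finite bandwidth $h_2$ and of the fact that $\widehat\kappa_k$, hence $h_1$, is random. Controlling the interplay between the random bandwidth, the VC-type uniform entropy bounds for $\mathcal F_{\mathcal K}$, and the $\alpha$-mixing dependence in a single maximal inequality — so that the drift term $|\widetilde r|$ dominates the fluctuations for $\widetilde r$ large — is the delicate core of the argument; everything else is bookkeeping with Taylor expansions of the squared norms and Slutsky-type replacements.
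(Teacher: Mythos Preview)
Your proposal is correct and follows essentially the same route as the paper: reduce to an oracle objective $Q^*$ with the deterministic bandwidth $h_2$ and population centering, decompose $Q^*(\eta_k+\widetilde r)-Q^*(\eta_k)$ into the drift $|\widetilde r|\,\|(f_{\eta_{k+1}}-f_{\eta_k})*\mathcal K_{h_2}\|_{L_2}^2$ plus the stochastic inner-product sum, establish tightness of $\kappa_k^{p/r+2}(\widetilde\eta_k-\eta_k)$ via a maximal inequality, and conclude by the argmin continuous mapping theorem (with the $\alpha$-mixing functional CLT in the vanishing regime). The one simplification you may be over-anticipating is that the paper does \emph{not} need a single maximal inequality coupling the random bandwidth with the VC entropy and mixing: it first shows the replacement $h_1\mapsto h_2$ and $F_{(\cdot],h_1}\mapsto f_{(\cdot]}*\mathcal K_{h_2}$ costs only $o_p(\widetilde r\,\kappa_k^{p/r+2})$ (via the pointwise bound $\|F_{t,h_1}-F_{t,h_2}\|_{L_2}\lesssim |\kappa_k-\widehat\kappa_k|^{1/2}\kappa_k^{-p/(2r)-1/2}$ from the kernel regularity), and only \emph{then} applies a purely scalar moment-based maximal inequality (of the Kirch/Doukhan type, not Bernstein) to the fixed-bandwidth inner products $\langle F_{t,h_2}-f_t*\mathcal K_{h_2},(f_{\eta_k}-f_{\eta_{k+1}})*\mathcal K_{h_2}\rangle_{L_2}$, so no uniform-in-$x$ or uniform-in-$h$ control is needed at the tightness stage.
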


\Cref{theorem:Final-est} considers two regimes of the jump sizes: vanishing and  non-vanishing. Notably, the upper bounds in these regimes on the localization error 
can be written as $$\max_{1 \leq k\leq K} \vert \widetilde{\eta}_k- \eta_k\vert\kappa_k^{\frac{p}{r}+2} = O_{p}(1).$$  Therefore, for $r =1$ our final estimator $\{\widetilde{\eta}_k\}$ attains a minimax optimal rate of convergence, see Lemma 3 in \cite{padilla2021optimal}. Furthermore, in the  setting $r=1$ and $\Delta = \Theta(T)$, our resulting rate is shaper than that in Theorem 1 in \cite{padilla2021optimal}, as we are able to remove the logarithmic factors from the upper bound. Additionally, our method can achieve optimal rates with choices of tuning parameters that do not depend on $\kappa$.

Comparing \Cref{theorem:Final-est} with  \Cref{theorem:FSBS}, we observe an improvement in the localization error, as \Cref{theorem:FSBS} showed $\max_{k=1,...,\widehat{K}}\kappa_k^{2}\Big|\widehat{\eta}_k-\eta_k\Big| \leq C_\epsilon  T^{\frac{p}{2r+p}} \log (T).$
%Comparing Theorem \ref{theorem:Final-est} with Theorem \ref{theorem:FSBS}, we observe that the former gives an improvement on the localization error as in 
%in Theorem \ref{theorem:FSBS} we had shown that 
%\begin{align*}
%\max_{k=1,...,\widehat{K}}\kappa_k^{2}\Big|\widehat{\eta}_k-\eta_k\Big| \leq C_\epsilon  T^{\frac{p}{2r+p}} \log (T).
%\end{align*}

Finally, we highlight that \Cref{theorem:Final-est} summarizes our derivations of the  limiting distributions associated with the $\{\widetilde{\eta}_k\}_{k=1}^{\widetilde{K}}$ estimators.  In the non-vanishing case, the resulting limiting distribution can be approximated
by a two-sided random walk distribution. In contrast, in the vanishing case, the mixing central limit theorem
leads to the two-sided Brownian motion distribution in the limit.

%\textcolor{blue}{ Are the limiting distributions useful in practice?} 
The limiting distributions in \Cref{theorem:Final-est} quantify the asymptotic uncertainty of $\{\widetilde{\eta}_k\}_{k=1}^{\widetilde{K}}$, enabling inference on change point locations, such as constructing confidence intervals. This is especially interesting in the vanishing regime, where the estimating error $|\widetilde{\eta}_k - \eta_k|$ diverges, as shown in \Cref{theorem:Final-est}\textbf{b.1.}. Thus, in the vanishing regime, the limiting distribution can be used to quantify the uncertainty of our change point estimator. Our result \Cref{theorem:Final-est}\textbf{a}. also shows that, in the non-vanishing regime, change points can be accurately estimated within a constant error rate. 

\subsubsection{Consistent long-run variance estimation}
Next, we discuss aspects of practically performing inference on change point locations using $\{\widetilde{\eta}_k\}_{k=1}^{\widetilde{K}}$. It is crucial to access consistent estimators of the long-run variances $\{\widetilde{\sigma}_{\infty}(k)\}$ for the limiting distribution in \Cref{theorem:Final-est}\textbf{b.2.} In this subsection, we propose a block-type long-run variance estimator and derive its consistency (\Cref{algorithm:2}).
%Next, we discuss some aspects for  practically performing inference on the change point locations 
%using $\{\widetilde{\eta}_k\}_{k=1}^{\widetilde{K}}$.  We notice that  based on  \Cref{theorem:Final-est}, in the vanishing jump size regime, it is crucial to have access to consistent estimators
%of the long-run variances $\{\widetilde{\sigma}_{\infty}(k)\}$. These appear in the limiting
%distribution in \Cref{theorem:Final-est}.b.2.  In this subsection, we propose a block-type long-run variance
%estimator and derive its consistency. The main procedure is stated in  \Cref{algorithm:2}.

\begin{theorem}
\label{Long-Run-V-Theorem}
 Under \Cref{assume: model assumption}, \Cref{kernel-as}, \Cref{assume-snr} and with all the notation in \Cref{theorem:Final-est}, let $\Big\{\widetilde{\sigma}_{\infty}^2(k)\Big\}_{k=1}^{\widehat{K}}$ be as in \eqref{long-run-var} and $R=O(T^{\frac{p+r}{2r+p}}/\kappa_k^{\frac{p}{2r}+\frac{3}{2}})$. We have that
$$
\max _{k=1}^K\Big|\widehat{\sigma}_{\infty}^2(k)-\widetilde{\sigma}_{\infty}^2(k)\Big| \stackrel{P }{\longrightarrow} 0, \quad T \rightarrow \infty 
$$
with
$\widehat{\sigma}_{\infty}^2(k)$ the output of \Cref{algorithm:2}.
\end{theorem}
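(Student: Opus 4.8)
The plan is to identify $\widehat{\sigma}_\infty^2(k)$, the output of \Cref{algorithm:2}, as a block (Bartlett--type) long-run variance estimator of an explicit weakly dependent ``score'' sequence, to reduce its analysis to that of an oracle block estimator built from the true $(\eta_k,\kappa_k,h_2)$ and the true density jump, to prove consistency of the oracle estimator by a bias--variance argument adapted to $\alpha$-mixing triangular arrays, and finally to absorb the plug-in errors caused by using $\widetilde\eta_k$, $\widehat\kappa_k$, $\widehat h_2=c_{\widehat\kappa_k}\widehat\kappa_k^{1/r}$ and the CUSUM-type density-jump estimate built as in \eqref{kappa-hat}.

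First I would recast the quantity in \eqref{long-run-var}. Writing $g_k:=(f_{\eta_k}-f_{\eta_{k+1}})*\mathcal{K}_{h_2}$ and, when $\mathcal K$ is symmetric, $\psi_k:=(f_{\eta_k}-f_{\eta_{k+1}})*\mathcal{K}_{h_2}*\mathcal{K}_{h_2}$, Fubini gives $\langle F_{t,h_2}-f_t*\mathcal{K}_{h_2},\,g_k\rangle_{L_2}=\psi_k(X_t)-\mathbb E[\psi_k(X_t)]=:Y_t^{(k)}$, a bounded measurable function of $X_t$; hence $\{Y_t^{(k)}\}$ is $\alpha$-mixing with the exponential rate \eqref{mix-cond}, mean zero, and marginally stationary within each inter-change segment. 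From \Cref{assume: model assumption}\textbf{a.}, the boundedness of H\"older densities, \Cref{assume: model assumption}\textbf{b.} and \Cref{kernel-as}, together with Young's inequality and a Davydov/Rio covariance inequality, one obtains $T$-uniform moment and covariance bounds for the normalised scores $\kappa_k^{(p/r-2)/2}Y_t^{(k)}$, so that $\{\kappa_k^{p/r-2}\cov(Y_0^{(k)},Y_j^{(k)})\}_{j}$ is summable uniformly in $T$; consequently the limit defining $\widetilde\sigma_\infty^2(k)$ exists and equals $\lim_{T\to\infty}\kappa_k^{p/r-2}\sum_{j\in\mathbb Z}\cov(Y_0^{(k)},Y_j^{(k)})$ (a convex combination of the one-sided long-run variances on the two sides of $\eta_k$, which coincide in the limit because $\|f_{\eta_k}-f_{\eta_{k+1}}\|_{L_2}=\kappa_k\to0$ in the vanishing regime).

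Next I would split $\widehat\sigma_\infty^2(k)-\widetilde\sigma_\infty^2(k)=\big(\widehat\sigma_\infty^2(k)-\bar\sigma_\infty^2(k)\big)+\big(\bar\sigma_\infty^2(k)-\widetilde\sigma_\infty^2(k)\big)$, where $\bar\sigma_\infty^2(k)$ is the block estimator of \Cref{algorithm:2} recomputed with $(\eta_k,\kappa_k,h_2,g_k)$ in place of their estimates. For the oracle term I would run the classical block long-run variance argument in triangular-array form: its expectation is $\kappa_k^{p/r-2}\sum_{|j|<R}(1-|j|/R)\cov(Y_0^{(k)},Y_j^{(k)})$ up to $O(1/N)$ end-effects, which converges to $\widetilde\sigma_\infty^2(k)$ by the uniform autocovariance summability above and $R\to\infty$; its variance tends to $0$ because blocks separated by $R$ are, by the exponential mixing, asymptotically uncorrelated while the number of blocks $N\asymp\Delta/R\to\infty$, and a Rosenthal-type inequality for $\alpha$-mixing sequences controls the diagonal fourth-moment contribution of the normalised block sums. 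Here I would check that $R=O\big(T^{(p+r)/(2r+p)}/\kappa_k^{p/(2r)+3/2}\big)$ simultaneously satisfies $R\to\infty$, $R=o(\Delta)$ and the bias/variance balance, using \Cref{assume: model assumption}\textbf{c.} and the stronger signal-to-noise condition \Cref{assume-snr2} in force for \Cref{theorem:Final-est}. For the plug-in term I would use: $|\widetilde\eta_k-\eta_k|=O_p(\kappa_k^{-2-p/r})$ in the vanishing regime (resp. $O_p(1)$ in the non-vanishing one) from \Cref{theorem:Final-est}, which is $o_p(\Delta)$, so the window used by \Cref{algorithm:2} lies eventually inside the correct segment(s) up to a vanishing fraction; $\widehat\kappa_k/\kappa_k\stackrel{P.}{\to}1$, hence $\widehat h_2/h_2\stackrel{P.}{\to}1$, uniformly over $k$, which follows from the concentration of the kernel density estimators underlying \eqref{kappa-hat} together with \Cref{theorem:FSBS}; and $\|\widehat g_k-g_k\|_{L_2}=o_p(\kappa_k)$. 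The effect of these perturbations on the squared, $\widehat\kappa_k^{p/r-2}$-scaled block sums is then controlled by a Lipschitz-in-parameters estimate, the bandwidth-perturbation part of which relies on the uniform covering-number bound in \Cref{kernel-as}\textbf{b.} to bound $\sup\|F_{t,h}-F_{t,h'}\|$ over a shrinking band of bandwidths around $h_2$. A union bound over the finitely many $k$ completes the argument.

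The step I expect to be the main obstacle is the triangular-array nature of the whole argument: since $h_2=h_2(T)\to0$ in the vanishing regime, the score $Y_t^{(k)}$, all of its moments, its effective dependence range, and the normalising factor $\kappa_k^{p/r-2}$ all drift with $T$, so the covariance inequality, the Rosenthal bound, the bias expansion, and the bias/variance trade-off in the choice of $R$ must each be carried out with constants uniform in $T$ after the $\kappa_k^{(p/r-2)/2}$ renormalisation. This is compounded by the instability of kernel density objects to bandwidth perturbations, which forces $|\widehat h_2-h_2|/h_2$ and $|\widehat\kappa_k-\kappa_k|/\kappa_k$ to be controlled at a sufficiently fine rate uniformly in $k$ — precisely where \Cref{assume-snr2} and \Cref{kernel-as}\textbf{b.} enter. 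A secondary, more routine difficulty is the bookkeeping needed to verify that the stated $R$ meets all of $R\to\infty$, $R=o(\Delta)$ and the bias/variance balance in both jump-size regimes.
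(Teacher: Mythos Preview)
Your overall strategy coincides with the paper's: introduce an oracle block estimator built from the true $(\eta_k,\kappa_k,h_2,g_k)$, show $|\widehat\sigma_\infty^2(k)-\text{oracle}|\stackrel{P.}{\to}0$ by a perturbation argument, and show $|\text{oracle}-\widetilde\sigma_\infty^2(k)|\stackrel{P.}{\to}0$ by a bias--variance computation under $\alpha$-mixing. Your Fubini rewriting $Y_t^{(k)}=\psi_k(X_t)-\mathbb E[\psi_k(X_t)]$ is a clean way to see the score structure; the paper keeps the inner-product form but the content is the same.

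Two points where the paper is more elementary than what you propose. First, for the bandwidth-perturbation part of the plug-in error you invoke the VC condition of \Cref{kernel-as}\textbf{b.}, but the paper bypasses this entirely: it computes directly, via a change of variables and the mean value theorem, that $\|F_{t,h_1}-F_{t,h_2}\|_{L_2}^2=O(|h_1-h_2|/h_2^{p+1})=O(|\kappa_k-\widehat\kappa_k|/\kappa_k^{p/r+1})$, and then controls $|\kappa_k-\widehat\kappa_k|$ by the kernel-density concentration underlying \eqref{kappa-hat}. This, combined with the algebraic identity $a^2-b^2=(a+b)(a-b)$ applied blockwise and repeated H\"older/Minkowski bounds, handles $|\widehat\sigma_\infty^2(k)-\breve\sigma_\infty^2(k)|$ without any empirical-process machinery. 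Second, for the oracle variance you reach for a Rosenthal-type inequality, but the paper instead first proves the pointwise bound $|\breve Y_i|\le 1$ (via $\|F_{t,h_2}-f_t*\mathcal K_{h_2}\|_{L_2}\le \kappa_k^{-p/(2r)}$ and $\|(f_{\eta_k}-f_{\eta_{k+1}})*\mathcal K_{h_2}\|_{L_2}\le \kappa_k$), which makes the squared block sums uniformly bounded by $S$; then a single application of the $\alpha$-mixing covariance inequality to the block-level sequence gives the variance bound directly. Your tools would work, but they are heavier than necessary.

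One caution on notation: in \Cref{algorithm:2} and the theorem, $R$ is the \emph{number of blocks} and $S=\lfloor(e_k-s_k)/R\rfloor$ is the block size. Your bias formula $\sum_{|j|<R}(1-|j|/R)\cov(\cdot)$ and the requirement ``$R\to\infty$, $R=o(\Delta)$'' treat $R$ as the block size; the paper's bias is $\sum_{|l|<S}((S-l)/S)\mathbb E[\breve Y_i\breve Y_{i+l}]$ and what must be checked is $S\to\infty$ (so the bias vanishes) together with $S/R\to0$ (so the variance vanishes). With the stated $R$, the block size is $S\asymp T^{r/(2r+p)}\kappa_k^{p/(2r)+3/2}$, and these are the relations the paper verifies under \Cref{assume-snr2}.
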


 \begin{algorithm}[ht]
	\begin{algorithmic}
		\INPUT $\{ X_t\}_{t=1}^T,\{\widehat{\eta}_k\}_{k=1}^{\widehat{K}},\{\widehat{\kappa}_k\}_{k=1}^{\widehat{K}},\{(s_k, e_k)\}_{k=1}^{\widehat{K}}$ and tuning parameter $R \in \mathbb{N}$    
		\For{$k = 1, \ldots, \widehat{K}$}  
    \State{Let $h_1=c_{\widehat{\kappa}}\widehat{\kappa}_k^{\frac{1}{r}}$}
            \For{$t\in\{ s_k, \ldots, e_k-1\}$} 
            \State {
        $Y_t=\widehat{\kappa}_k^{\frac{p}{2r}-1}\Big\langle F_{{t},h_1}-f_t*\mathcal{K}_{h_1}, (f_{\widehat{\eta}_{k}}-f_{\widehat{\eta}_{k+1}})*\mathcal{K}_{h_1}\Big\rangle_{L_2}$
            }
            \EndFor
            \State {$S=\lfloor \frac{e_k-s_k}{R}\rfloor$}
            \For{$r\in\{1,\ldots,R\}$}
            \State{$\mathcal{S}_r=\{s_k+(r-1)S,\ldots,s_k+rS-1\}$}
            \EndFor
            \State{$\widehat{\sigma}_{\infty}^2(k)=\frac{1}{R}\sum_{r=1}^{R}\Big(\frac{1}{\sqrt{S}}\sum_{i\in{\mathcal{S}_r}}Y_i\Big)^2$}
		\EndFor
            \OUTPUT $\{\widehat{\sigma}_{\infty}^2(k)\}_{k=1}^{\widehat{K}}$.
		\caption{Long-run variance estimators}
		\label{algorithm:2}
	\end{algorithmic}
\end{algorithm} 

\subsection{Discussions on multivariate nonparametric seeded change point\\ detection~(MNSBS)}\label{sec-comparison}

\textbf{Tuning parameters.} Our procedure comprises three steps: (1) preliminary estimation, (2) local refinement, and (3) confidence interval construction, with three key tuning parameters.

For step (1), we specify the density estimator of the sampling distribution to be a kernel estimator with bandwidth $h \asymp T^{-1/(2r+p)}$, which follows from the classical nonparametric literature \citep[e.g.][]{yu1993density,Tsybakov:1315296}. The threshold tuning parameter $\tau$ is set to a high-probability upper bound on the CUSUM statistics when there is no change point, of the form $\tau = C_\tau \log^{1/2}(T) \sqrt {h^{-p}}$, which reflects the requirement on the SNR detailed in \Cref{assume-snr}, $\kappa\sqrt{\Delta} \gtrsim \tau$.
%In step (1), for the CUSUM statistic \eqref{Cus-Sta}, we specify that the density estimator of the sampling distribution is a kernel estimator with bandwidth $h \asymp T^{-1/(2r+p)}$. Such a choice of the bandwidth follows from the classical nonparametric literature \citep[e.g.][]{Tsybakov:1315296,yu1993density}.
%The threshold tuning parameter $\tau$ is set to be a high-probability upper bound on the CUSUM statistics when there is no change point  and is in fact of the form $\tau = C_\tau \log^{1/2}(T) \sqrt {h^{-p}}$.  This also reflects the requirement on the SNR detailed in \Cref{assume-snr}, that $\kappa\sqrt{\Delta} \gtrsim \tau$.

The bandwidth $h_{1}$ satisfying $h_{1}\asymp \widehat{\kappa}_k^{\frac{1}{r}}$, inspired by the near minimax rate-optimal bandwidth choice in \cite{padilla2021optimal}, is chosen for refined estimation in step (2).
%The bandwidth $h_{1}$, for refined estimation in step (2), satisfies $h_{1}\asymp \widehat{\kappa}_k^{\frac{1}{r}}$. This choice is inspired by the near minimax rate-optimality shown in \cite{padilla2021optimal}.

The rest of the tuning parameters are the bandwidth $\widetilde{h}\asymp h$ for estimating $\widehat{\kappa}_k$ in both steps (2) and (3), and $\widehat{\sigma}^2_{\infty}$ and the number of blocks $R$ for long-run variance estimation in (3), which are specified in \Cref{algorithm:2}.
%\Yi{again, move the comparisons to the end and edit correspondingly to avoid repeating.  just discuss the results here.  (1) recall the consistency definition at the beginning.  show we are consistent here.  (2) discuss how different parameters affect rates.  (3) conjecture about the optimality if possible.  (4) explain why temporal dependence is not shown here.  (5) discuss the theoretical choice of the tuning parameters.  }

\textbf{Related work.} 
A comparison with \cite{padilla2021optimal} is presented: the H\"{o}lder condition is milder than their Lipschitz assumption; \Cref{assume: model assumption}\textbf{d} specifies changes through $L_2$-norm of probability density functions, weaker than $L_{\infty}$ distance when $\mathcal{X}$ is compact used in \cite{padilla2021optimal}; our assumptions allow for some dependence captured by $\alpha$-mixing coefficients, unlike \cite{padilla2021optimal} who assume independent observations.
%A comparison with \cite{padilla2021optimal} is presented below. Start by noticing that the H\"{o}lder  condition is milder than the  Lipschitz assumption in \cite{padilla2021optimal}. Next, \Cref{assume: model assumption}\textbf{d} specifies the changes through the function's $L_2$-norm between probability density functions. We emphasize that when $\mathcal{X}$ is compact, the $L_2$ is weaker than the $L_{\infty}$  distance used to quantify jumps in \cite{padilla2021optimal}. Moreover,  unlike \cite{padilla2021optimal} that assumes that the observations are independent, we allow for some dependence captured by the $\alpha$-mixing coefficients.   

Let us compare \Cref{theorem:FSBS} with Theorem 1 in \cite{padilla2021optimal}: recall the consistency definition stated at \eqref{cons-1}, and in view of \Cref{assume-snr} and \Cref{theorem:FSBS}, we see that with properly chosen tuning parameters and probability tending to one as $T$ grows,
%Let us now compare \Cref{theorem:FSBS}  with Theorem 1 in \cite{padilla2021optimal}. First, recall the consistency definition stated at \eqref{cons-1}. In view of \Cref{assume-snr} and \Cref{theorem:FSBS}, we see that with properly chosen tuning parameters and with probability tending to one as the sample size $T$ grows, 
\[
    \max_{k = 1 }^K  {|\widehat{\eta}_k - \eta_k|}/{\Delta} \lesssim { T^{\frac{p}{2r+p}}  \log(T)}/{(\kappa^2 \Delta) } =  o(1),
\]
where the equality follows from \Cref{assume-snr}. This yields the localization consistency guarantee. %The above inequality yields the localization consistency guarantee. 
Theorem 1 in \cite{padilla2021optimal} also establishes a consistency.

%Notably, the result on \Cref{theorem:FSBS} holds with tuning parameter choices that only depend on $T$ and not on the unknown parameter $\kappa$. While \cite{padilla2021optimal} attains a smaller error rate,  their results hold under the more restrictive assumption of independent observations and require choice of tuning parameters that depend on $\kappa$. Thus, we are able to obtain a consistent estimator with more general assumptions and with more practical choices of tuning parameters.

In terms of the conditions needed, in \Cref{assume-snr} we  allow $r$ to be arbitrary. To compare against \cite{padilla2021optimal} consider $r = 1$ and recall we impose $\Delta  = \Theta(T)$. In this case,  ignoring $\gamma_{T}$, \Cref{assume-snr} reduces to,
  \begin{equation}
      \label{eq:comp1}
       h \lesssim   T^{-\frac{1}{2+p}  }  \log^{1/2}( T) \,\lesssim \, \kappa,
  \end{equation}
where we have used our choice of $h$ in \Cref{theorem:FSBS}. In contrast, in the same setting, the signal-to-noise ration condition in \cite{padilla2021optimal} is 
\begin{equation}
\label{eq:snr_padilla}
\kappa^{2+p}\Delta    \gtrsim \gamma_{T}\log^{1+\epsilon}(T),
\end{equation}
and with bandwidth being the unknown parameter $\kappa$. However, if \eqref{eq:comp1} holds, then 
\[
    \begin{array}{lll}
     \kappa^{p+2}\Delta \gtrsim \kappa^2\kappa^{p}T \gtrsim   \kappa^2  \left(T^{ - \frac{1}{2+p}  } \right)^pT= \kappa^2 T^{ \frac{2}{2+p} }
    \end{array}
\]
which combined with \eqref{eq:comp1} implies \eqref{eq:snr_padilla}. Thus, 
in \cite{padilla2021optimal} the signal-to-noise ratio condition is weaker than \Cref{assume-snr}, and its localization error is faster. Nevertheless, \Cref{theorem:FSBS} allows for temporal dependence and has practical implications for the choice of the bandwidth.
% Thus, the signal-to-noise ratio condition in \Cref{assume-snr}  is stronger than that in \cite{padilla2021optimal}. Furthermore, the localization error in \cite{padilla2021optimal} is of order $  \kappa^{ -2-p} \log (T)$, which is faster than our localization error in \Cref{theorem:FSBS}. Despite this, \Cref{theorem:FSBS} has some advantages over Theorem 1 in \cite{padilla2021optimal}. First,  \Cref{theorem:FSBS} allows for temporal dependence among the observations whereas  \cite{padilla2021optimal} requires  independent measurements. Second, the choice of the bandwidth $h$ in \Cref{theorem:FSBS} has practical implications as it only depends on $T$, while the choice of tuning parameter in Theorem 1 in \cite{padilla2021optimal} requires knowledge of the unknown parameter $\kappa$.
 
\section{Numerical Experiments}\label{sec-numeric}
We refer to MNSBS to the final estimator.
\subsection{Simulated data analysis}\label{simu-data}
%We compare our proposed MNSBS with different change point estimation methods, across a wide range of simulation settings. Specifically, we compare with four competitors, the multivariate nonparametric change point estimator (MNP) \citep{padilla2021optimal}, the energy based method (EMNCP) \citep{matteson2014nonparametric}, the sparsified binary segmentation method (SBS) \citep{cho2015multiple} and the double CUSUM binat segmentation estimator (DCBS) \citep{cho2016change}. We implement these methods using corresponding R functions available respectively in R packages \texttt{changepoints} \citep{changepoints}, \texttt{ecp} \citep{ecp} and \texttt{hdbinseg} \citep{hdbinseg}.
We compare our proposed MNSBS with four competitors -- MNP \citep{padilla2021optimal}, EMNCP \citep{matteson2014nonparametric}, SBS \citep{cho2015multiple} and DCBS \citep{cho2016change} -- across a wide range of simulation settings, using corresponding R functions in \texttt{changepoints} \citep{changepoints}, \texttt{ecp} \citep{ecp} and \texttt{hdbinseg} \citep{hdbinseg} packages.
%To evaluate the $L_2$ based statistics in Change point estimation and in Long-run variance estimation, we implement the Subregion-Adaptive Vegas Algorithm\footnote{The Subregion-Adaptive Vegas Algorithm is available in R package \texttt{cubature} \citep{cubature}.} with the maximum number of function evaluations being $10^5$.
 We evaluate $L_2$ based statistics in Change point estimation and Long-run variance estimation using the Subregion-Adaptive Vegas Algorithm\footnote{The Subregion-Adaptive Vegas Algorithm is available in R package \texttt{cubature} \citep{cubature}} with a maximum of $10^5$ function evaluations.

%For the implementation of MNSBS, we use the Gaussian kernel. We adopt the false discovery rate control-based procedure proposed in \cite{padilla2021optimal} to select $\tau$ and we set $h = 2\times(1/T)^{1/(2r+p)}$ for the preliminary estimators. 
%In the second stage estimator, we set respectively the bandwidths, $\widetilde{h} = 0.05$ and $h_{1} = 2\times \widehat{\kappa}_k^{1/r}$.
%the selection of $R$ is guided by \Cref{Long-Run-V-Theorem}, i.e.~we choose $R = \Big\lfloor  \left(\max_{k = 1}^{\widehat{K}}\{e_k - s_k\}\right)^{3/5} \Big\rfloor$ with $\{(s_k, e_k)\}_{k = 1}^{\widehat{K}}$ defined in \eqref{final-int}. In addition, we use $\{\widehat{\kappa}_k\}_{k = 1}^{\widehat{K}}$ and $\{\widehat{\sigma}^2_{\infty}(k)\}_{k = 1}^{\widehat{K}}$ to estimate the unknown quantities 
For MNSBS implementation we use the Gaussian kernel and the false discovery rate control-based procedure of \cite{padilla2021optimal} for $\tau$ selection. Preliminary estimators are set as $h = 2\times(1/T)^{1/(2r+p)}$, while the second stage estimator has bandwidths respectively set as $\widetilde{h} = 0.05$ and $h_{1} = 2\times \widehat{\kappa}_k^{1/r}$. Selection of $R = \Big\lfloor  \left(\max_{k = 1}^{\widehat{K}}\{e_k - s_k\}\right)^{3/5} \Big\rfloor$ with $\{(s_k, e_k)\}_{k = 1}^{\widehat{K}}$ is guided by \Cref{Long-Run-V-Theorem} using $\{(s_k, e_k)\}_{k =1}^{\widehat{K}}$ from \eqref{final-int}. For the confidence interval construction, we use $\{\widehat{\kappa}_k\}_{k = 1}^{\widehat{K}}$ and $\{\widehat{\sigma}^2_{\infty}(k)\}_{k = 1}^{\widehat{K}}$ to estimate the required unknown quantities for the confidence interval construction. 

\subsubsection{Localization}
We consider four different scenarios with two equally spaced change points. 
For each scenario, we set $r = 2$, and varies $T \in \{150, 300\}$ and $p \in \{3,5\}$. Moreover, we consider $$
\{Y_t = \mathbbm{1}\{\lfloor T/3 \rfloor < t \leq \lfloor 2T/3 \rfloor\}Z_t + X_t\}_{t=1}^T
\subset \mathbb{R}^p$$ with with
$$
X_{t} = 0.3X_{t-1} + \epsilon_{t}.
$$

$\bullet${\bf{Scenario 1} (S1)}
 For any $t$, $Z_t=\mu \in \mathbb{R}^p$, such that $\mu_j = 0$ for $j \in \{1, \ldots, \lceil p/2 \rceil\}$ and $\mu_j = 2$ otherwise. Moreover, $\{\epsilon_t\}$ are i.i.d. $\mathcal{N}(0_p,I_p)$.

$\bullet${\bf{Scenario 2} (S2)}  
For any $t$, $Z_t = 0.3Z_{t-1} + \epsilon_{t}^{(1)}.
$ Moreover,
 $\{\epsilon_t^{(1)}\},\{\epsilon_{t}\}\subset \mathbb{R}^p$ are i.i.d. with entries independently follow Unif$(-1, 1)$ and Unif$(-\sqrt{3}, \sqrt{3})$, respectively.
 
$\bullet${\bf{Scenario 3} (S3)} 
Similarly as in {\bf{S2}}. But now,  $
\{\epsilon_{t}^{(1)}\},\{\epsilon_{t}\}\subset \mathbb{R}^p$ are i.i.d. with entries independently follow standardized $\text{Pareto}(3, 1)$ and $\text{Log-Normal}(0, 1)$, respectively.

$\bullet${\bf{Scenario 4} (S4)} 
For any $t$, $Z_t|\{u_t = 1\} = 1.5\times 1_p, \quad  Z_t|\{u_t = 0\} = -1.5\times 1_p$ and $ X_{t} = 0.3X_{t-1} + \epsilon_{t}$, and $\{\epsilon_t\}\subset \mathbb{R}^p$ are i.i.d. $\mathcal{N}(O_p,I_p)$.

$\bullet${\bf{Scenario 5} (S5)} 
For any $t$,
$
Z_t = 0.3Z_{t-1} + \epsilon_{t}^{(1)} + 0.5\times 1_p
$
and  $\{\epsilon_t^{(1)}\},\{\epsilon_t\}\subset \mathbb{R}^p$ are i.i.d. with entries independently follow $\text{Unif}(-\sqrt{3}, \sqrt{3})$ and the standardised $\text{Pareto}(3, 1)$, respectively.

\textbf{S1-S5} encompass a variety of simulation settings including the same type of distributions, changed mean and constant covariance \textbf{S1}; the same type of distributions, constant mean, changed covariance \textbf{S2}; different types of distributions, constant mean and covariance \textbf{S3};  mixture of distributions \textbf{S4}; and change between light-tailed and heavy-tailed distributions \textbf{S5}. 

\subsubsection{Inference}
We consider the following process
$$
\{Y_t = \mathbbm{1}\{\lfloor T/2 \rfloor < t \leq T\}\mu + X_t\}_{t=1}^{T},
$$
with
$$
X_{t} = 0.3X_{t-1} + \epsilon_{t},
$$
Here, $\mu = 1_p$ and $\{\epsilon_t\}_{t = 1}^T\subset\mathbb{R}^p$ are i.i.d. $\mathcal{N}(0_p,I_p)$. We vary $T \in \{100, 200, 300\}$ and $p \in \{2,3\}$, and observe that our localization results are robust to the bandwidth parameters, yet sensitive to the smoothness parameter $r$. We thus set $r = 1000$ in our simulations, as the density function of a multivariate normal distribution belongs to the H\"{o}lder function class with $r = \infty$.

%We varies $T \in \{100, 200, 300\}$ and $p \in \{2,3\}$. While the localization results of our methods are robust against the choices of several bandwidth parameters and also the smoothness parameter $r$, we note that the inference results are sensitive to the value of $r$. Since the density function of a multivariate normal distribution belongs to the H\"{o}lder function class with the smoothness parameter $r = \infty$, in our simulations, we set $r = 1000$.

\subsubsection{Evaluation results}
For a given set of true change points $\mathcal{C}= \{\eta_k\}_{k = 0}^{K+1}$, to assess the accuracy of the estimator $\widehat{\mathcal{C}} = \{\widehat{\eta}_k\}_{k = 0}^{\widehat{K}+1}$ with $\widehat{\eta}_0 = 1$ and $\widehat{\eta}_{T+1} = T+1f$, we report (1) the proportion of misestimating $K$ and (2) the scaled Hausdorff distance $d_{\mathrm{H}}(\widehat{\mathcal{C}},\mathcal{C})$, defined by $$d_{\mathrm{H}}(\widehat{\mathcal{C}},\mathcal{C})=\frac{1}{T}\max\{\max_{x\in{\widehat{\mathcal{C}}}}\min_{y\in{\mathcal{C}}}\{\vert x-y\vert\},\max_{y\in{\widehat{\mathcal{C}}}}\min_{x\in{\mathcal{C}}}\{\vert x-y\vert\}\}.$$

%Given a significance level $\alpha \in (0,1)$, our performance in change point inference is measured by the coverage of $\eta_k$, defined as
The performance of our change point inference is measured by the coverage of $\eta_k$, defined as $cover_k(1-\alpha)$ for significance level $\alpha \in (0,1)$. For, $k = 1, \dots, K,$
\begin{equation}
    cover_k(1-\alpha)  = \mathbbm{1}\bigg\{  
    \eta_k \in \bigg[\widetilde{\eta}_k + \frac{\widehat{q}_u(\alpha/2)}{\widehat{\kappa}_k^{p/r+2}}, 
    \widetilde{\eta}_k + \frac{\widehat{q}_u(1-\alpha/2)}{\widehat{\kappa}_k^{p/r+2}} \bigg]\bigg\},
\end{equation}
with $\widehat{q}_u(\alpha/2)$ and $\widehat{q}_u(1-\alpha/2)$ are the $\alpha/2$ and $1-\alpha/2$ empirical quantiles of the simulated limiting distribution given in \eqref{limit-dis}, $\widehat{\kappa}_k$ is defined in \eqref{kappa-hat}, and $k = 1, \dots, K$.
%$where \widehat{q}_u(\alpha/2)$ and $\widehat{q}_u(1-\alpha/2)$ are respectively the $\alpha/2$ and $1-\alpha/2$ empirical quantiles of the simulated limiting distribution given in \eqref{limit-dis}, and $\widehat{\kappa}_k$ is defined in \eqref{kappa-hat}. 

We repeat the experiment $200$ times for each setting and report simulation results for localisation in \Cref{Scenario1}, \ref{Scenario2}, \ref{Scenario3}, \ref{Scenario4}, and \ref{Scenario5}. Inference performance is presented in \Cref{inference}. To the best of our knowledge, no competitor exists for change point inference in multivariate nonparametric change settings. 

MNSBS generally performs well in all scenarios considered, among the top two except for {\bf{S2}}. DCBS, designed to estimate change points in mean or second-order structure, performs best in {\bf{S2}}, while MNSBS is comparable to ECP, and significantly better in {\bf{S1}} and {\bf{S5}} for large $T$.
%For each setting, we repeat the experiment $200$ times and report simulation results in \Cref{Scenario1}, \ref{Scenario2}, \ref{Scenario3}, \ref{Scenario4}, and \ref{Scenario5} for localisation. The inference performance is captured in \Cref{inference}. To the best of our knowledge, no competitor exists for change point inference in multivariate nonparametric change settings. 
%As can be seen, MNSBS  generally performs well in all scenarios considered and is among the top two except for {\bf{S2}}. In {\bf{S2}}, DCBS, which is designed to estimate change points in mean or second-order structure, performs best. Our method delivers similar performance to ECP, but is significantly better in {\bf{S1}} and {\bf{S5}} when the sample size $T$ is large. 
\subsection{Real data application}\label{Real-data}
We applied our proposed change point inference procedure to analyze stock price data\footnote{The stock price data are downloaded from \url{https://fred.stlouisfed.org/series}.}, which consisted of daily adjusted close price of the 3 major stock market indices (S\&P 500, Dow Jones and NASDAQ) from Jan-01-2021 to Jan-19-2023. After removing missing values and standardizing the raw data, the sample size was $n=515$ and the dimension $p=3$. 

We localized 6 estimated change points and performed inference based on them; results are summarized in \Cref{Rexample}. We also implemented the NMP and ECP methods on the same dataset, the estimated change points being presented in \Cref{RDataRes}. Except for the time point Aug-24-2022 estimated by ECP, all other estimated change points were located in the constructed $99\%$ confidence intervals by our proposed method.
%We applied our proposed change point inference procedure to analyze stock price data\footnote{The stock price data are downloaded from \url{https://fred.stlouisfed.org/series}.}. The data contain the daily adjusted close price, from Jan-01-2021 to Jan-19-2023, of the 3 major stock market indices (S\&P 500, Dow Jones and NASDAQ). Before applying our procedure, we remove the missing values and standardize the raw data. The sample size $n = 515$ and the dimension $p = 3$. 

%We first perform the change point localization to the transformed data, which outputs $6$ estimated change points, and then we perform the inference procedure based on the estimated change points. All the results are summarized in \Cref{Rexample}. 

%For comparison, we also implement the NMP and ECP methods on the same dataset. the estimated change points are presented in \Cref{RDataRes}. Except for the time point Aug-24-2022 estimated by ECP, all the other estimated change points are located in the constructed $99\%$ confidence intervals. 
\section{Conclusion}\label{sec-conclusion} 
We tackle the problem of change point detection for short range dependent multivariable nonparametric data, which has not been studied in the literature. Our two-stage algorithm MNSBS can consistently estimate the change points in stage one, a novelty in the literature. Then, we derived limiting distributions of change point estimators for inference in stage two, a first in the literature.
%In this paper, we tackle the problem of change point detection for short range dependent multivariable nonparametric data. Such generality has not been studied in the multivariable nonparametric change point detection literature. We show that our two-stage algorithm MNSBS can consistently estimate the change-points, as a novelty under the short range dependence condition, in the first stage. Then, use this estimator to derive limiting distributions of change point estimators to perform inference, in the second stage. Such limiting distribution is the first time seen in the multivariable nonparametric change point literature.   

Our theoretical analysis reveals multiple challenging and interesting directions for future exploration. Relaxing the assumption $\Delta\asymp T$ may be of interest. In addition, in \Cref{theorem:Final-est}.\textbf{a}, we can see the limiting distribution is a function of the data-generating mechanisms, lacking universality, therefore deriving a practical method to derive the limiting distributions in the non-vanishing regime may be interesting.  
%Our theoretical analysis reveals multiple future challenging and interesting directions. First, the inference result of MNSBS relies on the assumption that the minimal spacing $\Delta\asymp T$. Relaxing this assumption might be of our interest. We may also be interested in deriving a practical way to derive the limiting distributions in the non-vanishing regime. In \Cref{theorem:Final-est}.\textbf{a}, we can see the limiting distribution is a function of the data-generating mechanisms, lacking universality.

\newpage
\bibliography{references}

\begin{thebibliography}{50}
\providecommand{\natexlab}[1]{#1}
\providecommand{\url}[1]{\texttt{#1}}
\expandafter\ifx\csname urlstyle\endcsname\relax
  \providecommand{\doi}[1]{doi: #1}\else
  \providecommand{\doi}{doi: \begingroup \urlstyle{rm}\Url}\fi

\bibitem[Abadi(2004)]{abadi2004sharp}
Miguel Abadi.
\newblock Sharp error terms and neccessary conditions for exponential hitting
  times in mixing processes.
\newblock \emph{The Annals of Probability}, 32\penalty0 (1A):\penalty0
  243--264, 2004.

\bibitem[Arlot et~al.(2019)Arlot, Celisse, and Harchaoui]{arlot2019kernel}
Sylvain Arlot, Alain Celisse, and Zaid Harchaoui.
\newblock A kernel multiple change-point algorithm via model selection.
\newblock \emph{Journal of machine learning research}, 20\penalty0 (162), 2019.

\bibitem[Aue et~al.(2009{\natexlab{a}})Aue, Gabrys, Horv{\'a}th, and
  Kokoszka]{aue2009estimation}
Alexander Aue, Robertas Gabrys, Lajos Horv{\'a}th, and Piotr Kokoszka.
\newblock Estimation of a change-point in the mean function of functional data.
\newblock \emph{Journal of Multivariate Analysis}, 100\penalty0 (10):\penalty0
  2254--2269, 2009{\natexlab{a}}.

\bibitem[Aue et~al.(2009{\natexlab{b}})Aue, H{\"o}rmann, Horv{\'a}th, and
  Reimherr]{aue2009break}
Alexander Aue, Siegfried H{\"o}rmann, Lajos Horv{\'a}th, and Matthew Reimherr.
\newblock Break detection in the covariance structure of multivariate time
  series models.
\newblock \emph{The Annals of Statistics}, 37\penalty0 (6B):\penalty0
  4046--4087, 2009{\natexlab{b}}.

\bibitem[Azhar et~al.(2021)Azhar, Nugroho, and Wibirama]{azhar2021study}
Muhammad Ardian~Rizaldy Azhar, Hanung~Adi Nugroho, and Sunu Wibirama.
\newblock The study of multivariable autoregression methods to forecast
  infectious diseases.
\newblock In \emph{2021 IEEE 5th International Conference on Information
  Technology, Information Systems and Electrical Engineering (ICITISEE)}, pages
  83--88. IEEE, 2021.

\bibitem[Bai(1994)]{bai1994least}
Jushan Bai.
\newblock Least squares estimation of a shift in linear processes.
\newblock \emph{Journal of Time Series Analysis}, 15\penalty0 (5):\penalty0
  453--472, 1994.

\bibitem[Cai et~al.(2022)Cai, Wang, Eichi, Ongur, Dixon, Baker, Onnela, and
  Valeri]{cai2022state}
Xiaoxuan Cai, Xinru Wang, Habiballah~Rahimi Eichi, Dost Ongur, Lisa Dixon,
  Justin~T Baker, Jukka-Pekka Onnela, and Linda Valeri.
\newblock State space model multiple imputation for missing data in
  non-stationary multivariate time series with application in digital
  psychiatry.
\newblock \emph{arXiv preprint arXiv:2206.14343}, 2022.

\bibitem[Cho(2016)]{cho2016change}
Haeran Cho.
\newblock Change-point detection in panel data via double cusum statistic.
\newblock \emph{Electronic Journal of Statistics}, 10\penalty0 (2):\penalty0
  2000--2038, 2016.

\bibitem[Cho and Fryzlewicz(2015)]{cho2015multiple}
Haeran Cho and Piotr Fryzlewicz.
\newblock Multiple-change-point detection for high dimensional time series via
  sparsified binary segmentation.
\newblock \emph{Journal of the Royal Statistical Society: Series B (Statistical
  Methodology)}, 77\penalty0 (2):\penalty0 475--507, 2015.

\bibitem[Cho and Fryzlewicz(2018)]{hdbinseg}
Haeran Cho and Piotr Fryzlewicz.
\newblock \emph{hdbinseg: Change-Point Analysis of High-Dimensional Time Series
  via Binary Segmentation}, 2018.
\newblock URL \url{https://CRAN.R-project.org/package=hdbinseg}.
\newblock R package version 1.0.1.

\bibitem[Corbella and Stretch(2012)]{corbella2012predicting}
Stefano Corbella and Derek~D Stretch.
\newblock Predicting coastal erosion trends using non-stationary statistics and
  process-based models.
\newblock \emph{Coastal engineering}, 70:\penalty0 40--49, 2012.

\bibitem[Doukhan(1994)]{doukhan1994mixing}
P~Doukhan.
\newblock Mixing: properties and examples. lect.
\newblock \emph{Notes in Statisit}, 85, 1994.

\bibitem[Fan and Yao(2008)]{fan2008nonlinear}
Jianqing Fan and Qiwei Yao.
\newblock \emph{Nonlinear time series: nonparametric and parametric methods}.
\newblock Springer Science \& Business Media, 2008.

\bibitem[Frolov et~al.(2020)Frolov, Maksimenko, and
  Hramov]{frolov2020revealing}
Nikita Frolov, Vladimir Maksimenko, and Alexander Hramov.
\newblock Revealing a multiplex brain network through the analysis of
  recurrences.
\newblock \emph{Chaos: An Interdisciplinary Journal of Nonlinear Science},
  30\penalty0 (12):\penalty0 121108, 2020.

\bibitem[Fryzlewicz(2014)]{fryzlewicz2014wild}
Piotr Fryzlewicz.
\newblock Wild binary segmentation for multiple change-point detection.
\newblock \emph{The Annals of Statistics}, 42\penalty0 (6):\penalty0
  2243--2281, 2014.

\bibitem[Gin{\'e} and Guillou(1999)]{gine1999laws}
Evarist Gin{\'e} and Armelle Guillou.
\newblock Laws of the iterated logarithm for censored data.
\newblock \emph{The Annals of Probability}, 27\penalty0 (4):\penalty0
  2042--2067, 1999.

\bibitem[Gin{\'e} and Guillou(2001)]{gine2001consistency}
Evarist Gin{\'e} and Armelle Guillou.
\newblock On consistency of kernel density estimators for randomly censored
  data: rates holding uniformly over adaptive intervals.
\newblock \emph{Annales de l'IHP Probabilit{\'e}s et statistiques}, 37\penalty0
  (4):\penalty0 503--522, 2001.

\bibitem[Gorrostieta et~al.(2019)Gorrostieta, Ombao, and
  Von~Sachs]{gorrostieta2019time}
Cristina Gorrostieta, Hernando Ombao, and Rainer Von~Sachs.
\newblock Time-dependent dual-frequency coherence in multivariate
  non-stationary time series.
\newblock \emph{Journal of Time Series Analysis}, 40\penalty0 (1):\penalty0
  3--22, 2019.

\bibitem[Heo and Manuel(2022)]{heo2022greedy}
Taemin Heo and Lance Manuel.
\newblock Greedy copula segmentation of multivariate non-stationary time series
  for climate change adaptation.
\newblock \emph{Progress in Disaster Science}, 14:\penalty0 100221, 2022.

\bibitem[Herzel et~al.(2002)Herzel, St{\u{a}}ric{\u{a}}, and
  T{\"u}t{\"u}nc{\"u}]{herzel2002non}
Stefano Herzel, C{\u{a}}t{\u{a}}lin St{\u{a}}ric{\u{a}}, and Reha
  T{\"u}t{\"u}nc{\"u}.
\newblock \emph{A non-stationary multivariate model for financial returns}.
\newblock Chalmers University of Technology, 2002.

\bibitem[James et~al.(2019)James, Zhang, and Matteson]{ecp}
Nicholas~A. James, Wenyu Zhang, and David~S. Matteson.
\newblock \emph{ecp: An {R} Package for Nonparametric Multiple Change Point
  Analysis of Multivariate Data}, 2019.
\newblock URL \url{https://cran.r-project.org/package=ecp}.
\newblock R package version 3.1.2.

\bibitem[Kaul and Michailidis(2021)]{kaul2021inference}
Abhishek Kaul and George Michailidis.
\newblock Inference for change points in high dimensional mean shift models.
\newblock \emph{arXiv preprint arXiv:2107.09150}, 2021.

\bibitem[Kim et~al.(2019)Kim, Shin, Rinaldo, and Wasserman]{kim2019uniform}
Jisu Kim, Jaehyeok Shin, Alessandro Rinaldo, and Larry Wasserman.
\newblock Uniform convergence rate of the kernel density estimator adaptive to
  intrinsic volume dimension.
\newblock In \emph{International Conference on Machine Learning}, pages
  3398--3407. PMLR, 2019.

\bibitem[Kirch(2006)]{kirch2006resampling}
Claudia Kirch.
\newblock \emph{Resampling methods for the change analysis of dependent data}.
\newblock PhD thesis, Universit{\"a}t zu K{\"o}ln, 2006.

\bibitem[Kov{\'a}cs et~al.(2020)Kov{\'a}cs, Li, B{\"u}hlmann, and
  Munk]{kovacs2020seeded}
Solt Kov{\'a}cs, Housen Li, Peter B{\"u}hlmann, and Axel Munk.
\newblock Seeded binary segmentation: A general methodology for fast and
  optimal change point detection.
\newblock \emph{arXiv preprint arXiv:2002.06633}, 2020.

\bibitem[Kunitomo and Sato(2021)]{kunitomo2021robust}
Naoto Kunitomo and Seisho Sato.
\newblock A robust-filtering method for noisy non-stationary multivariate time
  series with econometric applications.
\newblock \emph{Japanese Journal of Statistics and Data Science}, 4\penalty0
  (1):\penalty0 373--410, 2021.

\bibitem[Li et~al.(2019)Li, Xie, Dai, and Song]{li2019scan}
Shuang Li, Yao Xie, Hanjun Dai, and Le~Song.
\newblock Scan b-statistic for kernel change-point detection.
\newblock \emph{Sequential Analysis}, 38\penalty0 (4):\penalty0 503--544, 2019.

\bibitem[Matteson and James(2014)]{matteson2014nonparametric}
David~S Matteson and Nicholas~A James.
\newblock A nonparametric approach for multiple change point analysis of
  multivariate data.
\newblock \emph{Journal of the American Statistical Association}, 109\penalty0
  (505):\penalty0 334--345, 2014.

\bibitem[Merlev{\`e}de et~al.(2009)Merlev{\`e}de, Peligrad, Rio,
  et~al.]{merlevede2009bernstein}
Florence Merlev{\`e}de, Magda Peligrad, Emmanuel Rio, et~al.
\newblock Bernstein inequality and moderate deviations under strong mixing
  conditions.
\newblock \emph{High dimensional probability V: the Luminy volume}, 5:\penalty0
  273--292, 2009.

\bibitem[Molenaar et~al.(2009)Molenaar, Sinclair, Rovine, Ram, and
  Corneal]{molenaar2009analyzing}
Peter Molenaar, Katerina~O Sinclair, Michael~J Rovine, Nilam Ram, and Sherry~E
  Corneal.
\newblock Analyzing developmental processes on an individual level using
  nonstationary time series modeling.
\newblock \emph{Developmental psychology}, 45\penalty0 (1):\penalty0 260, 2009.

\bibitem[Narasimhan et~al.(2022)Narasimhan, Johnson, Hahn, Bouvier, and
  Kiêu]{cubature}
Balasubramanian Narasimhan, Steven~G. Johnson, Thomas Hahn, Annie Bouvier, and
  Kiên Kiêu.
\newblock \emph{cubature: Adaptive Multivariate Integration over Hypercubes},
  2022.
\newblock URL \url{https://CRAN.R-project.org/package=cubature}.
\newblock R package version 2.0.4.5.

\bibitem[Nguyen et~al.(2021)Nguyen, Turk, and
  McWilliams]{nguyen2021forecasting}
Hieu~M Nguyen, Philip~J Turk, and Andrew~D McWilliams.
\newblock Forecasting covid-19 hospital census: A multivariate time-series
  model based on local infection incidence.
\newblock \emph{JMIR Public Health and Surveillance}, 7\penalty0 (8):\penalty0
  e28195, 2021.

\bibitem[Padilla et~al.(2022)Padilla, Wang, Zhao, and Yu]{padilla2022change}
Carlos Misael~Madrid Padilla, Daren Wang, Zifeng Zhao, and Yi~Yu.
\newblock Change-point detection for sparse and dense functional data in
  general dimensions.
\newblock \emph{arXiv preprint arXiv:2205.09252}, 2022.

\bibitem[Padilla et~al.(2019)Padilla, Yu, Wang, and
  Rinaldo]{padilla2019optimal}
Oscar Hernan~Madrid Padilla, Yi~Yu, Daren Wang, and Alessandro Rinaldo.
\newblock Optimal nonparametric change point detection and localization.
\newblock \emph{arXiv preprint arXiv:1905.10019}, 2019.

\bibitem[Padilla et~al.(2021)Padilla, Yu, Wang, and
  Rinaldo]{padilla2021optimal}
Oscar Hernan~Madrid Padilla, Yi~Yu, Daren Wang, and Alessandro Rinaldo.
\newblock Optimal nonparametric multivariate change point detection and
  localization.
\newblock \emph{IEEE Transactions on Information Theory}, 2021.

\bibitem[Rigollet and Vert(2009)]{rigollet2009optimal}
Philippe Rigollet and R{\'e}gis Vert.
\newblock Optimal rates for plug-in estimators of density level sets.
\newblock \emph{Bernoulli}, 15\penalty0 (4):\penalty0 1154--1178, 2009.

\bibitem[Rinaldo et~al.(2021)Rinaldo, Wang, Wen, Willett, and
  Yu]{rinaldo2021localizing}
Alessandro Rinaldo, Daren Wang, Qin Wen, Rebecca Willett, and Yi~Yu.
\newblock Localizing changes in high-dimensional regression models.
\newblock In \emph{International Conference on Artificial Intelligence and
  Statistics}, pages 2089--2097. PMLR, 2021.

\bibitem[Schmitt et~al.(2013)Schmitt, Chetalova, Sch{\"a}fer, and
  Guhr]{schmitt2013non}
Thilo~A Schmitt, Desislava Chetalova, Rudi Sch{\"a}fer, and Thomas Guhr.
\newblock Non-stationarity in financial time series: Generic features and tail
  behavior.
\newblock \emph{EPL (Europhysics Letters)}, 103\penalty0 (5):\penalty0 58003,
  2013.

\bibitem[Sriperumbudur and Steinwart(2012)]{sriperumbudur2012consistency}
Bharath Sriperumbudur and Ingo Steinwart.
\newblock Consistency and rates for clustering with dbscan.
\newblock In \emph{Artificial Intelligence and Statistics}, pages 1090--1098.
  PMLR, 2012.

\bibitem[Tsybakov(2009)]{Tsybakov:1315296}
Alexandre~B Tsybakov.
\newblock \emph{{Introduction to Nonparametric Estimation}}.
\newblock Springer series in statistics. Springer, Dordrecht, 2009.
\newblock \doi{10.1007/b13794}.

\bibitem[van~der Vaart and Wellner(1996)]{vanderVaart1996}
Aad~W. van~der Vaart and Jon~A Wellner.
\newblock \emph{Weak Convergence and Empirical Processes: With Applications to
  Statistics}.
\newblock Springer New York, New York, NY, 1996.
\newblock ISBN 978-1-4757-2545-2.
\newblock \doi{10.1007/978-1-4757-2545-2_3}.
\newblock URL \url{https://doi.org/10.1007/978-1-4757-2545-2_3}.

\bibitem[Venkatraman(1992)]{venkatraman1992consistency}
Ennapadam~Seshan Venkatraman.
\newblock \emph{Consistency results in multiple change-point problems}.
\newblock Stanford University, 1992.

\bibitem[Wang et~al.(2020)Wang, Yu, and Rinaldo]{wang2020univariate}
Daren Wang, Yi~Yu, and Alessandro Rinaldo.
\newblock Univariate mean change point detection: Penalization, cusum and
  optimality.
\newblock \emph{Electronic Journal of Statistics}, 14\penalty0 (1):\penalty0
  1917--1961, 2020.

\bibitem[Wolkovich and Donahue(2021)]{wolkovich2021phenological}
EM~Wolkovich and Megan~J Donahue.
\newblock How phenological tracking shapes species and communities in
  non-stationary environments.
\newblock \emph{Biological Reviews}, 96\penalty0 (6):\penalty0 2810--2827,
  2021.

\bibitem[Xu et~al.(2022{\natexlab{a}})Xu, Padilla, Wang, and Li]{changepoints}
Haotian Xu, Oscar Padilla, Daren Wang, and Mengchu Li.
\newblock \emph{changepoints: A Collection of Change-Point Detection Methods},
  2022{\natexlab{a}}.
\newblock URL \url{https://github.com/HaotianXu/changepoints}.
\newblock R package version 1.1.0.

\bibitem[Xu et~al.(2022{\natexlab{b}})Xu, Wang, Zhao, and Yu]{xu2022change}
Haotian Xu, Daren Wang, Zifeng Zhao, and Yi~Yu.
\newblock Change point inference in high-dimensional regression models under
  temporal dependence.
\newblock \emph{arXiv preprint arXiv:2207.12453}, 2022{\natexlab{b}}.

\bibitem[Yao(1987)]{yao1987approximating}
Yi-Ching Yao.
\newblock Approximating the distribution of the maximum likelihood estimate of
  the change-point in a sequence of independent random variables.
\newblock \emph{The Annals of Statistics}, pages 1321--1328, 1987.

\bibitem[Yao and Au(1989)]{yao1989least}
Yi-Ching Yao and Siu-Tong Au.
\newblock Least-squares estimation of a step function.
\newblock \emph{Sankhy{\=a}: The Indian Journal of Statistics, Series A}, pages
  370--381, 1989.

\bibitem[Yu(1993)]{yu1993density}
Bin Yu.
\newblock Density estimation in the $l_\infty$ norm for dependent data with
  applications to the gibbs sampler.
\newblock \emph{The Annals of Statistics}, pages 711--735, 1993.

\bibitem[Yu et~al.(2022)Yu, Chatterjee, and Xu]{yu2022localising}
Yi~Yu, Sabyasachi Chatterjee, and Haotian Xu.
\newblock Localising change points in piecewise polynomials of general degrees.
\newblock \emph{Electronic Journal of Statistics}, 16\penalty0 (1):\penalty0
  1855--1890, 2022.

\end{thebibliography}
%\bibliographystyle{icml2023}

%%%%%%%%%%%%%%%%%%%%%%%%%%%%%%%%%%%%%%%%%%%%%%%%%%%%%%%%%%%%%%%%%%%%%%%%%%%%%%%
%%%%%%%%%%%%%%%%%%%%%%%%%%%%%%%%%%%%%%%%%%%%%%%%%%%%%%%%%%%%%%%%%%%%%%%%%%%%%%%
% APPENDIX
%%%%%%%%%%%%%%%%%%%%%%%%%%%%%%%%%%%%%%%%%%%%%%%%%%%%%%%%%%%%%%%%%%%%%%%%%%%%%%%
%%%%%%%%%%%%%%%%%%%%%%%%%%%%%%%%%%%%%%%%%%%%%%%%%%%%%%%%%%%%%%%%%%%%%%%%%%%%%%%

\newpage
\appendix
\section*{Appendices}
%%%%%%%%%%%%%%%%%%%%%%%%%%%%%
%%%%%%%%%%%%%%%%%%%%%%%%%%%%
%%%%%% Proof Theorem 1%%%%%%%
%%%%%%%%%%%%%%%%%%%%%%%%%%%%%
%%%%%%%%%%%%%%%%%%%%%%%%%%%%%
Additional numerical results and all technical details are included in the supplementary materials.

\section{Detailed simulation results}
In this section, we present the results of our numerical study. To obtain them, we first conduct simulation studies in various scenarios of multivariate nonparametric changes to show the superior performance of our method in localization. We then perform several other simulation studies to illustrate the effectiveness of our methods in change point inference under the vanishing regime. Finally, a real data example is presented. We refer to MNSBS to the final estimator.
\subsection{Simulated data}\label{sec-tables}
We present the tables containing the results of the simulation study in \Cref{sec-numeric} of the main text. On each table, the mean over $200$ repetitions is reported, and the numbers in parenthesis denote the standard errors. For the purpose of identifying misestimation, we compute the averaged coverage among all the repetitions whose $\widehat{K} = K$. In each setting, we highlight the best result in \textbf{bold} and the second best result in \textbf{\textit{bold and italic}}.

\begin{table}[H]
\caption{Localisation results of Scenario 1.}
\label{Scenario1}
\vskip 0.15in
\begin{center}
\begin{small}
\begin{sc}
\begin{tabular}{lcccr}
\hline
& \multicolumn{2}{c}{$T = 150$} & \multicolumn{2}{c}{$T = 300$} \\
Method   & $p = 3$ & $p = 5$ & $p = 3$ & $p = 5$ \\
\hline
\multicolumn{5}{c}{propotion of times $\widehat{K} \neq K$}                                                                             \\
MNSBS & \textbf{\textit{0.040}}            & \textbf{\textit{0.025}}  & \textbf{\textit{0.140 }}           & \textbf{\textit{0.105}}  \\
NMP & 0.170            & 0.325  & 0.255   & 0.420 \\
ECP & \textbf{0.010}            & \textbf{0}  & 0.570    & 0.630 \\
SBS & 0.705            & 0.540  & \textbf{0.115}    & \textbf{0.010} \\
DCBS & 0.210            & 0.120  & 0.145    & 0.115 \\
\multicolumn{5}{c}{average (standard deviation) of $d_{\mathrm{H}}$}                                                                             \\
MNSBS & \textbf{\textit{0.026}} (0.042)            & \textbf{\textit{0.012}} (0.024)  & \textbf{\textit{0.028}} (0.045)  & \textbf{\textit{0.016}} (0.038)  \\
NMP & 0.064 (0.060)            & 0.077 (0.051)  & 0.045 (0.050)  & 0.064 (0.059)  \\
ECP & \textbf{0.017} (0.021)          & \textbf{0.007} (0.009)  & 0.080 (0.065)  & 0.087 (0.066)  \\
SBS & 0.245 (0.138)            & 0.190 (0.157)  & 0.051 (0.096)  & \textbf{0.010} (0.028)  \\
DCBS & 0.061 (0.087)            & 0.024 (0.035)  & \textbf{0.025} (0.036)  & 0.017 (0.033)  \\
\hline
\end{tabular}
\end{sc}
\end{small}
\end{center}
\vskip -0.1in
\end{table}

\begin{table}[H]
\caption{Localisation results of Scenario 2.}
\label{Scenario2}
\vskip 0.15in
\begin{center}
\begin{small}
\begin{sc}
\begin{tabular}{lcccr}
\hline
& \multicolumn{2}{c}{$T = 150$} & \multicolumn{2}{c}{$T = 300$} \\
Method   & $p = 3$ & $p = 5$ & $p = 3$ & $p = 5$ \\
\hline
\multicolumn{5}{c}{propotion of times $\widehat{K} \neq K$}                                                                             \\
MNSBS & 0.815            & 0.645  & 0.670            & 0.590  \\
NMP & \textbf{\textit{0.800}}            & 0.705  & 0.785   & \textbf{\textit{0.515}} \\
ECP & \textbf{0.515}            & \textbf{0.270}  & \textbf{\textit{0.630}}    & 0.665 \\
SBS & 1.000            & 0.995  & 0.820    & 0.625 \\
DCBS & 0.830            & \textbf{\textit{0.490}}  & \textbf{0.160}    & \textbf{0.055} \\
\multicolumn{5}{c}{average (standard deviation) of $d_{\mathrm{H}}$}                                                                             \\
MNSBS & 0.265 (0.124)            & 0.212 (0.154)  & 0.185 (0.147)  & 0.161 (0.149)  \\
NMP & \textbf{\textit{0.261}} (0.123)            & 0.235 (0.142)  & 0.174 (0.151)  & 0.151 (0.153)  \\
ECP & \textbf{0.177} (0.132)          & \textbf{0.094} (0.099)  & \textbf{\textit{0.102}} (0.060)  & \textbf{\textit{0.099}} (0.060)  \\
SBS & 0.333 (0.003)            & 0.332 (0.019)  & 0.279 (0.115)  & 0.222 (0.146)  \\
DCBS & 0.280 (0.117)            & \textbf{\textit{0.174}} (0.156)  & \textbf{0.066} (0.107)  & \textbf{0.020} (0.034)    \\
\hline
\end{tabular}
\end{sc}
\end{small}
\end{center}
\vskip -0.1in
\end{table}

\begin{table}[H]
\caption{Localisation results of Scenario 3.}
\label{Scenario3}
\vskip 0.15in
\begin{center}
\begin{small}
\begin{sc}
\begin{tabular}{lcccr}
\hline
& \multicolumn{2}{c}{$T = 150$} & \multicolumn{2}{c}{$T = 300$} \\
Method   & $p = 3$ & $p = 5$ & $p = 3$ & $p = 5$ \\
\hline
\multicolumn{5}{c}{propotion of times $\widehat{K} \neq K$}                                                                             \\
MNSBS & 0.840            & 0.835  & \textbf{0.740}            & \textbf{\textit{0.755}}  \\
NMP & \textbf{\textit{0.835}}            & \textbf{\textit{0.815}}  & 0.785   & \textbf{0.750} \\
ECP & \textbf{0.750}            & \textbf{0.705}  & \textbf{\textit{0.745}}    & 0.800 \\
SBS & 1.000            & 1.000  & 1.000    & 1.000 \\
DCBS & 0.970            & 0.975  & 0.990    & 0.980 \\
\multicolumn{5}{c}{average (standard deviation) of $d_{\mathrm{H}}$}                                                                             \\
MNSBS & 0.274 (0.073)            & \textbf{\textit{0.274}} (0.078) & \textbf{\textit{0.257}} (0.084) & \textbf{\textit{0.251}} (0.087) \\
NMP & \textbf{\textit{0.269}} (0.077)            & 0.275 (0.076)  & 0.260 (0.082)  & \textbf{\textit{0.251}} (0.084)  \\
ECP & \textbf{0.255} (0.099)          & \textbf{0.232} (0.103)  & \textbf{0.247} (0.091)  & \textbf{0.223} (0.094)  \\
SBS & 0.333 (0)            & 0.333 (0.009)  & 0.333 (0)  & 0.333 (0)  \\
DCBS & 0.330 (0.024)            & 0.331 (0.018)  & 0.333 (0.010)  & 0.333 (0.002)  \\
\hline
\end{tabular}
\end{sc}
\end{small}
\end{center}
\vskip -0.1in
\end{table}

\begin{table}[H]
\caption{Localisation results of Scenario 4.}
\label{Scenario4}
\vskip 0.15in
\begin{center}
\begin{small}
\begin{sc}
\begin{tabular}{lcccr}
\hline
& \multicolumn{2}{c}{$T = 150$} & \multicolumn{2}{c}{$T = 300$} \\
Method   & $p = 3$ & $p = 5$ & $p = 3$ & $p = 5$ \\
\hline
\multicolumn{5}{c}{propotion of times $\widehat{K} \neq K$}                                                                             \\
MNSBS & \textbf{\textit{0.420}}            & \textbf{\textit{0.055}}  & \textbf{0.110}            & \textbf{\textit{0.095}}\\
NMP & 0.575            & 0.405  & 0.145    & 0.210 \\
ECP & \textbf{0.120}            & \textbf{0.050}  & \textbf{\textit{0.125}}    & \textbf{0.055} \\
SBS & 1.000            & 1.000  & 0.910    & 0.845 \\
DCBS & 0.885            & 0.915  & 0.150    & 0.155 \\
\multicolumn{5}{c}{average (standard deviation) of $d_{\mathrm{H}}$}                                                                             \\
MNSBS & \textbf{\textit{0.143}} (0.153)            & \textbf{0.020} (0.055)  & \textbf{0.019} (0.037)  & \textbf{\textit{0.015}} (0.037)  \\
NMP & 0.202 (0.149)            & 0.152 (0.141)  & 0.038 (0.054)  & 0.048 (0.050)  \\
ECP & \textbf{0.058} (0.082)          & \textbf{\textit{0.024}} (0.032)  & \textbf{\textit{0.027}} (0.045)  & \textbf{0.011} (0.048)  \\
SBS & 0.333 (0)            & 0.333 (0)  & 0.305 (0.090)  & 0.285 (0.112)  \\
DCBS & 0.295 (0.102)            & 0.306 (0.089)  & 0.060 (0.111)  & 0.059 (0.112)  \\
\hline
\end{tabular}
\end{sc}
\end{small}
\end{center}
\vskip -0.1in
\end{table}

\begin{table}[H]
\caption{Localisation results of Scenario 5.}
\label{Scenario5}
\vskip 0.15in
\begin{center}
\begin{small}
\begin{sc}
\begin{tabular}{lcccr}
\hline
& \multicolumn{2}{c}{$T = 150$} & \multicolumn{2}{c}{$T = 300$} \\
Method   & $p = 3$ & $p = 5$ & $p = 3$ & $p = 5$ \\
\hline
\multicolumn{5}{c}{propotion of times $\widehat{K} \neq K$}                                                                             \\
MNSBS & \textbf{\textit{0.110}}            & \textbf{\textit{0.080}}  & \textbf{0.215}            & \textbf{\textit{0.230}}  \\
NMP & 0.155           & 0.090  & \textbf{\textit{0.230}}   & \textbf{0.215} \\
ECP & \textbf{0.100}            & \textbf{0.055}  & 0.545    & 0.655 \\
SBS & 0.995         & 0.990  & 0.995    & 0.960 \\
DCBS & 0.960            & 0.965  & 0.975    & 0.980 \\
\multicolumn{5}{c}{average (standard deviation) of $d_{\mathrm{H}}$}                                                                             \\
MNSBS & \textbf{\textit{0.057}} (0.089)            & \textbf{\textit{0.036}} (0.054)  & \textbf{0.039} (0.058)  & \textbf{0.035} (0.051)  \\
NMP & 0.070 (0.107)            & 0.041 (0.060)  & \textbf{\textit{0.040}} (0.057)  & \textbf{\textit{0.038}} (0.054)  \\
ECP & \textbf{0.044} (0.051)          & \textbf{0.032} (0.041)  & 0.083 (0.063)  & 0.093 (0.061)  \\
SBS & 0.332 (0.021)            & 0.330 (0.031)  & 0.331 (0.023)  & 0.321 (0.059)  \\
DCBS & 0.316 (0.061)            & 0.305 (0.075)  & 0.317 (0.062)  & 0.313 (0.068)  \\
\hline
\end{tabular}
\end{sc}
\end{small}
\end{center}
\vskip -0.1in
\end{table}

\begin{table}[H]
\caption{Localisation results of inference.}
\label{inference}
\vskip 0.15in
\begin{center}
\begin{small}
\begin{sc}
\begin{tabular}{lcccr}
\hline
& \multicolumn{2}{c}{$\alpha = 0.01$} & \multicolumn{2}{c}{$\alpha = 0.05$} \\
$n$   & $\mathrm{cover}(1-\alpha)$ & $\mathrm{width}(1-\alpha)$ & $\mathrm{cover}(1-\alpha)$ & $\mathrm{width}(1-\alpha)$ \\
\hline
\multicolumn{5}{c}{$p = 2$}                                                                             \\
100 & 0.864            & 17.613 (6.712) & 0.812 & 14.005 (5.639) \\
200 & 0.904            & 22.940 (7.740)  & 0.838  & 18.407 (6.541)  \\
300 & 0.993            & 26.144 (9.027)  & 0.961  & 20.902 (5.936)  \\
\multicolumn{5}{c}{$p = 3$}                                                                             \\
100 & 0.903            & 15.439 (5.792)  & 0.847            & 11.153 (4.361) \\
200 & 0.966            & 20.108 (7.009)  & 0.949    & 13.920 (5.293)  \\
300 & 0.981            & 22.395 (6.904)  & 0.955    & 15.376 (4.763) \\
\hline
\end{tabular}
\end{sc}
\end{small}
\end{center}
\vskip -0.1in
\end{table}

\subsection{Real data example}\label{RDataRes}
The transformed real data used on \Cref{Real-data} is illustrated in the figure below. These data correspond to   the daily adjusted close price, from Jan-01-2021 to Jan-19-2023, of the 3 major stock market indices, S\&P 500, Dow Jones and NASDAQ. Moreover, in \Cref{Rexample}, we present the  estimated change point by our proposed method MNSBS on the data before mentioned, together with their respective inference. 

\begin{figure}[H]
\vskip 0.2in
\begin{center}
\centerline{\includegraphics[width=0.7\textwidth]{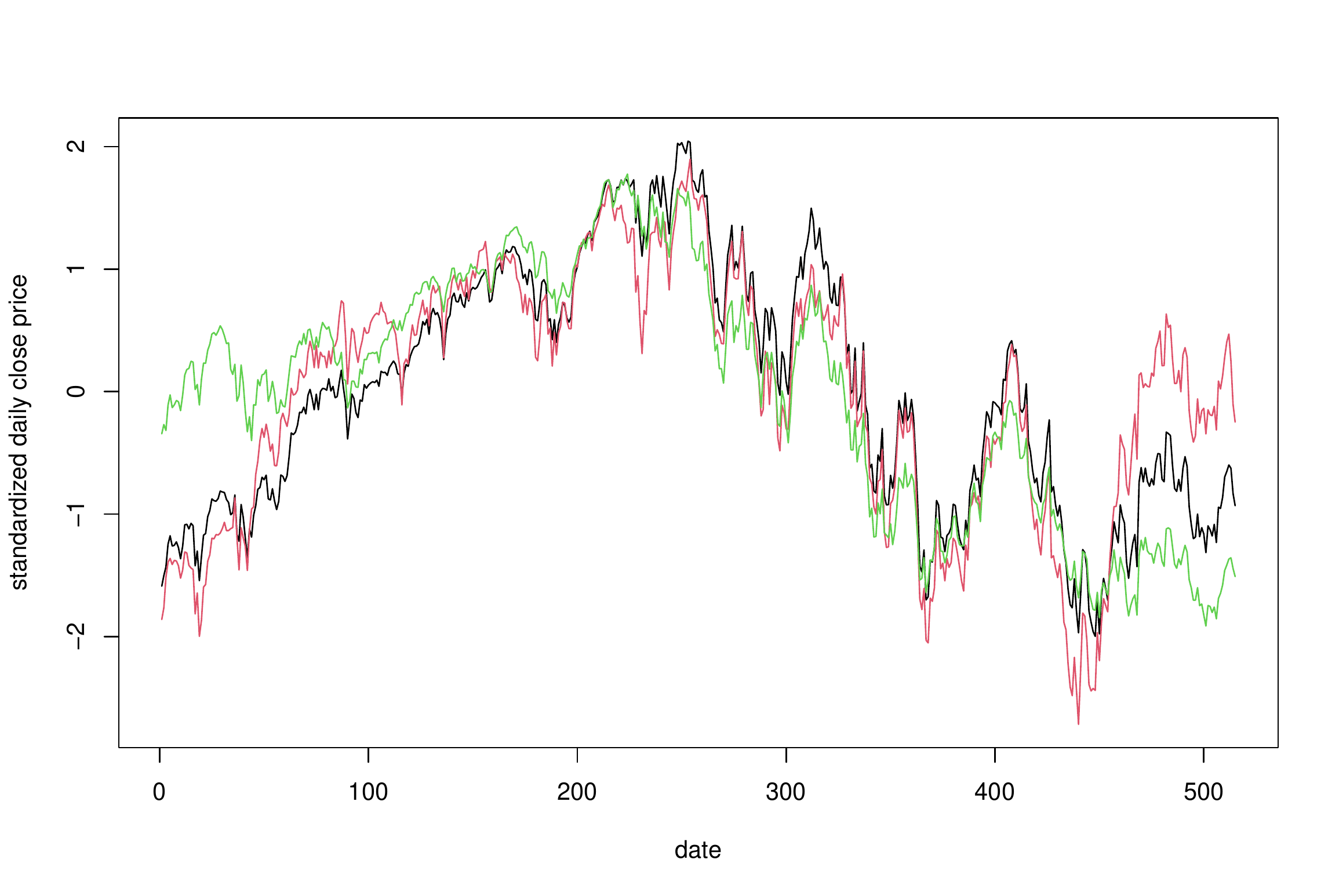}}
\caption{Plot of the standardized daily close price, from Jan-01-2021 to Jan-19-2023, of the 3 major stock market indices.} 
\label{fig:time_n}
\end{center}
\vskip -0.2in
\end{figure}

\begin{table}[H]
\caption{Confidence intervals constructed for change point locations in the Real data example.}
\label{Rexample}
\vskip 0.15in
\begin{center}
\begin{small}
\begin{sc}
\begin{tabular}{lcccr}
\hline
 & \multicolumn{2}{c}{$\alpha = 0.01$} & \multicolumn{2}{c}{$\alpha = 0.05$} \\
$\widehat{\eta}$   & Lower bound & Upper bound & Lower bound & Upper bound \\
\hline
April-07-2021   & April-01-2021            & April-12-2021 & April-05-2021            & April-09-2021\\
June-30-2021   & June-23-2021            & July-09-2021 & June-25-2021            & July-07-2021 \\
Oct-19-2021   & Oct-12-2021            & Oct-26-2021 & Oct-14-2021            & Oct-22-2021\\
Jan-18-2021   & Jan-12-2021            & Jan-21-2021 & Jan-13-2021            & Jan-20-2021\\
April-25-2022   & April-20-2022            & April-28-2022 & April-21-2022            & April-27-2022\\
Oct-27-2022   & Oct-24-2022            & Nov-01-2022 & Oct-25-2022            & Oct-31-2022\\
\hline
\end{tabular}
\end{sc}
\end{small}
\end{center}
\vskip -0.1in
\end{table}
The result of the implementation of NMP and ECP methods on the same dataset are 
\{\text{April-01-2021}, \text{July-01-2021}, \text{Oct-19-2021}, \text{Jan-14-2022}, \text{April-21-2022}, \text{Oct-26-2022}\}
and \{\text{April-08-2021}, \text{June-25-2021}, \text{Oct-18-2021}, \text{Jan-18-2022}, \text{April-28-2022}, \text{Aug-24-2022}, \text{Oct-27-2022}\} respectively.
\newpage
\section{Proof of \Cref{theorem:FSBS}}\label{sec-proof-thm1}
In this section, we present the proof of theorem \Cref{theorem:FSBS}.
\begin{proof}[Proof of \Cref{theorem:FSBS}]

For any $(s,e]\subseteq (0,T] $, let 
	$$ \widetilde f^{  (s, e]}_{t }    (x ) =  \sqrt { \frac{e-t}{ (e -s  )(t-s )}} \sum_{ l =s +1}^{ t}  f_l (x )  -
   \sqrt { \frac{t-s}{ (e-s  )(e-t)}} \sum_{l =t+1}^{ e}  f_l  (x ),\ x\in\mathcal{X}.$$
   For any $\widetilde{r}\in (\rho, T-\rho ] $, we consider
\begin{align*}
& 
\mathcal A(  (s,e], \rho , \lambda   )  =
\bigg\{ \max_{t=s+\rho+1}^{e-\rho }\sup_{x\in\mathbb{R}^p} |\widetilde F_{t, h } ^{s,e } (x)   - \widetilde f_{t} ^{s,e }  (x)  | \le \lambda \bigg\} ;
\\
 &   
\mathcal B (  \widetilde{r}  ,  \rho,  \lambda) = \bigg\{\max_{ N=\rho }^{T -\widetilde{r}   }\sup_{x\in\mathbb{R}^p} \bigg| \frac{1}{\sqrt  N} \sum_{t=\widetilde{r}+1}^{\widetilde{r}+ N}    F_{t,h   }(x)   - \frac{1}{\sqrt  N} \sum_{t=\widetilde{r}  +1}^{\widetilde{r}+N}   f_{t }  (x)  \bigg| \le \lambda \bigg\}  \bigcup 
\\
& 
\quad \quad  \quad \quad \quad \ \   \bigg\{\max_{ N=\rho }^{   \widetilde{r} } \sup_{x\in\mathbb{R}^p}\bigg| \frac{1}{\sqrt  N}  \sum_{t=\widetilde{r} -N +1}^{\widetilde{r}}  F_{t,h  }(x)   - \frac{1}{\sqrt  N}  \sum_{t=\widetilde{r} -N +1}^{\widetilde{r}}     f_{t}  (x)  \bigg| \le \lambda   \bigg\}.
\end{align*}
\\
\\
From \Cref{algorithm:WBS}, we have that
$$ \rho =\frac{\log(T)}{h^p}.$$
Therefore, \Cref{Concentration-Bound} imply that  with
\begin{align}\label{eq:size of lambda in functions} 
 \lambda 
=
C_\lambda   \bigg(  2C \sqrt{\frac{\log T}{h^p}}+\frac{2C_1\sqrt{p}}{\sqrt{h^p}}+2C_2\sqrt{T}h^{r}. \bigg)   ,
\end{align} 
for some diverging sequence $C_\lambda $, it holds that 
$$P \bigg\{ \mathcal A^{c} ( (s,e], \rho,  \lambda )  \bigg\} \lesssim \frac{1}{T^{2}},$$
and,
$$P \bigg\{ \mathcal B^{c} ( \widetilde{r}  , \rho,   \frac{\lambda}{2} )  \bigg\}\lesssim \frac{1}{T^{2}}.$$
Now, we notice that,
\begin{align*}
&\sum_{k=1}^ \mathfrak K \widetilde{n}_k 
=
\sum_{k=1}^ \mathfrak K (2^k -1)
\le
\sum_{k=1}^ \mathfrak K 2^k
\le
2(2^{\lc C_{\mathfrak{K}}(\log(T)) \rc}-1)
=O(T)
.
\end{align*}
In addition, there are $K =O(1) $ number of  change points. In consequence, it follows that 
\begin{align}
& 
P \bigg\{ \mathcal A  ( \mathcal I , \rho ,  \lambda )   \text{ for all } \mathcal I \in \mathcal J   \bigg\} \ge1- \frac{1}{T} , \label{eq:event B0} 
\\
&
P \bigg\{ \mathcal B  (  s , \rho,  \lambda )  \cup \mathcal B (e, \rho  ,  \lambda )   \text{ for all } \mathcal (s,e]=\mathcal  I \in \mathcal J  \bigg\}  \ge 1- \frac{1}{T},  \label{eq:event B1} 
\\
&
P \bigg\{ \mathcal B  (  \eta_k  , \rho,  \lambda )   \text{ for all }  1\le k \le K \bigg\}  \ge1- \frac{1}{T}  \label{eq:event B2} .
\end{align} 
The rest of the argument is made by assuming the events in equations \eqref{eq:event B0}, \eqref{eq:event B1} and  \eqref{eq:event B2} hold. 
By \Cref{remark-1}, we have that on these events,
it is satisfied that 
$$\max_{t=s+\rho+1}^{e-\rho }  \vert\vert  \widetilde F_{t, h }^{s,e} (x)   -   \widetilde f_{t}^{s,e} (x)\vert\vert_{L_2}\le \lambda.$$
Denote 
$$\Upsilon_k  =C      \log (T)  \bigg( T ^{ \frac{  p}{ 2r+p}  }           \bigg) \kappa_k^{-2}      \quad \text{and} 
\quad \Upsilon_{\max  } = C      \log (T)  \bigg(          T ^{ \frac{  p}{ 2r+p }  }           \bigg) \kappa ^{-2}    ,  $$
where $ \kappa = \min \{\kappa_1, \ldots, \kappa_K \} $.
Since $\Upsilon _k$ is  the desired   localisation rate, by induction, it suffices to consider any generic interval $(s, e]  \subseteq (0, T]$ that satisfies the following three conditions:
	\begin{align*}
		&
		\eta_{m-1} \le s\le \eta_m \le \ldots\le \eta_{m+q} \le e \le \eta_{m+q+1}, \quad q\ge -1; \\
		& 
		\text{ either }   \eta_m-s\le \Upsilon_m  \quad \text{or} \quad   s-\eta_{m-1} \le  \Upsilon_{m-1};
		\\
		& 
		\text{ either }  \eta_{m+q+1}-e \le \Upsilon_{m+q+1} \quad  \text{or}\quad   e-\eta_{m+q} \le \Upsilon _{m+q}  .
	\end{align*}
Here $q = -1$ indicates that there is no change point contained in $(s, e]$.
\\
\\
Denote 
$$\Delta_k =  \eta_{k-1}-\eta_{k }  \text{ for }  k =1, \ldots, K+1  \quad \text{and} \quad \Delta= \min\{ \Delta_1,\ldots, \Delta_{K+1}\} .$$
Observe  that  since 
$\kappa_k >0$ 
for all $1\le k\le K$
and  that 
$\Delta_k =\Theta(T) $,   
it holds that $\Upsilon_{\max  }   =o(\Delta)  $.  
Therefore, it has to be the case that for any true  change point $\eta_m\in (0, T ]$, 
either 
$|\eta_m  -s |\le \Upsilon_{m}  $ 
or 
$|\eta_m -s| \ge \Delta - \Upsilon_{\max  }     \geq  \Theta(T) $. This means that 
$ \min\{ |\eta_m-e|, |\eta_m-s| \}\le \Upsilon_{m}  $ 
indicates that 
$\eta_m$ 
is a detected change point in the previous induction step, even if $\eta_m\in (s, e]$.  
We refer to 
$\eta_m\in (s,e]$ 
as an undetected change point if 
$ \min\{ \eta_m -s, \eta_m-e\} =\Theta(T)  $.
To complete the induction step, it suffices to show that MNSBS$( (s,e], h    ,  \tau   )$     
\\
{\bf  (i)} will not detect any new change
point in $(s,e ]$ if
all the change points in that interval have been previously detected, and
\\
{\bf	(ii) }will find a point $D_{m*}^{\mathcal I^* }$ in $(s,e]$ such that $|\eta_m-D_{m*}^{\mathcal I^* } |\le \Upsilon_m$ if there exists at least one undetected change point in $(s, e]$.

In order to accomplish this, we need the following series of steps.

{\bf Step 1.}  We first observe that if $\eta_k \in \{ \eta_k\}_{k=1}^K$ is any change point in the functional time series, by \Cref{lemma:properties of seeded},
 there exists  a seeded interval  $ \mathcal I_k =(s_k, e_k] $ containing  exactly one change point $\eta_k $   such that   
\begin{align*}   
\min\{ \eta_k-s_k , e_k -\eta_k  \}\ge \frac{1}{16} \zeta_k ,  \quad \text{and}
 \quad \max\{ \eta_k-s_k , e_k -\eta_k  \}\le \zeta _k
 \end{align*}
where,
$$ \zeta _k = \frac{9}{10} \min \{ \eta_{k+1}-\eta_k, \eta_k -\eta_{k-1} \} . $$ 
Even more, we notice that if  
$ \eta_k\in (s,e]$ 
is any undetected change point in 
$(s,e] $. 
Then it must hold that 
 $$ s-\eta_{k-1}  \le \Upsilon_{\max}  .$$
Since 
$\Upsilon_{\max} =  O( \log (T)           T ^{ \frac{  p}{ 2r+p }  } )$ and $O(\log^a(T))=o(T^b)$ for any positive numbers $a$ and $b$, we have that $\Upsilon_{\max}=o(T)$. Moreover,  
$\eta_k -s _k \le \zeta_k  \le \frac{9}{10}(\eta_k -\eta_{k-1})  $, so that it holds that 
$$s_k -\eta_{k-1} \ge \frac{1}{10} (\eta_{k} -\eta_{k-1} ) >  \Upsilon_{\max } \ge s- \eta_{k-1} $$
and in consequence 
$  s_k\ge s $. 
Similarly $  e_k\le e $.   
Therefore 
$$ \mathcal I_k = (s_k, e_k] \subseteq (s,e]. $$
\
\\
{\bf Step 2.} Consider the collection of intervals  $\{ \mathcal I _k =(s_k, e_k] \}_{k=1}^K $ in {\bf Step 1.}
 In this step, it is shown that   for each $k \in \{ 1,\ldots,K\}$, it holds that
	\begin{align} \label{eq:properties of vectors}
	 \max_{ t=s_k  +\rho}^{t=e _k  - \rho} 	\vert\vert \widetilde F_{t, h }^{ ( s_k  , e_k  ]   }  \vert\vert_{L_2}  \ge   c_1 \sqrt {T}   \kappa_k   ,
	\end{align}
	for some sufficient small constant $c_1$. 
	 \\
	 \\
	 Let $k \in \{ 1,\ldots,K\}$.
	By  {\bf Step 1}, $ \mathcal I_k $
	contains exactly one change point $\eta_k$.  Since $ f _t $ is a    one-dimensional population  time series and there is only  one change point in $\mathcal I_k=(s_k, e_k]$, 
	it holds that 
$$f_{s_k+1}=...=f_{\eta_k}\neq f_{\eta_k+1}=...=f_{e_k}$$	
which implies, for $s_k<t< \eta_k$
\begin{align*}
\widetilde f^{  (s_k, e_k]}_{t }    =&  \sqrt { \frac{e_k-t}{ (e_k -s_k  )(t-s_k )}} \sum_{ l =s_k +1}^{ t}  f_{\eta_k}    -\sqrt { \frac{t-s_k}{ (e_k-s_k  )(e_k-t)}} \sum_{l =t+1}^{\eta_k}  f_{\eta_k}  \\
-&
   \sqrt { \frac{t-s_k}{ (e_k-s_k  )(e_k-t)}} \sum_{l= \eta_k+1}^{ e_k}  f_{\eta_k+1}  
 \\
 =&
 (t-s_k)\sqrt { \frac{e_k-t}{ (e_k -s_k  )(t-s_k )}}f_{\eta_k}  
 -(\eta_k-t)\sqrt { \frac{t-s_k}{ (e_k-s_k  )(e_k-t)}}f_{\eta_k}  \\
-&
 (e_k-\eta_k)\sqrt { \frac{t-s_k}{ (e_k-s_k  )(e_k-t)}}f_{\eta_k+1}  
 \\
 =&
 \sqrt { \frac{(t-s_k)(e_k-t)}{ (e_k -s_k  )}}f_{\eta_k}  
 -(\eta_k-t)\sqrt { \frac{t-s_k}{ (e_k-s_k  )(e_k-t)}}f_{\eta_k}  \\
-&
 (e_k-\eta_k)\sqrt { \frac{t-s_k}{ (e_k-s_k  )(e_k-t)}}f_{\eta_k+1}  
 \\
 =&
 (e_k-t)\sqrt { \frac{t-s_k}{ (e_k-t)(e_k -s_k  )}}f_{\eta_k}  
 -(\eta_k-t)\sqrt { \frac{t-s_k}{ (e_k-s_k  )(e_k-t)}}f_{\eta_k}  \\
-&
 (e_k-\eta_k)\sqrt { \frac{t-s_k}{ (e_k-s_k  )(e_k-t)}}f_{\eta_k+1}  
 \\
 =&
 (e_k-\eta_k)\sqrt { \frac{t-s_k}{ (e_k-t)(e_k -s_k  )}}f_{\eta_k} 
-
 (e_k-\eta_k)\sqrt { \frac{t-s_k}{ (e_k-s_k  )(e_k-t)}}f_{\eta_k+1}  
 \\
 =&
  (e_k-\eta_k)\sqrt { \frac{t-s_k}{ (e_k-t)(e_k -s_k  )}}(f_{\eta_k}  -f_{\eta_k+1}   ).
\end{align*}
Similarly, for $\eta_k\le t\le e_k$
\begin{align*}
 f^{  (s_k, e_k]}_{t }    =\sqrt {\frac { e  _k-t}{(e  _k- s  _k)(t-s   _k)} }(\eta_k-s  _k)    (   f  _{ \eta_{k}  }   -f _{ \eta_{k}+1    }).   
\end{align*}
Therefore,
\begin{align}\label{eq:one change point 1d cusum}
\widetilde f^{  (s_k, e_k]}_{t }        
=
\begin{cases}
\sqrt {\frac {t-s  _k}{(e  _k-s  _k)(e  _k-t)} }( e  _k-\eta_k)    (   f  _{ \eta_{k}  }   -f _{ \eta_{k  }+1    } ) ,  & s_k   <   t< \eta_k;  
\\
\sqrt {\frac { e  _k-t}{(e  _k- s  _k)(t-s   _k)} }(\eta_k-s  _k)    (   f  _{ \eta_{k} }   -f _{ \eta_{k  }+1    } )  , & \eta_k \le  t\le e_k .
\end{cases} 
\end{align}
Since $\Delta=\Theta(T)$, $\rho=O(\log(T)T^{\frac{p}{2r+p}})$ and $\log^a(T)=o(T^{b})$ for any positive numbers $a$ and $b,$ we have that
	 \begin{equation}
	 \label{eq-bound}
	     \min\{ \eta_k-s_k , e_k -\eta_k  \}\ge \frac{1}{16} \zeta_k  \ge\frac{3}{4} c_2 T  >  \rho  ,
	 \end{equation}
   so that  $\eta_k \in [s_k+\rho, e_k-\rho]$.
   Then, from \eqref{eq:one change point 1d cusum}, \eqref{eq-bound} and the fact that $\vert e_k-s_k\vert<T$ and $\vert \eta_k-s_k\vert<T$, 
   \begin{align}        \label{eq:functional population lower bound}  \vert\vert \widetilde f^{  (s_k, e_k]}_{ \eta_k  }  \vert\vert_{L_2}
   =   \sqrt {\frac { e  _k-\eta_k}{(e  _k- s  _k)(\eta_k-s   _k)} }(\eta_k-s  _k)    \vert\vert   f  _{ \eta_{k} }   -f _{ \eta_{k  }  +1  } \vert \vert_{L_
2}
   \ge  c_ 2 \sqrt {T } \frac{3}{4}\kappa_k.
   \end{align} 
	\
	\\
	Therefore, it holds that 
		\begin{align*}
	\max_{ t=s_k  +\rho}^{t=e _k  - \rho} \vert\vert\widetilde F^{  (s_k, e_k]}_{t,h  }      \vert\vert_{L_2}
	\ge & 	 \vert\vert \widetilde F^{  (s_k, e_k]}_{\eta_k,h }    \vert\vert_{L_2}
	\\
	\ge & \vert\vert \widetilde f^{  (s_k, e_k]}_{ \eta_k  }  \vert\vert_{L_2}  
	 -\lambda 
	\\
	\ge &  c_ 2 \frac{3}{4} \sqrt {T} \kappa_k  -\lambda  ,
	   \end{align*} 
where the first inequality follows from the fact that $\eta_k \in [s_k+\rho, e_k-\rho]$, the second inequality follows from the good  event in \eqref{eq:event B0} and \Cref{remark2}, and the last inequality follows from \eqref{eq:functional population lower bound}. 
	\\
	Next, we observe that $\log^{\frac{1}{2}}(T)\sqrt{\frac{1}{h^p}}=o(\sqrt{T^{\frac{2r+p}{p}}})O(\sqrt{T^{\frac{p}{2r+p}}})=o(\sqrt{T})$, $\rho<c_2T$, and $h^r=o(1)$.
	In consequence, since $\kappa_k  $ is a positive constant, by the upper bound of $\lambda $ on  \Cref{eq:size of lambda in functions}, for sufficiently large $T$, it holds that 
	$$ \frac{c_2}{4}\sqrt{T}\kappa_k\ge \lambda.$$
	Therefore,
	$$\max_{ t=s_k  +\rho}^{t=e _k  - \rho}	\vert\vert \widetilde F^{  (s_k, e_k]}_{t,h  }    \vert\vert_{L_2}  \ge \frac{c_2}{2}\sqrt {T} \kappa_k. $$
	Therefore \Cref{eq:properties of vectors} holds with $ c_1 =\frac{c_2}{2}.$
\\	
 \\
 {\bf Step 3.} 
In this step, it is  shown that   SBS$( (s, e] , h ,\tau)   $  can 
 consistently detect or reject the existence of undetected
change points within $(s, e]$.
 \\
 \\
Suppose $\eta_k\in (s, e ]$ is any undetected change point. Then by the second half of {\bf Step 1}, $\mathcal I_k \subseteq (s,e] $. Therefore 
  \begin{align*}  
		A_{\mathcal I^* } \ge \max_{ t=s_k  +\rho}^{t=e _k  - \rho}	\vert\vert \widetilde F_{t, h }^{ ( s_k  , e_k  ]   } \vert\vert_{L_2}  \ge   c_1 \sqrt {T}   \kappa_k   >  \tau,
	\end{align*} 
where the second inequality follows from    \Cref{eq:properties of vectors}, and the last inequality follows from the fact that, $\log^a(T)=o(T^b)$ for any positive numbers $a$ and $b$ implies $\tau = C_\tau      \bigg(   \log  (T)        \sqrt {   \frac{1}{h^p }  
 }      \bigg)  =o(\sqrt{T})$.
\\
\\
Suppose there does not exist any undetected change point in $(s, e]$. Then for any $ \mathcal I =(\alpha ,\beta]  \subseteq (s, e]$, one of the following situations must hold,
	\begin{itemize}
	\item [(a)]	There is no change point within $ (\alpha ,  \beta   ]$;
	\item [(b)] there exists only one change point $\eta_{k} $ within $(\alpha , \beta ]$ and $\min \{ \eta_{k}- \alpha   ,  \beta-\eta_{k}\}\le \Upsilon_k $; 
	\item [(c)] there exist two change points $\eta_{k} ,\eta_{k+1}$ within $(\alpha ,  \beta ]$ and 
	$$  \eta_{k}-  \alpha  \le \Upsilon_k  \quad \text{and} \quad    \beta   -\eta_{k+1}  \le \Upsilon_ {k+1}  .$$
	\end{itemize}
 Observe that if (a) holds, then we have
$$
\max _{\alpha+\rho<t<\beta-\rho} \vert\vert \widetilde{F}_{t,h}^{(\alpha, \beta]}\vert\vert_{L_2} \leq \max _{\alpha+\rho<t<\beta-\rho} \vert\vert\widetilde{f}_{t}^{(\alpha, \beta]}\vert\vert_{L_2}+\lambda=\lambda \text {. }
$$
Cases (b) and (c) can be dealt with using similar arguments. We will only work on (c) here. It follows that, in the good event in \Cref{eq:event B0}, 
\begin{align}
\max _{\alpha+\rho<t<\beta-\rho} \vert\vert\widetilde{F}_{t,h}^{(\alpha, \beta]}\vert\vert_{L_2} &\leq \max _{\alpha<t<\beta} \vert\vert\widetilde{f}_{t}^{(\alpha, \beta]}\vert\vert_{L_2}+\lambda\\
&\leq \sqrt{e-\eta_k} \kappa_{k+1}+ \sqrt{\eta_k-s} \kappa_k+\lambda
\\
&\le 2 \sqrt{C} \log ^{\frac{1}{2}}(T) \sqrt{T^{\frac{p}{2 r+p}} }+\lambda
\end{align}
where the second inequality is followed by \Cref{lemma:cusum boundary bound}.
 Therefore in the  good  event  in \Cref{eq:event B0}, for any $\mathcal I  =(\alpha, \beta] \subseteq(s,e] $, it holds that 
\begin{equation*} 
A_{\mathcal I}  
= 
\max_{  t=\alpha+ \rho }^{  \beta-\rho      }  \vert\vert \widetilde F _{t,h} ^{( \alpha  ,  \beta]  }\vert\vert_{L_2} 
\le
2\sqrt{C} \log^{ \frac{1}{2} }  (T)  \sqrt {        T ^{ \frac{  p}{ 2r+p }  }}    + \lambda ,
\end{equation*}
Then,
$$
\begin{aligned}
& 2 \sqrt{C} \log ^{\frac{1}{2}}(T) \sqrt{1+T^{\frac{p}{2 r+p}}} +\lambda_{\mathcal{A}} \\
= & 2 \sqrt{C} \log ^{\frac{1}{2}}(T) \sqrt{\frac{1}{ h^p}+1} + 2C \sqrt{\frac{\log T}{h^p}}+\frac{2C_1\sqrt{p}}{\sqrt{h^p}}+2C_2\sqrt{T}h^{r} .
\end{aligned}
$$
We observe that $\sqrt{\frac{\log (T)}{ h^p}}=O\Big(\log (T)^{1 / 2} \sqrt{\frac{1}{h^p}}\Big)$. Moreover,
$$
\sqrt{T} h^r=\sqrt{T}\Big(\frac{1}{ T}\Big)^{\frac{r}{2 r+p}} \leq\Big(T^{\frac{1}{2}-\frac{r}{2 r+p}}\Big) ,
$$
and given that,
$$
\frac{1}{2}-\frac{r}{2 r+p}=\frac{p}{2(2 r+p)}
$$
we get,
$$
\sqrt{T} h^r=o\Big(\log ^{\frac{1}{2}}(T) \sqrt{\frac{1}{ h^p}}\Big) .
$$
Therefore, by the choice of $\tau$, we will always correctly reject the existence of undetected change points, since
$$2 \sqrt{C} \log ^{\frac{1}{2}}(T) \sqrt{T^{\frac{p}{2 r+p}}} +\lambda\le \tau.$$
 Thus, by the choice of $\tau$, it holds that with sufficiently large constant $C_\tau $, 
\begin{align} 
\label{help}
A_ m^{\mathcal I} \le \tau \quad \text{for all } \quad    \mathcal I  \subseteq(s,e] .
\end{align}
As a result, MNSBS$( (s,e], h    ,  \tau   )$     will correctly
reject  if $(s,e]$ contains no undetected change points. 
\\
\
\\
 {\bf Step 4.} 
Assume that there exists an undetected change point $\eta_{\widetilde{k}}\in (s, e]$ such that 
$$ \min\{ \eta_{\widetilde{k}} -s, \eta_{\widetilde{k}}-e\} =\Theta(T).$$  
Let  $\mathcal I^*$ be defined as in  MNSBS $( (s,e], h   ,  \tau   )$ with 
$$\mathcal I^* =( \alpha ^*, \beta ^*].  $$
\\
To complete the  induction, it suffices to show that,   there exists a change point $\eta_k\in  (s ,e ]$ such that 
$ \min\{ \eta_k -s, \eta_k-e\}= \Theta(T)$ and $|b_{\mathcal I^*} -\eta_k|\le \Upsilon_k$.
To this end, we consider the collection of change points of $\{f_t\}_{t\in (\alpha^*,\beta^*]}$
We are to ensure that the assumptions of \Cref{final-bound} are satisfied.
In the following, $\lambda$ is used in \Cref{final-bound}.
Then  \Cref{eq:wbs noise 1} and \Cref{eq:wbs noise 2}  are directly consequence of 
 \Cref{eq:event B0}, \Cref{eq:event B1}, \Cref{eq:event B2}.     
By  {\bf Step 1} with $ \mathcal I_k =(s_k,e_k]$, it holds that $$\min\{ \eta_k-s_k , e_k -\eta_k  \}\ge \frac{1}{16} \zeta_k  \ge c_2 T  ,$$ 
Therefore  for all $ k\in \{ \widetilde{k} : \min \{ \eta_{\widetilde{k}}-s, e-\eta_{\widetilde{k}}\} \ge c_2 T\} $,
$$ \max_{ t=\alpha^*  +\rho}^{t=\beta^*  - \rho}\vert\vert \widetilde F_{t, h }^{ ( \alpha^*  , \beta^*  ]   } \vert\vert_{L_2}  \ge \max_{ t=s_k  +\rho}^{t=e _k  - \rho}\vert\vert \widetilde F_{t, h }^{ ( s_k  , e_k  ]   }\vert\vert_{L_2} \ge   c_1 \sqrt {T}   \kappa_k, $$
where the last inequality follows from \Cref{eq:properties of vectors}.
Therefore
\eqref{con-1} holds in \Cref{final-bound}. 
Finally,
\Cref{con-2} is a direct consequence of  the choices that 
$$
h  =  C_h (T)^{\frac{-1}{2r+d }}    \quad  \text{and} \quad \rho =     \frac{ \log(T) }{nh^d}     .$$
Thus, all the conditions in \Cref{final-bound} are met. So that, there exists a change point $\eta_{k}$ of $\{f _t \}_{ t \in \mathcal I^* } $, satisfying
\begin{equation}
\min \{ \beta ^* -\eta_k,\eta_k- \alpha^* \}    > cT  , \label{eq:coro wbsrp 1d re1}
\end{equation}
and
\begin{align*}
		| b_{\mathcal I^* }-\eta_{k}|\le  \max \{ C_3\lambda  ^2 \kappa_k ^{-2}
	    ,\rho  \} \le&    C _4      \log (T)  \bigg(   \frac{1}{h^p }      + T h^{2r } 
	    \bigg) \kappa_k^{-2}    \\
	    \le&   C      \log (T)  \bigg(           T ^{ \frac{  p}{ 2r+p }  }          \bigg) \kappa_k^{-2}   
	   \end{align*} 
for sufficiently large constant $C $, where we have followed the same line of arguments as for the conclusion of \eqref{help}.
Observe that \\
{\bf i)} The change points
of $\{f_t\}_{ t \in \mathcal I^*}  $ belong to $(s, e]\cap \{ \eta_k\}_{k=1}^K$; and
\\
{\bf ii)} \Cref{eq:coro wbsrp 1d re1}  and  $( \alpha^* , \beta^*  ]  \subseteq (s, e]$ imply that
	\[
	\min \{e-\eta_k,\eta_k-s\}  >  cT     \ge  \Upsilon_{\max }.
	\]
	As discussed in the argument before {\bf Step 1}, this implies that
	$\eta_k $ must be an undetected change point of  $\{f_t\}_{ t \in \mathcal I^*}  $.
\end{proof}

%%%%%%%%%%%%%%%%%%%%%%%%%%%%%%%%%%%
%%%%%%%%%%%%%%%%%%%%%%%%%%%%%%%%%%%
%%%%%% End proof of theorem 1%%%%%%
%%%%%%%%%%%%%%%%%%%%%%%%%%%%%%%%%%%
%%%%%%%%%%%%%%%%%%%%%%%%%%%%%%%%%%%

%%%%%%%%%%%%%%%%%%%%%%%%%%%%%%%%%%%%
%%%%%%%%%%%%%%%%%%%%%%%%%%%%%%%%%%%%
%%%%%%% Proof of theorem 2%%%%%%%%%%%
%%%%%%%%%%%%%%%%%%%%%%%%%%%%%%%%%%%%%
%%%%%%%%%%%%%%%%%%%%%%%%%%%%%%%%%%%%%

\newpage
\section{Proof of \Cref{theorem:Final-est}}\label{sec-proof-thm2}
In this section, we present the proof of theorem \Cref{theorem:Final-est}.
\begin{proof}[Proof of \Cref{theorem:Final-est}]

{\bf{Uniform tightness}} of $\kappa_k^{2+\frac{p}{r}}\Big|\widetilde{\eta}_k-\eta_k\Big|$. Here we show {\bf{a.1}} and {\bf{b.1}}. For this purpose, we will follow a series of steps. On {\bf{step 1}}, we rewrite \eqref{Final-est} in order to derive a uniform bound. {\bf{Step 2}} analyses the lower bound while {\bf{Step 3}} the upper bound.
\\
{\bf{Step 1:}}
Denote $\widetilde{r}=\widetilde{\eta}_k-\eta_k$. Without loss of generality, suppose $\widetilde{r} \geq 0$. Since $\widetilde{\eta}_k=\eta_k+\widetilde{r}$, defined in \eqref{Final-est}, is the minimizer of $\widehat{Q}_k(\eta)$, it follows that
\begin{equation*}
    \widehat{Q}_k(\eta_k+\widetilde{r})-\widehat{Q}_k(\eta_k)\le 0.
\end{equation*}
Let
\begin{equation}
    Q^*(\eta)=\sum_{t=s_k+1}^{\eta}\vert\vert F_{t,h_2}-f_{(s_k,\eta_k]} \ast \mathcal{K}_{h_2}\vert\vert_{L_2}^2+\sum_{t=\eta+1}^{e_k}\vert\vert F_{t,h_2}-f_{(\eta_k,e_k]}\ast \mathcal{K}_{h_2}\vert\vert_{L_2}^2,
\end{equation}
where, 
\begin{equation}
    f_{(s_k,\eta_k]}=\frac{1}{\eta_k-s_k}\sum_{i=s_k+1}^{\eta_k}f_{i}, \ f_{(\eta_k,e_k]}=\frac{1}{e_k-\eta_k}\sum_{i=\eta_k+1}^{e_k}f_{i}.
\end{equation}
Observe that,
\begin{align}
\label{bounds-req}
    Q^*(\eta_k+\widetilde{r})-Q^*(\eta_k)\le& \widehat{Q}_k(\eta_k)-\widehat{Q}_k(\eta_k+\widetilde{r})-Q^*(\eta_k)+Q^*(\eta_k+\widetilde{r}).
\end{align}
If $\widetilde{r} \leq 1 / \kappa_k^{2+\frac{p}{r}}$, then there is nothing to show. So for the rest of the argument, for contradiction, assume that
\begin{align}
\label{tighness-req}
    \widetilde{r} \geq \frac{1}{\kappa_k^{2+\frac{p}{r}}}.
\end{align}
{\bf{Step 2: Finding a lower bound.}}
In this step, we will find a lower bound of the inequality \eqref{bounds-req}. To this end, we observe that,
\begin{align*}
    Q^*(\eta_k+\widetilde{r})-Q^*(\eta_k)=&\sum_{t=\eta_k+1}^{\eta_k+\widetilde{r}}\vert\vert F_{t,{h_2}}-f_{(s_k,\eta_k]} \ast \mathcal{K}_{{h_2}}\vert\vert_{L_2}^2-\sum_{t=\eta_k+1}^{\eta_k+\widetilde{r}}\vert\vert F_{t,{h_2}}-f_{(\eta_k,e_k]}\ast \mathcal{K}_{h_2}\vert\vert_{L_2}^2
    \\
    =&\sum_{t=\eta_k+1}^{\eta_k+\widetilde{r}}\vert\vert f_{(s_k,\eta_k]} \ast \mathcal{K}_{{h_2}}-f_{(\eta_k,e_k]}\ast \mathcal{K}_{h_2}\vert\vert_{L_2}^2
    \\
    &-2\sum_{t=\eta_k+1}^{\eta_k+\widetilde{r}}\langle f_{(s_k,\eta_k]} \ast \mathcal{K}_{{h_2}}-f_{(\eta_k,e_k]}\ast \mathcal{K}_{h_2},F_{t,{h_2}}-f_{(\eta_k,e_k]}\ast \mathcal{K}_{h_2}\rangle _{L_2}
    \\
    =&\sum_{t=\eta_k+1}^{\eta_k+\widetilde{r}}\frac{1}{2}\vert\vert f_{(s_k,\eta_k]}-f_{(\eta_k,e_k]}\vert\vert_{L_2}^2-2\vert\vert f_{(s_k,\eta_k]} \ast \mathcal{K}_{{h_2}}-f_{(s_k,\eta_k]}+f_{(\eta_k,e_k]}\ast \mathcal{K}_{h_2}-f_{(\eta_k,e_k]}\vert\vert_{L_2}^2
\end{align*}
\begin{align*}
    &-2\sum_{t=\eta_k+1}^{\eta_k+\widetilde{r}}\langle f_{(s_k,\eta_k]} \ast \mathcal{K}_{{h_2}}-f_{(\eta_k,e_k]}\ast \mathcal{K}_{h_2},F_{t,{h_2}}-f_{(\eta_k,e_k]}\ast \mathcal{K}_{h_2}\rangle _{L_2}
    \\
    \geq&
    \frac{1}{2}\widetilde{r}\kappa_k^2-2\sum_{t=\eta_k+1}^{\eta_k+\widetilde{r}}\vert\vert f_{(s_k,\eta_k]} \ast \mathcal{K}_{{h_2}}-f_{(s_k,\eta_k]}+f_{(\eta_k,e_k]}\ast \mathcal{K}_{h_2}-f_{(\eta_k,e_k]}\vert\vert_{L_2}^2
    \\
    &-2\sum_{t=\eta_k+1}^{\eta_k+\widetilde{r}}\langle f_{(s_k,\eta_k]} \ast \mathcal{K}_{{h_2}}-f_{(\eta_k,e_k]}\ast \mathcal{K}_{h_2},F_{t,{h_2}}-f_{(\eta_k,e_k]}\ast \mathcal{K}_{h_2}\rangle _{L_2}
\end{align*}
We consider,
\begin{align*}
&I_1:=2\sum_{t=\eta_k+1}^{\eta_k+\widetilde{r}}\vert\vert f_{(s_k,\eta_k]} \ast \mathcal{K}_{{h_2}}-f_{(s_k,\eta_k]}+f_{(\eta_k,e_k]}\ast \mathcal{K}_{h_2}-f_{(\eta_k,e_k]}\vert\vert_{L_2}^2,\ \text{and,}
\\
&I_2:=2\sum_{t=\eta_k+1}^{\eta_k+\widetilde{r}}\langle f_{(s_k,\eta_k]} \ast \mathcal{K}_{{h_2}}-f_{(\eta_k,e_k]}\ast \mathcal{K}_{h_2},F_{t,{h_2}}-f_{(\eta_k,e_k]}\ast \mathcal{K}_{h_2}\rangle _{L_2}.
\end{align*}
From above, we have that,
\begin{align*}
    Q^*(\eta_k+\widetilde{r})-Q^*(\eta_k)\geq&\frac{1}{2}\widetilde{r}\kappa_k^2-I_1-I_2.
\end{align*}
We now analyze the order of magnitude of term $I_1$. Then, we get a lower bound for the term $-I_1.$ In fact $I_1$, has an upper bound of the form $o_p(\widetilde{r}k^{\frac{p}{r}+2})$, where we use that $\vert\vert f_{\eta_k} \ast \mathcal{K}_{{h_2}}-f_{\eta_k}\vert\vert_{L_2}=o(1)$ and $\vert\vert f_{\eta_{k+1}} \ast \mathcal{K}_{{h_2}}-f_{\eta_{k+1}}\vert\vert_{L_2}=o(1)$. For the term $I_2$, we consider the random variable,
$$Y_i=\frac{\langle f_{[s_k+1,\eta_k]} \ast \mathcal{K}_{{h_2}}-f_{[\eta_k+1,e_k]}\ast \mathcal{K}_{h_2},F_{t,{h_2}}-f_{[\eta_k+1,e_k]}\ast \mathcal{K}_{h_2}\rangle _{L_2}}{\kappa_k\mathbb{E}(\vert\vert F_{t,{h_2}}-f_{\eta_{k+1}}\ast \mathcal{K}_{h_2}\vert\vert_{L^2}^{3})^{1/3}}.$$
In order to use \Cref{lemma3}, we need to bound $\mathbb{E}(|Y_i|^3).$ For this, first we use Cauchy Schwartz inequality,
\begin{align*}
    \mathbb{E}(|Y_i|^3)\le& \frac{(\vert\vert (f_{\eta_k+1}-f_{\eta_k})\ast \mathcal{K}_{h_2}
    \vert\vert_{L^2})^3
    \mathbb{E}(\vert\vert F_{t,{h_2}}-f_{\eta_{k+1}}\ast \mathcal{K}_{h_2}\vert\vert_{L^2}^3)}{\kappa_k^3\mathbb{E}(\vert\vert F_{t,{h_2}}-f_{\eta_{k+1}}\ast \mathcal{K}_{h_2}\vert\vert_{L^2}^{3})}
\end{align*}
then, by Minkowski’s inequality, 
\begin{align*}
    \vert\vert (f_{\eta_k+1}-f_{\eta_k})\ast \mathcal{K}_{h_2}
    \vert\vert_{L^2}=&\Big|\Big| \int_{\mathbb{R}^p}(f_{\eta_k+1}-f_{\eta_k})(\cdot-y)\mathcal{K}_{h_2}(y)dy\Big|\Big|_{L_2}\\
    \le& \int_{\mathbb{R}^p}\Big|\Big|(f_{\eta_k+1}-f_{\eta_k})(\cdot-y)\mathcal{K}_{h_2}(y)\Big|\Big|_{L_2}dy
    \\
    =& \Big(\int_{\mathbb{R}^p}\vert\mathcal{K}_{h_2}(y)\vert dy\Big)\Big|\Big|(f_{\eta_k+1}-f_{\eta_k})(\cdot-y)\Big|\Big|_{L_2}
    \\
    =&\vert\vert f_{\eta_k+1}-f_{\eta_k}\vert\vert_{L^2}\vert\vert \mathcal{K}_{h_2}
    \vert\vert_{L^1}.
\end{align*}
Therefore, by \Cref{kernel-as}, we have
\begin{align*}
    & \frac{(\vert\vert (f_{\eta_k+1}-f_{\eta_k})\ast \mathcal{K}_{h_2}
    \vert\vert_{L^2})^3
    \mathbb{E}(\vert\vert F_{t,{h_2}}-f_{\eta_{k+1}}\ast \mathcal{K}_{h_2}\vert\vert_{L^2}^3)}{\kappa_k^3\mathbb{E}(\vert\vert F_{t,{h_2}}-f_{\eta_{k+1}}\ast \mathcal{K}_{h_2}\vert\vert_{L^2}^{3})}
    \\
    \le&
    \frac{(\vert\vert f_{\eta_k+1}-f_{\eta_k}\vert\vert_{L^2}\vert\vert \mathcal{K}_{h_2}
    \vert\vert_{L^1})^3
    \mathbb{E}(\vert\vert F_{t,{h_2}}-f_{\eta_{k+1}}\ast \mathcal{K}_{h_2}\vert\vert_{L^2}^3)}{\kappa_k^3\mathbb{E}(\vert\vert F_{t,{h_2}}-f_{\eta_{k+1}}\ast \mathcal{K}_{h_2}\vert\vert_{L^2}^3)}
    \\
    \le& C_K.
\end{align*}
 for any $t \in{(\eta_k,e_k]}$.
Moreover, we have that 
   \begin{equation}      
          \begin{array}{lll}
   \label{eqn:aux}
    \mathbb{E}(\vert\vert F_{t,h_2}-f_{\eta_{k+1}}\ast \mathcal{K}_{h_2}\vert\vert_{L^2}^3)^{\frac{1}{3}}
   & =&\Big(\int \Big(\int (\mathcal{K}_{h_2}(x-z)-\mathbb{E}(\mathcal{K}_{h_2}(x-X_t)))^{2} dx\Big) ^{\frac{3}{2}}f_t(z)dz\Big)^{1/3}
    \\
    &\le& \Big(\int \Big(\int (\mathcal{K}_{h_2}(x-z))^2 dx\Big) ^{\frac{3}{2}}f_t(z)dz\Big)^{\frac{1}{3}}\\
&    =&\frac{1}{\kappa^{p/2r}}.
  \end{array} 
   \end{equation}
Therefore, by \Cref{lemma3}, we have that $I_2=o_p\Big(\sqrt{\widetilde{r}}\kappa_{k}\kappa_k^{-\frac{p}{2r}} (\log(\widetilde{r}\kappa_k^{\frac{p}{r}+2})+1)\Big).$
Thus,
\begin{align}
\label{low-bound}
    Q^*(\eta_k+\widetilde{r})-Q^*(\eta_k)\ge& \frac{1}{2}\widetilde{r}\kappa_k^2-O_p\Big(\sqrt{\widetilde{r}}\kappa_{k}\kappa_k^{-\frac{p}{2r}} (\log(\widetilde{r}\kappa_k^{\frac{p}{r}+2})+1)\Big)-o_p(\widetilde{r}\kappa_k^{\frac{p}{r}+2}).
\end{align}
{\bf{Step 3: Finding an upper bound.}}
Now, we proceeded to get an upper bound of \eqref{bounds-req}. This is, an upper bound of the following expression,
\begin{equation}
  \widehat{Q}_k(\eta_k)-\widehat{Q}_k(\eta_k+\widetilde{r})-Q^*(\eta_k)+Q^*(\eta_k+\widetilde{r}).  
\end{equation}
Observe that, this expression can be written as,
\begin{align*}
    &\widehat{Q}_k(\eta_k)-\widehat{Q}_k(\eta_k+\widetilde{r})-Q^*(\eta_k)+Q^*(\eta_k+\widetilde{r})
    \\
    =&-\sum_{t=\eta_k+1}^{\eta_k+\widetilde{r}}\vert\vert F_{t,{h_1}}-F_{(s_k,\eta_k],{h_1}}\vert\vert_{L_2}^2+\sum_{t=\eta_k+1}^{\eta_k+\widetilde{r}}\vert\vert F_{t,{h_1}}-F_{(\eta_k,e_k],{h_1}}\vert\vert_{L_2}^2
    \\
    &+\sum_{t=\eta_k+1}^{\eta_k+\widetilde{r}}\vert\vert F_{t,{h_2}}-f_{(s_k,\eta_k]} \ast \mathcal{K}_{{h_2}}\vert\vert_{L_2}^2-\sum_{t=\eta_k+1}^{\eta_k+\widetilde{r}}\vert\vert F_{t,{h_2}}-f_{(\eta_k,e_k]}\ast \mathcal{K}_{h_2}\vert\vert_{L_2}^2    
\end{align*}
So that, $$\widehat{Q}_k(\eta_k)-\widehat{Q}_k(\eta_k+\widetilde{r})-Q^*(\eta_k)+Q^*(\eta_k+\widetilde{r})=U_1+U_2,$$
where,
\begin{align*}  &U_1=\sum_{t=\eta_k+1}^{\eta_k+\widetilde{r}}\vert\vert F_{t,{h_2}}-f_{(s_k,\eta_k]} \ast \mathcal{K}_{{h_2}}\vert\vert_{L_2}^2-\sum_{t=\eta_k+1}^{\eta_k+\widetilde{r}}\vert\vert F_{t,{h_1}}-F_{(s_k,\eta_k],{h_1}}\vert\vert_{L_2}^2,\ \text{and,}
\\
&U_2=\sum_{t=\eta_k+1}^{\eta_k+\widetilde{r}}\vert\vert F_{t,{h_1}}-F_{(\eta_k,e_k],{h_1}}\vert\vert_{L_2}^2
    -\sum_{t=\eta_k+1}^{\eta_k+\widetilde{r}}\vert\vert F_{t,{h_2}}-f_{(\eta_k,e_k]}\ast \mathcal{K}_{h_2}\vert\vert_{L_2}^2.
\end{align*}
Now, we analyze each of the terms above. For $U_1,$ observe that
\begin{align*}
    &\sum_{t=\eta_k+1}^{\eta_k+\widetilde{r}}\vert\vert F_{t,{h_2}}-f_{(s_k,\eta_k]} \ast \mathcal{K}_{{h_2}}\vert\vert_{L_2}^2-\sum_{t=\eta_k+1}^{\eta_k+\widetilde{r}}\vert\vert F_{t,{h_1}}-F_{(s_k,\eta_k],{h_1}}\vert\vert_{L_2}^2
    \\
    =&\sum_{t=\eta_k+1}^{\eta_k+\widetilde{r}}\vert\vert F_{t,{h_2}}-f_{(s_k,\eta_k]} \ast \mathcal{K}_{{h_2}}\vert\vert_{L_2}^2-\sum_{t=\eta_k+1}^{\eta_k+\widetilde{r}}\vert\vert F_{t,{h_2}}-F_{(s_k,\eta_k],{h_2}}\vert\vert_{L_2}^2
    \\
    +&\sum_{t=\eta_k+1}^{\eta_k+\widetilde{r}}\vert\vert F_{t,{h_2}}-F_{(s_k,\eta_k],{h_2}}\vert\vert_{L_2}^2-\sum_{t=\eta_k+1}^{\eta_k+\widetilde{r}}\vert\vert F_{t,{h_1}}-F_{(s_k,\eta_k],{h_1}}\vert\vert_{L_2}^2
    \\
    =&I_3+I_4,
\end{align*}
where,
\begin{align*}
&I_3=\sum_{t=\eta_k+1}^{\eta_k+\widetilde{r}}\vert\vert F_{t,{h_2}}-f_{(s_k,\eta_k]} \ast \mathcal{K}_{{h_2}}\vert\vert_{L_2}^2-\sum_{t=\eta_k+1}^{\eta_k+\widetilde{r}}\vert\vert F_{t,{h_2}}-F_{(s_k,\eta_k],{h_2}}\vert\vert_{L_2}^2, \ \text{and,}
\\
&I_4=\sum_{t=\eta_k+1}^{\eta_k+\widetilde{r}}\vert\vert F_{t,{h_2}}-F_{(s_k,\eta_k],{h_2}}\vert\vert_{L_2}^2-\sum_{t=\eta_k+1}^{\eta_k+\widetilde{r}}\vert\vert F_{t,{h_1}}-F_{(s_k,\eta_k],{h_1}}\vert\vert_{L_2}^2.
\end{align*}
 To analyze $I_3,$ we rewrite it as follow,
\begin{align*}
I_3=&\sum_{t=\eta_k+1}^{\eta_k+\widetilde{r}}\vert\vert f_{(s_k,\eta_k]} \ast \mathcal{K}_{{h_2}}-F_{(s_k,\eta_k],{h_2}}\vert\vert_{L_2}^2-2\sum_{t=\eta_k+1}^{\eta_k+\widetilde{r}}\langle f_{(s_k,\eta_k]} \ast \mathcal{K}_{{h_2}}-F_{(s_k,\eta_k],{h_2}} ,F_{t,{h_2}}-f_{(s_k,\eta_k]} \ast \mathcal{K}_{{h_2}}\rangle_{L_2}
    \\
    =&I_{3,1}+I_{3,2},
\end{align*}
where,
\begin{align*}
&I_{3,1}=\sum_{t=\eta_k+1}^{\eta_k+\widetilde{r}}\vert\vert f_{(s_k,\eta_k]} \ast \mathcal{K}_{{h_2}}-F_{(s_k,\eta_k],{h_2}}\vert\vert_{L_2}^2, \ \text{and,}
\\
&I_{3,2}=-2\sum_{t=\eta_k+1}^{\eta_k+\widetilde{r}}\langle f_{(s_k,\eta_k]} \ast \mathcal{K}_{{h_2}}-F_{(s_k,\eta_k],{h_2}} ,F_{t,{h_2}}-f_{(s_k,\eta_k]} \ast \mathcal{K}_{{h_2}}\rangle_{L_2}.
\end{align*}
Now, we will get an upper bound for each of the terms above. The term $I_{3,1}=O_p\Big(\widetilde{r}\frac{1}{T}\frac{\log(T)}{\kappa_k^{\frac{p}{r}}}\Big)$, which is followed by the use of \Cref{remark-1} and $\Delta=\Theta(T)$. Even more, by \Cref{assume-snr2}, we get 
\begin{equation}
\label{tig-1-1}
    I_{3,1}=o_p(\widetilde{r}\kappa_k^{\frac{p}{r}+2}).
\end{equation}
For the term $I_{3,2}$, by Cauchy Schwartz inequality and triangle inequality,
\begin{align*}
    &\langle f_{(s_k,\eta_k]} \ast \mathcal{K}_{\kappa}-F_{(s_k,\eta_k],\kappa} ,F_{t,{h_2}}-f_{(s_k,\eta_k]} \ast \mathcal{K}_{{h_2}}\rangle_{L_2}
    \\
    &\le
    \vert\vert f_{(s_k,\eta_k]} \ast \mathcal{K}_{{h_2}}-F_{(s_k,\eta_k],{h_2}}\vert\vert_{L_2} \vert\vert F_{t,{h_2}}-f_{(s_k,\eta_k]} \ast \mathcal{K}_{{h_2}}\vert\vert_{L_2}
    \\
    &\le
    \vert\vert f_{(s_k,\eta_k]} \ast \mathcal{K}_{{h_2}}-F_{(s_k,\eta_k],{h_2}}\vert\vert_{L_2} \Big(\vert\vert F_{t,{h_2}}-f_{[\eta_{k}+1,e_k]} \ast \mathcal{K}_{{h_2}}\vert\vert_{L_2}+\vert\vert f_{[\eta_k+1,e_k]} \ast \mathcal{K}_{{h_2}}-f_{[s_k+1,\eta_k]} \ast \mathcal{K}_{{h_2}}\vert\vert_{L_2}\Big)
\end{align*}
for any $t \in{(\eta_k,\eta_k+\widetilde{r}]} $. By the \Cref{remark-1}, and the fact that $\Delta=\Theta(T)$, we have that $$\vert\vert f_{(s_k,\eta_k]} \ast \mathcal{K}_{{h_2}}-F_{(s_k,\eta_k],{h_2}}\vert\vert_{L_2}=
O_p\Big(\frac{1}{\sqrt{T}}\sqrt{\frac{\log(T)}{\kappa_k^{\frac{p}{r}}}}\Big)$$ and using basic properties of integrals $\vert\vert f_{[\eta_k+1,e_k]} \ast \mathcal{K}_{h_2}-f_{[s_k+1,\eta_k]} \ast \mathcal{K}_{h_2}\vert\vert_{L_2}=O(\kappa_k).$ Therefore,
\begin{align*}
    I_{3,2}&\le O_p\Big(\frac{1}{\sqrt{T}}\sqrt{\frac{\log(T)}{\kappa_k^{\frac{p}{r}}}}\Big)\Big( O(\widetilde{r}\kappa_k)+\sum_{t=\eta_k+1}^{\eta_k+\widetilde{r}}\vert\vert F_{t,{h_2}}-f_{[\eta_{k}+1,e_k]} \ast \mathcal{K}_{{h_2}}\vert\vert_{L_2}\Big)
\end{align*}
Now, we need to get a bound of the magnitude of 
$$\sum_{t=\eta_k+1}^{\eta_k+\widetilde{r}}\vert\vert F_{t,{h_2}}-f_{[\eta_{k}+1,e_k]} \ast \mathcal{K}_{{h_2}}\vert\vert_{L_2},$$
in order to get an upper for $I_{3,2}.$ This is done 
similarly to $I_2.$ We consider the random variable 
$$\widetilde{Y}_i=\frac{\langle F_{t,{h_2}}-f_{(\eta_k,e_k]}\ast \mathcal{K}_{h_2},F_{t,{h_2}}-f_{(\eta_k,e_k]}\ast \mathcal{K}_{h_2}\rangle^{\frac{1}{2}} _{L_2}-\mathbb{E}(\vert\vert F_{t,{h_2}}-f_{\eta_{k+1}}\ast \mathcal{K}_{h_2}\vert\vert_{L^2})}{\mathbb{E}(\vert\vert F_{t,{h_2}}-f_{\eta_{k+1}}\ast \mathcal{K}_{h_2}\vert\vert_{L^2}^3)^{\frac{1}{3}}}.$$
In order to use \Cref{lemma3}, we observe that since $\vert\vert F_{t,{h_2}}-f_{\eta_{k+1}}\ast \mathcal{K}_{h_2}\vert\vert_{L^2}\ge0,$
\begin{align*}
    \mathbb{E}(|\widetilde{Y}_i|^3)\le\frac{
    \mathbb{E}(\vert\vert F_{t,{h_2}}-f_{\eta_{k+1}}\ast \mathcal{K}_{h_2}\vert\vert_{L^2}^3)}{\mathbb{E}(\vert\vert F_{t,{h_2}}-f_{\eta_{k+1}}\ast \mathcal{K}_{h_2}\vert\vert_{L^2}^3)}
    = 1.
\end{align*}
Therefore, using \Cref{lemma3} and that $\mathbb{E}(\vert\vert F_{t,{h_2}}-f_{\eta_{k+1}}\ast \mathcal{K}_{h_2}\vert\vert_{L^2})=O(\kappa_k^{\frac{-p}{2r}})$ by \eqref{eqn:aux}, we get
that
\begin{align*}
    \sum_{t=\eta_k+1}^{\eta_k+\widetilde{r}}\vert\vert F_{t,{h_2}}-f_{[\eta_{k}+1,e_k]} \ast \mathcal{K}_{{h_2}}\vert\vert_{L_2}^2=O_p(\sqrt{\widetilde{r}\kappa_k^{-\frac{p}{r}}}(\log(\widetilde{r}\kappa_k^{\frac{p}{r}+2})+1))+O_p(\widetilde{r}\kappa_k^{\frac{-p}{2r}}).
\end{align*}
Thus, by \Cref{assume-snr2} and above,
\begin{align}
\label{tig-1-2}
    I_{3,2}&\le O_p\Big(\frac{1}{\sqrt{T}}\sqrt{\frac{\log(T)}{\kappa_k^{\frac{p}{r}}}}\Big)\Big( O(\widetilde{r}\kappa_k)+O_p(\sqrt{\widetilde{r}\kappa_k^{-\frac{p}{r}}}(\log(\widetilde{r}\kappa_k^{\frac{p}{r}+2})+1))+O_p(\widetilde{r}\kappa_k^{\frac{-p}{2r}})\Big)=o_p(\widetilde{r}\kappa_k^{\frac{p}{r}+2}).
\end{align}
Consequently, $I_3$ has been bounded, and we only need to go over the term $I_4$, to finalize the analysis for $U_1.$
To analyze $I_4,$ we observe that
\begin{align*}
    I_4=& \sum_{t=\eta_k+1}^{\eta_k+\widetilde{r}}\vert\vert F_{t,{h_2}}-F_{(s_k,\eta_k],{h_2}}\vert\vert_{L_2}^2-\sum_{t=\eta_k+1}^{\eta_k+\widetilde{r}}\vert\vert F_{t,{h_1}}-F_{(s_k,\eta_k],{h_1}}\vert\vert_{L_2}^2
    \\
    =&\sum_{t=\eta_k+1}^{\eta_k+\widetilde{r}}\Big[\langle F_{t,h_2},F_{t,h_2}\rangle_{L_2}-2\langle F_{t,h_2}, F_{(s_k,\eta_k],{h_2}}\rangle_{L_2}+\langle F_{(s_k,\eta_k],{h_2}},F_{(s_k,\eta_k],{h_2}} 
    \rangle_{L_2}\Big]
    \\
    &+\sum_{t=\eta_k+1}^{\eta_k+\widetilde{r}}\Big[-\langle F_{t,h_1},F_{t,h_1}\rangle_{L_2}+2\langle F_{t,h_1}, F_{(s_k,\eta_k],{h_1}}\rangle_{L_2}-\langle F_{(s_k,\eta_k],{h_1}},F_{(s_k,\eta_k],{h_1}} 
    \rangle_{L_2}\Big]
    \\
    =&I_{4,1}+I_{4,2}+I_{4,3},
\end{align*}
where,
\begin{align*}
    &I_{4,1}=\sum_{t=\eta_k+1}^{\eta_k+\widetilde{r}}\langle F_{t,h_2},F_{t,h_2}\rangle_{L_2}-\langle F_{t,h_1},F_{t,h_1}\rangle_{L_2}\, 
    \\
    &I_{4,2}=\sum_{t=\eta_k+1}^{\eta_k+\widetilde{r}}2\langle F_{t,h_1}, F_{(s_k,\eta_k],{h_1}}\rangle_{L_2}-2\langle F_{t,h_2}, F_{(s_k,\eta_k],{h_2}}\rangle_{L_2}, \ \text{and},
    \\
    &I_{4,3}=\sum_{t=\eta_k+1}^{\eta_k+\widetilde{r}}\langle F_{(s_k,\eta_k],{h_2}},F_{(s_k,\eta_k],{h_2}} 
    \rangle_{L_2}-\langle F_{(s_k,\eta_k],{h_1}},F_{(s_k,\eta_k],{h_1}} 
    \rangle_{L_2}.
\end{align*}
Now, we explore each of the terms $I_{4,1},I_{4,2},$ and $I_{4,3}.$ First, $I_{4,1}$
can be bounded as follows, we add and subtract $\langle F_{t,h_1},F_{t,h_2}\rangle_{L_2},$ to get
\begin{align*}
    &\sum_{t=\eta_k+1}^{\eta_k+\widetilde{r}}\langle F_{t,h_2},F_{t,h_2}\rangle_{L_2}-\langle F_{t,h_1},F_{t,h_1}\rangle_{L_2}
    \\
    =&\sum_{t=\eta_k+1}^{\eta_k+\widetilde{r}}\langle F_{t,h_2},F_{t,h_2}\rangle_{L_2}-\langle F_{t,h_1},F_{t,h_1}\rangle_{L_2}+
    \langle F_{t,h_1},F_{t,h_2}\rangle_{L_2}-\langle F_{t,h_1},F_{t,h_2}\rangle_{L_2}
    \\
    =&
    \sum_{t=\eta_k+1}^{\eta_k+\widetilde{r}}\langle F_{t,h_2}-F_{t,h_1},F_{t,h_2}\rangle_{L_2}+\langle F_{t,h_1},F_{t,h_2}-F_{t,h_1}\rangle_{L_2}
\end{align*}
which, by H\"{o}lder’s inequality, is bounded by
\begin{align*}
     \sum_{t=\eta_k+1}^{\eta_k+\widetilde{r}}\vert\vert F_{t,h_2}\vert\vert_{L_2}\vert\vert F_{t,h_2}-F_{t,h_1}\vert\vert_{L_2}+\vert\vert F_{t,h_1}\vert\vert_{L_2}\vert\vert F_{t,h_2}-F_{t,h_1}\vert\vert_{L_2}
     =\widetilde{r}O_p(\frac{T^{-\frac{r}{2r+p}}}{\kappa_k^{\frac{p}{2r}+\frac{1}{2}+\frac{p}{2r}}}\log^{\frac{r}{2r+p}}(T)))
\end{align*}
since $\vert\vert F_{t,h_1}-F_{t,h_2}\vert\vert_{L_2}=O(\frac{|\kappa-\widehat{\kappa}|^{\frac{1}{2}}}{\kappa_k^{\frac{p}{2r}+\frac{1}{2}}})=O_p(\frac{T^{-\frac{r}{2r+p}}}{\kappa_k^{\frac{p}{2r}+\frac{1}{2}}}\log^{\frac{r}{2r+p}}(T)),$ for any $t$, see \Cref{remark2} for more detail. 
Similarly, for $I_{4,2}$, we have that adding and subtracting $2\langle F_{t,h_1},F_{(s_k,\eta_k],{h_2}}\rangle_{L_2},$
\begin{align*}
    &\sum_{t=\eta_k+1}^{\eta_k+\widetilde{r}}2\langle F_{t,h_1}, F_{(s_k,\eta_k],{h_1}}\rangle_{L_2}-2\langle F_{t,h_2}, F_{(s_k,\eta_k],{h_2}}\rangle_{L_2}
    \\
    =&\sum_{t=\eta_k+1}^{\eta_k+\widetilde{r}}2\langle F_{t,h_1}, F_{(s_k,\eta_k],{h_1}}\rangle_{L_2}-2\langle F_{t,h_2}, F_{(s_k,\eta_k],{h_2}}\rangle_{L_2}+
    2\langle F_{t,h_1}, F_{(s_k,\eta_k],{h_2}}\rangle_{L_2}-2\langle F_{t,h_1}, F_{(s_k,\eta_k],{h_2}}\rangle_{L_2}
    \\
    =&
    \sum_{t=\eta_k+1}^{\eta_k+\widetilde{r}}2\langle F_{t,h_1}-F_{t,h_2}, F_{(s_k,\eta_k],{h_2}}\rangle_{L_2}+2\langle F_{t,h_1},F_{(s_k,\eta_k],{h_2}}-F_{(s_k,\eta_k],{h_1}}\rangle_{L_2},
\end{align*}
and by H\"{o}lder’s inequality and \Cref{remark2}, it is bounded by
\begin{align*}
     \sum_{t=\eta_k+1}^{\eta_k+\widetilde{r}}\vert\vert F_{t,h_1}-F_{t,h_2}\vert\vert_{L_2}\vert\vert F_{(s_k,\eta_k],{h_2}}\vert\vert_{L_2}+\vert\vert F_{t,h_1}\vert\vert_{L_2}\vert\vert F_{(s_k,\eta_k],{h_2}}-F_{(s_k,\eta_k],{h_1}}\vert\vert_{L_2}
     =\widetilde{r}O_p(\frac{T^{-\frac{r}{2r+p}}}{\kappa_k^{\frac{p}{2r}+\frac{1}{2}+\frac{p}{2r}}}\log^{\frac{r}{2r+p}}(T))).
\end{align*}
Finally, for $I_{4,3},$ we notice that, adding and subtracting $\langle F_{(s_k,\eta_k],{h_1}},F_{(s_k,\eta_k],{h_2}} 
    \rangle_{L_2}$, it is written as,
\begin{align*}
    &\sum_{t=\eta_k+1}^{\eta_k+\widetilde{r}}\langle F_{(s_k,\eta_k],{h_2}},F_{(s_k,\eta_k],{h_2}} 
    \rangle_{L_2}-\langle F_{(s_k,\eta_k],{h_1}},F_{(s_k,\eta_k],{h_1}} 
    \rangle_{L_2}
    \\
    =&\sum_{t=\eta_k+1}^{\eta_k+\widetilde{r}}\langle F_{(s_k,\eta_k],{h_2}},F_{(s_k,\eta_k],{h_2}} 
    \rangle_{L_2}-\langle F_{(s_k,\eta_k],{h_1}},F_{(s_k,\eta_k],{h_1}} 
    \rangle_{L_2}+
    \langle F_{(s_k,\eta_k],{h_1}},F_{(s_k,\eta_k],{h_2}} 
    \rangle_{L_2}-\langle F_{(s_k,\eta_k],{h_1}},F_{(s_k,\eta_k],{h_2}} 
    \rangle_{L_2}
    \\
    =&
    \sum_{t=\eta_k+1}^{\eta_k+\widetilde{r}}\langle F_{(s_k,\eta_k],{h_2}}-F_{(s_k,\eta_k],{h_1}},F_{(s_k,\eta_k],{h_2}}\rangle_{L_2}+\langle F_{(s_k,\eta_k],{h_1}},F_{(s_k,\eta_k],{h_2}}-F_{(s_k,\eta_k],{h_1}}\rangle_{L_2}
\end{align*}
which, by H\"{o}lder’s inequality and \Cref{remark2}, is bounded by
\begin{align*}
     &\sum_{t=\eta_k+1}^{\eta_k+\widetilde{r}}\vert\vert F_{(s_k,\eta_k],{h_2}}\vert\vert_{L_2}\vert\vert F_{(s_k,\eta_k],{h_2}}-F_{(s_k,\eta_k],{h_2}}\vert\vert_{L_2}+\vert\vert F_{(s_k,\eta_k],{h_1}}\vert\vert_{L_2}\vert\vert F_{(s_k,\eta_k],{h_2}}-F_{(s_k,\eta_k],{h_1}}\vert\vert_{L_2}
     \\
     =&
     \widetilde{r}O_p(\frac{T^{-\frac{r}{2r+p}}}{\kappa_k^{\frac{p}{2r}+\frac{1}{2}+\frac{p}{2r}}}\log^{\frac{r}{2r+p}}(T)))
\end{align*}
Then, by above and \Cref{assume-snr2}, we conclude 
\begin{equation}
\label{tig-1-3}
I_4=o_p(\widetilde{r}\kappa_k^{\frac{p}{r}+2}).
\end{equation}
From \eqref{tig-1-1}, \eqref{tig-1-2} and \eqref{tig-1-3}, we find that $U_1$ has the following upper bound,
\begin{align}
\label{tightness-1}
    \sum_{t=\eta_k+1}^{\eta_k+\widetilde{r}}\vert\vert F_{t,{h_2}}-f_{(s_k,\eta_k]} \ast \mathcal{K}_{{h_2}}\vert\vert_{L_2}^2-\sum_{t=\eta_k+1}^{\eta_k+\widetilde{r}}\vert\vert F_{t,{h_1}}-F_{(s_k,\eta_k],{h_1}}\vert\vert_{L_2}^2
    =o_p(\widetilde{r}\kappa_k^{\frac{p}{r}+2}).
\end{align}
Now, making an analogous analysis, we have that $U_2$ is upper bounded by,
\begin{align}
\label{tightness-2}
    \sum_{t=\eta_k+1}^{\eta_k+\widetilde{r}}\vert\vert F_{t,{h_1}}-F_{(\eta_k,e_k],{h_1}}\vert\vert_{L_2}^2
    -\sum_{t=\eta_k+1}^{\eta_k+\widetilde{r}}\vert\vert F_{t,{h_2}}-f_{(\eta_k,e_k]}\ast \mathcal{K}_{h_2}\vert\vert_{L_2}^2=o_p(\widetilde{r}\kappa_k^{\frac{p}{r}+2}).    
\end{align}
In fact, we observe that
\begin{align*}
    &\sum_{t=\eta_k+1}^{\eta_k+\widetilde{r}}\vert\vert F_{t,{h_1}}-F_{(\eta_k,e_k],{h_1}}\vert\vert_{L_2}^2
    -\sum_{t=\eta_k+1}^{\eta_k+\widetilde{r}}\vert\vert F_{t,{h_2}}-f_{(\eta_k,e_k]}\ast \mathcal{K}_{h_2}\vert\vert_{L_2}^2
    \\
    =&\sum_{t=\eta_k+1}^{\eta_k+\widetilde{r}}\vert\vert F_{t,{h_2}}-F_{(\eta_k,e_k],{h_2}}\vert\vert_{L_2}^2-\sum_{t=\eta_k+1}^{\eta_k+\widetilde{r}}\vert\vert F_{t,{h_2}}-f_{(\eta_k,e_k]}\ast \mathcal{K}_{h_2}\vert\vert_{L_2}^2
    \\
    +&\sum_{t=\eta_k+1}^{\eta_k+\widetilde{r}}\vert\vert F_{t,{h_1}}-F_{(\eta_k,e_k],{h_1}}\vert\vert_{L_2}^2
    -\sum_{t=\eta_k+1}^{\eta_k+\widetilde{r}}\vert\vert F_{t,{h_2}}-F_{(\eta_k,e_k],{h_2}}\vert\vert_{L_2}^2
    \\
    =&I_5+I_6,
\end{align*}
where,
\begin{align*}
    &I_5=\sum_{t=\eta_k+1}^{\eta_k+\widetilde{r}}\vert\vert F_{t,{h_2}}-F_{(\eta_k,e_k],{h_2}}\vert\vert_{L_2}^2-\sum_{t=\eta_k+1}^{\eta_k+\widetilde{r}}\vert\vert F_{t,{h_2}}-f_{(\eta_k,e_k]}\ast \mathcal{K}_{h_2}\vert\vert_{L_2}^2,\ \text{and},\\ 
&I_6=\sum_{t=\eta_k+1}^{\eta_k+\widetilde{r}}\vert\vert F_{t,{h_1}}-F_{(\eta_k,e_k],{h_1}}\vert\vert_{L_2}^2
    -\sum_{t=\eta_k+1}^{\eta_k+\widetilde{r}}\vert\vert F_{t,{h_2}}-F_{(\eta_k,e_k],{h_2}}\vert\vert_{L_2}^2.
\end{align*}
Then, $I_5$ is bounded as follows
\begin{align*}
    I_5=&\sum_{t=\eta_k+1}^{\eta_k+\widetilde{r}}\vert\vert f_{(\eta_k,e_k]} \ast \mathcal{K}_{{h_2}}-F_{(\eta_k,e_k],{h_2}}\vert\vert_{L_2}^2+2\sum_{t=\eta_k+1}^{\eta_k+\widetilde{r}}\langle f_{(\eta_k,e_k]} \ast \mathcal{K}_{{h_2}}-F_{(\eta_k,e_k],{h_2}} ,F_{t,{h_2}}-f_{(\eta_k,e_k]} \ast \mathcal{K}_{{h_2}}\rangle_{L_2}
\end{align*}
where,
\begin{align*}
 & I_{5,1}=  \sum_{t=\eta_k+1}^{\eta_k+\widetilde{r}}\vert\vert f_{(\eta_k,e_k]} \ast \mathcal{K}_{{h_2}}-F_{(\eta_k,e_k],{h_2}}\vert\vert_{L_2}^2,\ \text{and},
  \\
&I_{5,2}=2\sum_{t=\eta_k+1}^{\eta_k+\widetilde{r}}\langle f_{(\eta_k,e_k]} \ast \mathcal{K}_{{h_2}}-F_{(\eta_k,e_k],{h_2}} ,F_{t,{h_2}}-f_{(\eta_k,e_k]} \ast \mathcal{K}_{{h_2}}\rangle_{L_2}.
\end{align*}
The term $I_{5,1}=O_p\Big(\widetilde{r}\frac{1}{T}\frac{\log(T)}{\kappa^{\frac{p}{r}}}\Big)$, using \Cref{remark-1}. Even more, by \Cref{assume-snr2}, we get
\begin{equation}
\label{tig-2-1}
    I_{5,1}=o_p(\widetilde{r}\kappa_k^{\frac{p}{r}+2}).
\end{equation}
For the term $I_{5,2}$, by Cauchy Schwartz inequality,
\begin{align*}
    &\langle f_{(\eta_k,e_k]} \ast \mathcal{K}_{{h_2}}-F_{(\eta_k,e_k],{h_2}} ,F_{t,{h_2}}-f_{(\eta_k,e_k]} \ast \mathcal{K}_{{h_2}}\rangle_{L_2}
    \\
    &\le
    \vert\vert f_{(\eta_k,e_k]} \ast \mathcal{K}_{{h_2}}-F_{(\eta_k,e_k],{h_2}}\vert\vert_{L_2} \vert\vert F_{t,{h_2}}-f_{(\eta_k,e_k]} \ast \mathcal{K}_{{h_2}}\vert\vert_{L_2}
\end{align*}
for any $t \in{(\eta_k,\eta_k+\widetilde{r}]} $. By \Cref{remark-1}, we have that $\vert\vert f_{(
\eta_k,e_k]} \ast \mathcal{K}_{{h_2}}-F_{(\eta_k,e_k],{h_2}}\vert\vert_{L_2}=
O_p\Big(\frac{1}{\sqrt{T}}\sqrt{\frac{\log(T)}{\kappa_k^{\frac{p}{r}}}}\Big)$. Therefore,
\begin{align*}
    I_{5,2}&\le O_p\Big(\frac{1}{\sqrt{T}}\sqrt{\frac{\log(T)}{\kappa_k^{\frac{p}{r}}}}\Big)\Big( \sum_{t=\eta_k+1}^{\eta_k+\widetilde{r}}\vert\vert F_{t,{h_2}}-f_{[\eta_{k}+1,e_k]} \ast \mathcal{K}_{{h_2}}\vert\vert_{L_2}\Big)
\end{align*}
Now, similarly to the bound for $I_2,$ we consider the random variable
$$\bar{Y}_i=\frac{\langle F_{t,{h_2}}-f_{(\eta_k,e_k]}\ast \mathcal{K}_\kappa,F_{t,{h_2}}-f_{(\eta_k,e_k]}\ast \mathcal{K}_{h_2}\rangle _{L_2}^{\frac{1}{2}}-\mathbb{E}(\vert\vert F_{t,{h_2}}-f_{[\eta_{k}+1,e_k]} \ast \mathcal{K}_{{h_2}}\vert\vert_{L_2})}{\mathbb{E}(\vert\vert F_{t,{h_2}}-f_{\eta_{k+1}}\ast \mathcal{K}_{h_2}\vert\vert_{L^2}^3)^{\frac{1}{3}}}.$$
In order to use \Cref{lemma3}, we observe
\begin{align*}
    \mathbb{E}(|\bar{Y}_i|^3)=\frac{
    \mathbb{E}(\vert\vert F_{t,{h_2}}-f_{\eta_{k+1}}\ast \mathcal{K}_{h_2}\vert\vert_{L^2}^3)}{\mathbb{E}(\vert\vert F_{t,{h_2}}-f_{\eta_{k+1}}\ast \mathcal{K}_{h_2}\vert\vert_{L^2}^3)}
    = 1.
\end{align*}
so that, by \Cref{lemma3}, 
\begin{align}
\label{tig-2-2}
    I_{5,2}&\le O_p\Big(\frac{1}{\sqrt{T}}\sqrt{\frac{\log(T)}{\kappa^{\frac{p}{r}}}}\Big)\Big( O_p(\sqrt{\widetilde{r}\kappa_k^{-\frac{p}{r}}}(\log(\widetilde{r}\kappa_k^{\frac{p}{r}+2})+1))+O_p(\kappa_k^{\frac{-p}{2r}})\Big)=o_p(\widetilde{r}\kappa_k^{\frac{p}{r}+2}).
\end{align}
To analyze $I_6,$ we observe that
\begin{align*}
    I_6=& \sum_{t=\eta_k+1}^{\eta_k+\widetilde{r}}\vert\vert F_{t,{h_2}}-F_{(\eta_k,e_k],{h_2}}\vert\vert_{L_2}^2-\sum_{t=\eta_k+1}^{\eta_k+\widetilde{r}}\vert\vert F_{t,{h_1}}-F_{(\eta_k,e_k],{h_1}}\vert\vert_{L_2}^2
    \\
    =&\sum_{t=\eta_k+1}^{\eta_k+\widetilde{r}}\Big[\langle F_{t,h_2},F_{t,h_2}\rangle_{L_2}-2\langle F_{t,h_2}, F_{(\eta_k,e_k],{h_2}}\rangle_{L_2}+\langle F_{(\eta_k,e_k],{h_2}},F_{(\eta_k,e_k],{h_2}} 
    \rangle_{L_2}\Big]
    \\
    &\sum_{t=\eta_k+1}^{\eta_k+\widetilde{r}}\Big[-\langle F_{t,h_1},F_{t,h_1}\rangle_{L_2}+2\langle F_{t,h_1}, F_{(\eta_k,e_k],{h_1}}\rangle_{L_2}-\langle F_{(\eta_k,e_k],{h_1}},F_{(\eta_k,e_k],{h_1}} 
    \rangle_{L_2}\Big]\\
    =&I_{6,1}+I_{6,2}+I_{6,3},
\end{align*}
where,
\begin{align*}
    &I_{6,1}=\sum_{t=\eta_k+1}^{\eta_k+\widetilde{r}}\langle F_{t,h_2},F_{t,h_2}\rangle_{L_2}-\langle F_{t,h_1},F_{t,h_1}\rangle_{L_2},\\
    &I_{6,2}=\sum_{t=\eta_k+1}^{\eta_k+\widetilde{r}}2\langle F_{t,h_1}, F_{(\eta_k,e_k],{h_1}}\rangle_{L_2}-2\langle F_{t,h_2}, F_{(\eta_k,e_k],{h_2}}\rangle_{L_2}
    \\
    &I_{6,3}=\sum_{t=\eta_k+1}^{\eta_k+\widetilde{r}}\langle F_{(\eta_k,e_k],{h_2}},F_{(\eta_k,e_k],{h_2}} 
    \rangle_{L_2}-\langle F_{(\eta_k,e_k],{h_1}},F_{(\eta_k,e_k],{h_1}} 
    \rangle_{L_2}.
\end{align*}
Then we bound each of these terms. First, we rewrite $I_{6,1},$ as
\begin{align*}
    &\sum_{t=\eta_k+1}^{\eta_k+\widetilde{r}}\langle F_{t,h_2},F_{t,h_2}\rangle_{L_2}-\langle F_{t,h_1},F_{t,h_1}\rangle_{L_2}
    \\
    =&\sum_{t=\eta_k+1}^{\eta_k+\widetilde{r}}\langle F_{t,h_2},F_{t,h_2}\rangle_{L_2}-\langle F_{t,h_1},F_{t,h_1}\rangle_{L_2}+
    \langle F_{t,h_1},F_{t,h_2}\rangle_{L_2}-\langle F_{t,h_1},F_{t,h_2}\rangle_{L_2}
    \\
    =&
    \sum_{t=\eta_k+1}^{\eta_k+\widetilde{r}}\langle F_{t,h_2}-F_{t,h_1},F_{t,h_2}\rangle_{L_2}+\langle F_{t,h_1},F_{t,h_2}-F_{t,h_1}\rangle_{L_2}
\end{align*}
which, by H\"{o}lder’s inequality, is bounded by
\begin{align*}
     \sum_{t=\eta_k+1}^{\eta_k+\widetilde{r}}\vert\vert F_{t,h_2}\vert\vert_{L_2}\vert\vert F_{t,h_2}-F_{t,h_1}\vert\vert_{L_2}+\vert\vert F_{t,h_1}\vert\vert_{L_2}\vert\vert F_{t,h_2}-F_{t,h_1}\vert\vert_{L_2}
     =\widetilde{r}O_p(\frac{T^{-\frac{r}{2r+p}}}{\kappa_k^{\frac{p}{2r}+\frac{1}{2}+\frac{p}{2r}}}\log^{\frac{r}{2r+p}}(T)))
\end{align*}
since $\vert\vert F_{t,\kappa}-F_{t,\widehat{\kappa}}\vert\vert_{L_2}^2=O(\frac{|\kappa-\widehat{\kappa}|^{\frac{1}{2}}}{\kappa_k^{\frac{p}{2r}+\frac{1}{2}}})=O_p(\frac{T^{-\frac{r}{2r+p}}}{\kappa_k^{\frac{p}{2r}+\frac{1}{2}}}\log^{\frac{r}{2r+p}}(T))),$ for any $t$, see \Cref{remark2} for more detail. 
Similarly, for $I_{6,2}$ we have,
\begin{align*}
    &\sum_{t=\eta_k+1}^{\eta_k+\widetilde{r}}2\langle F_{t,h_1}, F_{(\eta_k,e_k],{h_1}}\rangle_{L_2}-2\langle F_{t,h_2}, F_{(\eta_k,e_k],{h_2}}\rangle_{L_2}
    \\
    =&\sum_{t=\eta_k+1}^{\eta_k+\widetilde{r}}2\langle F_{t,h_1}, F_{(\eta_k,e_k],{h_1}}\rangle_{L_2}-2\langle F_{t,h_2}, F_{(\eta_k,e_k],{h_2}}\rangle_{L_2}+
    2\langle F_{t,h_1}, F_{(\eta_k,e_k],{h_2}}\rangle_{L_2}-2\langle F_{t,h_1}, F_{(\eta_k,e_k],{h_2}}\rangle_{L_2}
    \\
    =&
    \sum_{t=\eta_k+1}^{\eta_k+\widetilde{r}}2\langle F_{t,h_1}-F_{t,h_2}, F_{(\eta_k,e_k],{h_2}}\rangle_{L_2}+2\langle F_{t,h_1},F_{(\eta_k,e_k],{h_2}}-F_{(\eta_k,e_k],{h_1}}\rangle_{L_2}
\end{align*}
and by H\"{o}lder’s inequality and \Cref{remark2}, it is bounded by
\begin{align*}
     &\sum_{t=\eta_k+1}^{\eta_k+\widetilde{r}}\vert\vert F_{t,h_1}-F_{t,h_2}\vert\vert_{L_2}\vert\vert F_{(\eta_k,e_k],{h_2}}\vert\vert_{L_2}+\vert\vert F_{t,h_1}\vert\vert_{L_2}\vert\vert F_{(\eta_k,e_k],{h_2}}-F_{(\eta_k,e_k],{h_1}}\vert\vert_{L_2}
     \\
     =&\widetilde{r}O_p(\frac{T^{-\frac{r}{2r+p}}}{\kappa_k^{\frac{p}{2r}+\frac{1}{2}+\frac{p}{2r}}}\log^{\frac{r}{2r+p}}(T))).
\end{align*}
Now for $I_{6,3}$, we write it as
\begin{align*}
&\sum_{t=\eta_k+1}^{\eta_k+\widetilde{r}}\langle F_{(\eta_k,e_k],{h_2}},F_{(\eta_k,e_k],{h_2}} 
    \rangle_{L_2}-\langle F_{(\eta_k,e_k],{h_1}},F_{(\eta_k,e_k],{h_1}} 
    \rangle_{L_2}
    \\   =&\sum_{t=\eta_k+1}^{\eta_k+\widetilde{r}}\langle F_{(\eta_k,e_k],{h_2}},F_{(\eta_k,e_k],{h_2}} 
    \rangle_{L_2}-\langle F_{(\eta_k,e_k],{h_1}},F_{(\eta_k,e_k],{h_1}} 
    \rangle_{L_2}+
    \langle F_{(\eta_k,e_k],{h_1}},F_{(\eta_k,e_k],{h_2}} 
    \rangle_{L_2}-\langle F_{(\eta_k,e_k],{h_1}},F_{(\eta_k,e_k],{h_2}} 
    \rangle_{L_2}
    \\
=&\sum_{t=\eta_k+1}^{\eta_k+\widetilde{r}}\langle F_{(\eta_k,e_k],{h_2}}-F_{(\eta_k,e_k],{h_1}},F_{(s_k,\eta_k],{h_2}}\rangle_{L_2}+\langle F_{(\eta_k,e_k],{h_1}},F_{(\eta_k,e_k],{h_2}}-F_{(\eta_k,e_k],{h_1}}\rangle_{L_2}
\end{align*}
which, by H\"{o}lder’s inequality and \Cref{remark2}, is bounded by
\begin{align*}
&\sum_{t=\eta_k+1}^{\eta_k+\widetilde{r}}\vert\vert F_{(\eta_k,e_k],{h_2}}\vert\vert_{L_2}\vert\vert F_{(\eta_k,e_k],{h_2}}-F_{(\eta_k,e_k],{h_2}}\vert\vert_{L_2}+\vert\vert F_{(\eta_k,e_k],{h_1}}\vert\vert_{L_2}\vert\vert F_{(\eta_k,e_k],{h_2}}-F_{(\eta_k,e_k],{h_1}}\vert\vert_{L_2}
\\
     =&\widetilde{r}O_p(\frac{T^{-\frac{r}{2r+p}}}{\kappa_k^{\frac{p}{2r}+\frac{1}{2}+\frac{p}{2r}}}\log^{\frac{r}{2r+p}}(T)))
\end{align*}
By above and \Cref{assume-snr2}, we conclude 
\begin{equation}
\label{tig-2-3}
    I_6=o_p(\widetilde{r}\kappa_k^{\frac{p}{r}+2}).
\end{equation}
From, \eqref{tig-2-1}, \eqref{tig-2-2} and \eqref{tig-2-3}, we get that $U_2$ is bounded by
\begin{align*}
    \sum_{t=\eta_k+1}^{\eta_k+\widetilde{r}}\vert\vert F_{t,\widehat{\kappa}}-F_{(\eta_k,e_k],\widehat{\kappa}}\vert\vert_{L_2}^2
    -\sum_{t=\eta_k+1}^{\eta_k+\widetilde{r}}\vert\vert F_{t,\kappa}-f_{(\eta_k,e_k]}\ast \mathcal{K}_\kappa\vert\vert_{L_2}^2=o_p(\widetilde{r}\kappa_k^{\frac{p}{r}+2})    
\end{align*}
Therefore, from \eqref{tightness-1} and \eqref{tightness-2}
\begin{equation}
\label{upp-bound}
\widehat{Q}_k(\eta_k)-\widehat{Q}_k(\eta_k+\widetilde{r})-Q^*(\eta_k)+Q^*(\eta_k+\widetilde{r})=o_p(\widetilde{r}\kappa_k^{\frac{p}{r}+2})
\end{equation}
{\bf{Step 4: Combination of all the steps above.}}
Finally, combining \eqref{bounds-req}, \eqref{low-bound} and \eqref{upp-bound}, uniformly for any $\widetilde{r}\geq \frac{1}{\kappa_k^{\frac{p}{r}+2}}$ we have that
\begin{align*}
    \frac{1}{2}\widetilde{r}\kappa_k^2-O_p\Big(\sqrt{\widetilde{r}}\kappa_{k}\kappa_k^{-\frac{p}{2r}} (\log(\widetilde{r}\kappa_k^{\frac{p}{r}+2})+1)\Big)-o_p(\widetilde{r}\kappa_k^{\frac{p}{r}+2}) \le& o_p(\widetilde{r}\kappa_k^{\frac{p}{r}+2})
\end{align*}
which implies,
\begin{equation}
\label{tightness}
    \widetilde{r}\kappa_k^{\frac{p}{r}+2}=O_p(1)
\end{equation}
and complete the proofs of ${\bf{a.1}}$ and ${\bf{b.1}}$.

{\bf{Limiting distributions.}} For any $k \in\{1, \ldots, K\}$, due to the uniform tightness of $\widetilde{r} \kappa_k^{\frac{p}{r}+2}$, \eqref{bounds-req} and \eqref{upp-bound}, as $T \rightarrow \infty$
$$
Q^*(\eta)=\sum_{t=s_k+1}^{\eta}\vert\vert F_{t,{h_2}}-f_{(s_k,\eta_k]} \ast \mathcal{K}_{{h_2}}\vert\vert_{L_2}^2+\sum_{t=\eta+1}^{e_k}\vert\vert F_{t,{h_2}}-f_{(\eta_k,e_k]}\ast \mathcal{K}_{h_2}\vert\vert_{L_2}^2,
$$
satisfies
$$
\Big|\widehat{Q}\Big(\eta_k+\widetilde{r}\Big)-\widehat{Q}\Big(\eta_k\Big)-\Big(Q^*\Big(\eta_k+\widetilde{r}\Big)-Q^*\Big(\eta_k\Big)\Big)\Big|  \stackrel{p}{\rightarrow} 0 .
$$
Therefore, it is sufficient to find the limiting distributions of $Q^*\Big(\eta_k+\widetilde{r}\Big)-Q^*\Big(\eta_k\Big)$ when $T \rightarrow \infty$.
{\bf{Non-vanishing regime.}}
Observe that for $\widetilde{r} > 0$, we have that when $T \rightarrow \infty$,
\begin{align*}
    Q^*(\eta_k+\widetilde{r})-Q^*(\eta_k)=&\sum_{t=\eta_k+1}^{\eta_k+\widetilde{r}}\vert\vert F_{t,{h_2}}-f_{(s_k,\eta_k]} \ast \mathcal{K}_{{h_2}}\vert\vert_{L_2}^2-\sum_{t=\eta_k+1}^{\eta_k+\widetilde{r}}\vert\vert F_{t,{h_2}}-f_{(\eta_k,e_k]}\ast \mathcal{K}_{h_2}\vert\vert_{L_2}^2
    \\
    =&\sum_{t=\eta_k+1}^{\eta_k+\widetilde{r}}\vert\vert f_{(s_k,\eta_k]} \ast \mathcal{K}_{{h_2}}-f_{(\eta_k,e_k]}\ast \mathcal{K}_{h_2}\vert\vert_{L_2}^2
    \\
    &-2\sum_{t=\eta_k+1}^{\eta_k+\widetilde{r}}\langle f_{(s_k,\eta_k]} \ast \mathcal{K}_{{h_2}}-f_{(\eta_k,e_k]}\ast \mathcal{K}_{h_2},F_{t,{h_2}}-f_{(\eta_k,e_k]}\ast \mathcal{K}_{h_2}\rangle _{L_2}
    \\
    \ \underrightarrow{\mathcal{D}} \ &
    \sum_{t=1}^{\widetilde{r}} 2\Big\langle F_{{h_2},t}-f_t*\mathcal{K}_{h_2}, (f_{\eta_{k+1}}-f_{\eta_{k}})*\mathcal{K}_{h_2}\Big\rangle_{L_2}+\widetilde{r}\vert\vert  (f_{\eta_{k+1}}-f_{\eta_{k}})*\mathcal{K}_{h_2}\vert\vert_{L_2}^2 .
\end{align*}
When $\widetilde{r}<0$ and $T\rightarrow \infty$, we have that
\begin{align*}
    Q^*(\eta_k+\widetilde{r})-Q^*(\eta_k)=&\sum_{t=\eta_k+\widetilde{r}}^{\eta_k-1}\vert\vert F_{t,{h_2}}-f_{(s_k,\eta_k]} \ast \mathcal{K}_{{h_2}}\vert\vert_{L_2}^2-\sum_{t=\eta_k+\widetilde{r}}^{\eta_k-1}\vert\vert F_{t,{h_2}}-f_{(\eta_k,e_k]}\ast \mathcal{K}_{h_2}\vert\vert_{L_2}^2
    \\
    =&\sum_{t=\eta_k+\widetilde{r}}^{\eta_k-1}\vert\vert f_{(s_k,\eta_k]} \ast \mathcal{K}_{{h_2}}-f_{(\eta_k,e_k]}\ast \mathcal{K}_{h_2}\vert\vert_{L_2}^2
    \\
    &-2\sum_{t=\eta_k+\widetilde{r}}^{\eta_k-1}\langle f_{(s_k,\eta_k]} \ast \mathcal{K}_{{h_2}}-f_{(\eta_k,e_k]}\ast \mathcal{K}_{h_2},F_{t,{h_2}}-f_{(\eta_k,e_k]}\ast \mathcal{K}_{h_2}\rangle _{L_2}
    \\
    \ \underrightarrow{\mathcal{D}} \ &
    \sum_{t=\widetilde{r}+1}^0 2\Big\langle F_{{h_2},t}-f_t*\mathcal{K}_{h_2}, (f_{\eta_{k}}-f_{\eta_{k+1}})*\mathcal{K}_{h_2}\Big\rangle_{L_2}+\widetilde{r}\vert\vert  (f_{\eta_{k+1}}-f_{\eta_{k}})*\mathcal{K}_{h_2}\vert\vert_{L_2}^2.
\end{align*}
Therefore, using Slutsky’s theorem and the Argmax (or Argmin) continuous mapping theorem (see 3.2.2 Theorem
van der Vaart and Wellner, 1996) we conclude 
\begin{equation}
    ( \widetilde{\eta}_k- \eta_k)\kappa_k^{\frac{p}{r}+2} \ \underrightarrow{\mathcal{D}} \ 
    \underset{\widetilde{r}\in{\mathbb{Z}}}{\arg \min } P_k(\widetilde{r})
\end{equation}
{\bf{Vanishing regime.}}
Vanishing regime. Let $m=\kappa_k^{-2-\frac{p}{r}}$, and we have that $m \rightarrow \infty$ as $T \rightarrow \infty$. Observe that for $\widetilde{r}>0$, we have that
$$
\begin{aligned}
 Q_k^*\Big(\eta_k+\widetilde{r} m\Big)-Q_k^*\Big(\eta_k\Big)=&\sum_{t=\eta_k}^{\eta_k+\widetilde{r} m-1}\vert\vert f_{(s_k,\eta_k]} \ast \mathcal{K}_{{h_2}}-f_{(\eta_k,e_k]}\ast \mathcal{K}_{h_2}\vert\vert_{L_2}^2\\
- & 2\sum_{t=\eta_k}^{\eta_k+\widetilde{r} m-1} \langle f_{(s_k,\eta_k]} \ast \mathcal{K}_{{h_2}}-f_{(\eta_k,e_k]}\ast \mathcal{K}_{h_2},F_{t,{h_2}}-f_{(\eta_k,e_k]}\ast \mathcal{K}_{h_2}\rangle _{L_2}
\end{aligned}
$$
Following the Central Limit Theorem for
$\alpha-$mixing, see \Cref{{CTL}}, we get
$$
\frac{1}{\sqrt{m}} \sum_{t=\eta_k}^{\eta_k+r m-1}\frac{\langle f_{(s_k,\eta_k]} \ast \mathcal{K}_{{h_2}}-f_{(\eta_k,e_k]}\ast \mathcal{K}_{h_2},F_{t,{h_2}}-f_{(\eta_k,e_k]}\ast \mathcal{K}_{h_2}\rangle _{L_2}}{\kappa_k^{\frac{p}{2r}+1}}\stackrel{\mathcal{D}}{\rightarrow} \kappa_k^{-\frac{p}{r}}\widetilde{\sigma}_{\infty}(k) \mathbb{B}(\widetilde{r}),
$$
where $\mathbb{B}(\widetilde{r})$ is a standard Brownian motion and $\widetilde{\sigma}(k)$ is the long-run variance given in \eqref{long-run-var}. 
Therefore, it holds that when $T \rightarrow \infty$
$$
Q_k^*\Big(\eta_k+\widetilde{r} m\Big)-Q_k^*\Big(\eta_k\Big) \stackrel{\mathcal{D}}{\rightarrow} \kappa_k^{-\frac{p}{r}}\widetilde{\sigma}_{\infty}(k)\mathbb{B}_1(r)+\widetilde{r}\kappa_k^{-\frac{p}{r}-2}\vert\vert f_{(s_k,\eta_k]} \ast \mathcal{K}_{{h_2}}-f_{(\eta_k,e_k]}\ast \mathcal{K}_{h_2}\vert\vert_{L_2}^2.
$$
Similarly, for $\widetilde{r}<0$, we have that when $n \rightarrow \infty$
$$
Q_k^*\Big(\eta_k+r m\Big)-Q_k^*\Big(\eta_k\Big) \stackrel{\mathcal{D}}{\rightarrow}\kappa_k^{-\frac{p}{r}}\widetilde{\sigma}_{\infty}(k) \mathbb{B}_1(-\widetilde{r})-\widetilde{r}\kappa_k^{-\frac{p}{r}-2}\vert\vert f_{(s_k,\eta_k]} \ast \mathcal{K}_{{h_2}}-f_{(\eta_k,e_k]}\ast \mathcal{K}_{h_2}\vert\vert_{L_2}^2. .
$$
Then, using Slutsky's theorem and the Argmax (or Argmin) continuous mapping theorem (see 3.2.2 Theorem in \cite{vanderVaart1996}), and the fact that, $\mathbb{E}(\vert\vert f_{(s_k,\eta_k]} \ast \mathcal{K}_{{h_2}}-f_{(\eta_k,e_k]}\ast \mathcal{K}_{h_2}\vert\vert_{L_2}^2)=O(\kappa_k^2)$, we conclude that
$$
\kappa_k^{2+\frac{p}{r}}\Big(\widetilde{\eta}_k-\eta_k\Big) \stackrel{\mathcal{D}}{\longrightarrow}    \underset{r\in{\mathbb{Z}}}{\arg \min } \ \widetilde{\sigma}_{\infty}(k)B(\widetilde{r})+\vert\widetilde{r}\vert,
$$
which completes the proof of $\mathbf{b} . \mathbf{2}$.
\end{proof}
%%%%%%%%%%%%%%%%%%%%%%%%%%%%%%%%%%%%%%%%%%
%%%%%%%%%%%%%%%%%%%%%%%%%%%%%%%%%%%%%%%%%%
%%%%%%End of proof of theorem 2%%%%%%%%%%%
%%%%%%%%%%%%%%%%%%%%%%%%%%%%%%%%%%%%%%%%%%
%%%%%%%%%%%%%%%%%%%%%%%%%%%%%%%%%%%%%%%%%%

%%%%%%%%%%%%%%%%%%%%%%%%%%%%%%%%%%%%%%%%%%
%%%%%%%%%%%%%%%%%%%%%%%%%%%%%%%%%%%%%%%%%%
%%%%%%%% Proof of theorem 3%%%%%%%%%%%%%%%%
%%%%%%%%%%%%%%%%%%%%%%%%%%%%%%%%%%%%%%%%%%
%%%%%%%%%%%%%%%%%%%%%%%%%%%%%%%%%%%%%%%%%%%

\newpage
\section{Proof of \Cref{Long-Run-V-Theorem}}\label{sec-proof-thm3}
In this section, we present the proof of theorem \Cref{Long-Run-V-Theorem}.
\begin{proof}[Proof of \Cref{Long-Run-V-Theorem}]

First, letting $h_2=c_\kappa\kappa_k^{\frac{1}{r}}$ and $R=O(\frac{T^{\frac{p+r}{2r+p}}}{\kappa_k^{\frac{p}{2r}+\frac{3}{2}}})$, we consider
\begin{align}
\label{Long-run-var-Est-aux}
\breve{\sigma}_{\infty}^2(k)=\frac{1}{R}\sum_{r=1}^{R}\Big(\frac{1}{\sqrt{S}}\sum_{i\in{\mathcal{S}_r}}\breve{Y}_i\Big)^2, \ \text{where}, \ \breve{Y}_i=\kappa_k^{\frac{p}{2r}-1}\Big\langle F_{{h_2},i}-f_i*\mathcal{K}_{h_2}, (f_{\eta_{k}}-f_{\eta_{k+1}})*\mathcal{K}_{h_2}\Big\rangle_{L_2}.
\end{align}
We will show that 
\begin{itemize}
    \item[(i)] $\Big|\widehat{\sigma}_{\infty}^2(k)-\breve{\sigma}_{\infty}^2(k)\Big| \stackrel{P }{\longrightarrow} 0, \quad T \rightarrow \infty $, and
    \item[(ii)] $\Big|\breve{\sigma}_{\infty}^2(k)-\widetilde{\sigma}_{\infty}^2(k)\Big| \stackrel{P }{\longrightarrow} 0, \quad T \rightarrow \infty $
\end{itemize}
in order to conclude the result. For (i), we use $a^2-b^2=(a+b)(a-b)$, to write,
\begin{align*}
    \Big|\widehat{\sigma}_{\infty}^2(k)-\breve{\sigma}_{\infty}^2(k)\Big|=&\Big|\frac{1}{R}\sum_{r=1}^{R}\Big(\frac{1}{\sqrt{S}}\sum_{i\in{\mathcal{S}_r}}\breve{Y}_i\Big)^2-\frac{1}{R}\sum_{r=1}^{R}\Big(\frac{1}{\sqrt{S}}\sum_{i\in{\mathcal{S}_r}}Y_i\Big)^2\Big|
    \\
    =&
    \Big|\frac{1}{R}\sum_{r=1}^{R}\Big(\frac{1}{\sqrt{S}}\sum_{i\in{\mathcal{S}_r}}\breve{Y}_i-Y_i\Big)\Big(\frac{1}{\sqrt{S}}\sum_{i\in{\mathcal{S}_r}}\breve{Y}_i+Y_i\Big)\Big|
    \\
    =&\Big|\frac{1}{R}\sum_{r=1}^{R}I_1I_2\Big|
\end{align*}
Then, we bound each of the terms $I_1$ and $I_2.$ For $I_1$, we observe that,
\begin{align*}
    I_1=\Big|\frac{1}{\sqrt{S}}\sum_{i\in{\mathcal{S}_r}}\breve{Y}_i-Y_i\Big|\le \frac{1}{\sqrt{S}}\sum_{i\in{\mathcal{S}_r}}\Big|\breve{Y}_i-Y_i\Big|.
\end{align*}
Then, adding and subtracting, $\widehat{\kappa}_k^{\frac{p}{2r}-1}\Big\langle F_{{h_1},i}-f_i*\mathcal{K}_{h_1}, (f_{\eta_{k}}-f_{\eta_{k+1}})*\mathcal{K}_{h_2}\Big\rangle_{L_2}$ and $$\widehat{\kappa}_k^{\frac{p}{2r}-1}\Big\langle F_{{h_2},i}-f_i*\mathcal{K}_{h_2}-F_{{h_1},i}+f_i*\mathcal{K}_{h_1}, (f_{\eta_{k}}-f_{\eta_{k+1}})*\mathcal{K}_{h_1}\Big\rangle_{L_2},$$
we get that,
\begin{align*}
    &\Big|\breve{Y}_i-Y_i\Big|
    \\
    =&\Big|\kappa_k^{\frac{p}{2r}-1}\Big\langle F_{{h_2},i}-f_i*\mathcal{K}_{h_2}, (f_{\eta_{k}}-f_{\eta_{k+1}})*\mathcal{K}_{h_2}\Big\rangle_{L_2}-\widehat{\kappa}_k^{\frac{p}{2r}-1}\Big\langle F_{{h_1},i}-f_i*\mathcal{K}_{h_1}, (f_{\eta_{k}}-f_{\eta_{k+1}})*\mathcal{K}_{h_1}\Big\rangle_{L_2}\Big|
    \\
    =&
    \Big|\kappa_k^{\frac{p}{2r}-1}\Big\langle F_{{h_2},i}-f_i*\mathcal{K}_{h_2}, (f_{\eta_{k}}-f_{\eta_{k+1}})*\mathcal{K}_{h_2}\Big\rangle_{L_2}-\widehat{\kappa}_k^{\frac{p}{2r}-1}\Big\langle F_{{h_1},i}-f_i*\mathcal{K}_{h_1}, (f_{\eta_{k}}-f_{\eta_{k+1}})*\mathcal{K}_{h_2}\Big\rangle_{L_2}
    \\
    +&
    \widehat{\kappa}_k^{\frac{p}{2r}-1}\Big\langle F_{{h_1},i}-f_i*\mathcal{K}_{h_1}, (f_{\eta_{k}}-f_{\eta_{k+1}})*\mathcal{K}_{h_2}\Big\rangle_{L_2}-\widehat{\kappa}_k^{\frac{p}{2r}-1}\Big\langle F_{{h_1},i}-f_i*\mathcal{K}_{h_1}, (f_{\eta_{k}}-f_{\eta_{k+1}})*\mathcal{K}_{h_1}\Big\rangle_{L_2}\Big|
    \\
    +&
    \widehat{\kappa}_k^{\frac{p}{2r}-1}\Big\langle F_{{h_2},i}-f_i*\mathcal{K}_{h_2}-F_{{h_1},i}+f_i*\mathcal{K}_{h_1}, (f_{\eta_{k}}-f_{\eta_{k+1}})*\mathcal{K}_{h_1}\Big\rangle_{L_2}
    \\
    -&\widehat{\kappa}_k^{\frac{p}{2r}-1}\Big\langle F_{{h_2},i}-f_i*\mathcal{K}_{h_2}-F_{{h_1},i}+f_i*\mathcal{K}_{h_1}, (f_{\eta_{k}}-f_{\eta_{k+1}})*\mathcal{K}_{h_1}\Big\rangle_{L_2}
\end{align*}
which can be written as,
\begin{align*}
   &\Big|\Big\langle \kappa_k^{\frac{p}{2r}-1}(F_{{h_2},i}-f_i*\mathcal{K}_{h_2})-\widehat{\kappa}_k^{\frac{p}{2r}-1}(F_{{h_1},i}-f_i*\mathcal{K}_{h_1}), (f_{\eta_{k}}-f_{\eta_{k+1}})*\mathcal{K}_{h_2}-(f_{\eta_{k}}-f_{\eta_{k+1}})*\mathcal{K}_{h_1}\Big\rangle_{L_2}
   \\
   +&\Big\langle \widehat{\kappa}_k^{\frac{p}{2r}-1}(F_{{h_1},i}-f_i*\mathcal{K}_{h_1})-\kappa_k^{\frac{p}{2r}-1}(F_{{h_2},i}-f_i*\mathcal{K}_{h_2}), (f_{\eta_{k}}-f_{\eta_{k+1}})*\mathcal{K}_{h_1}-(f_{\eta_{k}}-f_{\eta_{k+1}})*\mathcal{K}_{h_2}\Big\rangle_{L_2}
   \\
   +&\Big\langle \kappa_k^{\frac{p}{2r}-1}(F_{{h_2},i}-f_i*\mathcal{K}_{h_2}), (f_{\eta_{k}}-f_{\eta_{k+1}})*\mathcal{K}_{h_2}-(f_{\eta_{k}}-f_{\eta_{k+1}})*\mathcal{K}_{h_1}\Big\rangle_{L_2}
   \\
   +&\Big\langle \widehat{\kappa}_k^{\frac{p}{2r}-1}(F_{{h_1},i}-f_i*\mathcal{K}_{h_1})-\kappa_k^{\frac{p}{2r}-1}(F_{{h_2},i}-f_i*\mathcal{K}_{h_2}), (f_{\eta_{k}}-f_{\eta_{k+1}})*\mathcal{K}_{h_1}\Big\rangle_{L_2}\Big|.
\end{align*}
Now, we bound the expression above. For this purpose, by triangle inequality, it is enough to bound each of the terms above. Then, we use H\"{o}lder's inequality. First,
\begin{align*}
    &\Big|\Big\langle \kappa_k^{\frac{p}{2r}-1}(F_{{h_2},i}-f_i*\mathcal{K}_{h_2})-\widehat{\kappa}_k^{\frac{p}{2r}-1}(F_{{h_1},i}-f_i*\mathcal{K}_{h_1}), (f_{\eta_{k}}-f_{\eta_{k+1}})*\mathcal{K}_{h_2}-(f_{\eta_{k}}-f_{\eta_{k+1}})*\mathcal{K}_{h_1}\Big\rangle_{L_2}\Big|
    \\
    &\le
    \vert \kappa_k^{\frac{p}{2r}-1}-\widehat{\kappa}_k^{\frac{p}{2r}-1}\vert\vert\vert F_{{h_2},i}-f_i*\mathcal{K}_{h_2}-F_{{h_1},i}+f_i*\mathcal{K}_{h_1}\vert\vert_{L_2}\vert\vert (f_{\eta_{k}}-f_{\eta_{k+1}})*\mathcal{K}_{h_2}-(f_{\eta_{k}}-f_{\eta_{k+1}})*\mathcal{K}_{h_1}\vert\vert_{L_2}.
\end{align*}
Then, using \eqref{kappa-b}, we have that $\vert \kappa_k^{\frac{p}{2r}-1}-\widehat{\kappa}_k^{\frac{p}{2r}-1}\vert=O_p(\frac{T^{-\frac{2r}{2r+p}}}{\kappa_k^{2-\frac{p}{2r}}}\log^{\frac{2r}{2r+p}}(T))$, and using \Cref{remark2}, it follows that
\begin{align*}
    \vert\vert F_{{h_2},i}-f_i*\mathcal{K}_{h_2}-F_{{h_1},i}+f_i*\mathcal{K}_{h_1}\vert\vert_{L_2}
    \le&
    \vert\vert F_{{h_2},i}-F_{{h_1},i}\vert\vert_{L_2}+\vert\vert f_i*\mathcal{K}_{h_1}-f_i*\mathcal{K}_{h_2}\vert\vert_{L_2}
    \\
    =&O_p(\frac{T^{-\frac{r}{2r+p}}}{\kappa_k^{\frac{p}{2r}+\frac{1}{2}}}\log^{\frac{r}{2r+p}}(T))
\end{align*}
and,
\begin{align*}
    &\vert\vert (f_{\eta_{k}}-f_{\eta_{k+1}})*\mathcal{K}_{h_2}-(f_{\eta_{k}}-f_{\eta_{k+1}})*\mathcal{K}_{h_1}\vert\vert_{L_2}
    \\
    \le&
    \vert\vert f_{\eta_{k}}*\mathcal{K}_{h_2}-f_{\eta_{k}}*\mathcal{K}_{h_1}\vert\vert_{L_2}+\vert\vert f_{\eta_{k+1}}*\mathcal{K}_{h_1}-f_{\eta_{k+1}}*\mathcal{K}_{h_2}\vert\vert_{L_2}
    =O_p(\frac{T^{-\frac{r}{2r+p}}}{\kappa_k^{\frac{p}{2r}+\frac{1}{2}}}\log^{\frac{r}{2r+p}}(T)).
\end{align*}
So that,
\begin{align*}
    &\Big|\Big\langle \kappa_k^{\frac{p}{2r}-1}(F_{{h_2},i}-f_i*\mathcal{K}_{h_2})-\widehat{\kappa}_k^{\frac{p}{2r}-1}(F_{{h_1},i}-f_i*\mathcal{K}_{h_1}), (f_{\eta_{k}}-f_{\eta_{k+1}})*\mathcal{K}_{h_2}-(f_{\eta_{k}}-f_{\eta_{k+1}})*\mathcal{K}_{h_1}\Big\rangle_{L_2}\Big|
    \\
    &=O_p(\frac{T^{-\frac{2r}{2r+p}}}{\kappa_k^{2-\frac{p}{2r}}}\log^{\frac{2r}{2r+p}}(T))O_p(\frac{T^{-\frac{r}{2r+p}}}{\kappa_k^{\frac{p}{2r}+\frac{1}{2}}}\log^{\frac{r}{2r+p}}(T))O_p(\frac{T^{-\frac{r}{2r+p}}}{\kappa_k^{\frac{p}{2r}+\frac{1}{2}}}\log^{\frac{r}{2r+p}}(T)).
\end{align*}
Now, in a similar way, we observe that 
\begin{align*}
    &\Big\langle \kappa_k^{\frac{p}{2r}-1}(F_{{h_2},i}-f_i*\mathcal{K}_{h_2}), (f_{\eta_{k}}-f_{\eta_{k+1}})*\mathcal{K}_{h_2}-(f_{\eta_{k}}-f_{\eta_{k+1}})*\mathcal{K}_{h_1}\Big\rangle_{L_2}
    \\
    \le& \vert\vert\kappa_k^{\frac{p}{2r}-1}(F_{{h_2},i}-f_i*\mathcal{K}_{h_2})\vert\vert_{L_2}\vert\vert(f_{\eta_{k}}-f_{\eta_{k+1}})*\mathcal{K}_{h_2}-(f_{\eta_{k}}-f_{\eta_{k+1}})*\mathcal{K}_{h_1}\vert\vert_{L_2}
    \\
    =&O_p(\kappa_k^{\frac{p}{2r}-1}\kappa_k^{-\frac{p}{2r}})O_p(\frac{T^{-\frac{r}{2r+p}}}{\kappa_k^{\frac{p}{2r}+\frac{1}{2}}}\log^{\frac{r}{2r+p}}(T))
\end{align*}
where equality is followed by noticing that 
\begin{align}
\label{bound-K}
    \vert\vert F_{{h_2},i}-f_i*\mathcal{K}_{h_2}\vert\vert_{L_2}\le& \vert\vert F_{{h_2},i}\vert\vert_{L_2}+\vert\vert f_i*\mathcal{K}_{h_2}\vert\vert_{L_2}
    \\
    =&O(\kappa_k^{-\frac{p}{2r}})+O(1),
\end{align}
and then using \Cref{remark2} and \Cref{assume: model assumption}.
Finally,
\begin{align*}
    &\Big\langle \widehat{\kappa}_k^{\frac{p}{2r}-1}(F_{{h_1},i}-f_i*\mathcal{K}_{h_1})-\kappa_k^{\frac{p}{2r}-1}(F_{{h_2},i}-f_i*\mathcal{K}_{h_2}), (f_{\eta_{k}}-f_{\eta_{k+1}})*\mathcal{K}_{h_1}\Big\rangle_{L_2}
    \\
    \le&
    \vert\widehat{\kappa}_k^{\frac{p}{2r}-1}-\kappa_k^{\frac{p}{2r}-1}\vert\vert\vert(F_{{h_1},i}-f_i*\mathcal{K}_{h_1})-(F_{{h_2},i}-f_i*\mathcal{K}_{h_2})\vert\vert_{L_2}\vert\vert(f_{\eta_{k}}-f_{\eta_{k+1}})*\mathcal{K}_{h_1}\vert\vert_{L_2}
    \\
    =&O_p(\frac{T^{-\frac{2r}{2r+p}}}{\kappa_k^{2-\frac{p}{2r}}}\log^{\frac{2r}{2r+p}}(T))O_p(\frac{T^{-\frac{r}{2r+p}}}{\kappa_k^{\frac{p}{2r}+\frac{1}{2}}}\log^{\frac{r}{2r+p}}(T))\kappa_k
\end{align*}
where equality is followed by \Cref{remark2}, \Cref{assume: model assumption} and Minkowski’s inequality.
Therefore,
{\small{
\begin{align*}
    &I_1\le
    \frac{1}{\sqrt{S}}\sum_{i\in{\mathcal{S}_r}}\Big|\breve{Y}_i-Y_i\Big|
    \\
    &\sqrt{S}\Big(O_p(\frac{T^{-\frac{2r}{2r+p}}}{\kappa_k^{2-\frac{p}{2r}}}\log(T)^{\frac{2r}{2r+p}})O_p(\frac{T^{-\frac{r}{2r+p}}}{\kappa_k^{\frac{p}{2r}+\frac{1}{2}}}\log(T)^{\frac{r}{2r+p}})O_p(\frac{T^{-\frac{r}{2r+p}}}{\kappa_k^{\frac{p}{2r}+\frac{1}{2}}}\log(T)^{\frac{r}{2r+p}})
    \\
    +&O_p(\kappa_k^{\frac{p}{2r}-1}\kappa_k^{-\frac{p}{2r}})O_p(\frac{T^{-\frac{r}{2r+p}}}{\kappa_k^{\frac{p}{2r}+\frac{1}{2}}}\log^{\frac{r}{2r+p}}(T))+O_p(\frac{T^{-\frac{r}{4r+2p}}}{\kappa_k^{2-\frac{p}{2r}}}\log^{\frac{r}{2r+p}}(T))O_p(\frac{T^{-\frac{r}{2r+p}}}{\kappa_k^{\frac{p}{2r}+\frac{1}{2}}}\log^{\frac{r}{2r+p}}(T))\kappa_k\Big)
    \\
    =&\sqrt{S}\Big(O_p(\frac{T^{-\frac{2r}{2r+p}}}{\kappa_k^{2-\frac{p}{2r}}}\log(T)^{\frac{2r}{2r+p}})O_p(\frac{T^{-\frac{2r}{2r+p}}}{\kappa_k^{\frac{p}{r}+1}}\log(T)^{\frac{2r}{2r+p}})+O_p(\kappa_k^{-1})O_p(\frac{T^{-\frac{r}{2r+p}}}{\kappa_k^{\frac{p}{2r}+\frac{1}{2}}}\log(T)^{\frac{r}{2r+p}})\Big).
\end{align*}}}
To bound the $I_2$ term, we add and subtract $\kappa_k^{\frac{p}{2r}-1}\Big\langle F_{{h_2},i}-f_i*\mathcal{K}_{h_2}, (f_{\eta_{k}}-f_{\eta_{k+1}})*\mathcal{K}_{h_1}\Big\rangle_{L_2}$, to get
\begin{align*}
    &\Big|\breve{Y}_i+Y_i\Big|
    \\
    =&\Big|\kappa_k^{\frac{p}{2r}-1}\Big\langle F_{{h_2},i}-f_i*\mathcal{K}_{h_2}, (f_{\eta_{k}}-f_{\eta_{k+1}})*\mathcal{K}_{h_2}\Big\rangle_{L_2}+\widehat{\kappa}_k^{\frac{p}{2r}-1}\Big\langle F_{{h_1},i}-f_i*\mathcal{K}_{h_1}, (f_{\eta_{k}}-f_{\eta_{k+1}})*\mathcal{K}_{h_1}\Big\rangle_{L_2}\Big|
    \\
    =&
    \Big|\kappa_k^{\frac{p}{2r}-1}\Big\langle F_{{h_2},i}-f_i*\mathcal{K}_{h_2}, (f_{\eta_{k}}-f_{\eta_{k+1}})*\mathcal{K}_{h_2}\Big\rangle_{L_2}-\kappa_k^{\frac{p}{2r}-1}\Big\langle F_{{h_2},i}-f_i*\mathcal{K}_{h_2}, (f_{\eta_{k}}-f_{\eta_{k+1}})*\mathcal{K}_{h_1}\Big\rangle_{L_2}
    \\
    +&
    \kappa_k^{\frac{p}{2r}-1}\Big\langle F_{{h_2},i}-f_i*\mathcal{K}_{h_2}, (f_{\eta_{k}}-f_{\eta_{k+1}})*\mathcal{K}_{h_1}\Big\rangle_{L_2}+\widehat{\kappa}_k^{\frac{p}{2r}-1}\Big\langle F_{{h_1},i}-f_i*\mathcal{K}_{h_1}, (f_{\eta_{k}}-f_{\eta_{k+1}})*\mathcal{K}_{h_1}\Big\rangle_{L_2}\Big|
    \\
    =&\Big|\kappa_k^{\frac{p}{2r}-1}\Big\langle F_{{h_2},i}-f_i*\mathcal{K}_{h_2}, (f_{\eta_{k}}-f_{\eta_{k+1}})*\mathcal{K}_{h_2}\Big\rangle_{L_2}+\kappa_k^{\frac{p}{2r}-1}\Big\langle F_{{h_2},i}-f_i*\mathcal{K}_{h_2}, (f_{\eta_{k}}-f_{\eta_{k+1}})*\mathcal{K}_{h_1}\Big\rangle_{L_2}
    \\
    +&\Big\langle \widehat{\kappa}_k^{\frac{p}{2r}-1}(F_{{h_1},i}-f_i*\mathcal{K}_{h_1})-\kappa_k^{\frac{p}{2r}-1}(F_{{h_2},i}-f_i*\mathcal{K}_{h_2}),(f_{\eta_{k}}-f_{\eta_{k+1}})*\mathcal{K}_{h_1}\Big\rangle_{L_2}\Big|.
\end{align*}
Then, as before, we bound each of the terms above using H\"{o}lder's inequality. We start with the term
\begin{align*}
    &\Big|\kappa_k^{\frac{p}{2r}-1}\Big\langle F_{{h_2},i}-f_i*\mathcal{K}_{h_2}, (f_{\eta_{k}}-f_{\eta_{k+1}})*\mathcal{K}_{h_2}\Big\rangle_{L_2}\Big|
    \\
    \le&\kappa_k^{\frac{p}{2r}-1}\vert\vert F_{{h_2},i}-f_i*\mathcal{K}_{h_2}\vert\vert_{L_2}\vert\vert(f_{\eta_{k}}-f_{\eta_{k+1}})*\mathcal{K}_{h_2}\vert\vert_{L_2}
    \\
    \le&
    \kappa_k^{\frac{p}{2r}-1}O_p(\kappa_k^{-\frac{p}{2r}})\kappa_k
    =O_p(1)
\end{align*}
where the second inequality is followed by \eqref{bound-K}.
Similarly, 
\begin{align*}
    &\Big|\kappa_k^{\frac{p}{2r}-1}\Big\langle F_{{h_2},i}-f_i*\mathcal{K}_{h_2}, (f_{\eta_{k}}-f_{\eta_{k+1}})*\mathcal{K}_{h_1}\Big\rangle_{L_2}\Big|
    \\
    \le&\kappa_k^{\frac{p}{2r}-1}\vert\vert F_{{h_2},i}-f_i*\mathcal{K}_{h_2}\vert\vert_{L_2}\vert\vert(f_{\eta_{k}}-f_{\eta_{k+1}})*\mathcal{K}_{h_1}\vert\vert_{L_2}
    \\
    \le&
    \kappa_k^{\frac{p}{2r}-1}O_p(\kappa_k^{-\frac{p}{2r}})\kappa_k
    =O_p(1)
\end{align*}
where the second inequality is followed by the \eqref{bound-K}.
Finally, the term 
$$\Big|\Big\langle \widehat{\kappa}_k^{\frac{p}{2r}-1}(F_{{h_1},i}-f_i*\mathcal{K}_{h_1})-\kappa_k^{\frac{p}{2r}-1}(F_{{h_2},i}-f_i*\mathcal{K}_{h_2}),(f_{\eta_{k}}-f_{\eta_{k+1}})*\mathcal{K}_{h_1}\Big\rangle_{L_2}\Big|$$
was previously bounded by,
$$O_p(\frac{T^{-\frac{2r}{2r+p}}}{\kappa_k^{2-\frac{p}{2r}}}\log^{\frac{2r}{2r+p}}(T))O_p(\frac{T^{-\frac{r}{2r+p}}}{\kappa_k^{\frac{p}{2r}+\frac{1}{2}}}\log^{\frac{r}{2r+p}}(T))\kappa_k.$$
Therefore,
\begin{align*}
    I_2\le
    \frac{1}{\sqrt{S}}\sum_{i\in{\mathcal{S}_r}}\Big|\breve{Y}_i+Y_i\Big|
    =\sqrt{S}\Big(O_p(1)+O_p(\frac{T^{-\frac{2r}{2r+p}}}{\kappa_k^{2-\frac{p}{2r}}}\log^{\frac{2r}{2r+p}}(T))O_p(\frac{T^{-\frac{r}{2r+p}}}{\kappa_k^{\frac{p}{2r}+\frac{1}{2}}}\log^{\frac{r}{2r+p}}(T))\kappa_k\Big).
\end{align*}
In consequences,
\begin{align*}
    \Big|\widehat{\sigma}_{\infty}^2(k)-\breve{\sigma}_{\infty}^2(k)\Big|=&\Big|\frac{1}{R}\sum_{r=1}^{R}\Big(\frac{1}{\sqrt{S}}\sum_{i\in{\mathcal{S}_r}}\breve{Y}_i\Big)^2-\frac{1}{R}\sum_{r=1}^{R}\Big(\frac{1}{\sqrt{S}}\sum_{i\in{\mathcal{S}_r}}Y_i\Big)^2\Big|
    \\
    =&
    \Big|\frac{1}{R}\sum_{r=1}^{R}I_1I_2\Big|
    \\
    =&S\Big(O_p(\frac{T^{-\frac{4r}{2r+p}}}{\kappa_k^{4-\frac{p}{r}}}\log^{\frac{4r}{2r+p}}(T))O_p(\frac{T^{-\frac{2r}{2r+p}}}{\kappa_k^{\frac{p}{r}+1}}\log^{\frac{2r}{2r+p}}(T))O_p(\frac{T^{-\frac{r}{2r+p}}}{\kappa_k^{\frac{p}{2r}+\frac{1}{2}}}\log^{\frac{r}{2r+p}}(T))\kappa_k
    \\
    +&O_p(\frac{T^{-\frac{2r}{2r+p}}}{\kappa_k^{2-\frac{p}{2r}}}\log^{\frac{2r}{2r+p}}(T))O_p(\frac{T^{-\frac{r}{2r+p}}}{\kappa_k^{\frac{p}{2r}+\frac{1}{2}}}\log^{\frac{r}{2r+p}}(T))O_p(\frac{T^{-\frac{r}{2r+p}}}{\kappa_k^{\frac{p}{2r}+\frac{1}{2}}}\log^{\frac{r}{2r+p}}(T))
    \\
    +&O_p(\kappa_k^{-1})O_p(\frac{T^{-\frac{r}{2r+p}}}{\kappa_k^{\frac{p}{2r}+\frac{1}{2}}}\log^{\frac{r}{2r+p}}(T))
    \\
    +&O_p(\kappa_k^{-1})O_p(\frac{T^{-\frac{r}{2r+p}}}{\kappa_k^{\frac{p}{2r}+\frac{1}{2}}}\log^{\frac{r}{2r+p}}(T))O_p(\frac{T^{-\frac{2r}{2r+p}}}{\kappa_k^{2-\frac{p}{2r}}}\log^{\frac{2r}{2r+p}}(T))O_p(\frac{T^{-\frac{r}{2r+p}}}{\kappa_k^{\frac{p}{2r}+\frac{1}{2}}}\log^{\frac{r}{2r+p}}(T))\kappa_k\Big).
\end{align*}
In order to conclude (i), we notice that by \Cref{assume-snr2} and that $S=O(T^{\frac{r}{2r+p}}\kappa_k^{\frac{p}{2r}+\frac{3}{2}})$, which implies,
\begin{align*}
    \Big|\widehat{\sigma}_{\infty}^2(k)-\breve{\sigma}_{\infty}^2(k)\Big|=&o_p(1).
\end{align*}
Now, we are going to see that $\Big|\breve{\sigma}_{\infty}^2(k)-\widetilde{\sigma}_{\infty}^2(k)\Big| \stackrel{P }{\longrightarrow} 0, \quad T \rightarrow \infty .$ To this end, we will show that the estimator is asymptotically unbiased, and its variance $\rightarrow 0$ as $T\rightarrow\infty.$ First, we notice that, by H\"{o}lder's inequality and Minkowsky's inequality,
\begin{align*}
     \vert\breve{Y}_i\vert=&\vert \kappa_k^{\frac{p}{2r}-1}\Big\langle F_{{h_2},i}-f_i*\mathcal{K}_{h_2}, (f_{\eta_{k}}-f_{\eta_{k+1}})*\mathcal{K}_{h_2}\Big\rangle_{L_2}\vert
     \\
     \le&\kappa_k^{\frac{p}{2r}-1}\vert\vert F_{{h_2},i}-f_i*\mathcal{K}_{h_2}\vert \vert_{L_2}\vert\vert (f_{\eta_{k}}-f_{\eta_{k+1}})*\mathcal{K}_{h_2}\vert\vert_{L_2}
     \\
     \le&
     \kappa_k^{\frac{p}{2r}-1}\kappa_k^{-\frac{p}{2r}}\kappa_k=1.
\end{align*}
Now, we analyze the Bias. We observe that,
\begin{align*}
    \mathbb{E}(\breve{\sigma}_{\infty}^2(k))=\frac{1}{R}\sum_{r=1}^{R} \mathbb{E}\Big(\Big(\frac{1}{\sqrt{S}}\sum_{i\in{\mathcal{S}_r}}\breve{Y}_i\Big)^2\Big)
    =\frac{1}{S}\mathbb{E}\Big(\Big(\sum_{i\in{\mathcal{S}_r}}\breve{Y}_i\Big)^2\Big)
    =\sum_{l=-S+1}^{S+1}\frac{S-l}{S}\mathbb{E}(\breve{Y}_i\breve{Y}_{i+l})    
\end{align*}
and, 
\begin{align*}
   \widetilde{\sigma}_{\infty}^2(k)=\sum_{l=-\infty}^{\infty}\mathbb{E}(\breve{Y}_i\breve{Y}_{i+l}).
\end{align*}
so that, the bias has the following form,
\begin{align*}
    \widetilde{\sigma}_{\infty}^2(k)-\mathbb{E}(\breve{\sigma}_{\infty}^2(k))=2\sum_{l=S}^{\infty}\mathbb{E}(\breve{Y}_i\breve{Y}_{i+l})+2\sum_{l=1}^{S}\frac{l}{S}\mathbb{E}(\breve{Y}_i\breve{Y}_{i+l}).
\end{align*}
Now, we show that each of the above terms vanishes as $T\rightarrow \infty.$
We have that, by condition \eqref{mix-cond} and covariance inequality 
\begin{align*}
    2\sum_{l=S}^{\infty}\mathbb{E}(\breve{Y}_i\breve{Y}_{i+l})\le&8\sum_{l=S}^{\infty}\vert\vert\breve{Y}_i\vert\vert_{L_\infty}^2\alpha_l\le8\sum_{l=S}^{\infty}\alpha_l\rightarrow 0, \ \text{as} \ T\rightarrow\infty
\end{align*}
where $\alpha_l$ is the mixing coefficient.
Then, 
\begin{align*}
    2\sum_{l=1}^{S}\frac{l}{S}\mathbb{E}(\breve{Y}_i\breve{Y}_{i+l})\le 8\sum_{l=1}^{S}\frac{l}{S}\vert\vert\breve{Y}_i\vert\vert_{L_\infty}^2\alpha_l\le \frac{C}{S}\rightarrow 0,
\end{align*}
by condition \eqref{mix-cond}, choice of $S$ and \Cref{assume-snr2}.
Therefore, we conclude that the Bias vanishes as $T\rightarrow\infty.$ To analyze the Variance, we observe that, if $Y_r=\frac{1}{S}\Big(\sum_{i\in{\mathcal{S}_r}}\breve{Y}_i\Big)^2$ 
\begin{align*}
    &Var(\breve{\sigma}_{\infty}^2(k))=\mathbb{E}((\breve{\sigma}_{\infty}^2(k)-\mathbb{E}(\breve{\sigma}_{\infty}^2(k)))^2)
    \\
    =&\frac{1}{R^2}\mathbb{E}\Big(\Big(\sum_{r=1}^{R}Y_r-\mathbb{E}(Y_r)\Big)^2\Big)
    \\
    =&\frac{1}{R}\sum_{l=-R+1}^{R-1}\frac{R-l}{R}cov(Y_r,Y_{l+r})
    \\
    \le&
    \frac{8}{R}\vert\vert Y_r\vert\vert_{L_\infty}^2\sum_{l=0}^{\infty}\widetilde{\alpha}_{l}
    \le
    \frac{8CS}{R}\rightarrow 0, \ \text{as}, \ T\rightarrow\infty.
\end{align*}
where, $\widetilde{\alpha}_l$ are the mixing coefficients of $\{Y_r\}_{r\in{\mathbb{Z}}}$, which is bounded by the mixing coefficient $\alpha_l$.
From here, we conclude the result (ii).
\end{proof}
%%%%%%%%%%%%%%%%%%%%%%%%%%%%%%%%%%%%%%%%%%%%%
%%%%%%%%%%%%%%%%%%%%%%%%%%%%%%%%%%%%%%%%%%%%%
%%%%%%%End of proof of theorem 3%%%%%%%%%%%%%%
%%%%%%%%%%%%%%%%%%%%%%%%%%%%%%%%%%%%%%%%%%%%%
%%%%%%%%%%%%%%%%%%%%%%%%%%%%%%%%%%%%%%%%%%%%%
  
\newpage
\section{Large probability events}\label{sec-large-events}
In this section, we deal with all the large probability events that occurred in the proof of \Cref{theorem:FSBS}. Recall that, for any $(s,e]\subseteq (0,T] $,
	$$ \widetilde f^{  s, e}_{t }    (x ) =  \sqrt { \frac{e-t}{ (e -s  )(t-s )}} \sum_{ l =s +1}^{ t}  f_l (x )  -
   \sqrt { \frac{t-s}{ (e-s  )(e-t)}} \sum_{l =t+1}^{ e}  f_l  (x ),\ x\in\mathcal{X}.$$
\begin{proposition}
\label{prop-1}
For any $x$,
\begin{align*}
\mathbb{P}\Big(\max _{\rho \leqslant k \leq T-\widetilde{r}} \Big| \frac{1}{\sqrt{k}} \sum_{t=\widetilde{r}+1}^{\widetilde{r}+k}\Big(\mathcal{K}_h(x-X_t)-\int \mathcal{K}_h(x-z) d F_t(z)\Big) \Big|
\geq C \sqrt{\frac{\log T}{h^p}}\Big) \leqslant T^{-p-3}.
\end{align*}
\end{proposition}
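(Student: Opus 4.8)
The plan is to establish this maximal inequality by combining a VC-type uniform concentration bound for the kernel class $\mathcal{F}_{\mathcal{K}}$ with a blocking/mixing argument to handle the $\alpha$-mixing dependence, followed by a union bound over the scale $k$. First I would fix $x$ and set $Z_t = Z_t(x) = \mathcal{K}_h(x-X_t) - \mathbb{E}[\mathcal{K}_h(x-X_t)]$, noting that $\|Z_t\|_{L_\infty} \lesssim h^{-p}\|\mathcal{K}\|_{L_\infty}$ and $\mathrm{Var}(Z_t) \lesssim h^{-p}$ (the latter because $\int \mathcal{K}_h(x-z)^2 f_t(z)\,dz \asymp h^{-p}\int \mathcal{K}(u)^2 f_t(x-hu)\,du = O(h^{-p})$, using boundedness of the densities from \Cref{assume: model assumption}\textbf{a}). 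Thus for a fixed $k$ and fixed starting index $\widetilde{r}$, the sum $S_k = \sum_{t=\widetilde{r}+1}^{\widetilde{r}+k} Z_t$ is a sum of bounded, short-range dependent random variables, and a Bernstein-type inequality for $\alpha$-mixing sequences (of the Merlevède–Peligrad–Rio type, available under the exponential mixing \eqref{mix-cond}) yields $\mathbb{P}(|S_k| \geq \lambda) \lesssim \exp\!\big(-c\lambda^2/(k h^{-p} + \lambda h^{-p}\log T)\big)$ modulo the standard logarithmic corrections that exponential mixing introduces.

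Next, I would choose $\lambda = \lambda_k = C\sqrt{k\log T / h^p}$ so that the dominant term in the Bernstein exponent is $\lambda_k^2/(k h^{-p}) \asymp \log T$, giving a bound of order $\exp(-c' C^2 \log T) = T^{-c' C^2}$ for each fixed $k$; one must check that the cross term $\lambda_k h^{-p}\log T$ does not dominate, which holds provided $\rho \leq k$ (recall $\rho = \log(T) h^{-p}$ from \Cref{algorithm:WBS}), precisely the lower range of $k$ in the statement — this is why the maximum starts at $k=\rho$. Then a union bound over the at most $T$ values of $k \in \{\rho, \ldots, T-\widetilde{r}\}$ costs a factor $T$, and choosing $C$ large enough (as a function of $p$) absorbs this and the exponent $p+3$ that appears on the right-hand side. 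Alternatively, and perhaps more cleanly, I would pass through a chaining/maximal-inequality argument: since $|S_k|/\sqrt{k}$ behaves like a self-normalized partial-sum process, one can control $\max_{\rho \le k \le T} |S_k|/\sqrt{k}$ directly via a peeling argument over dyadic blocks $k \in [2^j, 2^{j+1})$, applying the Bernstein bound on each block with a slightly inflated constant — this avoids losing a full factor of $T$ and only loses $\log T$, which is already subsumed.

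The main obstacle I anticipate is handling the dependence carefully in the maximal inequality: a naive union bound over $k$ together with a per-$k$ tail bound works, but one has to be sure the mixing-Bernstein inequality is being applied with the correct variance proxy (the true variance $k h^{-p}$ rather than a crude $k^2 h^{-2p}$ bound), which requires the covariance-summability estimate $\sum_{l} |\mathrm{Cov}(Z_s, Z_{s+l})| \lesssim h^{-p}$; this follows from the covariance inequality for $\alpha$-mixing combined with $\|Z_t\|_\infty \lesssim h^{-p}$ and the exponential decay \eqref{mix-cond}, but it is the step where the short-range-dependence hypothesis is genuinely used. The role of the $x$-uniformity is minor here since the statement is for fixed $x$; the VC/covering structure of \Cref{kernel-as}\textbf{b} would only be needed if we later take a supremum over $x$, which is done elsewhere (in \Cref{Concentration-Bound}). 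Everything else — the choice of $\lambda_k$, the union bound, matching the exponent $p+3$ — is bookkeeping in the constants.
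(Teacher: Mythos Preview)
Your overall strategy---apply a Bernstein inequality for exponentially $\alpha$-mixing sequences (Merlev\`ede--Peligrad--Rio), choose $\lambda_k\asymp\sqrt{k\log T/h^p}$, and use $k\ge\rho=\log(T)h^{-p}$ to kill the cross term, then union-bound over $k$---is exactly what the paper does. The structural outline is correct.

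There is, however, a genuine gap in your variance step. You assert that the covariance-summability estimate
\[
\sum_{l\ge 1}\bigl|\mathrm{Cov}\bigl(Z_s,Z_{s+l}\bigr)\bigr|\;\lesssim\;h^{-p}
\]
``follows from the covariance inequality for $\alpha$-mixing combined with $\|Z_t\|_\infty\lesssim h^{-p}$ and the exponential decay \eqref{mix-cond}.'' It does not. The $L_\infty$ covariance inequality gives only $|\mathrm{Cov}(Z_s,Z_{s+l})|\le 4\,\alpha_l\,\|Z_s\|_\infty\|Z_{s+l}\|_\infty\lesssim h^{-2p}\alpha_l$, so summing over $l$ yields $O(h^{-2p})$, not $O(h^{-p})$. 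With $V^2\asymp h^{-2p}$ the Bernstein exponent $\lambda_k^2/(kV^2)$ becomes $C^2h^p\log T=o(1)$ and the bound is useless at your chosen scale $\lambda_k$. (Using a Davydov-type inequality with $L_q$ norms for finite $q$ gets you to $h^{-p-\varepsilon}$ for any $\varepsilon>0$, but never to $h^{-p}$ with a uniform constant.)

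The missing ingredient is \Cref{assume: model assumption}{\bf b}, the bounded joint density $\|g_t\|_{L_\infty}<\infty$. The paper splits the covariance sum into small and large lags: for small lags one writes
\[
|\mathrm{Cov}(Z_1,Z_{1+t})|\le\iint \mathcal{K}_h(x-z_1)\mathcal{K}_h(x-z_2)\,g_t(z_1,z_2)\,dz_1dz_2+\{\mathbb{E}Z_1\}^2=O(1),
\]
independently of $h$, and only invokes the mixing covariance bound $\lesssim h^{-2p}\alpha_l$ for large lags where the exponential decay of $\alpha_l$ overwhelms the $h^{-2p}$ prefactor. Combining the two pieces gives $V^2=O(h^{-p})$. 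You need to invoke \eqref{join-d-cond} explicitly; without it the proof does not close.
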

\begin{proof}
  We have that the random variables $\{Z_t=\mathcal{K}_h\Big(x-X_t\Big)\}_{t=1}^{T}$ satisfies
\begin{equation*}
    \sigma\Big(\mathcal{K}_h\Big(x-X_t\Big)\Big) \subset \sigma\Big(X_t\Big)
\end{equation*}
and,
\begin{equation*}
    \Big|\mathcal{K}_h\Big(x-X_t\Big)\Big| \leqslant \frac{1}{h^p} C_{K}.
\end{equation*}
Moreover, let
\begin{equation*}
V^2=\sup_{t>0}\Big(var(\mathcal{K}_h(x-X_t)+2\sum_{j>t}|cov(Z_t,Z_j)|)\Big).
\end{equation*}
We observe that,
\begin{align*}
    var(\mathcal{K}_h(x-X_t))\le& E((\frac{1}{h^{p}}\mathcal{K}(\frac{x-X_t}{h}))^2)
    \\
    \le& \int \frac{1}{h^{2p}}\mathcal{K}(\frac{x-z}{h})dF_t(z)
\end{align*}
making $\mu=\frac{x-z}{h}$, the last inequality is equal to
\begin{align*}
    \int \frac{1}{h^{2p}}\mathcal{K}(\frac{x-z}{h})dF_t(z)&\le \frac{1}{h^p}\int \mathcal{K}^2(u)dF_t(z)
    \\
    &\le \frac{1}{h^p}C_kC_f.
\end{align*}
Then, by proposition 2.5 on \cite{fan2008nonlinear}, $|cov(Z_1,Z_1+t)|\le C\alpha(t)\frac{1}{h^{2p}}C_K^2$.
On the other hand, 
\begin{align*}
  cov(Z_1,Z_1+t)=&|E(Z_1Z_{t+1})-E(Z_1)^2|
  \\
  \le&\int\int \mathcal{K}_h(x-z_1)\mathcal{K}_h(x-z_2)g_t(z_1,z_2)dz_1dz_2+E(Z_1)^2
  \\
  \le& ||g_t||_{L_\infty}+E(Z_1)^2.
\end{align*}
 Since by assumption, equation \eqref{join-d-cond}, $||g_t||_{L_\infty}<\infty$ and,
 \begin{align*}
     E(Z_1)=E(\mathcal{K}_h(x-X_1))=\int \frac{1}{h^{p}}\mathcal{K}(\frac{x-z}{h})dF_t(z)=\int \mathcal{K}(u)f_t(x-hu)du=O(1),
 \end{align*}
 we obtain that $|cov(Z_1,Z_{1+t})|$. Therefore, $\sum_{t=1}^{\frac{1}{h}-1}|cov(Z_1,Z_{t+1})|\le C\frac{1}{h}$
 and, using the mixing condition bound, inequality \eqref{mix-cond},
 \begin{align*}
 \sum_{t=\frac{1}{h^p}}^{T-1}|cov(Z_1,Z_1+t)|\le& D\sum_{t=\frac{1}{h}}^{\infty}\frac{e^{-2Ct}}{h^{2p}}
 \\
 \le& D\frac{e^{-2C\frac{1}{h^p}}}{h^{2p}}
 \\
 \le& \widetilde{D}\frac{1}{h^{2p}}h^p= \widetilde{D}h^p
 \end{align*}
 where the last inequity is followed by the fact $e^{-x}<\frac{1}{x}$ for $x>-1.$
 In consequence,
 \begin{align*}
     V^2=&\sup_{t>0}\Big(var(\mathcal{K}_h(x-X_t)+2\sum_{j>t}|cov(Z_t,Z_j)|)\Big)
     \\
     =& \widetilde{C}\frac{1}{h^p}+\widetilde{D}\frac{1}{h^p}=\widetilde{\widetilde{C}}\frac{1}{h^p}.
 \end{align*}
Then, by Bernstein inequality for mixing dependence, see \cite{merlevede2009bernstein} for more details, letting 
 \begin{equation*}
     \lambda=C_p\Big(\sqrt{\frac{k\log(T)}{h^p}}+\sqrt{\frac{\log(T)}{h^{2p}}}+\sqrt{\frac{\log(T)\log^2(k)}{h^p}}\Big)
 \end{equation*}
 we get that,
 \begin{align*}
     \mathbb{P}\Big( \Big|  \sum_{t=\widetilde{r}+1}^{\widetilde{r}+k}\Big(\mathcal{K}_h(x-X_t)-\int \mathcal{K}_h(x-z) d F_t(z)\Big) \Big|>\lambda\Big)\le T^{-p-3}.
 \end{align*}
 in consequence,
  \begin{align*}
     \mathbb{P}\Big(\Big| \frac{1}{\sqrt{k}} \sum_{t=\widetilde{r}+1}^{\widetilde{r}+k}\Big(\mathcal{K}_h(x-X_t)-\int \mathcal{K}_h(x-z) d F_t(z)\Big) \Big|>\frac{\lambda}{\sqrt{k}}\Big)\le T^{-p-3}.
 \end{align*}
 Since $kh^p\ge log(T)$ if $k>\rho$, and $\log^2(k)=O(k)$,
 \begin{align*}
     &\frac{\lambda}{\sqrt{k}}=\frac{C_p\Big(\sqrt{\frac{k\log(T)}{h^p}}+\sqrt{\frac{\log(T)}{h^{2p}}}+\sqrt{\frac{\log(T)\log^2(k)}{h^p}}\Big)}{\sqrt{k}}
     \\
     =&C_p\Big(\sqrt{\frac{\log(T)}{h^p}}+\sqrt{\frac{\log(T)}{kh^{2p}}}+\sqrt{\frac{\log(T)\log^2(k)}{kh^p}}\Big)
     \\
     \le&
     C_p\Big(\sqrt{\frac{\log(T)}{h^p}}+\sqrt{\frac{1}{h^{p}}}+\sqrt{\frac{\log(T)}{h^p}}\Big)
     \\
     \le&
     C_1\sqrt{\frac{\log(T)}{h^p}}.
 \end{align*}
 It follows that,
   \begin{align*}
     \mathbb{P}\Big(\Big| \frac{1}{\sqrt{k}} \sum_{t=\widetilde{r}+1}^{\widetilde{r}+k}\Big(\mathcal{K}_h(x-X_t)-\int \mathcal{K}_h(x-z) d F_t(z)\Big) \Big|>C_1\sqrt{\frac{\log(T)}{h^p}}\Big)\le T^{-p-3}.
 \end{align*}
\end{proof}
\begin{proposition}
\label{Concentration-Bound}
Define the events
$$\mathcal{A}_1=\Big\{\max_{t=s+\rho+1}^{e-\rho }  \sup_{x \in \mathbb{R}^p} \bigg|  \widetilde F_{t, h }^{s,e} (x)   -   \widetilde f_{t}^{s,e} (x)\bigg|\ge 2C \sqrt{\frac{\log T}{h^p}}+\frac{2C_1\sqrt{p}}{h^p}+2C_2\sqrt{T}h^{r}\Big\}$$
and,
$$\mathcal{A}_2=\Big\{\max _{\rho \leqslant k \leq T-\widetilde{r}} \sup_{x\in\mathbb{R}^p}\Big| \frac{1}{\sqrt{k}} \sum_{t=\widetilde{r}+1}^{\widetilde{r}+k}\Big(\mathcal{K}_h(x-X_t)-f_t(x)\Big) \Big|
\geq C \sqrt{\frac{\log T}{h^p}}+\frac{C_1\sqrt{p}}{h^p}+C_2\sqrt{T}h^{r}\Big\}.$$
Then
\begin{align}
&\mathbb{P}\Big(\mathcal{A}_1\Big) \leqslant 2R^pT^{-2}\\
&
\mathbb{P}\Big(\mathcal{A}_2\Big) \leqslant R^pT^{-2}
\end{align}
where $R$ is a positive constant. 
\end{proposition}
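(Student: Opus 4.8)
\textbf{Proof plan for \Cref{Concentration-Bound}.}

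The plan is to reduce the two suprema over $x\in\mathbb R^p$ to finite maxima via a discretization/chaining argument, then apply \Cref{prop-1} together with a bias bound coming from \Cref{kernel-as}. First I would handle $\mathcal A_2$, since $\mathcal A_1$ follows from it: the CUSUM difference $\widetilde F^{s,e}_{t,h}-\widetilde f^{s,e}_t$ is, by the definition in \Cref{Cus-Sta}, a normalized linear combination of the two partial-sum processes appearing in $\mathcal A_2$ (the forward block $\sum_{t=\widetilde r+1}^{\widetilde r+k}$ and, after relabeling, the backward block), with coefficients $\sqrt{(e-t)/((e-s)(t-s))}$ and $\sqrt{(t-s)/((e-s)(e-t))}$ both bounded by $1$ in the relevant range $s+\rho<t<e-\rho$; hence an event of the form $\mathcal A_2$ for the two endpoints $s$ and $e$ implies $\mathcal A_1$ up to the factor $2$, giving the extra factor $2$ in both the bound $2R^pT^{-2}$ and the threshold. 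I would make this precise using the telescoping identity already exploited in \Cref{remark-1}/the proof of \Cref{theorem:FSBS}.

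For $\mathcal A_2$ itself, the key steps are: (i) split $\mathcal K_h(x-X_t)-f_t(x) = \big(\mathcal K_h(x-X_t)-\mathbb E\mathcal K_h(x-X_t)\big) + \big(\int \mathcal K_h(x-z)\,dF_t(z) - f_t(x)\big)$. The first (stochastic) term is controlled, for fixed $x$, by \Cref{prop-1}, which gives the $C\sqrt{\log T/h^p}$ piece with probability at least $1-T^{-p-3}$; the second (bias) term is deterministic and bounded uniformly in $x$ by $\widetilde C h^r$ via \Cref{kernel-as}\textbf{a}, contributing the $C_2\sqrt T h^r$ piece once summed and renormalized by $1/\sqrt k$ over a block of length at most $T$. (ii) Remove the supremum over $x$: since $\mathcal K$ has compact support and $\mathcal X$ has bounded Lebesgue measure, it suffices to consider $x$ in a bounded region; cover it by a grid of $R^p$ points at spacing $\delta\asymp h^{p+1}/\mathrm{poly}(T)$ (so that there are $\le R^p$ points with $R$ polynomial in $T$), apply \Cref{prop-1} and a union bound over these $R^p$ points and over the $\le T$ values of $k$, and control the oscillation of $x\mapsto \mathcal K_h(x-X_t)$ between grid points. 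The Lipschitz/VC structure of $\mathcal F_{\mathcal K}$ in \Cref{kernel-as}\textbf{b} (or simply the gradient bound on $\mathcal K$ on its compact support, rescaled by $h^{-p-1}$) shows this oscillation is $\le C_1\sqrt p/h^p$ after normalization, producing the third term $C_1\sqrt p/h^p$ in the threshold. Collecting: the union bound over $R^p$ grid points and $T$ block-lengths gives probability $\le R^p\cdot T\cdot T^{-p-3}\le R^pT^{-2}$, as claimed (for $p\ge 0$).

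The main obstacle is the discretization step (ii): one must choose the grid fine enough that the pointwise bound from \Cref{prop-1} transfers to the supremum with only the stated additive slack $C_1\sqrt p/h^p$, yet coarse enough that $R$ stays polynomial in $T$ so the union bound still beats $T^{-p-3}$. This is where \Cref{kernel-as}\textbf{b} (the uniformly bounded VC-class property, giving control of the covering numbers of $\mathcal F_{\mathcal K}$ uniformly over scales) does the real work — it is the clean way to get a uniform-in-$x$ (and uniform-in-$h$) modulus of continuity; a cruder Lipschitz argument would force $\delta$ too small unless one is careful with the $h^{-p-1}$ factor, but since here $h$ is fixed at $h=c_hT^{-1/(2r+p)}$ this is harmless. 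Everything else — the Bernstein-type tail in \Cref{prop-1}, the bias bound, and the reduction $\mathcal A_1\Rightarrow$ endpoints of $\mathcal A_2$ — is routine given the earlier results. I would write $R$ explicitly in terms of $T$ and $p$ at the end so that the stated inequalities hold verbatim.
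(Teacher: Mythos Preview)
Your plan is essentially the paper's own proof: split into stochastic plus bias, discretize over a grid, apply \Cref{prop-1} at grid points with a union bound, control oscillations by a Lipschitz bound on $\mathcal K_h$, and reduce $\mathcal A_1$ to two $\mathcal A_2$-type events via the CUSUM coefficient bounds $\sqrt{(e-t)/((e-s)(t-s))}\le 1/\sqrt{t-s}$, etc. Two small corrections: first, the paper does \emph{not} invoke the VC-class property of \Cref{kernel-as}\textbf{b} here---it uses the direct Lipschitz bound $|\mathcal K_h(x-X_t)-\mathcal K_h(x_i-X_t)|\le |x-x_i|/h^{p+1}$ with grid spacing $\sqrt{h^p}\,h/\sqrt T$, which is exactly the ``cruder'' route you already noted works; second, no union bound over the $T$ block lengths is needed, since \Cref{prop-1} already delivers the maximum over $k$.
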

\begin{proof}
  First, we notice that 
  \begin{align*}
    &\max _{\rho \leqslant k \leqslant T-\widetilde{r}}\sup_{x \in \mathbb{R}^p} \Big|\frac{1}{\sqrt{k}} \sum_{t=\widetilde{r}+1}^{\widetilde{r}+k} \mathcal{K}_h\Big(x-X_t\Big)-f_t(x)\Big|
    \\
    \le&\max _{\rho \leqslant k \leq T-\widetilde{r}} \sup_{x \in \mathbb{R}^p} \Big| \frac{1}{\sqrt{k}} \sum_{t=\widetilde{r}+1}^{\widetilde{r}+k}\Big(\mathcal{K}_h(x-X_t)-\int \mathcal{K}_h(x-z) d F_t(z)\Big) \Big|
    \\
    +&\max _{\rho \leqslant k \leqslant T-\widetilde{r}} \sup_{x \in \mathbb{R}^p} \Big| \frac{1}{\sqrt{k}} \sum_{t=\widehat{r}+1}^{r+k}\Big(\int \mathcal{K}_h(x-z) d F_z(z)-f_t(x)\Big)\Big|=I_1+I_2
\end{align*}
Now we will bound each of the terms $I_1$, $I_{2}$.
For $I_1$, we consider
 $$A=\{x_1,...,x_{(R\sqrt{T}/\sqrt{h^p}h)^p}\}$$ with $\cup_{x_i \in\{x_1,...,x_{(R\sqrt{T}/\sqrt{h^p}h)^p}\}}Rec(x_i,\frac{\sqrt{h^p}h}{\sqrt{T}})\supset D$, where $D$ is the support of $K$ and $Rec(x_i,\frac{\sqrt{h^p}h}{\sqrt{T}})$ are boxes centered at $x_i$ of size $\frac{\sqrt{h^p}h}{\sqrt{T}}$ and $R$ is the size of the boxe containing $D$. Then by \Cref{prop-1}, for any $x_i,$
\begin{align}
\mathbb{P}\Big(\max _{\rho \leqslant k \leq T-\widetilde{r}} \Big| \frac{1}{\sqrt{k}} \sum_{t=\widetilde{r}+1}^{\widetilde{r}+k}\Big(\mathcal{K}_h(x_i-X_t)-\int \mathcal{K}_h(x_i-z) d F_t(z)\Big) \Big|
\geq C \sqrt{\frac{\log T}{h^p}}\Big) \leqslant T^{-p-3},
\end{align}
by an union bound argument,
\begin{align}
\mathbb{P}\Big(\max _{\rho \leqslant k \leq T-\widetilde{r}}\sup_{x\in A}\Big| \frac{1}{\sqrt{k}} \sum_{t=\widetilde{r}+1}^{\widetilde{r}+k}\Big(\mathcal{K}_h(x-X_t)-\int \mathcal{K}_h(x-z) d F_t(z)\Big) \Big|
\geq C \sqrt{\frac{\log T}{h^p}}\Big) \leqslant T^{-p-3}|A|.
\end{align}
Let $I_{1,1}=\{\max _{\rho \leqslant k \leq T-\widetilde{r}}\sup_{x\in A}\Big| \frac{1}{\sqrt{k}} \sum_{t=\widetilde{r}+1}^{\widetilde{r}+k}\Big(\mathcal{K}_h(x-X_t)-\int \mathcal{K}_h(x-z) d F_t(z)\Big) \Big|\}.$
For any $x\in \mathbb{R}^p$, there exist $x_i\in A $ such that 
\begin{align*}
    &\max _{\rho \leqslant k \leq T-\widetilde{r}}\Big| \frac{1}{\sqrt{k}} \sum_{t=\widetilde{r}+1}^{\widetilde{r}+k}\Big(\mathcal{K}_h(x-X_t)-\int \mathcal{K}_h(x-z) d F_t(z)\Big) \Big|
    \\
    \le&\max _{\rho \leqslant k \leq T-\widetilde{r}}\Big| \frac{1}{\sqrt{k}} \sum_{t=\widetilde{r}+1}^{\widetilde{r}+k}\Big(\mathcal{K}_h(x_i-X_t)-\int \mathcal{K}_h(x_i-z) d F_t(z)\Big) \Big|
    \\
    +&\max _{\rho \leqslant k \leq T-\widetilde{r}}\Big| \frac{1}{\sqrt{k}} \sum_{t=\widetilde{r}+1}^{\widetilde{r}+k}\Big(\mathcal{K}_h(x-X_t)-\mathcal{K}_h(x_i-X_t) \Big) \Big|
    \\
    +&\max _{\rho \leqslant k \leq T-\widetilde{r}}\Big| \frac{1}{\sqrt{k}} \sum_{t=\widetilde{r}+1}^{\widetilde{r}+k}\Big(\int \mathcal{K}_h(x_i-z) d F_t(z)-\int \mathcal{K}_h(x-z) d F_t(z)\Big) \Big|
    \\
    \le&\max _{\rho \leqslant k \leq T-\widetilde{r}}\sup_{x\in A}\Big| \frac{1}{\sqrt{k}} \sum_{t=\widetilde{r}+1}^{\widetilde{r}+k}\Big(\mathcal{K}_h(x_i-X_t)-\int \mathcal{K}_h(x_i-z) d F_t(z)\Big) \Big|
    \\
    +&\max _{\rho \leqslant k \leq T-\widetilde{r}}\Big| \frac{1}{\sqrt{k}} \sum_{t=\widetilde{r}+1}^{\widetilde{r}+k}\Big(\mathcal{K}_h(x-X_t)-\mathcal{K}_h(x_i-X_t) \Big) \Big|
    \\
    +&\max _{\rho \leqslant k \leq T-\widetilde{r}}\Big| \frac{1}{\sqrt{k}} \sum_{t=\widetilde{r}+1}^{\widetilde{r}+k}\Big(\int \mathcal{K}_h(x_i-z) d F_t(z)-\int \mathcal{K}_h(x-z) d F_t(z)\Big) \Big|
    \\
    =& I_{1,1}+I_{1,2}+I_{1,3}
\end{align*}
The term $I_{1,2}$ is bounded as followed. 
\begin{align*}
    &\max _{\rho \leqslant k \leq T-\widetilde{r}}\Big| \frac{1}{\sqrt{k}} \sum_{t=\widetilde{r}+1}^{\widetilde{r}+k}\Big(\mathcal{K}_h(x-X_t)-\mathcal{K}_h(x_i-X_t) \Big) \Big|
    \\
    \le&
    \max _{\rho \leqslant k \leq T-\widetilde{r}} \frac{1}{\sqrt{k}} \sum_{t=\widetilde{r}+1}^{\widetilde{r}+k}\Big|\mathcal{K}_h(x-X_t)-\mathcal{K}_h(x_i-X_t)\Big| 
    \\
    \le&
    \max _{\rho \leqslant k \leq T-\widetilde{r}} \frac{1}{\sqrt{k}} \sum_{t=\widetilde{r}+1}^{\widetilde{r}+k}\frac{\vert x-x_i\vert}{h^{p+1}} 
    \\
    \le&\max _{\rho \leqslant k \leq T-\widetilde{r}} \frac{1}{\sqrt{k}} \sum_{t=\widetilde{r}+1}^{\widetilde{r}+k}\frac{\sqrt{h^{p}}h\sqrt{p}}{\sqrt{T}h^{p+1}} 
    \le \frac{\sqrt{p}}{\sqrt{h^p}}
\end{align*}
For the term $I_{1,3}$, since the random variables $\{\mathcal{K}_h(x-X_t)\}_{t=1}^{T}$ have bounded expected value for any $x\in \mathbb{R}^p$
\begin{align*}
    &\max _{\rho \leqslant k \leq T-\widetilde{r}}\Big| \frac{1}{\sqrt{k}} \sum_{t=\widetilde{r}+1}^{\widetilde{r}+k}\Big(\int \mathcal{K}_h(x_i-z) d F_t(z)-\int \mathcal{K}_h(x-z) d F_t(z)\Big) \Big|
    \\
    \le& \max _{\rho \leqslant k \leq T-\widetilde{r}}\Big| \frac{1}{\sqrt{k}} \sum_{t=\widetilde{r}+1}^{\widetilde{r}+k}\Big(\frac{\sqrt{h^p}h2C\sqrt{p}}{h^{p+1}\sqrt{T}}\Big) \Big|
    \le \frac{2C\sqrt{p}}{\sqrt{h^p}}.
\end{align*}
Thus, 
\begin{align*}
    &\max _{\rho \leqslant k \leq T-\widetilde{r}} \sup_{x \in \mathbb{R}^p} \Big| \frac{1}{\sqrt{k}} \sum_{t=\widetilde{r}+1}^{\widetilde{r}+k}\Big(\mathcal{K}_h(x-X_t)-\int \mathcal{K}_h(x-z) d F_t(z)\Big) \Big|
    \\
    \le& I_{1,1}+C_2\frac{\sqrt{p}}{\sqrt{h^{p}}}
\end{align*}
From here, 
\begin{align}
\label{event-needed}
    \mathbb{P}\Big(I_1>C_1 \sqrt{\frac{\log T}{h^p}}+\frac{C_2\sqrt{p}}{\sqrt{h^p}}\Big)
    \le&
    \mathbb{P}\Big(I_{1,1}+I_{1,2}+I_{1,3}>C_1 \sqrt{\frac{\log T}{h^p}}+\frac{C_2\sqrt{p}}{\sqrt{h^p}}\Big)
    \\
    \le&\mathbb{P}\Big(I_{1,1}\Big)
    \le T^{-p-3}|A|
    =T^{-p-2}(R\sqrt{T}/\sqrt{h^p}h)^p
    \\
    \le& T^{-p-3}(R\sqrt{T}\sqrt{T}T^{\frac{1}{p}})^{p}
    =R^pT^{-2}
\end{align}
Finally, we analyze the term $I_2$. By the adaptive assumption, the following is satisfied,
\begin{align*}
    &\max _{\rho \leqslant k \leqslant T-\widetilde{r}} \sup_{x \in \mathbb{R}^p} \Big| \frac{1}{\sqrt{k}} \sum_{t=\widehat{r}+1}^{r+k}\Big(\int \mathcal{K}_h(x-z) d F_z(z)-f_t(x)\Big)\Big|
    \\
    \le& \max _{\rho \leqslant k \leqslant T-\widetilde{r}}  \frac{1}{\sqrt{k}} \sum_{t=\widehat{r}+1}^{r+k}\sup_{x \in \mathbb{R}^p}\Big| \int \mathcal{K}_h(x-z) d F_z(z)-f_t(x)\Big| 
    \\
    \le&\max _{\rho \leqslant k \leqslant T-\widetilde{r}}  \frac{1}{\sqrt{k}}\sum_{t=\widehat{r}+1}^{r+k} C_2h^r
    \\
    \le& C_2\sqrt{T}h^r
\end{align*}
We conclude the bound for event $\mathcal{A}_2.$
We conclude the bound for event $\mathcal{A}_2.$ Next, to derive the bound for event $\mathcal{A}_1,$
by definition of $\widetilde F_{t, h}^{s,e}$ and $\widetilde f_{t}^{s,e}$, we have that
\begin{align*}
    \bigg | \widetilde F_{t, h}^{s,e } (x) -\widetilde f_{t}^{s,e} (x)  \bigg|  
&\le
\bigg |\sqrt{\frac{e-t}{(e-s)(t-s)}}\sum_{l=s+1}^{t}(F_{l,h}(x)-f_{l,h}(x))\bigg |
\\
&+
\bigg |\sqrt{\frac{t-s}{(e-s)(e-t)}}\sum_{l=t+1}^{e}(F_{l,h}(x)-f_{l,h}(x))\bigg |.
\end{align*}
Then, we observe that, 
\begin{align*}
   \sqrt{\frac{e-t}{(e-s)(t-s)}}\le\sqrt{\frac{1}{t-s}} \ \text{if} \ s\le t, \ \text{and} \  \sqrt{\frac{t-s}{(e-s)(e-t)}}\le \sqrt{\frac{1}{e-t}} \ \text{if} \ t\le e.
\end{align*}
Therefore,
\begin{align*}
X=\max_{t=s+\rho+1}^{e-\rho }\bigg | \widetilde F_{t, h}^{(s,e } (x) -\widetilde f_{t}^{s,e} (x)  \bigg|  
\le 
& \max_{t=s+\rho+1}^{e-\rho }  \bigg|   \sqrt { \frac{1}{  t-s }} \sum_{ l =s+1}^{ t}
\bigg (  F_{ l , h} (x)  -   \{ f_{ l,h }(x) \bigg )  \bigg| 
  \\
  +
  & 
 \max_{t=s+\rho+1}^{e-\rho } \bigg|   \sqrt { \frac{1}{  e- t }} \sum_{ l =t+1}^{ e }
\bigg (  F_{ l , h} (x)  -  f_{ l,h }(x) \bigg )  \bigg|
=X_1+X_2.
\end{align*} 
Finally, letting $\lambda=2C_1 \sqrt{\frac{\log T}{h^p}}+\frac{2C_2\sqrt{p}}{\sqrt{h^p}}+2C_2\sqrt{T}h^{r},$ we get that
\begin{align*}
\mathbb P(X\ge \lambda)\le& 
\mathbb P(X_1+X_2\ge \frac{\lambda}{2}+\frac{\lambda}{2})
\\
\le&
 \mathbb P(X_1\ge \frac{\lambda}{2})+\mathbb P(X_2\ge\frac{\lambda}{2})
\\
\le&   2R^pT ^{-2},
\end{align*}   
where the last inequality follows from above. This concludes the bound for $\mathcal{A}_1.$
\end{proof}
\begin{remark}
\label{remark-1}
On the events
$(\mathcal{A}_1)^c$
and,
$(\mathcal{A}_2)^c$,
by \Cref{assume: model assumption}, we have that
\begin{align*}
    \max_{t=s+\rho+1}^{e-\rho }  \vert\vert  \widetilde F_{t, h }^{s,e} (x)   -   \widetilde f_{t}^{s,e} (x)\vert\vert_{L_2}\le&
    \max_{t=s+\rho+1}^{e-\rho } \widetilde{C}_{\mathcal{X}} \sup_{x \in \mathbb{R}^p} \bigg|  \widetilde F_{t, h }^{s,e} (x)   -   \widetilde f_{t}^{s,e} (x)\bigg|
    \\
    \le&2\widetilde{C}_{\mathcal{X}}C \sqrt{\frac{\log T}{h^p}}+\frac{2\widetilde{C}_{\mathcal{X}}C_1\sqrt{p}}{h^p}+2\widetilde{C}_{\mathcal{X}}C_2\sqrt{T}h^{r}
\end{align*}
where $\widetilde{C}_{\mathcal{X}}$ is the volume of the set $\mathcal{X}.$
Moreover, using inequality \eqref{event-needed}, we have that
\begin{align*}
    &\max _{\rho \leqslant k \leq T-\widetilde{r}}  \Big|\Big| \frac{1}{\sqrt{k}} \sum_{t=\widetilde{r}+1}^{\widetilde{r}+k}\Big(\mathcal{K}_h(\cdot-X_t)-\int \mathcal{K}_h(\cdot-z) d F_t(z)\Big) \Big|\Big|_{L_2}
    \\
    \le&C_{\mathcal{X}}\max _{\rho \leqslant k \leq T-\widetilde{r}} \sup_{x \in \mathbb{R}^p} \Big| \frac{1}{\sqrt{k}} \sum_{t=\widetilde{r}+1}^{\widetilde{r}+k}\Big(\mathcal{K}_h(x-X_t)-\int \mathcal{K}_h(x-z) d F_t(z)\Big) \Big|
    \\
    =&O_p\Big(\sqrt{\frac{\log T}{h^p}}\Big).
\end{align*}
\end{remark}

\newpage
\section{$\alpha$-mixing condition} \label{sec-mic-properties}
A process $(X_t,t\in{\mathbb{Z}})$ is said to be $\alpha$-mixing if 
$$\alpha_k=\sup_{t\in{\mathbb{Z}}}\alpha(\sigma(X_s,s\le t),\sigma(X_s,s\ge t+k))\longrightarrow_{k\rightarrow \infty} 0.$$
%Furthermore, it is said to be $2$-$\alpha$-mixing if
%$$\alpha_k^{(2)}=\sup_{t\in{\mathbb{Z}}}\alpha(\sigma(X_t),\sigma(X_{t+k}))\longrightarrow_{k\rightarrow \infty} 0.$$
%This last condition is weaker than $\alpha$-mixing.
The strong mixing, or $\alpha$-mixing coefficient between two $\sigma$-fields $\mathcal{A}$ and $\mathcal{B}$ is defined as
$$
\alpha(\mathcal{A}, \mathcal{B})=\sup _{A \in \mathcal{A}, B \in \mathcal{B}}|\mathbb{P}(A \cap B)-\mathbb{P}(A) \mathbb{P}(B)| .
$$
Suppose $X$ and $Y$ are two random variables. Then for positive numbers $p^{-1}+q^{-1}+r^{-1}=1$, it holds that
$$
|\operatorname{Cov}(X, Y)| \leq 4\|X\|_{L_p}\|Y\|_{L_q}\{\alpha(\sigma(X), \sigma(Y))\}^{1 / r} .
$$
Let $\Big\{Z_t\Big\}_{t=-\infty}^{\infty}$ be a stationary time series vectors. Denote the alpha mixing coefficients of $k$ to be
$$
\alpha(k)=\alpha\Big(\sigma\Big\{\ldots, Z_{t-1}, Z_t\Big\}, \sigma\Big\{Z_{t+k}, Z_{t+k+1}, \ldots\Big\}\Big) .
$$
Note that the definition is independent of $t$.
\subsection{ Maximal Inequality}
The unstationary version of the following lemma is in Lemma B.5. of \cite{kirch2006resampling}.
\begin{lemma}
\label{lem1-mix}
Suppose $\Big\{y_i\Big\}_{i=1}^{\infty}$ is a stationary alpha-mixing time series with mixing coefficient $\alpha(k)$ and that $\mathbb{E}\Big(y_i\Big)=0$. Suppose that there exists $\delta, \Delta>0$ such that
$$
\mathbb{E}\Big(\Big|y_i\Big|^{2+\delta+\Delta}\Big) \leq D_1
$$
and
$$
\sum_{k=0}^{\infty}(k+1)^{\delta / 2} \alpha(k)^{\Delta /(2+\delta+\Delta)} \leq D_2 .
$$
Then
$$
\mathbb{E}\Big(\max _{k=1, \ldots, n}\Big|\sum_{i=1}^k y_i\Big|^{2+\delta}\Big) \leq D n^{(2+\delta) / 2},
$$
where $D$ only depends on $\delta$ and the joint distribution of $\Big\{y_i\Big\}_{i=1}^{\infty}$.
\end{lemma}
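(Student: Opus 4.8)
The strategy is to split the statement into two classical pieces. Write $S_{u,v}=\sum_{i=u+1}^{v}y_i$ and $S_k=S_{0,k}$. First I would establish the non-maximal moment bound
\[
\mathbb{E}\big(|S_{u,v}|^{2+\delta}\big)\le C\,(v-u)^{(2+\delta)/2},\qquad 0\le u\le v,
\]
with $C$ depending only on $\delta$ and the joint law of $\{y_i\}$ (through $D_1$ and $D_2$); and then I would use a deterministic maximal inequality of M\'oricz--Serfling type to upgrade this to the same order bound on $\mathbb{E}\max_{1\le k\le n}|S_k|^{2+\delta}$.

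\textbf{Step 1: moment bound for a single partial sum.} Under the hypotheses $\mathbb{E}|y_1|^{2+\delta+\Delta}\le D_1$ and $\sum_{k\ge 0}(k+1)^{\delta/2}\alpha(k)^{\Delta/(2+\delta+\Delta)}\le D_2$, the displayed bound is exactly the hypothesis-form of the moment inequality for strongly mixing stationary sequences (Yokoyama, 1980; see also Doukhan, 1994, and the Rosenthal-type inequalities for mixing sequences). If a self-contained derivation is preferred, it follows from a blocking argument: partition $(u,v]$ into alternating ``large'' blocks of length $\asymp m$ and ``small'' blocks of length $\asymp m^{\theta}$ for suitable $\theta\in(0,1)$ and $m\to\infty$ with $v-u$, bound the contribution of the small blocks and of the cross-block covariances via the covariance inequality recalled in \Cref{sec-mic-properties}, $|\cov(X,Y)|\le 4\|X\|_{L_p}\|Y\|_{L_q}\{\alpha(\sigma(X),\sigma(Y))\}^{1/r}$ with $p^{-1}+q^{-1}+r^{-1}=1$, choosing $p=q=2+\delta+\Delta$ so that $r^{-1}=\Delta/(2+\delta+\Delta)$ --- precisely so that the summable series in the hypothesis controls all cross terms --- and then reduce the sum of the (nearly independent, mixing-separated) large-block sums to a Marcinkiewicz--Zygmund/Rosenthal bound, which produces the rate $(v-u)^{(2+\delta)/2}$. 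The extra integrability margin $\Delta$ is exactly what is consumed in the H\"older step.

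\textbf{Step 2: from $S_n$ to the maximum.} Set $g(u,v):=C^{2/(2+\delta)}(v-u)$; this is additive, hence superadditive, as a set function on intervals, and with $\gamma:=(2+\delta)/2>1$ Step 1 reads $\mathbb{E}|S_{u,v}|^{2\gamma}\le g(u,v)^{\gamma}$. The M\'oricz maximal inequality (bisection/dyadic chaining: bound $\max_{1\le k\le n}|S_k|$ by the sum of at most $\lceil\log_2 n\rceil$ dyadic block increments, apply the triangle inequality in $L^{2\gamma}$, and sum the resulting series using the superadditivity of $g$ and $\gamma>1$) then gives $\mathbb{E}\big(\max_{1\le k\le n}|S_k|^{2\gamma}\big)\le K_\gamma\, g(0,n)^{\gamma}=K_\gamma C\, n^{(2+\delta)/2}$, where $K_\gamma$ is an absolute constant depending only on $\gamma$, equivalently only on $\delta$. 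Taking $D=K_\gamma C$ yields the claim, and $D$ depends only on $\delta$ and the joint distribution of $\{y_i\}$, as asserted.

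\textbf{Main obstacle.} The genuinely delicate part is Step 1: obtaining an $n$-free constant while matching the H\"older exponents to the precise mixing-series hypothesis in the blocking argument, and carrying out the Rosenthal/Marcinkiewicz step at the non-integer exponent $2+\delta$. Step 2 is a standard and purely deterministic chaining argument once Step 1 is in hand, so in practice one simply quotes the moment inequality of Yokoyama together with the maximal inequality of M\'oricz.
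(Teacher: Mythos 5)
Your proposal is correct, and it reconstructs the argument that the paper itself does not spell out: the paper's entire proof is the citation ``This is Lemma B.8 of \cite{kirch2006resampling}.'' Your two-step route --- the Yokoyama-type moment bound $\mathbb{E}|S_{u,v}|^{2+\delta}\le C(v-u)^{(2+\delta)/2}$, whose hypotheses ($\mathbb{E}|y_1|^{2+\delta+\Delta}\le D_1$ and $\sum_k (k+1)^{\delta/2}\alpha(k)^{\Delta/(2+\delta+\Delta)}<\infty$) match the lemma exactly, followed by the M\'oricz/M\'oricz--Serfling--Stout maximal inequality applied with the superadditive set function $g(u,v)=C^{2/(2+\delta)}(v-u)$ and exponent $(2+\delta)/2>1$ --- is precisely the standard derivation underlying the cited result, so there is no substantive divergence; you simply make explicit what the paper delegates to the reference, and in practice one would, as you say, quote Yokoyama (1980) and M\'oricz rather than redo the blocking argument.
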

\begin{proof}
This is Lemma B.8. of \cite{kirch2006resampling}.
\end{proof} 

\begin{lemma}
 Suppose that there exists $\delta, \Delta>0$ such that
$$
\mathbb{E}\Big(\Big|y_i\Big|^{2+\delta+\Delta}\Big) \leq D_1
$$
and
$$
\sum_{k=0}^{\infty}(k+1)^{\delta / 2} \alpha(k)^{\Delta /(2+\delta+\Delta)} \leq D_2 .
$$
Then it holds that for any $d>0,0<\nu<1$ and $x>0$,
$$
\mathbb{P}\Big(\max _{k \in[\nu d, d]} \frac{\Big|\sum_{i=1}^k y_i\Big|}{\sqrt{k}} \geq x\Big) \leq C x^{-2-\delta},
$$
where $C$ is some constant.
\end{lemma}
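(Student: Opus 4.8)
The plan is to deduce the bound directly from the maximal moment inequality of Lemma~\ref{lem1-mix} stated just above, by first stripping off the $k^{-1/2}$ normalization uniformly over the window $[\nu d, d]$ and then invoking Markov's inequality at the exponent $2+\delta$. Throughout I would keep the standing assumptions of the preceding lemma, namely that $\{y_i\}$ is a stationary $\alpha$-mixing sequence with $\mathbb{E}(y_i)=0$, which carry over to this statement.

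First I would dispense with the trivial case $d<1$: then $[\nu d,d]$ contains no positive integer, the maximum is over the empty set, and the probability is $0$. So assume $d\ge 1$. On the range of summation $k\in[\nu d,d]$ one has $k\ge \nu d$, hence $k^{-1/2}\le (\nu d)^{-1/2}$, and therefore
\[
\max_{k\in[\nu d,d]}\frac{\bigl|\sum_{i=1}^k y_i\bigr|}{\sqrt k}\;\le\;\frac{1}{\sqrt{\nu d}}\,\max_{1\le k\le \lfloor d\rfloor}\Bigl|\sum_{i=1}^k y_i\Bigr|.
\]
This replaces the data-dependent normalization by a plain running maximum of partial sums over $1\le k\le\lfloor d\rfloor$.

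Next I would apply Markov's inequality to the nonnegative random variable $\max_{1\le k\le\lfloor d\rfloor}|\sum_{i=1}^k y_i|^{2+\delta}$: since $t\mapsto t^{2+\delta}$ is increasing on $[0,\infty)$,
\[
\mathbb{P}\Bigl(\max_{k\in[\nu d,d]}\frac{|\sum_{i=1}^k y_i|}{\sqrt k}\ge x\Bigr)\le
\mathbb{P}\Bigl(\max_{1\le k\le\lfloor d\rfloor}\Bigl|\sum_{i=1}^k y_i\Bigr|^{2+\delta}\ge x^{2+\delta}(\nu d)^{(2+\delta)/2}\Bigr)\le
\frac{\mathbb{E}\bigl(\max_{1\le k\le\lfloor d\rfloor}|\sum_{i=1}^k y_i|^{2+\delta}\bigr)}{x^{2+\delta}(\nu d)^{(2+\delta)/2}}.
\]
The two hypotheses, $\mathbb{E}(|y_i|^{2+\delta+\Delta})\le D_1$ and $\sum_{k}(k+1)^{\delta/2}\alpha(k)^{\Delta/(2+\delta+\Delta)}\le D_2$, are precisely what Lemma~\ref{lem1-mix} requires, so the numerator is at most $D\lfloor d\rfloor^{(2+\delta)/2}\le D\,d^{(2+\delta)/2}$, with $D$ depending only on $\delta$ and the joint law of $\{y_i\}$. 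Cancelling the factor $d^{(2+\delta)/2}$ gives the claim with $C=D\,\nu^{-(2+\delta)/2}$, uniform in both $d$ and $x$.

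There is no genuine obstacle here: the real content sits inside Lemma~\ref{lem1-mix} (itself quoted from Kirch, 2006). The only point needing the slightest care is that the constant $C$ must be free of $d$, which is exactly why one bounds $k^{-1/2}$ by $(\nu d)^{-1/2}$ before taking probabilities rather than carrying $k$ through; a secondary cosmetic point is rounding $d$ down to $\lfloor d\rfloor$ so that Lemma~\ref{lem1-mix} applies verbatim, which does not affect the form of the bound.
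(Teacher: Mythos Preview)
Your proof is correct and follows essentially the same approach as the paper: both bound $k^{-1/2}$ by $(\nu d)^{-1/2}$ on the window $[\nu d,d]$, reduce to the unnormalized running maximum $\max_{k\le d}|\sum_{i=1}^k y_i|$, and then combine Markov's inequality at exponent $2+\delta$ with the moment bound of Lemma~\ref{lem1-mix} to obtain the constant $C=D\,\nu^{-(2+\delta)/2}$. Your version is slightly more careful about the vacuous case $d<1$ and the rounding to $\lfloor d\rfloor$, but otherwise the arguments coincide.
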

\begin{proof}
 Let
$$
S_d^*=\max _{k=1, \ldots, d}\Big|\sum_{i=1}^k y_i\Big| .
$$
Then \Cref{lem1-mix} implies that
$$
\Big\|S_d^*\Big\|_{L_{2+\delta}} \leq C_1 d^{1 / 2}
$$
Therefore it holds that
$$
\mathbb{P}\Big(\Big|\frac{S_d^*}{\sqrt{d}}\Big| \geq x\Big)=\mathbb{P}\Big(\Big|\frac{S_d^*}{\sqrt{d}}\Big|^{2+\delta} \geq x^{2+\delta}\Big) \leq C_1 x^{-2-\delta} .
$$
Observe that
$$
\frac{\Big|S_d^*\Big|}{\sqrt{d}}=\max _{k=1, \ldots, d} \frac{\Big|\sum_{i=1}^k y_i\Big|}{\sqrt{d}} \geq \max _{k \in[\nu d, d]} \frac{\Big|\sum_{i=1}^k y_i\Big|}{\sqrt{d}} \geq \max _{k \in[\nu d, d]} \frac{\Big|\sum_{i=1}^k y_i\Big|}{\sqrt{k / \nu}}
$$
Therefore
$$
\mathbb{P}\Big(\max _{k \in[\nu d, d]} \frac{\Big|\sum_{i=1}^k y_i\Big|}{\sqrt{k}} \geq x / \sqrt{\nu}\Big) \leq \mathbb{P}\Big(\Big|\frac{S_d^*}{\sqrt{d}}\Big| \geq x\Big) \leq C_1 x^{-2-\delta},
$$
which gives
$$
\mathbb{P}\Big(\max _{k \in[\nu d, d]} \frac{\Big|\sum_{i=1}^k y_i\Big|}{\sqrt{k}} \geq x\Big) \leq C_2 x^{-2-\delta} .
$$
\end{proof}
\begin{lemma}
\label{lemma3}
 Let $\nu>0$ be given. Under the same assumptions as in \Cref{lem1-mix}, for any $0<a<1$ it holds that
$$
\mathbb{P}\Big(\Big|\sum_{i=1}^r y_i\Big| \leq \frac{C}{a} \sqrt{r}\{\log (r \nu)+1\} \text { for all } r \geq 1 / \nu\Big) \geq 1-a^2,
$$
where $C$ is some absolute constant.
\end{lemma}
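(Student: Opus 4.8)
The plan is to run a dyadic peeling (chaining-over-scales) argument, using the tail bound established just above for $\max_{k\in[\nu d,d]}|\sum_{i=1}^k y_i|/\sqrt{k}$ on each scale, and then to choose the thresholds so that the union bound over scales contributes at most $a^2$ while the thresholds themselves grow only logarithmically in $r$.

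Concretely, set $m_0=\lceil 1/\nu\rceil$ and partition the integers $r\ge m_0$ into the dyadic blocks $I_j=[2^j m_0,\,2^{j+1}m_0]$, $j=0,1,2,\dots$, which cover $[m_0,\infty)$. For each $j$, I would apply the preceding tail bound with $d=2^{j+1}m_0$ and mixing parameter $1/2$ (so that its range $[\tfrac12 d,\,d]$ is exactly $I_j$), and threshold $x_j=\tfrac{K}{a}(j+1)$ for a large absolute constant $K$ to be chosen later. This yields
\begin{equation*}
\mathbb{P}\Big(\max_{k\in I_j}\frac{|\sum_{i=1}^k y_i|}{\sqrt{k}}\ge x_j\Big)\le C_0\, x_j^{-2-\delta}=C_0\Big(\frac{a}{K}\Big)^{2+\delta}(j+1)^{-(2+\delta)},
\end{equation*}
where $C_0$ depends only on $\delta,\Delta$ and the joint law of $\{y_i\}$. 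Summing over $j\ge 0$, using $\sum_{j\ge 0}(j+1)^{-(2+\delta)}<\infty$ together with $a^{2+\delta}\le a^2$ (valid since $0<a<1$ and $\delta>0$), the union bound gives probability at most $a^2$ for the event that the bound fails on some block, provided $K$ is taken large enough, uniformly in $a$ and $\nu$.

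On the complementary event, fix any integer $r\ge m_0$; it lies in some block $I_j$, so $2^j m_0\le r$ and hence $j+1\le \log_2(r/m_0)+1$. Since $m_0=\lceil 1/\nu\rceil\ge 1/\nu$ we have $r/m_0\le r\nu$, and $r\nu\ge m_0\nu\ge 1$, so $\log_2(r/m_0)\le \log_2(r\nu)=\log(r\nu)/\log 2$ and this quantity is nonnegative; combining, $j+1\le \tfrac{1}{\log 2}\{\log(r\nu)+1\}$. Therefore on this event, for every integer $r\ge 1/\nu$,
\begin{equation*}
\Big|\sum_{i=1}^r y_i\Big|<\sqrt{r}\,x_j=\frac{K}{a}\sqrt{r}\,(j+1)\le \frac{K}{a\log 2}\,\sqrt{r}\,\{\log(r\nu)+1\},
\end{equation*}
which is the claimed inequality with $C=K/\log 2$.

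The argument is essentially routine; the points requiring care are (i) checking that the constant $C_0$ in the preceding lemma depends only on $\delta,\Delta$ and the distribution, not on the scale $d$ or on the mixing parameter $1/2$ (this is exactly what makes the final $C$ absolute), and (ii) the choice of the threshold sequence $x_j\propto(j+1)$: any slower growth would make $\sum_j x_j^{-2-\delta}$ diverge, whereas this choice is still $O(\log(r\nu)+1)$ on block $j$, so it is the natural balance. A secondary bookkeeping point is the passage from the block index $j$ to $\log(r\nu)$, which uses only $m_0\asymp 1/\nu$. I do not anticipate a genuine obstacle here, only the need to keep the dependence of constants transparent.
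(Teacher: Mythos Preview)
Your proposal is correct and follows essentially the same dyadic peeling argument as the paper: partition $[1/\nu,\infty)$ into blocks $[2^j/\nu,\,2^{j+1}/\nu]$, apply the preceding maximal-tail lemma on each block with threshold proportional to $(j+1)/a$, sum the resulting $(j+1)^{-(2+\delta)}$ (the paper simplifies to $(j+1)^{-2}$ first), and convert the block index back to $\log(r\nu)$. Your handling of the constant $K$ to guarantee the failure probability is at most $a^2$ is in fact slightly cleaner than the paper's, which lands on $a^2\pi^2/6$ before absorbing the factor into $C$.
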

\begin{proof}
 Let $s \in \mathbb{Z}^{+}$and $\mathcal{T}_s=\Big[2^s / \nu, 2^{s+1} / \nu\Big]$. By Lemma 3, for all $x \geq 1$,
$$
\mathbb{P}\Big(\sup _{r \in \mathcal{T}_s} \frac{\Big|\sum_{i=1}^r y_i\Big|}{\sqrt{r}} \geq x\Big) \leq C_1 x^{-2-\delta} \leq C_1 x^{-2} .
$$
Therefore by a union bound, for any $0<a<1$,
$$
\mathbb{P}\Big(\exists s \in \mathbb{Z}^{+}: \sup _{r \in \mathcal{T}_s} \frac{\Big|\sum_{i=1}^r y_i\Big|}{\sqrt{r}} \geq \frac{\sqrt{C_1}}{a}(s+1)\Big) \leq \sum_{s=0}^{\infty} \frac{a^2}{(s+1)^2}=a^2 \pi^2 / 6 .
$$
For any $r \in\Big[2^s / \nu, 2^{s+1} / \nu\Big], s \leq \log (r \nu) / \log (2)$, and therefore
$$
\mathbb{P}\Big(\exists s \in \mathbb{Z}^{+}: \sup _{r \in \mathcal{T}_s} \frac{\Big|\sum_{i=1}^r y_i\Big|}{\sqrt{r}} \geq \frac{\sqrt{C_1}}{a}\Big\{\frac{\log (r \nu)}{\log (2)}+1\Big\}\Big) \leq a^2 \pi^2 / 6 .
$$
Equation (2) directly gives
$$
\mathbb{P}\Big(\sup _{r \in \mathcal{T}_s} \frac{\Big|\sum_{i=1}^r y_i\Big|}{\sqrt{r}} \geq \frac{C}{a}\{\log (r \nu)+1\}\Big) \leq a^2 .
$$
\end{proof}

\subsection{Central Limit theorem} 
Below is the central limit theorem for $\alpha$-mixing random variable. We refer to \cite{doukhan1994mixing} for more details.
\begin{lemma}
\label{CTL}
   Let $\Big\{Z_t\Big\}$ be a centred $\alpha$-mixing stationary time series. Suppose for the mixing coefficients and moments, for some $\delta>0$ it holds
$$
\sum_{k=1}^{\infty} \alpha_k^{\delta /(2+\delta)}<\infty, \quad \mathbb{E}\Big[\Big|Z_1\Big|^{2+\delta}<\infty\Big] .
$$
Denote $S_n=\sum_{t=1}^n Z_t$ and $\sigma_n^2=\mathbb{E}\Big[\Big|S_n\Big|^2\Big]$. Then
$$
\frac{S_{\lfloor n t\rfloor}}{\sigma_n} \rightarrow W(t),
$$
where convergence is in Skorohod topology and $W(t)$ is the standard Brownian motion on $[0,1]$. 
\end{lemma}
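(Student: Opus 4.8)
The plan is to prove this invariance principle by the classical two–step route: (i) convergence of the finite–dimensional distributions of the rescaled partial–sum process $W_n(t):=S_{\lfloor nt\rfloor}/\sigma_n$ to those of standard Brownian motion, and (ii) tightness of $\{W_n\}$ in the Skorohod space $D[0,1]$. A preliminary observation underpins both steps. By Davydov's covariance inequality — the bound $|\cov(X,Y)|\le 4\|X\|_{L_p}\|Y\|_{L_q}\alpha^{1/r}$ stated in the excerpt, with $p=q=2+\delta$ and $r=(2+\delta)/\delta$ — one gets $|\cov(Z_1,Z_{1+k})|\le 4\|Z_1\|_{L_{2+\delta}}^{2}\,\alpha_k^{\delta/(2+\delta)}$, so that the series $\sigma^2:=\var(Z_1)+2\sum_{k\ge 1}\cov(Z_1,Z_{1+k})$ converges absolutely and $\sigma_n^2/n\to\sigma^2$; moreover the same bound plus a Ces\`aro argument shows $\mathbb{E}[S_{\lfloor ns\rfloor}S_{\lfloor nt\rfloor}]/\sigma_n^2\to\min(s,t)$. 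We work under the (implicit) non-degeneracy assumption $\sigma^2>0$, so that $\sigma_n\asymp\sqrt n$; the case $\sigma^2=0$ is degenerate and excluded.

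For step (i), fix $0=t_0<t_1<\cdots<t_m\le 1$ and observe that, by stationarity, the increments $S_{\lfloor nt_j\rfloor}-S_{\lfloor nt_{j-1}\rfloor}$ are partial sums over disjoint blocks of lengths $\approx n(t_j-t_{j-1})$. By the Cram\'er--Wold device it suffices to show that every linear combination $\sum_j c_j\bigl(S_{\lfloor nt_j\rfloor}-S_{\lfloor nt_{j-1}\rfloor}\bigr)/\sigma_n$ is asymptotically Gaussian with the correct variance; this is a single triangular–array partial–sum statistic for an $\alpha$-mixing stationary sequence, to which the classical one–dimensional CLT for strongly mixing processes (holding under $\mathbb{E}|Z_1|^{2+\delta}<\infty$ and $\sum_k\alpha_k^{\delta/(2+\delta)}<\infty$; see \cite{doukhan1994mixing}) applies, after verifying via the Davydov bound that $\var\bigl(S_{\lfloor nt_j\rfloor}-S_{\lfloor nt_{j-1}\rfloor}\bigr)/\sigma_n^2\to t_j-t_{j-1}$ and that cross–block covariances are $o(\sigma_n^2)$. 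Matching the resulting limiting covariance $\min(s,t)$ to that of $W$ then yields convergence of all finite–dimensional distributions.

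For step (ii), tightness, I would control the increments of $W_n$ by a moment bound. The key input is the maximal moment inequality \Cref{lem1-mix} (Kirch's Lemma B.8): applied to the block starting just after $\lfloor ns\rfloor$ — after splitting the available exponent as $2+\delta=(2+\delta_0)+\Delta$ with $\delta_0,\Delta>0$ and using that $\sum_k\alpha_k^{\delta/(2+\delta)}<\infty$ (a fortiori the geometric decay \eqref{mix-cond} available throughout this paper) implies $\sum_k(k+1)^{\delta_0/2}\alpha_k^{\Delta/(2+\delta)}<\infty$ — it gives $\mathbb{E}\bigl[\max_{\ell\le n(t-s)}|S_{\lfloor ns\rfloor+\ell}-S_{\lfloor ns\rfloor}|^{2+\delta_0}\bigr]\le C\,(n(t-s))^{1+\delta_0/2}$, hence $\mathbb{E}\bigl[|W_n(t)-W_n(s)|^{2+\delta_0}\bigr]\le C'\,|t-s|^{1+\delta_0/2}$ whenever $|t-s|\ge 1/n$, while for $|t-s|<1/n$ the increment is a single summand and is $O_p(\sigma_n^{-1})=o_p(1)$. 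This moment estimate verifies the standard tightness criterion for $D[0,1]$ (Billingsley, Theorem 13.5), so $\{W_n\}$ is tight in the Skorohod topology; combining with step (i) gives $W_n\Rightarrow W$.

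The main obstacle is step (ii): the maximal inequality \Cref{lem1-mix} needs moments of order strictly above the exponent $2$ appearing in the tightness criterion, so one must carefully partition the hypothesized $\delta$ and re-check the weighted mixing–series condition for the chosen split — in the fully general statement this is the delicate bookkeeping, although in the present paper, where \eqref{mix-cond} supplies geometric mixing, every such series converges trivially and the argument is routine. A secondary point to handle explicitly is the non-degeneracy $\sigma^2>0$, without which $\sigma_n$ need not diverge and the normalization in the statement is ill-posed; I would either add it as a standing hypothesis or restrict attention to the regime in which it is automatic.
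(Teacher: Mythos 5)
The paper itself never proves \Cref{CTL}: it is quoted as a known result with a pointer to \cite{doukhan1994mixing}, so there is no internal argument to compare yours against, and what you have written is an independent proof sketch along the classical finite-dimensional-distributions-plus-tightness route. Your preliminary covariance computation via Davydov's inequality, the identification of the limiting covariance $\min(s,t)$, and your observation that the statement silently needs the non-degeneracy $\sigma^2>0$ (otherwise the normalization by $\sigma_n$ is ill-posed) are all correct and worth making explicit. In the FDD step, note that a linear combination $\sum_j c_j(S_{\lfloor nt_j\rfloor}-S_{\lfloor nt_{j-1}\rfloor})$ is a weighted sum of block sums, not a plain partial sum of the stationary sequence, so "the classical one-dimensional CLT applies" needs an intermediate Bernstein big-block/small-block argument establishing asymptotic independence of the blocks; this is routine but is a step, not a citation.

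The genuine gap is in your tightness step. Applying \Cref{lem1-mix} with target exponent $2+\delta_0$ and spare moment $\Delta$, where $\delta_0+\Delta=\delta$, requires $\sum_k (k+1)^{\delta_0/2}\alpha_k^{\Delta/(2+\delta_0+\Delta)}=\sum_k (k+1)^{\delta_0/2}\alpha_k^{\Delta/(2+\delta)}<\infty$, and this does \emph{not} follow from the lemma's hypothesis $\sum_k \alpha_k^{\delta/(2+\delta)}<\infty$: since $\Delta<\delta$ and $\alpha_k\le 1$, the exponent on $\alpha_k$ is smaller and there is in addition a polynomially growing weight. In fact no admissible split works in general: take $\alpha_k\asymp k^{-(2+\delta)/\delta}(\log k)^{-2(2+\delta)/\delta}$, so that $\alpha_k^{\delta/(2+\delta)}\asymp k^{-1}(\log k)^{-2}$ is summable and the hypothesis holds, while $(k+1)^{\delta_0/2}\alpha_k^{\Delta/(2+\delta)}\asymp k^{\delta_0/2-\Delta/\delta}(\log k)^{-2\Delta/\delta}$ with $\delta_0/2-\Delta/\delta=\delta_0/2-1+\delta_0/\delta>-1$ for every $\delta_0>0$, so the required series diverges for every split. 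Hence your argument, as written, proves an invariance principle under a strictly stronger mixing condition (e.g.\ the geometric decay \eqref{mix-cond}, which is indeed all the paper needs when it invokes \Cref{CTL} in the proof of \Cref{theorem:Final-est}), but it does not prove \Cref{CTL} at its stated generality; you acknowledge the bookkeeping is "delicate," but the point is that it is impossible, not merely delicate. To prove the lemma as stated one must use tightness arguments calibrated to exactly these hypotheses — truncation combined with maximal or Rosenthal-type inequalities for strongly mixing sequences as in \cite{doukhan1994mixing} (or Herrndorf's theorem) — rather than a maximal inequality that consumes moments beyond order $2+\delta$. Either restrict the statement you prove to geometric (or sufficiently fast polynomial) mixing, which suffices for this paper, or replace the tightness step.
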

\newpage

\section{ Additional Technical Results}\label{sec-additional-res}
\begin{lemma} \label{lemma:properties of seeded}
Let $ \mathcal J$ be defined as in 
\Cref{definition:seeded} and suppose  \Cref{assume: model assumption} {\bf e} holds. 
Denote
$$ \zeta _k=  \frac{9}{10} \min \{ \eta_{k+1}-\eta_k, \eta_k -\eta_{k-1} \}\  k\in\{1,...,K\}. $$
Then for each change point $\eta_k $ there exists a seeded interval $ \mathcal I_k  =(s_k,e_k ] $ such that 
\\
{\bf a.} 
$\mathcal I_k$ contains exactly one change point $\eta_k $; 
\\
{\bf b.}  
$\min\{ \eta_k-s_k , e_k -\eta_k  \}\ge \frac{1}{16} \zeta_k  $; and
\\
{\bf c.} 
$\max\{ \eta_k-s_k , e_k -\eta_k  \}\le \zeta_k  $;
\end{lemma}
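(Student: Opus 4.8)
The plan is to pick the right dyadic scale of seeded intervals and then exploit the fact that, \emph{within a fixed layer $\mathcal J_m$, consecutive intervals overlap by exactly half their length}; consequently every point that is not too close to $0$ or $T$ sits ``well inside'' one of the layer-$m$ intervals. Throughout I use $\Delta=\Theta(T)$ (from \Cref{assume: model assumption}), which gives $\zeta_k=\Theta(T)$, and the trivial bound $\zeta_k=\tfrac9{10}\min\{\eta_{k+1}-\eta_k,\eta_k-\eta_{k-1}\}\le \tfrac9{10}T<T$.

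\emph{Step 1 (scale selection).} Recall that $\mathcal J_m$ consists of intervals of length $l_m=T2^{-m+1}$ shifted by $\delta_m:=l_m/2=T2^{-m}$. I would choose $m\in\{1,\dots,\mathfrak K\}$ with
\[
\delta_m=T2^{-m}\in\big[\tfrac{\zeta_k}{8},\,\tfrac{\zeta_k}{4}\big],
\]
equivalently $2^m\in[\,4T/\zeta_k,\,8T/\zeta_k\,]$. Such an $m$ exists: the admissible window is of the form $[y,2y]$ and hence contains a power of $2$; it forces $2^m\ge 8T/\zeta_k>8$ so $m\ge1$ is possible (since $\zeta_k<T$), and $2^m\le 8T/\zeta_k=O(1)$ while $2^{\mathfrak K}=T^{\Omega(1)}$ for $\mathfrak K\asymp\log T$ with $C_{\mathfrak K}$ large, so $m\le\mathfrak K$ for $T$ large.

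\emph{Step 2 (locating $\eta_k$).} The $i$-th interval of $\mathcal J_m$ is $(s^{(i)},e^{(i)}]$ with $s^{(i)}=\lfloor(i-1)\delta_m\rfloor\le (i-1)\delta_m$ and $e^{(i)}=\lceil(i-1)\delta_m+l_m\rceil=\lceil(i+1)\delta_m\rceil\ge(i+1)\delta_m$. Let $i$ be the nearest integer to $\eta_k/\delta_m$, so $|\eta_k-i\delta_m|\le\delta_m/2$, and set $\mathcal I_k:=(s_k,e_k]:=(s^{(i)},e^{(i)}]$. Then
\[
\eta_k-s_k\ \ge\ \eta_k-(i-1)\delta_m=\delta_m+(\eta_k-i\delta_m)\ \ge\ \tfrac12\delta_m,
\qquad
e_k-\eta_k\ \ge\ (i+1)\delta_m-\eta_k\ \ge\ \tfrac12\delta_m .
\]
One checks $i\in\{1,\dots,2^m-1\}$ from $\delta_m/2<\eta_k<T-\delta_m/2$, which follows from $\eta_k\ge1+\Delta$, $\eta_k\le T+1-\Delta$ and $\delta_m\le\zeta_k/4\le\tfrac9{40}\Delta<\Delta$. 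Since $\delta_m\ge\zeta_k/8$, the displayed bounds give $\min\{\eta_k-s_k,e_k-\eta_k\}\ge\delta_m/2\ge\zeta_k/16$, which is \textbf{b}. Since the floors/ceilings only enlarge the interval, $e_k-s_k< l_m+2=2\delta_m+2\le \zeta_k/2+2\le\zeta_k$ for $T$ large (using $\zeta_k\to\infty$), giving $\max\{\eta_k-s_k,e_k-\eta_k\}\le e_k-s_k\le\zeta_k$, which is \textbf{c}; and $\eta_k\in(s_k,e_k]$.

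\emph{Step 3 (exactly one change point).} From \textbf{c}, $\eta_k-s_k\le\zeta_k\le\tfrac9{10}(\eta_k-\eta_{k-1})$, hence $s_k\ge\eta_{k-1}+\tfrac1{10}(\eta_k-\eta_{k-1})>\eta_{k-1}$, so $(s_k,e_k]$ contains no change point indexed $\le k-1$; symmetrically $e_k-\eta_k\le\zeta_k\le\tfrac9{10}(\eta_{k+1}-\eta_k)$ gives $e_k<\eta_{k+1}$, so no change point indexed $\ge k+1$. With $\eta_k\in(s_k,e_k]$ this yields \textbf{a}.

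\emph{Remarks on difficulty.} The argument is elementary; the only slightly delicate point is bookkeeping the $\lfloor\cdot\rfloor/\lceil\cdot\rceil$ rounding and verifying the chosen scale lies in $\{1,\dots,\mathfrak K\}$. This is exactly why the statement asks for the constant $\tfrac1{16}$ rather than the ``natural'' $\tfrac14$: the rounding can only enlarge each seeded interval, so it never degrades the lower bound \textbf{b}, but it adds a harmless $+2$ to the length used for \textbf{c}, which is absorbed because $\zeta_k=\Theta(T)\to\infty$. There is no serious obstacle.
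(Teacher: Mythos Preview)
Your proof is correct and follows essentially the same approach as the paper's: both select an appropriate dyadic scale and then use the half-overlap of consecutive intervals in that layer to place $\eta_k$ well inside one of them. The paper parameterises the chosen interval as $(c_k-r_k,c_k+r_k]$ with $r_k\ge\zeta_k/4$ and $|c_k-\eta_k|\le 5r_k/8$ (citing Kov\'acs et al.), whereas you make the more explicit choice $\delta_m\in[\zeta_k/8,\zeta_k/4]$ and pick the interval whose centre index is the nearest integer to $\eta_k/\delta_m$; your version is in fact more detailed, as you verify the index range $i\in\{1,\dots,2^m-1\}$, handle the floor/ceiling rounding, and derive part \textbf{a} from \textbf{c} rather than from the containment $(s_k,e_k]\subseteq(\eta_k-\zeta_k,\eta_k+\zeta_k]$. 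One small slip: in Step~1 you write ``it forces $2^m\ge 8T/\zeta_k>8$'', but you mean $2^m\ge 4T/\zeta_k>4$ (the lower endpoint of the window), which still gives $m\ge 3\ge 1$.
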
 
\begin{proof}
These are the desired  properties of seeded intervals by construction.  The proof is the same as theorem 3 of  \citeauthor{kovacs2020seeded} (\citeyear{kovacs2020seeded}) and is provided here for completeness.
\\
\\
Since $\zeta_k = \Theta(T) $, by construction of seeded intervals, one can find a seeded interval 
$(s_k, e_k] = (c_k-r_k , c_k +r_k] $ 
such that  
$ (c_k-r_k , c_k +r_k] \subseteq ( \eta_k-\zeta _k, \eta_k+\zeta_k]$, $r_ k\ge \frac{\zeta_k}{4} $ 
and 
$ |c_k-\eta_k|\le \frac{5r_ k }{8} $. 
So $ (c_k-r_k , c_k +r_k] $ contains only one change point $ \eta_k$.
In addition,
$$ e_k - \eta_k  =   c_k+r_k-\eta_k   \ge r_k -   |c_k -\eta_k|    \ge \frac{3r_k }{8} \ge \frac{3 \zeta _k}{32},$$
and similarly 
$\eta_k -s_k \ge  \frac{3 \zeta _k}{32}  $, so {\bf b} holds. 
Finally, since $ (c_k-r_k , c_k +r_k] \subseteq ( \eta_k-\zeta _k, \eta_k+\zeta_k]$, 
it holds that 
$ c_k+r_k \le \eta_k +\zeta_k $ 
and so 
$$ e_k -\eta_k = c_k+r_k -\eta_k \le \zeta_k .$$
\end{proof}
 
\begin{lemma}\label{lemma:KDE}
Let $\{ X_{i}  \}_{i = 1}^{ T}$ be random grid points sampled from a common density function $f_t: \mathbb{R}^p \to \mathbb R$, satisfying \Cref{assume: model assumption}-{\bf{a}} and -{\bf{b}}. Under Assumption~\eqref{kernel-as}, the density estimator of the sampling distribution $\mu$,
    \[
        \widehat{f}_t(x)=\frac{1}{T}\sum_{t=1}^{T}\mathcal{K}_{h}(x-X_{i}), \quad x \in \mathbb{R}^p,
    \]
satisfies,
\begin{align}
\label{Eq-7}
\vert\vert \widehat{f}_T-f_t\vert\vert_{L_\infty}=O_p\Big( \Big(\frac{\log(T)}{T}\Big)^{\frac{2r}{2r+p}}\Big).
\end{align}
\end{lemma}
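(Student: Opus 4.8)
\textbf{Proof proposal for Lemma~\ref{lemma:KDE}.}

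The plan is to split the error $\widehat f_T - f_t$ into the usual stochastic (variance) term and deterministic (bias) term, and to control each separately. Writing $\widehat f_T(x) - f_t(x) = \big(\widehat f_T(x) - \mathbb{E}\widehat f_T(x)\big) + \big(\mathbb{E}\widehat f_T(x) - f_t(x)\big)$, the second term is handled immediately by the adaptivity property in \Cref{kernel-as}\textbf{a.}: since $\mathbb{E}\widehat f_T(x) = \int h^{-p}\mathcal{K}((x-z)/h) f_t(z)\,\mathrm{d}z$ (recall the data are i.i.d.\ from $f_t$ in this lemma), we get $\sup_x |\mathbb{E}\widehat f_T(x) - f_t(x)| \le \widetilde C h^r$. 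So the whole game is the supremum of the centered stochastic term.

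For the stochastic term, I would reuse the machinery already built for \Cref{Concentration-Bound} / \Cref{prop-1}, specialized to the stationary i.i.d.\ (hence $\alpha$-mixing with $\alpha_k = 0$ for $k\ge 1$) case with a single density $f_t$. Concretely: (i) discretize $\mathbb{R}^p$ by a grid of mesh $\sim h^{(p+2)/2}/\sqrt{T}$ over the compact support region, so the grid has polynomially many points in $T$; (ii) on each grid point apply a Bernstein-type bound (here the mixing Bernstein inequality of \citet{merlevede2009bernstein} degenerates to the classical Bernstein inequality, using $\|\mathcal{K}_h\|_\infty \lesssim h^{-p}$ and $\mathrm{Var}(\mathcal{K}_h(x-X_1)) \lesssim h^{-p}$ from the computation in \Cref{prop-1}) to get a deviation of order $\sqrt{\log(T)/(Th^p)}$ with probability $\ge 1 - T^{-c}$; (iii) take a union bound over the grid; (iv) control the discretization error by the Lipschitz bound $|\mathcal{K}_h(x - X_t) - \mathcal{K}_h(x' - X_t)| \le |x-x'| h^{-p-1}$, exactly as in the $I_{1,2}$, $I_{1,3}$ estimates of \Cref{Concentration-Bound}, so that the mesh choice makes this term lower-order. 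This yields $\sup_x |\widehat f_T(x) - \mathbb{E}\widehat f_T(x)| = O_p(\sqrt{\log(T)/(Th^p)})$.

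Combining, $\|\widehat f_T - f_t\|_{L_\infty} = O_p\big(\sqrt{\log(T)/(Th^p)} + h^r\big)$, and optimizing the bandwidth by balancing the two terms gives $h \asymp (\log(T)/T)^{1/(2r+p)}$, for which both terms are of order $(\log(T)/T)^{r/(2r+p)}$. Squaring is not needed; the stated rate $(\log(T)/T)^{2r/(2r+p)}$ in \eqref{Eq-7} should read $(\log(T)/T)^{r/(2r+p)}$ unless a different normalization is intended — I would state the bound as $O_p((\log(T)/T)^{r/(2r+p)})$, which is the standard uniform rate for $\mathcal{H}^r$ densities, and note that this matches e.g.\ \citet{gine2001consistency}. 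The main obstacle is purely bookkeeping: making sure the VC/covering-number hypothesis \Cref{kernel-as}\textbf{b.} (rather than the crude grid argument) is invoked cleanly if one wants the sharp logarithmic power, and tracking that the compact-support assumption on $\mathcal{K}$ together with the bounded-measure support $\mathcal{X}$ of \Cref{assume: model assumption}\textbf{a.} keeps the effective grid size polynomial — but no genuinely new idea beyond what \Cref{Concentration-Bound} already contains is required.
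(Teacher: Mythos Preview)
Your proposal is correct, and in fact goes well beyond what the paper does: the paper gives no proof at all for this lemma, simply stating that ``the verification of these bounds can be found in many places in the literature'' and citing \cite{yu1993density} and \cite{Tsybakov:1315296}. Your bias--variance split, Bernstein-plus-grid argument (reusing the \Cref{Concentration-Bound} machinery), and bandwidth balancing are the standard route and are sound.

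Your remark on the exponent is also well-taken. The classical sup-norm rate for kernel density estimation over $\mathcal{H}^r(L,\mathbb{R}^p)$ is $(\log T/T)^{r/(2r+p)}$, not $(\log T/T)^{2r/(2r+p)}$; the latter is the squared rate. The paper's statement in \eqref{Eq-7}, and its reuse in \Cref{remark2}, appear to carry this exponent typo. Your derivation yields the correct rate $O_p\big((\log T/T)^{r/(2r+p)}\big)$, which is what the cited references actually give.
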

 The verification of these bounds can be found in many places in the literature. See for example \cite{yu1993density} and \cite{Tsybakov:1315296}.

 \begin{remark}
 \label{remark2}
    Even more, by \Cref{assume: model assumption},
\begin{align}
\label{L2-bound}
\vert\vert \widehat{f}_T-f_t\vert\vert_{L_2}\le C_{\mathcal{X}}\vert\vert \widehat{f}_T-f_t\vert\vert_{L_\infty}=O\Big( \Big(\frac{\log(T)}{T}\Big)^{\frac{2r}{2r+p}}\Big)
\end{align}
with high probability. Therefore, given that
\begin{equation}
\label{kappa-b}
    \kappa=\frac{\vert\vert \sqrt{\frac{\eta_{k+1}-\eta_k}{(\eta_{k+1}-\eta_{k-1})(\eta_k-\eta_{k-1})}}\sum_{i=\eta_{k-1}+1}^{\eta_k}f_{i}-\sqrt{\frac{(\eta_k-\eta_{k-1})}{(\eta_{k+1}-\eta_{k-1})(\eta_{k+1}-\eta}_k)}\sum_{i=\eta_k+1}^{\eta_{k+1}}f_{i}\vert\vert_{L_2}}{\sqrt{\frac{(\eta_k-\eta_{k-1})(\eta_{k+1}-\eta_k)}{\eta_{k+1}-\eta_{k-1}}}}
\end{equation}
and \eqref{kappa-hat}, by triangle inequality, \eqref{L2-bound} and the fact that $\Delta=\Theta(T)$,
\begin{align*}
    \vert \kappa-\widehat{\kappa}\vert=O_p\Big( \Big(\frac{\log(T)}{T}\Big)^{\frac{2r}{2r+p}}\Big).
\end{align*}
From here, and \Cref{kernel-as}, if $h_1=O(\kappa^{\frac{1}{r}})$ and $h_2=O(\widehat{\kappa}^{\frac{1}{r}})$, we conclude that $$\vert\vert F_{t,h_1}-F_{t,h_2}\vert\vert_{L_2}^2=O\Big(\frac{\vert\kappa-\widehat{\kappa}\vert}{\kappa^{\frac{p}{r}+1}}\Big).$$
In fact,
\begin{align*}
    &\vert\vert F_{t,h_1}-F_{t,h_2}\vert\vert_{L_2}^2
    \\
    =&\int_{\mathbb{R}^{p}}(\frac{1}{h_1^p}\mathcal{K}(\frac{x-X_t}{h_1})-\frac{1}{h_2^p}\mathcal{K}(\frac{x-X_t}{h_2}))^2dx
    \\
    =&\int_{\mathbb{R}^{p}}\Big(\frac{1}{h_1^p}\mathcal{K}(\frac{x-X_t}{h_1})\Big)^2-2\frac{1}{h_1^p}\mathcal{K}(\frac{x-X_t}{h_1})\frac{1}{h_2^p}\mathcal{K}(\frac{x-X_t}{h_2})
    +\Big(\frac{1}{h_2^p}\mathcal{K}(\frac{x-X_t}{h_2})\Big)^2dx.
\end{align*}
Now, we analyze the two following terms,
\begin{align*}
    I_1=\int_{\mathbb{R}^{p}}\Big(\frac{1}{h_1^p}\mathcal{K}(\frac{x-X_t}{h_1})\Big)^2-\frac{1}{h_1^p}\mathcal{K}(\frac{x-X_t}{h_1})\frac{1}{h_2^p}\mathcal{K}(\frac{x-X_t}{h_2})dx
\end{align*}
and
\begin{align*}
    I_2=\int_{\mathbb{R}^{p}}\Big(\frac{1}{h_2^p}\mathcal{K}(\frac{x-X_t}{h_2})\Big)^2-\frac{1}{h_1^p}\mathcal{K}(\frac{x-X_t}{h_1})\frac{1}{h_2^p}\mathcal{K}(\frac{x-X_t}{h_2})dx.
\end{align*}
For $I_1$, letting $u=\frac{x-X_t}{h_1}$, we have that
\begin{align*}
    \int_{\mathbb{R}^{p}}\Big(\frac{1}{h_1^p}\mathcal{K}(\frac{x-X_t}{h_1})\Big)^2dx=&\int_{\mathbb{R}^{p}}\frac{1}{h_1^{p}}\Big(\mathcal{K}(u)\Big)^2du
\end{align*}
and, letting $v=\frac{x-X_t}{h_2}$, we have that
\begin{align*}
    \int_{\mathbb{R}^{p}}\frac{1}{h_1^p}\mathcal{K}(\frac{x-X_t}{h_1})\frac{1}{h_2^p}\mathcal{K}(\frac{x-X_t}{h_2})dx=&\int_{\mathbb{R}^{p}}\frac{1}{h_1^p}\mathcal{K}(v\frac{h_2}{h_1})\mathcal{K}(v)dv.
\end{align*}
Therefore, by \Cref{kernel-as} and the Mean Value Theorem,
\begin{align*}
    I_1=\frac{1}{h_1^{p}}\int_{\mathbb{R}^{p}}\mathcal{K}(v)\Big(\mathcal{K}(v)-\mathcal{K}(v\frac{h_2}{h_1})\Big)dv\le&C\frac{1}{h_1^{p}}\Big|1-\frac{h_2}{h_1}\Big|\int_{\mathbb{R}^{p}}\mathcal{K}(v)\vert\vert v\vert\vert dv\\
    \le&C_1\frac{\vert h_1-h_2\vert}{h_1^{p+1}}
    \\
    =&O\Big(\frac{\vert \kappa-\widehat{\kappa}\vert}{\kappa^{\frac{p+1}{r}}}\kappa^{\frac{1}{r}-1}\Big)=O\Big(\frac{\vert \kappa-\widehat{\kappa}\vert}{\kappa^{\frac{p}{r}+1}}\Big).
\end{align*}
Similarly, we have,
\begin{align*}
    I_2=&\int_{\mathbb{R}^{p}}\Big(\frac{1}{h_2^p}\mathcal{K}(\frac{x-X_t}{h_2})\Big)^2-\frac{1}{h_1^p}\mathcal{K}(\frac{x-X_t}{h_1})\frac{1}{h_2^p}\mathcal{K}(\frac{x-X_t}{h_2})dx\\
    =&\frac{1}{h_2^{p}}\int_{\mathbb{R}^{p}}\mathcal{K}(v)\Big(\mathcal{K}(v)-\mathcal{K}(v\frac{h_1}{h_2})\Big)dv\le C\frac{1}{h_2^{p}}\Big|1-\frac{h_1}{h_2}\Big|\int_{\mathbb{R}^{p}}\mathcal{K}(v)\vert\vert v\vert\vert dv=O\Big(\frac{\vert \kappa-\widehat{\kappa}\vert}{\kappa^{\frac{p}{r}+1}}\Big).
\end{align*}

\end{remark}

\subsection{Multivariate change point detection lemmas}
We present some technical results corresponding to the generalization of the univariate CUSUM to the Multivariate case. For more details, we refer the interested readers to \cite{padilla2021optimal} and \cite{wang2020univariate}.

Let  $\{X_t\}_{t=1}^{T}\subset \mathbb{R}^p$ a process with unknown densities $\{f_t\}_{t=1}^{T}$. 

\begin{assumption}
\label{asum-1}
    We assume there exist  $\{\eta_k\}_{k=1}^{K}\subset \{2,...,T\}$ with $1=\eta_0<\eta_1<...<\eta_k\le T<\eta_{K+1}=T+1,$ such that 
\begin{equation}
 \label{mod1}
    f_t\neq f_{t+1} \ \text{if and only if} \ \ t\in{\{\eta_1,...,\eta_K\}},
\end{equation}
Assume
\begin{align*}
& 
\min_{k = 1, \ldots, K+1} ( \eta_k-\eta_{k-1} ) \ge  \Delta > 0,
\\
& 
0< \vert\vert f  _{\eta_{k+1}} -  f _{\eta_{k}} \vert\vert_{L_\infty}  = \kappa_{k}   \text{ for all }  k = 1, \ldots, K.
\end{align*}
\end{assumption}
In the rest of this section, we use the notation 
\[
\widetilde{f}^{(s, e]}_{t}(x) = \sqrt{\frac{e - t}{(e-s)(t-s)}} \sum_{j = s + 1}^t f_{j}(x) - \sqrt{\frac{t - s}{(e-s)(e - t)}} \sum_{j = t + 1}^e f_{j}(x),
\]
for all $0 \leq s < t < e \leq T$ and $x \in \mathbb{R}^p$.
\begin{lemma}\label{lemma:cusum boundary bound}
If $[s, e]$ contain two and only two change points $\eta_r$ and $\eta_{r+1}$, then
\[
\sup_{s\leq t \leq e} \vert\vert\widetilde{f}^{s, e}_t\vert\vert_{L_2} \leq \sqrt{e - \eta_{r+1}} \vert\vert f_{r+1}-f_{r}\vert\vert_{L_2} + \sqrt{\eta_r - s} \vert\vert f_{r}-f_{r-1}\vert\vert_{L_2}.
\]
\end{lemma}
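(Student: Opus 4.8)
The plan is to expand the CUSUM contrast $\widetilde{f}^{s,e}_t$ explicitly on the three sub-blocks determined by the two change points $\eta_r < \eta_{r+1}$ inside $(s,e]$, and then bound the resulting piecewise-linear-in-$t$ expression by the triangle inequality in $L_2$. First I would fix $t \in [s,e]$ and split into the three natural cases according to whether $t$ lies in $(s,\eta_r]$, $(\eta_r,\eta_{r+1}]$, or $(\eta_{r+1},e]$; by symmetry of the CUSUM construction it suffices to treat one generic case carefully and note the others are analogous. Within a block, the densities $f_j$ are constant, so each of the two sums in $\widetilde f^{s,e}_t$ telescopes into a combination of $f_{r-1}$ (the common value on $(s,\eta_r]$, which I will write as $f_{r}$ in the lemma's indexing — being careful to match the paper's convention $f_{\eta_k}$ vs.\ $f_{\eta_k+1}$), $f_r$, and $f_{r+1}$. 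The key algebraic step, exactly parallel to the single-change-point computation already carried out in Step~2 of the proof of Theorem~\ref{theorem:FSBS} (equations leading to \eqref{eq:one change point 1d cusum}), is to collect coefficients and rewrite $\widetilde f^{s,e}_t$ as a linear combination $\alpha(t)(f_r - f_{r-1}) + \beta(t)(f_{r+1}-f_r)$ with explicit nonnegative coefficients $\alpha(t),\beta(t)$ built from the square-root weights.

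Next I would bound the scalar coefficients. The point of the lemma is that $\sup_t \alpha(t) \le \sqrt{\eta_r - s}$ and $\sup_t \beta(t) \le \sqrt{e - \eta_{r+1}}$ — these follow because each coefficient is, up to the telescoping factor (a block length like $\eta_r - s$ or $e - \eta_{r+1}$), one of the weights $\sqrt{(e-t)/((e-s)(t-s))}$ or $\sqrt{(t-s)/((e-s)(e-t))}$, and one uses the elementary inequalities $\sqrt{\tfrac{e-t}{(e-s)(t-s)}} \le \sqrt{\tfrac{1}{t-s}}$ and $\sqrt{\tfrac{t-s}{(e-s)(e-t)}} \le \sqrt{\tfrac{1}{e-t}}$ (the same bounds invoked in \Cref{Concentration-Bound}'s proof), together with monotonicity in $t$ of the block-length-times-weight products. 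Then by the triangle inequality in $L_2$,
\[
\|\widetilde f^{s,e}_t\|_{L_2} \le \alpha(t)\|f_r - f_{r-1}\|_{L_2} + \beta(t)\|f_{r+1}-f_r\|_{L_2} \le \sqrt{\eta_r-s}\,\|f_r-f_{r-1}\|_{L_2} + \sqrt{e-\eta_{r+1}}\,\|f_{r+1}-f_r\|_{L_2},
\]
and taking the supremum over $s \le t \le e$ gives the claim.

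The main obstacle is bookkeeping rather than conceptual: one must carry out the telescoping/coefficient-collection in each of the three cases without sign errors, and verify that in every case the coefficient multiplying $(f_{r+1}-f_r)$ is controlled by $\sqrt{e-\eta_{r+1}}$ and the one multiplying $(f_r - f_{r-1})$ by $\sqrt{\eta_r - s}$ — in particular checking that the "cross" terms (e.g.\ when $t$ is in the middle block $(\eta_r,\eta_{r+1}]$, where both differences appear with genuinely $t$-dependent weights) still obey these bounds. I expect the middle-block case to be the one requiring the most care, since there the weight $\sqrt{(t-s)(e-t)/(e-s)}$-type factors do not collapse as cleanly; the resolution is to bound $\sqrt{(t-s)(e-t)/(e-s)} \le \min\{\sqrt{t-s},\sqrt{e-t}\}$ and then estimate each block contribution separately, exactly mirroring the manipulations already displayed in the Theorem~\ref{theorem:FSBS} proof. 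No new probabilistic input is needed; this is a purely deterministic statement about the population CUSUM.
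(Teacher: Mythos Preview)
Your approach is correct but takes a genuinely different route from the paper. The paper does not expand $\widetilde f^{s,e}_t$ directly; instead it introduces an auxiliary sequence $\{g_t\}$ obtained by overwriting the first segment's density with the second's (so that $g$ has only the single change point $\eta_{r+1}$), bounds $\|\widetilde f^{s,e}_t-\widetilde g^{s,e}_t\|_{L_2}\le \sqrt{\eta_r-s}\,\kappa_r$ for $t\ge\eta_r$, invokes the fact that $\sup_t\|\widetilde f^{s,e}_t\|_{L_2}$ is attained at one of the two change points, and then bounds $\|\widetilde g^{s,e}_t\|_{L_2}$ by the one-change-point formula. Your direct decomposition $\widetilde f^{s,e}_t=\alpha(t)(f_r-f_{r-1})+\beta(t)(f_{r+1}-f_r)$ with the coefficient bounds $|\alpha(t)|\le\sqrt{\eta_r-s}$, $|\beta(t)|\le\sqrt{e-\eta_{r+1}}$ works uniformly in $t$ and avoids any appeal to where the supremum is attained, which is arguably cleaner here. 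Conversely, the paper's comparison-sequence trick is a reduction step that generalizes immediately to more than two change points (peel off one at a time) and is reused elsewhere (e.g.\ \Cref{lemma20}). One small simplification for you: your worry about the middle block is unnecessary---since the CUSUM is linear in $\{f_l\}$ and annihilates constants, the two-change-point CUSUM is exactly the sum of the two one-change-point CUSUMs at $\eta_r$ and $\eta_{r+1}$, so the coefficients $\alpha(t),\beta(t)$ are given \emph{globally} by the single-change-point formula \eqref{eq:one change point 1d cusum} and the bounds $(\eta_r-s)\sqrt{(e-t)/((t-s)(e-s))}\le\sqrt{\eta_r-s}$ and $(e-\eta_{r+1})\sqrt{(t-s)/((e-t)(e-s))}\le\sqrt{e-\eta_{r+1}}$ follow in one line without casework.
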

\begin{proof}
This is Lemma 15 in \citeauthor{wang2020univariate} (\citeyear{wang2020univariate}). Consider the sequence $\Big\{g_t\Big\}_{t=s+1}^e$ be such that
$$
g_t=\Big\{\begin{array}{lll}
f_{\eta_k}, & \text { if } \  s+1 \leq t<\eta_k, \\
f_t, & \text { if } \quad \eta_k \leq t \leq e .
\end{array}\Big.
$$
For any $t \geq \eta_k$,
$$
\begin{aligned}
& \widetilde{f}_t^{s, e}-\widetilde{g}_t^{s, e} \\
= & \sqrt{\frac{e-t}{(e-s)(t-s)}}\Big(\sum_{i=s+1}^t f_i-\sum_{i=s+1}^{\eta_k} f_{\eta_k}-\sum_{i=\eta_k+1}^t f_i\Big) \\
- & \sqrt{\frac{t-s}{(e-s)(e-t)}}\Big(\sum_{i=t+1}^e f_i-\sum_{i=t+1}^e f_i\Big) \\
= & \sqrt{\frac{e-t}{(e-s)(t-s)}}\Big(\eta_k-s\Big)\Big(f_{\eta_k}-f_{\eta_{k-1}}\Big) .
\end{aligned}
$$
So for $t \geq \eta_k,\vert\vert\widetilde{f}_t^{s, e}-\widetilde{g}_t^{s, e}\vert\vert_{L_2} \leq \sqrt{\eta_k-s} \kappa_k$. Since $\sup _{s \leq t \leq e}\vert\vert\widetilde{f}_t^{s, e}\vert\vert_{L_2}=\max \Big\{\vert\vert\widetilde{f}_{\eta_k}^{s, e}\vert\vert_{L_2},\vert\vert\widetilde{f}_{\eta_{k+1}}^{s, e}\vert\vert_{L_2}\Big\}$, and that
$$
\begin{aligned}
\max \Big\{\vert\vert\widetilde{f}_{\eta_k}^{s, e}\vert\vert_{L_2},\vert\vert\widetilde{f}_{\eta_{k+1}}^{s, e}\vert\vert_{L_2}\Big\} & \leq \sup _{s \leq t \leq e}\vert\vert\widetilde{g}_t^{s, e}\vert\vert_{L_2}+\sqrt{\eta_k-s} \kappa_k \\
& \leq \sqrt{e-\eta_{k+1}} \kappa_{k+1}+\sqrt{\eta_r-s} \kappa_k
\end{aligned}
$$
where the last inequality follows form the fact that $g_t$ has only one change point in $[s, e]$.
\end{proof}  
\begin{lemma}
\label{lem-2-uc}
Suppose $e-s \leq C_R \Delta$, where $C_R>0$ is an absolute constant, and that
$$
\eta_{k-1} \leq s \leq \eta_k \leq \ldots \leq \eta_{k+q} \leq e \leq \eta_{k+q+1}, \quad q \geq 0
$$
Denote
$$
\kappa_{\max }^{s, e}=\max \Big\{\sup _{x \in \mathbb{R}^p}\Big|f_{\eta_p}(x)-f_{\eta_{p-1}}(x)\Big|: k \leq p \leq k+q\Big\} .
$$
Then for any $k-1 \leq p \leq k+q$, it holds that
$$
\sup _{x \in \mathbb{R}^p}\Big|\frac{1}{e-s} \sum_{i=s+1}^e f_{i}(x)-f_{\eta_p}(x)\Big| \leq C_R \kappa_{\max }^{s, e} .
$$
\end{lemma}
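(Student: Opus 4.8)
The plan is to reduce the supremum of the deviation to a weighted average of the jump sizes along the change points contained in $(s,e]$, and then to control that average using the length constraint $e-s \le C_R\Delta$ together with the minimal spacing $\Delta$. First I would fix $p$ with $k-1 \le p \le k+q$ and write $\frac{1}{e-s}\sum_{i=s+1}^e f_i(x) - f_{\eta_p}(x)$ as a telescoping sum: since $f_i$ is piecewise constant and equals $f_{\eta_j}$ on the block $(\eta_{j-1},\eta_j]$ (with the obvious truncation at the endpoints $s$ and $e$), the average $\frac{1}{e-s}\sum_{i=s+1}^e f_i(x)$ is exactly the convex combination $\sum_{j} w_j f_{\eta_j}(x)$ where $w_j = |(s,e]\cap(\eta_{j-1},\eta_j]|/(e-s) \ge 0$ and $\sum_j w_j = 1$, the index $j$ ranging over $\{k,\dots,k+q\}$ plus the boundary pieces. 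Subtracting $f_{\eta_p}(x) = \sum_j w_j f_{\eta_p}(x)$ gives $\sum_j w_j\,(f_{\eta_j}(x)-f_{\eta_p}(x))$.

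Next I would bound $\sup_x |f_{\eta_j}(x)-f_{\eta_p}(x)|$ by a sum of at most $q+1$ consecutive jumps, each of which is at most $\kappa_{\max}^{s,e}$ by definition; so $\sup_x|f_{\eta_j}(x)-f_{\eta_p}(x)| \le (q+1)\kappa_{\max}^{s,e} \le q\,\kappa_{\max}^{s,e}$ when $q\ge1$ (and the bound is trivially $0$ or $\le \kappa_{\max}^{s,e}$ when $q\le 1$). Since the $w_j$ sum to one, this yields $\sup_x|\frac{1}{e-s}\sum_{i=s+1}^e f_i(x) - f_{\eta_p}(x)| \le (q+1)\kappa_{\max}^{s,e}$. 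Finally I would use the length constraint: the $q+1$ change points $\eta_k,\dots,\eta_{k+q}$ all lie in $(s,e]$, and consecutive ones are separated by at least $\Delta$, so $q\Delta \le e - s \le C_R\Delta$, hence $q \le C_R$ and $q+1 \le C_R + 1$. Absorbing constants (or, more cleanly, noting $q+1 \le (C_R+1)$ and renaming the absolute constant on the right-hand side), we obtain the claimed bound $\sup_x|\cdot| \le C_R\,\kappa_{\max}^{s,e}$ with a possibly enlarged $C_R$.

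The main obstacle — really the only subtlety — is bookkeeping at the two boundary blocks $(s,\eta_k]$ and $(\eta_{k+q},e]$: on these the relevant densities are $f_{\eta_k}$ and $f_{\eta_{k+q+1}}$ respectively, and one must check that the weight attached to the $f_{\eta_{k+q+1}}$ piece is still compared to $f_{\eta_p}$ through a chain of at most $q+1$ jumps (indeed $\eta_p$ with $p\le k+q$ is within $q+1$ jumps of $\eta_{k+q+1}$), so the same bound applies. One also has to handle the degenerate cases $q=0$ (only one change point, so all nonzero-weight densities are among $\{f_{\eta_k},f_{\eta_{k+1}}\}$, at most one jump apart, giving the bound with constant $1$) and the case $p=k-1$ (then $f_{\eta_p}=f_{\eta_{k-1}}$ which is the density on $(s,\eta_k]$, still within $q+1$ jumps of every other relevant density). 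None of these requires a genuinely new idea; it is a routine convexity-plus-triangle-inequality argument with the quantitative input coming entirely from $e-s\le C_R\Delta$ and the $\Delta$-spacing.
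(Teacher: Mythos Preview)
Your proposal is correct and follows essentially the same approach as the paper's proof: both decompose the average $\frac{1}{e-s}\sum_{i=s+1}^e f_i$ into its piecewise-constant blocks, bound each $|f_{\eta_j}-f_{\eta_p}|$ via a telescoping chain of at most $q+1$ jumps (each controlled by $\kappa_{\max}^{s,e}$), and finish by using $e-s\le C_R\Delta$ together with the $\Delta$-spacing to conclude $q+1\le C_R+1$. Your write-up is in fact slightly more careful about the convex-combination structure and the boundary blocks than the paper, but the route is the same; note also that both arguments actually deliver the bound $(C_R+1)\kappa_{\max}^{s,e}$, with the extra $+1$ absorbed into the absolute constant in the statement.
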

\begin{proof}
This is Lemma 18 in \citeauthor{wang2020univariate} (\citeyear{wang2020univariate}). Since $e-s \leq C_R \Delta$, the interval $[s, e]$ contains at most $C_R+1$ change points. Observe that
$$
\begin{aligned}
& \Big|\Big|\frac{1}{e-s} \sum_{i=s}^e f_i-f_{\eta_p}\Big|\Big|_{L_\infty} \\
= & \frac{1}{e-s}\Big|\Big|\sum_{i=s}^{\eta_k}\Big(f_{\eta_{k-1}}-f_{\eta_p}\Big)+\sum_{i=\eta_k+1}^{\eta_{k+1}}\Big(f_{\eta_k}-f_{\eta_p}\Big)+\ldots+\sum_{i=\eta_{k+q}+1}^e\Big(f_{\eta_{k+q}}-f_{\eta_p}\Big)\Big|\Big|_{L_\infty}
\end{aligned}
$$
$$
\begin{aligned}
& \leq \frac{1}{e-s} \sum_{i=s}^{\eta_k}|p-k| \kappa_{\max }^{s, e}+\sum_{i=\eta_k+1}^{\eta_{k+1}}|p-k-1| \kappa_{\max }^{s, e}+\ldots+\sum_{i=\eta_{k+q}+1}^e|p-k-q-1| \kappa_{\max }^{s, e} \\
& \leq \frac{1}{e-s} \sum_{i=s}^e\Big(C_R+1\Big) \kappa_{\max }^{s, e},
\end{aligned}
$$
where $\Big|p_1-p_2\Big| \leq C_R+1$ for any $\eta_{p_1}, \eta_{p_2} \in[s, e]$ is used in the last inequality.
\end{proof}  
\begin{lemma}
\label{lemma20}
    Let $(s, e) \subset(0, n)$ contains two or more change points such that
$$
\eta_{k-1} \leq s \leq \eta_k \leq \ldots \leq \eta_{k+q} \leq e \leq \eta_{k+q+1}, \quad q \geq 1
$$
If $\eta_k-s \leq c_1 \Delta$, for $c_1>0$, then
$$
\Big|\Big|\widetilde{f}_{\eta_k}^{s, e}\Big|\Big|_{L_{\infty}} \leq \sqrt{c_1}\Big|\Big|\widetilde{f}_{\eta_{k+1}}^{s, e}\Big|\Big|_{L_{\infty}}+2 \kappa_k \sqrt{\eta_k-s}
$$
\end{lemma}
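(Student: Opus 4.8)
The plan is to reproduce, with one extra twist, the auxiliary–sequence technique already used in the proof of \Cref{lemma:cusum boundary bound}. Throughout we may assume $s<\eta_k<\eta_{k+1}<e$, since otherwise the statement is vacuous or one of the population contrasts is undefined; this is guaranteed by $q\ge 1$ together with the hypothesis that $(s,e)$ contains at least two change points. The first move is to introduce a population sequence $\{g_t\}_{t=s+1}^{e}$ that agrees with $\{f_t\}$ everywhere on $(s,e]$ except that the change point $\eta_k$ is \emph{erased by prolonging backwards the segment lying immediately to the right of $\eta_k$}: set $g_t=f_{\eta_{k+1}}$ for $s<t\le\eta_k$ and $g_t=f_t$ for $\eta_k<t\le e$. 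Since $\eta_{k-1}\le s$, the change–point structure forces $f_t=f_{\eta_k}$ for every $s<t\le\eta_k$, so $g$ is constant and equal to $f_{\eta_{k+1}}$ on $(s,\eta_{k+1}]$, and has change points exactly $\eta_{k+1},\dots,\eta_{k+q}$ in $(s,e]$. The reason for prolonging the \emph{new} segment backwards (rather than the old one forwards) is that then $f$ and $g$ differ only on the short interval $(s,\eta_k]$, of length at most $\eta_k-s\le c_1\Delta$.

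Second, I would carry out the same telescoping identity as in \Cref{lemma:cusum boundary bound}: for every $t\ge\eta_k$,
\[
\widetilde f_t^{s,e}-\widetilde g_t^{s,e}=\sqrt{\tfrac{e-t}{(e-s)(t-s)}}\,(\eta_k-s)\,(f_{\eta_k}-f_{\eta_{k+1}}),
\]
so that $\|\widetilde f_t^{s,e}-\widetilde g_t^{s,e}\|_{L_\infty}\le \tfrac{\eta_k-s}{\sqrt{t-s}}\,\kappa_k\le\sqrt{\eta_k-s}\,\kappa_k$, using $t-s\ge\eta_k-s$ and $e-t\le e-s$. Evaluating at $t=\eta_k$ and at $t=\eta_{k+1}$ gives
\[
\|\widetilde f_{\eta_k}^{s,e}\|_{L_\infty}\le\|\widetilde g_{\eta_k}^{s,e}\|_{L_\infty}+\sqrt{\eta_k-s}\,\kappa_k,\qquad
\|\widetilde g_{\eta_{k+1}}^{s,e}\|_{L_\infty}\le\|\widetilde f_{\eta_{k+1}}^{s,e}\|_{L_\infty}+\sqrt{\eta_k-s}\,\kappa_k .
\]
It then remains to bound $\|\widetilde g_{\eta_k}^{s,e}\|_{L_\infty}$ by $\sqrt{c_1}\,\|\widetilde g_{\eta_{k+1}}^{s,e}\|_{L_\infty}$. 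Since $g$ is constant on $(s,\eta_{k+1}]$, a direct computation (sum the two blocks and recombine the prefactors) shows that for all $s<t\le\eta_{k+1}$,
\[
\widetilde g_t^{s,e}=\sqrt{\tfrac{t-s}{(e-s)(e-t)}}\,\Big[(e-\eta_{k+1})f_{\eta_{k+1}}-\textstyle\sum_{l=\eta_{k+1}+1}^{e}g_l\Big],
\]
so $\|\widetilde g_t^{s,e}\|_{L_\infty}$ is $\sqrt{(t-s)/(e-t)}$ times a quantity independent of $t$. As $t\mapsto(t-s)/(e-t)$ is increasing on $(s,e)$ and $\eta_k\le\eta_{k+1}$,
\[
\frac{\|\widetilde g_{\eta_k}^{s,e}\|_{L_\infty}}{\|\widetilde g_{\eta_{k+1}}^{s,e}\|_{L_\infty}}
=\sqrt{\frac{(\eta_k-s)(e-\eta_{k+1})}{(e-\eta_k)(\eta_{k+1}-s)}}
\le\sqrt{\frac{\eta_k-s}{\eta_{k+1}-s}}\le\sqrt{c_1},
\]
where the last step uses $\eta_{k+1}-s\ge\eta_{k+1}-\eta_k\ge\Delta$ and $\eta_k-s\le c_1\Delta$.

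Finally I would chain the displays: $\|\widetilde f_{\eta_k}^{s,e}\|_{L_\infty}\le\sqrt{c_1}\,\|\widetilde g_{\eta_{k+1}}^{s,e}\|_{L_\infty}+\sqrt{\eta_k-s}\,\kappa_k\le\sqrt{c_1}\,\|\widetilde f_{\eta_{k+1}}^{s,e}\|_{L_\infty}+(1+\sqrt{c_1})\sqrt{\eta_k-s}\,\kappa_k$; since this lemma is only invoked with $c_1\le 1$ one has $1+\sqrt{c_1}\le 2$, which is the claim (for $c_1>1$ the identical argument gives the constant $1+\sqrt{c_1}$ instead of $2$). I do not expect a real obstacle here — all the algebra is the same telescoping and normalisation already performed for \Cref{lemma:cusum boundary bound}. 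The only step that genuinely matters is the construction in the first paragraph: one must erase $\eta_k$ by prolonging the post-$\eta_k$ segment so that the discrepancy $\widetilde f^{s,e}-\widetilde g^{s,e}$ is supported on an interval of length $\le c_1\Delta$; prolonging the pre-$\eta_k$ segment instead would introduce an $O(\sqrt{\Delta})$ error term and defeat the purpose.
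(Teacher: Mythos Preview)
Your proof is correct and follows essentially the same route as the paper's: you introduce the identical auxiliary sequence $g_t$ (prolonging the post-$\eta_k$ segment backwards), obtain the same telescoping identity and $\sqrt{\eta_k-s}\,\kappa_k$ perturbation bound, use the same ratio $\sqrt{(\eta_k-s)(e-\eta_{k+1})/\{(e-\eta_k)(\eta_{k+1}-s)\}}\le\sqrt{c_1}$ to pass from $\widetilde g_{\eta_k}^{s,e}$ to $\widetilde g_{\eta_{k+1}}^{s,e}$, and then chain back to $\widetilde f_{\eta_{k+1}}^{s,e}$. Your write-up is in fact slightly more explicit than the paper's, which compresses the final $(1+\sqrt{c_1})\le 2$ step into a single inequality.
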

\begin{proof}
This is Lemma 20 in \cite{wang2020univariate}.
 Consider the sequence $\Big\{g_t\Big\}_{t=s+1}^e$ be such that
$$
g_t= \begin{cases}f_{\eta_{r+1}}, & s+1 \leq t \leq \eta_k, \\ f_t, & \eta_k+1 \leq t \leq e\end{cases}
$$
For any $t \geq \eta_r$, it holds that
$$
\vert\vert\widetilde{f}_{\eta_k}^{s, e}-\widetilde{g}_{\eta_k}^{s, e}\vert\vert_{L_\infty}=\Big|\Big|\sqrt{\frac{(e-s)-t}{(e-s)(t-s)}}\Big(\eta_k-s\Big)\Big(f_{\eta_{k+1}}-f_{\eta_k}\Big) \Big|\Big|_{L_\infty}\leq \sqrt{\eta_k-s} \kappa_k .
$$
Thus,
$$
\begin{aligned}
\vert\vert\widetilde{f}_{\eta_k}^{s, e}\vert\vert_{L_\infty} & \leq\vert\vert\widetilde{g}_{\eta_k}^{s, e}\vert\vert_{L_\infty}+\sqrt{\eta_k-s} \kappa_k \leq \sqrt{\frac{\Big(\eta_k-s\Big)\Big(e-\eta_{k+1}\Big)}{\Big(\eta_{k+1}-s\Big)\Big(e-\eta_k\Big)}}\vert\vert\widetilde{g}_{\eta_{k+1}}^{s, e}\vert\vert_{L_\infty}+\sqrt{\eta_k-s} \kappa_k \\
& \leq \sqrt{\frac{c_1 \Delta}{\Delta}}\vert\vert\widetilde{g}_{\eta_{k+1}}^{s, e}\vert\vert_{L_\infty}+\sqrt{\eta_k-s} \kappa_k \leq \sqrt{c_1}\vert\vert\widetilde{f}_{\eta_{k+1}}^{s, e}\vert\vert_{L_\infty}+2 \sqrt{\eta_k-s} \kappa_k,
\end{aligned}
$$
where the first inequality follows from the observation that the first change point of $g_t$ in $(s, e)$ is at $\eta_{k+1}$.
\end{proof}

\begin{lemma}
\label{max-pop}
Under \Cref{asum-1}, for any interval $(s, e) \subset(0, T)$ satisfying
$$
\eta_{k-1} \leq s \leq \eta_k \leq \ldots \leq \eta_{k+q} \leq e \leq \eta_{k+q+1}, \quad q \geq 0 .
$$
Let
$$
b \in \underset{t=s+1, \ldots, e }{\arg \max } \sup _{x \in \mathbb{R}^p}\Big|\widetilde{f}_{t}^{(s, e]}(x)\Big| .
$$
Then $b \in\Big\{\eta_1, \ldots, \eta_K\Big\}$.
For any fixed $z \in \mathbb{R}^p$, if $\widetilde{f}_{t}^{(s, e]}(z)>0$ for some $t \in(s, e)$, then $\widetilde{f}_{t}^{(s, e]}(z)$ is either strictly monotonic or decreases and then increases within each of the interval $\Big(s, \eta_k\Big),\Big(\eta_k, \eta_{k+1}\Big), \ldots,\Big(\eta_{k+q}, e\Big)$.
\end{lemma}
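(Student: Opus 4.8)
The plan is to deduce this multivariate statement from the classical behaviour of the one‑dimensional population CUSUM curve, evaluated pointwise in the spatial argument, paralleling the corresponding univariate argument (see \cite{wang2020univariate}) and its multivariate adaptation in \cite{padilla2021optimal}, and then carrying everything through the supremum over $x$. First I would fix $z\in\mathbb{R}^p$ and set $\mu_i=f_i(z)$; then $t\mapsto\widetilde f_t^{(s,e]}(z)$ is exactly the population CUSUM of the real sequence $\{\mu_i\}_{i=s+1}^e$, which by \Cref{asum-1} is piecewise constant with jumps only at $\eta_k,\dots,\eta_{k+q}$, so the whole argument reduces to understanding the shape of this curve on each maximal constant stretch.

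The algebraic core is the identity that, on a stretch $[\eta_j,\eta_{j+1}]$ on which $\mu\equiv c$, writing $\sum_{i=s+1}^t\mu_i=a+ct$ and using $\phi(t)(t-s)=\psi(t)(e-t)=\sqrt{(t-s)(e-t)/(e-s)}$ with $\phi(t)=\sqrt{\frac{e-t}{(e-s)(t-s)}}$ and $\psi(t)=\sqrt{\frac{t-s}{(e-s)(e-t)}}$, the curve collapses to
\[
\widetilde f_t^{(s,e]}(z)=\alpha\,\phi(t)+\beta\,\psi(t),\qquad
\alpha=(\eta_j-s)\bigl(\bar\mu_{(s,\eta_j]}-c\bigr),\quad
\beta=(e-\eta_j)\bigl(c-\bar\mu_{(\eta_j,e]}\bigr).
\]
Thus on each constant stretch the curve is a fixed linear combination of the strictly decreasing, strictly convex $\phi$ and the strictly increasing, strictly convex $\psi$. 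On the boundary stretches $(s,\eta_k]$ and $(\eta_{k+q},e]$ one of $\alpha,\beta$ vanishes, so the curve is strictly monotone there; on an interior stretch a case split on the signs of $\alpha,\beta$ gives: $\alpha\beta\le0\Rightarrow$ strictly monotone; $\alpha,\beta\ge0\Rightarrow$ strictly convex and nonnegative, i.e.\ ``decreases then increases''; $\alpha,\beta\le0\Rightarrow$ strictly concave and nonpositive. In particular, once $\widetilde f_t^{(s,e]}(z)$ is positive somewhere in $(s,e)$, the last case is excluded on the relevant stretches, which is the second assertion; and in every case $|\widetilde f_t^{(s,e]}(z)|$ restricted to a closed constant stretch is maximised at one of the two endpoints, with the convention $\widetilde f_s^{(s,e]}(z)=\widetilde f_e^{(s,e]}(z)=0$.

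For the first assertion, write $M(t)=\sup_x|\widetilde f_t^{(s,e]}(x)|$; since the Hölder densities are continuous and (being densities supported, up to null sets, in $\mathcal{X}$) the CUSUM curve decays in $x$, this supremum is attained. Suppose for contradiction that a maximiser $b$ of $M$ over $t$ lies strictly inside a constant stretch with endpoints $\ell,r\in\{s,e,\eta_k,\dots,\eta_{k+q}\}$, and pick $x^\ast$ with $M(b)=|\widetilde f_b^{(s,e]}(x^\ast)|$. By the shape analysis, $|\widetilde f_b^{(s,e]}(x^\ast)|\le\max\{|\widetilde f_\ell^{(s,e]}(x^\ast)|,|\widetilde f_r^{(s,e]}(x^\ast)|\}$, with strict inequality unless $t\mapsto\widetilde f_t^{(s,e]}(x^\ast)$ vanishes identically on that stretch; the latter would give $M(b)=0$, impossible since $(s,e)$ contains a change point (as $q\ge0$). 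Hence the inequality is strict, so $M(b)<\max\{M(\ell),M(r)\}\le M(b)$, a contradiction. Therefore $b\in\{\eta_k,\dots,\eta_{k+q}\}\subseteq\{\eta_1,\dots,\eta_K\}$.

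I expect the difficulty to be bookkeeping rather than conceptual: correctly handling the boundary stretches, the degenerate case $\alpha=\beta=0$ (curve identically zero on a stretch), and the positivity bookkeeping that pins down exactly which shape occurs in the second assertion (showing that the strictly concave case is incompatible with positivity anywhere in $(s,e)$). The one genuinely analytic point is attainment of $\sup_x|\widetilde f_t^{(s,e]}(x)|$ and the attendant interchange of $\sup_x$ with $\max_t$; if one does not assume compact support this should be run along a maximising sequence $x_n$, passing to the limit by continuity of the densities in \Cref{assume: model assumption}\textbf{a}, or else the conclusion of the first part should be read as ``some maximiser is a change point''.
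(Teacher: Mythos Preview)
Your approach coincides with the paper's: fix $z$, reduce to the univariate CUSUM of the scalar sequence $\{f_t(z)\}$, and use its shape on each constant stretch to run the contradiction. The paper outsources the shape step to Lemma~2.2 of \cite{venkatraman1992consistency}; you make it explicit via the decomposition $\widetilde f_t^{(s,e]}(z)=\alpha\,\phi(t)+\beta\,\psi(t)$, which is a welcome piece of self-containment.

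There is one genuine slip in that derivation: $\phi$ and $\psi$ are \emph{not} strictly convex on all of $(s,e)$. With $s=0$, $e=1$ one has $\phi(t)=\sqrt{(1-t)/t}$, and a direct second-derivative check gives $\phi''(t)>0$ only for $t<3/4$; symmetrically $\psi$ is concave on $(0,1/4)$. Hence ``sum of convex functions'' does not justify the decreases--then--increases claim in the case $\alpha,\beta>0$. Your case-by-case conclusions are nonetheless correct, for a different reason: computing the derivative directly,
\[
(\alpha\phi+\beta\psi)'(t)=\frac{\sqrt{e-s}}{2\,(t-s)^{3/2}(e-t)^{3/2}}\,\bigl[(\alpha+\beta)(t-s)-\alpha(e-s)\bigr],
\]
the bracket is linear in $t$, so the sign changes at most once on $(s,e)$. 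Tracking the signs of $\alpha,\beta$ then recovers exactly your trichotomy (monotone if $\alpha\beta\le 0$; positive and decreasing--then--increasing if $\alpha,\beta>0$; negative and increasing--then--decreasing if $\alpha,\beta<0$), and your endpoint-maximum argument for the first assertion goes through unchanged. For the second assertion, your caveat is warranted: positivity at \emph{some} $t\in(s,e)$ does not by itself rule out the $\alpha,\beta<0$ shape on every stretch; the paper does not argue this either, simply folding it into the Venkatraman citation.
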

\begin{proof}
 We prove this by contradiction. Assume that $b \notin\{\eta_1, \ldots, \eta_K\}$. Let $z_1 \in \underset{x \in \mathbb{R}^p}{\arg \max } \Big|\bar{f}_{b}^{s, e}(x)\Big|$. Due to the definition of $b$, we have
$$
b \in\underset{t=s+1, \ldots, e}{\arg \max }\Big|\widetilde{f}_{t}^{(s, e]}\Big(z_1\Big)\Big| .
$$
It is easy to see that the collection of change points $\{f_{t}(z_1)\}_{t=s+1}^{e}$ is a subset of the change points of $\{f\}_{t=s+1}^{e}.$
Then, from Lemma $2.2$ in \cite{venkatraman1992consistency} that
$$
\widetilde{f}_{b}^{(s, e]}\Big(z_1\Big)<\max _{j \in\{k, \ldots, k+q\}} \widetilde{f}_{\eta_j}^{(s, e]}\Big(z_1\Big) \leq \max _{t=s+1, \ldots, e} \sup _{x \in \mathbb{R}^p}\Big|\widetilde{f}_{t}^{(s, e]}(x)\Big|
$$
which is a contradiction.
\end{proof}
Recall that in Algorithm 1, when searching for change points in the interval $(s, e)$, we actually restrict to values $t \in\Big(s+\rho, e-\rho\Big)$. We now show that for intervals satisfying condition $S E$ from Lemma 1, taking the maximum of the CUSUM statistic over $\Big(s+\rho, e-\rho\Big)$ is equivalent to searching on $(s, e)$, when there are change points in $\Big(s+\rho, e-\rho\Big)$.

\begin{lemma}
\label{in-up-bound}
Let $z_0 \in \mathbb{R}^p,(s, e) \subset(0, T)$. Suppose that there exists a true change point $\eta_k \in(s, e)$ such that
\begin{equation}
\label{lem10-1}
    \min \Big\{\eta_k-s, e-\eta_k\Big\} \geq c_1 \Delta,
\end{equation}
and
\begin{equation}
\label{lem10-2}
\Big|\widetilde{f}_{\eta_k}^{(s, e]}\Big(z_0\Big)\Big| \geq\Big(c_1 / 2\Big) \frac{\kappa \Delta}{\sqrt{e-s}},
\end{equation}
where $c_1>0$ is a sufficiently small constant. In addition, assume that
\begin{equation}
\label{lem10-3}
\max _{t=s+1, \ldots, e}\Big|\widetilde{f}_{t}^{(s, e]}\Big(z_0\Big)\Big|-\Big|\widetilde{f}_{\eta_k}^{(s, e]}\Big(z_0\Big)\Big| \leq c_2 \Delta^4(e-s)^{-7 / 2} \kappa,
\end{equation}
where $c_2>0$ is a sufficiently small constant.
Then for any $d \in(s, e)$ satisfying
\begin{equation}
\label{lem10-4}
\Big|d-\eta_k\Big| \leq c_1 \Delta / 32,
\end{equation}
it holds that
\begin{equation}
\Big|\widetilde{f}_{\eta_k}^{(s, e]}\Big(z_0\Big)\Big|-\Big|\widetilde{f}_{d}^{(s, e]}\Big(z_0\Big)\Big|>c\Big|d-\eta_k\Big| \Delta\Big|\widetilde{f}_{\eta_k}^{(s, e]}\Big(z_0\Big)\Big|(e-s)^{-2},
\end{equation}
where $c>0$ is a sufficiently small constant, depending on all the other absolute constants.
\end{lemma}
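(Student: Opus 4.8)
The plan is to fix $z_0$ and reduce everything to the scalar function $g(t):=\widetilde f^{(s,e]}_t(z_0)$, $t\in(s,e)$, which is a CUSUM of the piecewise-constant real sequence $\{f_j(z_0)\}_{j=s+1}^{e}$. By \eqref{lem10-2} we have $g(\eta_k)\neq 0$; replacing $g$ by $-g$ if necessary we assume $g(\eta_k)>0$, and by the reflection $t\mapsto s+e-t$ (which merely swaps the two sums defining the CUSUM and the roles of $s$ and $e$) we assume $d=\eta_k+h$ with $h=|d-\eta_k|\le c_1\Delta/32$. Since $\Delta$ is the minimal spacing and $\eta_{k+1}-\eta_k\ge\Delta>h$, no change point of $\{f_j\}$ lies strictly inside $(\eta_k,d]$, so $f_j(z_0)$ equals the constant post-change value on $(\eta_k,\eta_{k+1}]\supseteq(\eta_k,d]$. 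Now apply \Cref{max-pop}: since $g(\eta_k)>0$, on each change-point-free window $g$ is either strictly monotone or first decreases and then increases; in particular, on $(\eta_k,\eta_{k+1})$ the function $g$ decreases on an initial segment $(\eta_k,t^\ast]$, so $g$ is non-increasing on $[\eta_k,d]$ whenever $d\le t^\ast$, and the complementary case $d>t^\ast$ is disposed of by bounding the dip on $(\eta_k,t^\ast]$ and the rebound on $(t^\ast,d]$ separately — both over intervals of length $\le c_1\Delta/32$ — together with the flatness hypothesis \eqref{lem10-3}.

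The heart of the argument is then a quantitative lower bound on $g(\eta_k)-g(\eta_k+h)$. Write $g(t)=\sqrt{(t-s)(e-t)/(e-s)}\,D(t)$ with $D(t)=\frac{1}{t-s}\sum_{s<j\le t}f_j(z_0)-\frac{1}{e-t}\sum_{t<j\le e}f_j(z_0)$. Using that $f_j(z_0)$ is constant on $(\eta_k,d]$, I would expand $g(\eta_k+h)$ in $h$ around $\eta_k$: the prefactor $\sqrt{(t-s)(e-t)/(e-s)}$ is smooth and slowly varying, while $D(t)$ changes because the left average absorbs $h$ copies of the post-change value and the right average sheds them. The linear term equals $-h\,g'(\eta_k^{+})$ up to a remainder, and a direct computation expresses $g'(\eta_k^{+})$ through $g(\eta_k)$ and the ratios $(\eta_k-s)/(e-s)$ and $(e-\eta_k)/(e-s)$; invoking \eqref{lem10-1} to bound $\eta_k-s$ and $e-\eta_k$ below by a multiple of $\Delta$, this produces a contribution of order $h\,g(\eta_k)\,\Delta\,(e-s)^{-2}$. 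The second- and higher-order terms in $h$, as well as the contribution to $D(t)$ from the portions of $(s,e]$ far from $\eta_k$ (including any change points in $(s,\eta_{k-1}]$ or $[\eta_{k+1},e)$), are exactly what \eqref{lem10-3} is designed to kill: the near-flatness of $g$ at its maximum forces these to be at most $O\big(\Delta^4(e-s)^{-7/2}\kappa\big)$, hence negligible against the linear term once $c_1$ and $c_2$ are chosen small.

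Collecting the pieces gives $g(\eta_k)-g(d)\ge c\,h\,g(\eta_k)\,\Delta\,(e-s)^{-2}$ for a small absolute $c$. The same bounds show the total decrease $g(\eta_k)-g(d)$ is at most, say, $g(\eta_k)/2$ (since $h\lesssim\Delta$ and $|g'(\eta_k^{+})|\lesssim g(\eta_k)/\Delta$ by \eqref{lem10-1}), so $g(d)>0$ and hence $|\widetilde f^{(s,e]}_d(z_0)|=g(d)$; combined with $g(\eta_k)=|\widetilde f^{(s,e]}_{\eta_k}(z_0)|$ this is precisely $|\widetilde f^{(s,e]}_{\eta_k}(z_0)|-|\widetilde f^{(s,e]}_d(z_0)|>c\,|d-\eta_k|\,\Delta\,|\widetilde f^{(s,e]}_{\eta_k}(z_0)|\,(e-s)^{-2}$.

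I expect the main obstacle to be the bookkeeping in the expansion of $g$ near $\eta_k$: isolating its genuine linear-in-$h$ decay and showing that every competing term — the curvature of the $\sqrt{(t-s)(e-t)}$ prefactor, the quadratic variation of the moving averages in $D(t)$, and the influence of change points lying in $(s,e]$ but outside $(\eta_{k-1},\eta_{k+1})$ — is dominated, which requires using \eqref{lem10-1}, \eqref{lem10-2} and \eqref{lem10-3} at once and tuning $c_1,c_2,c$ against one another. The shape reduction furnished by \Cref{max-pop} is what makes this manageable, since it confines the analysis to a single change-point-free subinterval on which $g$ is monotone.
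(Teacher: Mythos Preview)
Your overall reduction --- fix $z_0$, set $g(t)=\widetilde f^{(s,e]}_t(z_0)$, assume $g(\eta_k)>0$ and $d=\eta_k+h$ with $h>0$, and show $g(\eta_k)-g(d)\gtrsim h\,g(\eta_k)\,\Delta\,(e-s)^{-2}$ --- matches the paper, which follows Lemma~2.6 of Venkatraman (1992). The gap is in your linearization step: you assert that ``a direct computation expresses $g'(\eta_k^{+})$ through $g(\eta_k)$ and the ratios $(\eta_k-s)/(e-s)$, $(e-\eta_k)/(e-s)$.'' This is true only when $\eta_{k+1}>e$. If a further change point lies in $(\eta_k,e]$, the right average $\frac{1}{e-\eta_k}\sum_{\eta_k<j\le e}f_j(z_0)$ differs from the immediate post-change value $v=f_{\eta_k+1}(z_0)$, and $g'(\eta_k^{+})$ picks up a term proportional to that difference which is \emph{first-order} in $h$, not a higher-order remainder. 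So \eqref{lem10-3} is not merely ``killing second-order terms'': it must already enter at the linear level, and your Taylor scheme does not make clear how.

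The paper resolves this by an explicit two-case split. When $\eta_{k+1}>e$ it uses the exact identity $g(\eta_k)-g(d)=\bigl(1-\sqrt{(e-d)(\eta_k-s)/\bigl((d-s)(e-\eta_k)\bigr)}\,\bigr)g(\eta_k)$ and bounds the bracket below by $c\,h\,\Delta\,(e-s)^{-2}$ algebraically. When $\eta_{k+1}\le e$ it fixes an auxiliary abscissa $\eta_k+c_1\Delta/16$ (still in $(\eta_k,\eta_{k+1})$), sets $b:=g(\eta_k+c_1\Delta/16)-g(\eta_k)$, and exploits that on a change-point-free window the CUSUM is determined by its two endpoint values; this produces a decomposition $g(\eta_k)-g(d)=E_{1}(1+E_{2})+E_{3}$ where $E_{1}$ is the one-change-point main term of the desired order, $|E_{2}|\le 1/2$, and $E_{3}\propto -b\,h$ carries the entire influence of the remaining change points. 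Condition \eqref{lem10-3} then gives $b\le c_2\Delta^4(e-s)^{-7/2}\kappa$, and together with \eqref{lem10-2} this makes $E_{3}$ negligible against $E_{1}$. Your invocation of \Cref{max-pop} and the $t^\ast$ case analysis is unnecessary here; the paper proceeds by direct algebra without any shape lemma.
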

\begin{proof}
Without loss of generality, we assume that $d \geq \eta_k$ and $\widetilde{f}_{\eta_k}\Big(z_0\Big) \geq 0$. Following the arguments in Lemma $2.6$ in \cite{venkatraman1992consistency}, it suffices to consider two cases: (i) $\eta_{k+1}>e$ and (ii) $\eta_{k+1} \leq e$
{\bf{Case (i)}}. Note that
$$
\widetilde{f}_{\eta_k}^{(s, e]}\Big(z_0\Big)=\sqrt{\frac{\Big(e-\eta_k\Big)\Big(\eta_k-s\Big)}{e-s}}\Big\{f_{\eta_k}\Big(z_0\Big)-f_{\eta_{k+1}}\Big(z_0\Big)\Big\}
$$
and
$$
\widetilde{f}_{d}^{(s, e]}\Big(z_0\Big)=\Big(\eta_k-s\Big) \sqrt{\frac{e-d}{(e-s)(d-s)}}\Big\{f_{\eta_k}\Big(z_0\Big)-f_{\eta_{k+1}}\Big(z_0\Big)\Big\} .
$$
Therefore, it follows from \eqref{lem10-1} that
\begin{equation}
\label{lem10-10}
\widetilde{f}_{\eta_k}^{(s, e]}\Big(z_0\Big)-\widetilde{f}_{d}^{(s, e]}\Big(z_0\Big)=\Big(1-\sqrt{\frac{(e-d)\Big(\eta_k-s\Big)}{(d-s)\Big(e-\eta_k\Big)}}\Big) \widetilde{f}_{\eta_k}^{(s, e]}\Big(z_0\Big) \geq c \Delta\Big|d-\eta_k\Big|(e-s)^{-2} \widetilde{f}_{\eta_k}^{(s, e]}\Big(z_0\Big) .
\end{equation}
The inequality follows from the following arguments. Let $u=\eta_k-s, v=e-\eta_k$ and $w=d-\eta_k$. Then
\begin{align*}
& 1-\sqrt{\frac{(e-d)\Big(\eta_k-s\Big)}{(d-s)\Big(e-\eta_k\Big)}}-c \Delta\Big|d-\eta_k\Big|(e-s)^2 \\
=& 1-\sqrt{\frac{(v-w) u}{(u+w) v}-c \frac{\Delta w}{(u+v)^2}} \\
=& \frac{w(u+v)}{\sqrt{(u+w) v}(\sqrt{(v-w) u}+\sqrt{(u+w) v})}-c \frac{\Delta w}{(u+v)^2} .
\end{align*}
The numerator of the above equals
\begin{align*}
& w(u+v)^3-c \Delta w(u+w) v-c \Delta w \sqrt{u v(u+w)(v-w)} \\
\geq & 2 c_1 \Delta w\Big\{(u+v)^2-\frac{c(u+w) v}{2 c_1}-\frac{c \sqrt{u v(u+w)(v-w)}}{2 c_1}\Big\} \\
\geq & 2 c_1 \Delta w\Big\{\Big(1-c /\Big(2 c_1\Big)\Big)(u+v)^2-2^{-1 / 2} c / c_1 u v\Big\}>0
\end{align*}
as long as
$$
c<\frac{\sqrt{2} c_1}{4+1 /\Big(\sqrt{2} c_1\Big)} .
$$
{\bf{Case (ii)}}. Let $g=c_1 \Delta / 16$. We can write
$$
\widetilde{f}_{\eta_k}^{(s, e]}\Big(z_0\Big)=a \sqrt{\frac{e-s}{\Big(\eta_k-s\Big)\Big(e-\eta_k\Big)}}, \quad \widetilde{f}_{\eta_k+g}^{(s, e]}\Big(z_0\Big)=(a+g \theta) \sqrt{\frac{e-s}{\Big(e-\eta_k-g\Big)\Big(\eta_k+g-s\Big)}},
$$
where
$$
a=\sum_{j=s+1}^{\eta_k}\Big\{f_{j}\Big(z_0\Big)-\frac{1}{e-s} \sum_{j=s+1}^e f_{j}\Big(z_0\Big)\Big\}
$$
$$
\theta=\frac{a \sqrt{\Big(\eta_k+g-s\Big)\Big(e-\eta_k-g\Big)}}{g}\Big\{\frac{1}{\sqrt{\Big(\eta_k-s\Big)\Big(e-\eta_k\Big)}}-\frac{1}{\Big(\eta_k+g-s\Big)\Big(e-\eta_k-g\Big)}+\frac{b}{a \sqrt{e-s}}\Big\},
$$
and $b=\widetilde{f}_{\eta_k+g}^{(s, e]}\Big(z_0\Big)-\widetilde{f}_{\eta_k}^{(s, e]}\Big(z_0\Big)$.
To ease notation, let $d-\eta_k=l \leq g / 2, N_1=\eta_k-s$ and $N_2=e-\eta_k-g$. We have
\begin{equation}
\label{lem10-5}
E_l=\widetilde{f}_{\eta_k}^{(s, e]}\Big(z_0\Big)-\widetilde{f}_{d}^{(s, e]}\Big(z_0\Big)=E_{1 l}\Big(1+E_{2 l}\Big)+E_{3 l},
\end{equation}
where
\begin{equation*}
E_{1 l}=\frac{a l(g-l) \sqrt{e-s}}{\sqrt{N_1\Big(N_2+g\Big)} \sqrt{\Big(N_1+l\Big)\Big(g+N_2-l\Big)}\Big(\sqrt{\Big(N_1+l\Big)\Big(g+N_2-l\Big)}+\sqrt{N_1\Big(g+N_2\Big)}\Big)},
\end{equation*}
\begin{equation*}
E_{2 l}=\frac{\Big(N_2-N_1\Big)\Big(N_2-N_1-l\Big)}{\Big(\sqrt{\Big(N_1+l\Big)\Big(g+N_2-l\Big)}+\sqrt{\Big(N_1+g\Big) N_2}\Big)\Big(\sqrt{N_1\Big(g+N_2\Big)}+\sqrt{\Big(N_1+g\Big) N_2}\Big)},
\end{equation*}
and
\begin{equation*}
E_{3 l}=-\frac{b l}{g} \sqrt{\frac{\Big(N_1+g\Big) N_2}{\Big(N_1+l\Big)\Big(g+N_2-l\Big)}} .
\end{equation*}
Next, we notice that $g-l \geq c_1 \Delta / 32$. It holds that
\begin{equation}
\label{lem10-6}
E_{1 l} \geq c_{1 l}\Big|d-\eta_k\Big| \Delta \widetilde{f}_{\eta_k}^{(s, e]}\Big(z_0\Big)(e-s)^{-2},
\end{equation}
where $c_{1 l}>0$ is a sufficiently small constant depending on $c_1$. As for $E_{2 l}$, due to \eqref{lem10-4}, we have
\begin{equation}
\label{lem10-7}
E_{2 l} \geq-1 / 2 \text {. }
\end{equation}
As for $E_{3 l}$, we have
\begin{align}
\label{lem10-8}
E_{3 l} & \geq-c_{3 l, 1} b\Big|d-\eta_k\Big|(e-s) \Delta^{-2} \geq-c_{3 l, 2} b\Big|d-\eta_k\Big| \Delta^{-3}(e-s)^{3 / 2} \widetilde{f}_{\eta_k}^{(s, e]}\Big(z_0\Big) \kappa^{-1} \\
& \geq-c_{1 l} / 2\Big|d-\eta_k\Big| \Delta \widetilde{f}_{\eta_k}^{(s, e]}\Big(z_0\Big)(e-s)^{-2},
\end{align}
where the second inequality follows from \eqref{lem10-2} and the third inequality follows from \eqref{lem10-3}, $c_{3 l, 1}, c_{3 l, 2}>$
0 are sufficiently small constants, depending on all the other absolute constants.
Combining \eqref{lem10-5}, \eqref{lem10-6}, \eqref{lem10-7} and \eqref{lem10-8}, we have
\begin{equation}
\label{lem10-9}
    \widetilde{f}_{\eta_k}^{(s, e]}\Big(z_0\Big)-\widetilde{f}_{d}^{(s, e]}\Big(z_0\Big) \geq c\Big|d-\eta_k\Big| \Delta \widetilde{f}_{\eta_k}^{(s, e]}\Big(z_0\Big)(e-s)^{-2},
\end{equation}
where $c>0$ is a sufficiently small constant.
In view of \eqref{lem10-10} and \eqref{lem10-9}, the proof is complete.
\end{proof}
Consider the following events
\begin{align*}
& 
\mathcal A ( (s,e],  \rho, \gamma   )  =
 \bigg\{ \max_{t=s+\rho+1,\ldots,e-\rho }\sup _{z \in \mathbb{R}^p} |\widetilde F_{t,h}^{s,e }(z)  - \widetilde f ^{s,e}_t(z)  | \le \gamma     \bigg\} ;
\\
& 
\mathcal B (r, \rho, \gamma    ) = \bigg\{\max_{N=\rho,\ldots, T-r   }\sup _{z \in \mathbb{R}^p} \bigg| \frac{1}{\sqrt  N} \sum_{t=r+1}^{r+ N} (F_{t,h} -f _t )\bigg| \le  \gamma    \bigg\}
\\
&\hspace{2cm}
\bigcup \bigg\{\max_{ N=\rho,\ldots ,r    } \bigg| \frac{1}{\sqrt N}\sum_{t=r-N+1}^r \sup _{z \in \mathbb{R}^p}(F_{t,h}(z) -f _t(z) )\bigg| \le   \gamma\bigg\}.
\end{align*}
\begin{lemma}\label{final-bound}
Suppose  \Cref{asum-1} holds.  Let  $[s ,e ]$ be an subinterval of $[1, T]$  and contain at least one change point $\eta_r$  
with    
$\min\{ \eta_{r} -s  , e  -\eta_{r} \}\ge cT $ 
for some constant $ c>0$. 
Let 
$\kse= \max\{\kappa_p: \min\{ \eta_p -s  , e  -\eta_p \} \ge cT \}$.
Let
$$b \in \arg \max_{t=s+\rho,\ldots,   e-\rho }\vert\vert\widetilde F  _{t,h}^{s,e}  \vert\vert_{L_2} .$$
For some $c_1>0$, $\lambda>0$ and $\delta>0$, suppose that   the following events  hold
\begin{align}
&
\mathcal A ( (s,e], \rho,  \gamma    ),  \label{eq:wbs noise 1}
\\
&    
\mathcal B (s, \rho,\gamma  ) \cup   \mathcal B (e ,\rho,    \gamma     )  \cup\bigcup_{\eta      \in    \{ \eta_k\}_{k=1}^K}       \mathcal B (\eta,\rho,    \gamma  )    \label{eq:wbs noise 2}
\end{align}
and that 
\begin{equation}
\label{con-1}
\max_{t=s+\rho,\ldots,   e-\rho }\vert\vert\widetilde F _{t,h}^{s,e}  \vert\vert_{L_2} = \vert\vert\widetilde F   _{b,h}^{s,e}  \vert\vert_{L_2}  \ge c_1 \kse \sqrt{T } 
\end{equation}
If there exists a sufficiently small $c_2 > 0$ such that
\begin{equation}\label{con-2}
\gamma   \le c_2\kse\sqrt T   \quad \text{and that}\quad  \rho \le  c_2T ,
\end{equation}
then there exists a change point $\eta_{k} \in (s, e)$  such that 
\[\min \{e-\eta_k,\eta_k-s\}  > c_3 T    \quad \text{and} \quad 
|\eta_{k} -b |\le  C_3\max\{ \gamma     ^2\kappa_k^{-2} , \rho\},
\]
where $c_3$ is some sufficiently small constant independent of $T$.  
\end{lemma}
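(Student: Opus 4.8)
The statement is the multivariate, nonparametric analogue of the classical localisation lemma for binary segmentation, and the plan is to follow the template of \citet{wang2020univariate} (ultimately going back to \citet{venkatraman1992consistency}), with the kernel-density fluctuations absorbed through the good events \eqref{eq:wbs noise 1}--\eqref{eq:wbs noise 2}. First I would reduce everything to a one-dimensional slice. On \eqref{eq:wbs noise 1}, $\sup_{t}\|\widetilde F_{t,h}^{s,e}-\widetilde f_{t}^{s,e}\|_{L_2}\le\gamma$ (passing from $L_\infty$ to $L_2$ via $\|\cdot\|_{L_2}\le\sqrt{\mathrm{vol}(\mathcal{X})}\|\cdot\|_{L_\infty}$ as in \Cref{remark-1}), so \eqref{con-1} together with \eqref{con-2} yields $\|\widetilde f_b^{s,e}\|_{L_2}\gtrsim\kse\sqrt T$. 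Choosing $z_0\in\mathbb{R}^p$ with $|\widetilde f_b^{s,e}(z_0)|\ge \frac12\|\widetilde f_b^{s,e}\|_{L_\infty}\gtrsim\kse\sqrt T$, from this point on I work with the fixed univariate sequence $t\mapsto\widetilde f_{t}^{s,e}(z_0)$, whose set of change points is a subset of $\{\eta_k\}_{k=1}^K$.

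Next I would locate the change point responsible for the peak. By \Cref{max-pop} the slice $t\mapsto\widetilde f_{t}^{s,e}(z_0)$ is, on each interval between consecutive change points, either strictly monotone or first decreasing then increasing; hence its maximum over $[s,e]$ is attained at a change point. Combining this with the boundary estimate \Cref{lemma20}, applied iteratively to discard change points with $\min\{\eta-s,e-\eta\}$ of order smaller than $T$ (whose contribution to the peak is $O(\kappa\sqrt{c_3T})$, negligible relative to $\kse\sqrt T$ once $c_3$ is chosen small against the absolute constants, using also that the densities, hence the jumps, are bounded), I obtain a change point $\eta_k\in(s,e)$ with $\min\{\eta_k-s,e-\eta_k\}\ge c_3T$ and $|\widetilde f_{\eta_k}^{s,e}(z_0)|\gtrsim\kse\sqrt T\gtrsim\kappa_k\Delta/\sqrt{e-s}$ (here $\Delta\asymp e-s\asymp T$ by the hypothesis $\min\{\eta_r-s,e-\eta_r\}\ge cT$), which also dominates the slice up to this peeling error; so the hypotheses \eqref{lem10-1}--\eqref{lem10-3} of \Cref{in-up-bound} are in force, the right-hand side of \eqref{lem10-3} being of the natural order $\kappa\sqrt T$. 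A crude, scale-free version of the same V-shape estimate then forces $|b-\eta_k|\le c_1\Delta/32$: otherwise, moving from $\eta_k$ towards $b$ either leaves the current monotonicity piece --- so $b$ lies within $c_1\Delta/32$ of another change point and we re-run the argument there --- or drops $|\widetilde f^{s,e}(z_0)|$ by a fixed fraction of $\kse\sqrt T$, exceeding the $\gamma$-level noise, contradicting that $b$ is the empirical argmax.

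Finally, with $|b-\eta_k|\le c_1\Delta/32$ established, \Cref{in-up-bound} gives the quantitative descent
$|\widetilde f_{\eta_k}^{s,e}(z_0)|-|\widetilde f_{b}^{s,e}(z_0)|\gtrsim |b-\eta_k|\,\Delta\,|\widetilde f_{\eta_k}^{s,e}(z_0)|\,(e-s)^{-2}\gtrsim |b-\eta_k|\,\kappa_k\,T^{-1/2}$.
On the other hand, since $\eta_k\in[s+\rho,e-\rho]$ the empirical optimality $\|\widetilde F_b^{s,e}\|_{L_2}\ge\|\widetilde F_{\eta_k}^{s,e}\|_{L_2}$ together with \eqref{eq:wbs noise 1} controls the gap between the empirical CUSUMs; decomposing the noise part of $\widetilde F^{s,e}_{\cdot}(z_0)$ at $b$ and at $\eta_k$ into partial sums of $F_{t,h}-f_t$ over the short block between $\eta_k$ and $b$ and invoking \eqref{eq:wbs noise 2} bounds the resulting fluctuation by $\lesssim\gamma\sqrt{|b-\eta_k|}\,T^{-1/2}$ whenever $|b-\eta_k|\ge\rho$ (the complementary case $|b-\eta_k|<\rho$ yielding the $\rho$ term directly). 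Putting the two estimates together, $|b-\eta_k|\,\kappa_k\lesssim\gamma\sqrt{|b-\eta_k|}$, i.e. $|b-\eta_k|\lesssim\gamma^2\kappa_k^{-2}$, which gives the claimed bound $|\eta_k-b|\le C_3\max\{\gamma^2\kappa_k^{-2},\rho\}$. The main obstacle is precisely the bookkeeping of the last two steps: reducing the $L_2$-based argmax $b$ to a statement about a single one-dimensional slice $z_0$ that is simultaneously compatible with $b$'s empirical optimality and with the slice hypotheses \eqref{lem10-2}--\eqref{lem10-3} of \Cref{in-up-bound}, while keeping the kernel-estimation error uniformly at level $\gamma$ across all the ranges appearing in \eqref{eq:wbs noise 1}--\eqref{eq:wbs noise 2}, and ensuring that the boundary contribution surviving the peeling is genuinely negligible; the rest is the standard CUSUM shape analysis, with $\Delta=\Theta(T)$ doing the work of separating signal from noise.
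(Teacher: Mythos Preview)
Your qualitative steps --- fixing a slice $z_0$, invoking \Cref{max-pop} and \Cref{lemma20} to identify a well-separated change point $\eta_k$, and then using \Cref{in-up-bound} to place $b$ within $c_1\Delta/32$ of $\eta_k$ --- are essentially the paper's Steps~1--2 and are fine.

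The gap is in your final quantitative step. You want to combine (i) the pointwise descent $|\widetilde f_{\eta_k}^{s,e}(z_0)|-|\widetilde f_b^{s,e}(z_0)|\gtrsim |b-\eta_k|\,\kappa_k\, T^{-1/2}$ from \Cref{in-up-bound} with (ii) a short-interval noise bound $\lesssim\gamma\sqrt{|b-\eta_k|}\,T^{-1/2}$ at $z_0$ coming from event $\mathcal B$. To close this you would need $|\widetilde F_b^{s,e}(z_0)|\ge|\widetilde F_{\eta_k}^{s,e}(z_0)|$, and this is \emph{not} available: $b$ is the argmax of $\|\widetilde F_{\cdot}^{s,e}\|_{L_2}$, not of the fixed slice. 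The $L_2$-optimality together with event $\mathcal A$ gives only the crude bound $\|\widetilde f_{\eta_k}\|_{L_2}-\|\widetilde f_b\|_{L_2}\le 2\gamma$, which localizes $b$ merely to order $\gamma\sqrt T/\kappa_k$, far from $\gamma^2/\kappa_k^2$. Your closing caveat names exactly this mismatch, but the sketch does not bridge it.

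The paper's Step~3 is not a slice comparison but a \emph{squared-norm} comparison via the projection framework of \citet{wang2020univariate}. Writing $\|\widetilde F_t\|^2=\|F\|^2-\|F-\mathcal P_t^{s,e}(F)\|^2$, the optimality of $b$ becomes $\|F-\mathcal P_b^{s,e}(F)\|^2\le\|F-\mathcal P_{\eta_k}^{s,e}(F)\|^2\le\|F-\mathcal P_{\eta_k}^{s,e}(f)\|^2$, and one argues by contradiction that if $|b-\eta_k|>C_3\gamma^2\kappa_k^{-2}$ then in fact $\|F-\mathcal P_b^{s,e}(F)\|^2>\|F-\mathcal P_{\eta_k}^{s,e}(f)\|^2$. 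After expanding, the signal part $\|f-\mathcal P_b^{s,e}(f)\|^2-\|f-\mathcal P_{\eta_k}^{s,e}(f)\|^2$ equals $|\widetilde f_{\eta_k}|^2-|\widetilde f_b|^2$ and is lower-bounded via \Cref{in-up-bound}; the cross term $2\langle F-f,\,\mathcal P_b^{s,e}(F)-\mathcal P_{\eta_k}^{s,e}(f)\rangle$ is split into contributions over $(s,\eta_k]$, $(\eta_k,b]$ and $(b,e]$, each controlled by combining event $\mathcal B$ (for the partial sums of $F_{t,h}-f_t$) with \Cref{lem-2-uc} (for the population means). This is what produces the $\gamma\sqrt{|b-\eta_k|}$-type refinement you are aiming for, and the squared-norm/projection decomposition is the missing ingredient in your plan.
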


\begin{proof}
  Let $z_1 \in \arg \max _{z \in \mathbb{R}^p}\Big|\widetilde{f}_{b}^{(s, e]}(z)\Big|$. Without loss of generality, assume that $\widetilde{f}_{b}^{(s, e]}\Big(z_1\Big)>0$ and that $\widetilde{f}_{b}^{(s, e]}\Big(z_1\Big)$ as a function of $t$ is locally decreasing at $b$. Observe that there has to be a change point $\eta_k \in(s, b)$, or otherwise $\widetilde{f}_{b}^{(s, e]}\Big(z_1\Big)>0$ implies that $\widetilde{f}_{t}^{(s, e]}\Big(z_1\Big)$ is decreasing, as a consequence of \Cref{max-pop}.
Thus, there exists a change point $\eta_k \in(s, b)$ satisfying that
\begin{align}
\label{lem11-1}
\sup _{z \in \mathbb{R}^p}\Big|\widetilde{f}_{\eta_k}^{(s, e]}(z)\Big| & \geq\Big|\widetilde{f}_{\eta_k}^{(s, e]}\Big(z_1\Big)\Big|>\Big|\widetilde{f}_{b}^{(s, e]}\Big(z_1\Big)\Big| \geq \sup _{z \in \mathbb{R}^p}\Big|\widetilde{F}_b^{(s, e]}(z)\Big|-\gamma\geq c \kappa_k \sqrt{\Delta}
\end{align}
where the second inequality follows from \Cref{max-pop}, the third because of the good event $\mathcal{A}$, and fourth inequalities by \eqref{con-1} and \Cref{assume: model assumption}, and $c>0$ is an absolute constant.
Observe that $(s, e)$ has to contain at least one change point or otherwise $\sup _{z \in \mathbb{R}}\Big|\widetilde{f}_{\eta_k}^{(s, e]}(z)\Big|=0$ which contradicts \eqref{lem11-1}.

{\bf{Step 1}}. In this step, we are to show that
\begin{equation}
\label{lem11-2}
    \min \Big\{\eta_k-s, e-\eta_k\Big\} \geq \min \Big\{1, c_1^2\Big\} \Delta / 16
\end{equation}
Suppose that $\eta_k$ is the only change point in $(s, e)$. Then \eqref{lem11-2} must hold or otherwise it follows from \eqref{eq:one change point 1d cusum} that
$$
\sup _{z \in \mathbb{R}^p}\Big|\widetilde{f}_{\eta_k}^{s, e}(z)\Big| \leq \kappa_k \frac{c_1 \sqrt{\Delta}}{4},
$$
which contradicts \eqref{lem11-1}.

Suppose $(s, e)$ contains at least two change points. Then arguing by contradiction, if $\eta_k-s<$ $\min \Big\{1, c_1^2\Big\} \Delta / 16$, it must be the cast that $\eta_k$ is the left most change point in $(s, e)$. Therefore
\begin{align}
\sup _{z \in \mathbb{R}^p}\Big|\widetilde{f}_{\eta_k}^{s, e}(z)\Big| & \leq c_1 / 4 \sup _{z \in \mathbb{R}^p}\Big|\widetilde{f}_{\eta_{k+1}}^{s, e}(z)\Big|+2 \kappa_k \sqrt{\eta_k-s} \\
&<c_1 / 4 \max _{s+\rho<t<e-\rho} \sup _{z \in \mathbb{R}^p}\Big|\widetilde{f}_{t}^{s, e}(z)\Big|+2\sqrt{\Delta} \kappa_k \\
& \leq c_1 / 4 \max _{s+\rho<t<e-\rho} \sup _{z \in \mathbb{R}^p}\Big|\widetilde{F}_t^{s, e}(z)\Big|+c_1 / 4 \gamma+2\sqrt{\Delta} \kappa_k  \\
& \leq \sup _{z \in \mathbb{R}^p}\Big|\widetilde{F}_b^{s, e}(z)\Big|-\gamma
\end{align}
where the first inequality follows from \Cref{lemma20}, the second follows from the assumption of $\eta_k-s$, the third from the definition of the event $\mathcal{A}$ and the last from \eqref{con-1} and \Cref{assume: model assumption}. The last display contradicts \eqref{lem11-1}, thus \eqref{lem11-2} must hold.
\\
{\bf{Step 2}}. Let
\begin{equation*}
    z_0 \in \underset{z \in \mathbb{R}^p}{\arg \max }\Big|\widetilde{f}_{\eta_k}^{s, e}(z)\Big| .
\end{equation*}
It follows from \Cref{in-up-bound} that there exits $d \in\Big(\eta_k, \eta_k+c_1 \Delta / 32\Big)$ such that
\begin{equation}
\label{lem11-3}
    \widetilde{f}_{\eta_k}^{s, e}\Big(z_0\Big)-\widetilde{f}_{d}^{s, e}\Big(z_0\Big) \geq 2 \gamma.
\end{equation}
We claim that $b \in\Big(\eta_k, d\Big) \subset\Big(\eta_k, \eta_k+c_1 \Delta / 16\Big)$. By contradiction, suppose that $b \geq d$. Then
\begin{equation}
\label{lem11-4}
   \widetilde{f}_{b}^{s, e}\Big(z_0\Big) \leq \widetilde{f}_{d}^{s, e}\Big(z_0\Big) \leq \max _{s<t<e} \sup _{z \in \mathbb{R}^p}\Big|\widetilde{f}_{t}^{s, e}(z)\Big|-2 \gamma \leq \sup _{z \in \mathbb{R}^p}\Big|\widetilde{F}_b^{s, e}(z)\Big|-\gamma, 
\end{equation}
where the first inequality follows from \Cref{max-pop}, the second follows from \eqref{lem11-3} and the third follows from the definition of the event $\mathcal{A}$. Note that \eqref{lem11-4} is a contradiction to the bound in \eqref{lem11-1}, therefore we have $b \in\Big(\eta_k, \eta_k+c_1 \Delta / 32\Big)$.

{\bf{Step 3}}. Let
\begin{equation*}
j^* \in \underset{j=1, \ldots, T}{\arg \max }\Big|\widetilde{F}_b^{s, e}(X(j))\Big|, \quad f^{s, e}=\Big(f_{s+1}\Big(X\Big(j^*\Big)\Big), \ldots, f_{e}\Big(X\Big(j^*\Big)\Big)\Big)^{\top} \in \mathbb{R}^{(e-s)}
\end{equation*}
and
\begin{equation*}
F^{s, e}=\Big(\frac{1}{h^p} k\Big(\frac{X\Big(j^*\Big)-X(s)}{h}\Big), \ldots, \frac{1}{h^p} k\Big(\frac{X\Big(j^*\Big)-X(e)}{h}\Big)\Big) \in \mathbb{R}^{(e-s)} .
\end{equation*}
By the definition of $b$, it holds that
\begin{equation*}
\Big\|F^{s, e}-\mathcal{P}_b^{s, e}\Big(F^{s, e}\Big)\Big\|^2 \leq\Big\|F^{s, e}-\mathcal{P}_{\eta_k}^{s, e}\Big(F^{s, e}\Big)\Big\|^2 \leq\Big\|F^{s, e}-\mathcal{P}_{\eta_k}^{s, e}\Big(f^{s, e}\Big)\Big\|^2
\end{equation*}
where the operator $\mathcal{P}^{s, e}(\cdot)$ is defined in Lemma 21 in \cite{wang2020univariate}. For the sake of contradiction, throughout the rest of this argument suppose that, for some sufficiently large constant $C_3>0$ to be specified,
\begin{equation}
\label{lem11-5}
\eta_k+C_3 \lambda_{\mathcal{A}}^2 \kappa_k^{-2}<b .
\end{equation}
We will show that this leads to the bound
\begin{equation}
\label{lem11-6}
\Big\|F^{s, e}-\mathcal{P}_b^{s, e}\Big(F^{s, e}\Big)\Big\|^2>\Big\|F^{s, e}-\mathcal{P}_{\eta_k}^{s, e}\Big(f^{s, e}\Big)\Big\|^2,
\end{equation}
which is a contradiction. If we can show that
\begin{equation}
\label{lem11-7}
2\Big\langle F^{s, e}-f^{s, e}, \mathcal{P}_b^{s, e}\Big(F^{s, e}\Big)-\mathcal{P}_{\eta_k}^{s, e}\Big(f^{s, e}\Big)\Big\rangle<\Big\|f^{s, e}-\mathcal{P}_b^{s, e}\Big(f^{s, e}\Big)\Big\|^2-\Big\|f^{s, e}-\mathcal{P}_{\eta_k}^{s, e}\Big(f^{s, e}\Big)\Big\|^2,
\end{equation}
then \eqref{lem11-6} holds.
To derive \eqref{lem11-7} from \eqref{lem11-5}, we first note that $\min \Big\{e-\eta_k, \eta_k-s\Big\} \geq \min \Big\{1, c_1^2\Big\} \Delta / 16$ and that $\Big|b-\eta_k\Big| \leq c_1 \Delta / 32$ implies that
\begin{equation}
\min \{e-b, b-s\} \geq \min \Big\{1, c_1^2\Big\} \Delta / 16-c_1 \Delta / 32 \geq \min \Big\{1, c_1^2\Big\} \Delta / 32
\end{equation}
As for the right-hand side of \eqref{lem11-7}, we have
\begin{align}
\label{lem11-8}
&\Big\|f^{s, e}-\mathcal{P}_b^{s, e}\Big(f^{s, e}\Big)\Big\|^2-\Big\|f^{s, e}-\mathcal{P}_{\eta_k}^{s, e}\Big(f^{s, e}\Big)\Big\|^2=\Big(\widetilde{f}_{\eta_k}^{s, e}\Big(X\Big(j^*\Big)\Big)\Big)^2-\Big(\widetilde{f}_{b}^{s, e}\Big(X\Big(j^*\Big)\Big)\Big)^2 \\
\geq &\Big(\widetilde{f}_{\eta_{k}}^{s, e}\Big(X\Big(j^*\Big)\Big)-\widetilde{f}_{b}^{s, e}\Big(X\Big(j^*\Big)\Big)\Big)\Big|\widetilde{f}_{\eta_k}^{s, e}\Big(X\Big(j^*\Big)\Big)\Big|
\end{align}
On the event $\mathcal{A} \cap \mathcal{B}$, we are to use \Cref{in-up-bound}. Note that \eqref{lem10-2} holds due to the fact that here we have
\begin{equation}
\label{lem11-9}
    \Big|\widetilde{f}_{\eta_k}^{s, e}\Big(X\Big(j^*\Big)\Big)\Big| \geq\Big|\widetilde{f}_{b}^{s, e}\Big(X\Big(j^*\Big)\Big)\Big| \geq\Big|\widetilde{F}_b^{s, e}\Big(X\Big(j^*\Big)\Big)\Big|-\gamma \geq c_1 \kappa_k \sqrt{\Delta}-\gamma \geq\Big(c_1\Big) / 2 \kappa_k \sqrt{\Delta},
\end{equation}
where the first inequality follows from the fact that $\eta_k$ is a true change point, the second inequality holds due to the event $\mathcal{A}$, the third inequality follows from \eqref{con-1}, and the final inequality follows from \eqref{con-2}. Towards this end, it follows from \Cref{in-up-bound} that
\begin{equation}
\label{lem11-10}
\Big.\Big.\Big.\mid \widetilde{f}_{\eta_k}^{s, e}\Big(X\Big(j^*\Big)\Big)\Big|-| \widetilde{f}_{b}^{s, e}\Big(X\Big(j^*\Big)\Big)\Big||>c| b-\eta_k|\Delta| \widetilde{f}_{\eta_k}^{s, e}\Big(X\Big(j^*\Big)\Big)\Big) \mid(e-s)^{-2} .
\end{equation}
Combining \eqref{lem11-8}, \eqref{lem11-9} and \eqref{lem11-10}, we have
\begin{equation}
\label{lem11-14}
\Big\|f^{s, e}-\mathcal{P}_b^{s, e}\Big(f^{s, e}\Big)\Big\|^2-\Big\|f^{s, e}-\mathcal{P}_{\eta_k}^{s, e}\Big(f^{s, e}\Big)\Big\|^2 \geq \frac{c c_1^2}{4} \Delta^2 \kappa_k \mathcal{A}^2(e-s)^{-2}\Big|b-\eta_k\Big| .
\end{equation}
The left-hand side of \eqref{lem11-7} can be decomposed as follows.
\begin{align}
\label{lem11-15}
& 2\Big\langle F^{s, e}-f^{s, e}, \mathcal{P}_b^{s, e}\Big(F^{s, e}\Big)-\mathcal{P}_{\eta_k}^{s, e}\Big(f^{s, e}\Big)\Big\rangle \\
=& 2\Big\langle F^{s, e}-f^{s, e}, \mathcal{P}_b^{s, e}\Big(F^{s, e}\Big)-\mathcal{P}_b^{s, e}\Big(f^{s, e}\Big)\Big\rangle+2\Big\langle Y^{s, e}-f^{s, e}, \mathcal{P}_b^{s, e}\Big(f^{s, e}\Big)-\mathcal{P}_{\eta_k}^{s, e}\Big(f^{s, e}\Big)\Big\rangle \\
=&(I)+2\Big(\sum_{i=1}^{\eta_k-s}+\sum_{i=\eta_k-s+1}^{b-s}+\sum_{i=b-s+1}^{e-s}\Big)\Big(F^{s, e}-f^{s, e}\Big)_i\Big(\mathcal{P}_b^{s, e}\Big(f^{s, e}\Big)-\mathcal{P}_{\eta_k}^{s, e}\Big(f^{s, e}\Big)\Big)_i \\
=&(I)+(I I .1)+(I I .2)+(I I .3) .
\end{align}
As for the term (I), we have
\begin{equation}
    \label{lem11-16}
    (I) \leq 2 \gamma^2 \text {. }
\end{equation}
As for the term (II.1), we have
\begin{equation*}
(I I .1)=2 \sqrt{\eta_k-s}\Big\{\frac{1}{\sqrt{\eta_k-s}} \sum_{i=1}^{\eta_k-s}\Big(F^{s, e}-f^{s, e}\Big)_i\Big\}\Big\{\frac{1}{b-s} \sum_{i=1}^{b-s}\Big(f^{s, e}\Big)_i-\frac{1}{\eta_k-s} \sum_{i=1}^{\eta_k-s}\Big(f^{s, e}\Big)_i\Big\} .
\end{equation*}
In addition, it holds that
\begin{align*}
&\quad\Big|\frac{1}{b-s} \sum_{i=1}^{b-s}\Big(f^{s, e}\Big)_i-\frac{1}{\eta_k-s} \sum_{i=1}^{\eta_k-s}\Big(f^{s, e}\Big)_i\Big|=\frac{b-\eta_k}{b-s}\Big|-\frac{1}{\eta_k-s} \sum_{i=1}^{\eta_k-s} f_{i}\Big(X\Big(j^*\Big)\Big)+f_{\eta_{k+1}}\Big(X\Big(j^*\Big)\Big)\Big| \\
&\leq \frac{b-\eta_k}{b-s}\Big(C_R+1\Big) \kappa_{s_0, e_0}^{\max },
\end{align*}
where the inequality is followed by \Cref{lem-2-uc}. Combining with the good events,
\begin{align}
\label{lem11-11}
(I I .1) & \leq 2 \sqrt{\eta_k-s} \frac{b-\eta_k}{b-s}\Big(C_R+1\Big) \kappa_{s_0, e_0}^{\max } \gamma \\
& \leq 2 \frac{4}{\min \Big\{1, c_1^2\Big\}} \Delta^{-1 / 2} \gamma\Big|b-\eta_k\Big|\Big(C_R+1\Big) \kappa_{s_0, e_0}^{\max }
\end{align}
As for the term (II.2), it holds that
\begin{equation}
\label{lem11-12}
(I I .2) \leq 2 \sqrt{\Big|b-\eta_k\Big|} \gamma\Big(2 C_R+3\Big) \kappa_{s_0, e_0}^{\max }
\end{equation}
As for the term (II.3), it holds that
\begin{equation}
\label{lem11-13}
(I I .3) \leq 2 \frac{4}{\min \Big\{1, c_1^2\Big\}} \Delta^{-1 / 2} \gamma\Big|b-\eta_k\Big|\Big(C_R+1\Big) \kappa_{s_0, e_0}^{\max }
\end{equation}
Therefore, combining \eqref{lem11-11}, \eqref{lem11-12}, \eqref{lem11-13}, \eqref{lem11-14}, \eqref{lem11-15} and \eqref{lem11-16}, we have that \eqref{lem11-7} holds if
\begin{equation}
\Delta^2 \kappa_k^2(e-s)^{-2}\Big|b-\eta_k\Big| \gtrsim \max \Big\{\gamma^2, \Delta^{-1 / 2} \gamma\Big|b-\eta_k\Big| \kappa_k, \sqrt{\Big|b-\eta_k\Big|} \gamma \kappa_k\Big\}
\end{equation}
The second inequality holds due to \Cref{assume-snr}, the third inequality holds due to \eqref{lem11-4} and the first inequality is a consequence of the third inequality and \Cref{assume-snr}.
\end{proof}

%%%%%%%%%%%%%%%%%%%%%%%%%%%%%%%%%%%%%%%%%%%%%%%%%%%%%%%%%%%%%%%%%%%%%%%%%%%%%%%
%%%%%%%%%%%%%%%%%%%%%%%%%%%%%%%%%%%%%%%%%%%%%%%%%%%%%%%%%%%%%%%%%%%%%%%%%%%%%%%

 \end{document}